\font\rsfs=rsfs10
\newcommand{\marsfs}[1]{\mbox{\rsfs #1}}
\newcommand{\C}{\mathbb C}
\newcommand{\R}{\mathbb R}
\newcommand{\Z}{\mathbb Z}
\newcommand{\N}{\mathbb N} 
\newcommand{\Q}{\mathbb Q}
\newcommand{\T}{\mathbb T}
\newcommand{\F}{\mathbb F}
\newcommand{\Fc}{{\mathbb F}^{\rm c}} 
\newcommand{\rsF}{\marsfs{F}}
\newcommand{\hbFc}{\widehat{\mathbb F}^{\rm c}} 
\newcommand{\bL}{\mathbb L}
\newcommand{\Proj}{\mathbb P}
\newcommand{\J}{\mathbb J} 
\newcommand{\Jc}{\mathbb{J}^{\rm c}} 
\newcommand{\G}{\mathbb G} 
\newcommand{\K}{\mathbb K} 
\newcommand{\HH}{\mathbb H} 
\newcommand{\U}{\mathbb U} 
\newcommand{\seminf}{$\frac{\infty}{2}$} 
\newcommand{\ev}{\operatorname{ev}}
\newcommand{\age}{\operatorname{age}}
\newcommand{\Hom}{\operatorname{Hom}}
\newcommand{\End}{\operatorname{End}}
\newcommand{\Pic}{\operatorname{Pic}}
\newcommand{\Ker}{\operatorname{Ker}}
\newcommand{\Image}{\operatorname{Im}}
\newcommand{\Boxop}{\operatorname{Box}}
\newcommand{\id}{\operatorname{id}}
\newcommand{\Grading}{\operatorname{\mathsf{Gr}}}
\newcommand{\Res}{\operatorname{Res}}
\newcommand{\unit}{\operatorname{\boldsymbol{1}}}
\newcommand{\Span}{\operatorname{Span}}
\newcommand{\Tr}{\operatorname{Tr}}
\newcommand{\Eff}{\operatorname{Eff}}
\newcommand{\mir}{\operatorname{mir}}  
\newcommand{\Mir}{\operatorname{Mir}} 
\newcommand{\ch}{\operatorname{ch}} 
\newcommand{\tch}{\widetilde{\operatorname{ch}}}
\newcommand{\tTd}{\widetilde{\operatorname{Td}}}
\newcommand{\codim}{\operatorname{codim}} 
\newcommand{\Tor}{\operatorname{Tor}} 
\newcommand{\pr}{\operatorname{pr}} 
\newcommand{\Vol}{\operatorname{Vol}} 
\newcommand{\crit}{\operatorname{cr}} 
\newcommand{\Hess}{\operatorname{Hess}} 
\newcommand{\cl}{\operatorname{cl}}
\newcommand{\Aut}{\operatorname{Aut}} 
\newcommand{\Gr}{\operatorname{Gr}} 
\newcommand{\inv}{\operatorname{inv}}
\newcommand{\Log}{\operatorname{Log}} 
\newcommand{\pt}{{\operatorname{pt}}}
\newcommand{\Spec}{\operatorname{Spec}} 
\newcommand{\bN}{\boldsymbol{N}} 
\newcommand{\bs}{\boldsymbol{s}} 
\newcommand{\sfT}{\mathsf{T}} 
\newcommand{\sfp}{\mathsf{p}} 
\newcommand{\sfw}{\mathsf{w}} 
\newcommand{\sfm}{\mathsf{m}} 
\newcommand{\cF}{\mathcal{F}}
\newcommand{\cN}{\mathcal{N}}
\newcommand{\cA}{\mathcal{A}}
\newcommand{\cU}{\mathcal{U}}
\newcommand{\cO}{\mathcal{O}}
\newcommand{\cD}{\mathcal{D}}
\newcommand{\cL}{\mathcal{L}}
\newcommand{\cX}{\mathcal{X}}
\newcommand{\cH}{\mathcal{H}}
\newcommand{\hcH}{\widehat{\mathcal{H}}}
\newcommand{\cM}{\mathcal{M}}
\newcommand{\cR}{\mathcal{R}}
\newcommand{\cZ}{\mathcal{Z}}
\newcommand{\cP}{\mathcal{P}}
\newcommand{\cV}{\mathcal{V}}
\newcommand{\cI}{\mathcal{I}}
\newcommand{\cK}{\mathcal{K}}
\newcommand{\cMo}{\mathcal{M}^{\rm o}}
\newcommand{\cMoo}{\mathcal{M}^{\rm oo}} 
\newcommand{\cRz}{\mathcal{R}^{(0)}} 
\newcommand{\tcRz}{\widetilde{\mathcal{R}}^{(0)}}  
\newcommand{\Sol}{\mathcal{S}} 
\newcommand{\tSol}{\widetilde{\mathcal{S}}}
\newcommand{\tC}{\widetilde{C}}
\newcommand{\hrho}{\hat{\rho}} 
\newcommand{\hGamma}{{\widehat{\Gamma}}}
\newcommand{\hS}{\widehat{S}}
\newcommand{\frD}{\mathfrak{D}}
\newcommand{\frJ}{\mathfrak{J}} 
\newcommand{\tM}{\widetilde{M}}
\newcommand{\ov}{\overline}
\newcommand{\iu}{\pmb{\mathtt{i}}} 
\newcommand{\Yo}{Y^{\rm o}}
\newcommand{\ceil}[1]{\lceil #1\rceil}
\newcommand{\floor}[1]{\lfloor #1 \rfloor}
\newtheorem{theorem}{Theorem}[section]
\newtheorem{lemma}[theorem]{Lemma}
\newtheorem{proposition}[theorem]{Proposition}
\newtheorem{conjecture}[theorem]{Conjecture}
\newtheorem{corollary}[theorem]{Corollary} 
\theoremstyle{definition}
\newtheorem{definition}[theorem]{Definition}
\newtheorem{remark}[theorem]{Remark}
\newtheorem{assumption}[theorem]{Assumption}
\newtheorem{proposal}[theorem]{Proposal} 
\def\pair#1#2{\left\langle #1,#2\right\rangle}
\def\Pair#1#2{ (\!( #1,#2 )\!)} 
\def\parfrac#1#2{\frac{\partial{#1}}{\partial #2}}
\def\corr#1{\left\langle #1 \right\rangle}
\begin{document}

\title[An integral structure in quantum cohomology]  
{An integral structure in quantum cohomology 
and mirror symmetry for toric orbifolds}
\author{Hiroshi Iritani}
\address{Faculty of Mathematics, Kyushu University, 6-10-1, 
Hakozaki, Higashiku, Fukuoka, 812-8581, Japan.}
\email{iritani@math.kyushu-u.ac.jp}
\address{Department of Mathematics, Imperial College London, 
Huxley Building, 180, Queen's Gate, London, 
SW7 2AZ, United Kingdom.}
\email{h.iritani@imperial.ac.uk}  
\begin{abstract}
We introduce an integral structure in orbifold 
quantum cohomology associated to the $K$-group 
and the $\hGamma$-class.  
In the case of compact toric orbifolds, 
we show that this integral structure matches with 
the natural integral structure for the Landau-Ginzburg model 
under mirror symmetry. 
By assuming the existence of an integral 
structure, we give a natural explanation for 
the specialization to a root of unity in Y. Ruan's 
crepant resolution conjecture \cite{ruan-crc}.   
\end{abstract} 
\maketitle 

\tableofcontents 

\section{Introduction} 
Mirror symmetry for Calabi-Yau manifolds 
can be formulated as an isomorphism of 
\emph{variations of Hodge structures} (VHS for short):  
The A-model VHS \cite{morrison-mathaspects} 
defined by the genus zero 
Gromov-Witten theory of $X$ 
is isomorphic to the B-model VHS 
associated to deformation of complex structures 
of the mirror $Y$. 
As a consequence, one can calculate Gromov-Witten 
invariants of $X$ from Picard-Fuchs equations for $Y$;  
such phenomena have been checked in many examples 
including toric complete intersections 
\cite{givental-mirrorthm-toric, CCLT:wp}. 
However, while the B-model VHS has a natural integral 
local system $H^n(Y,\Z)$, the A-model VHS 
seems to lack an integral structure. 
In this paper, we study the question 
\emph{``What is the integral local system 
in the A-model mirrored from the B-model?''}  
Our calculation on compact toric orbifolds suggests 
that the $K$-group of $X$ should give 
the integral local system in the A-model.

Let us describe our $K$-theory 
integral structure in the A-model. 
The genus zero Gromov-Witten theory 
defines a family of commutative algebras 
$(H^*(X),\circ_\tau)$ on the cohomology group 
parametrized by $\tau\in H^*(X)$, 
called \emph{quantum cohomology}.  
The \emph{quantum $D$-module} is given by 
a flat connection $\nabla$ on 
the trivial bundle $H^*(X)\times H^*(X) \to H^*(X)$  
with a parameter $z\in \C^*$, 
called the \emph{Dubrovin connection}:  
\[
\nabla_X = d_X + \frac{1}{z} X\circ_\tau, \quad 
X\in H^*(X), 
\]
where $\tau$ denotes a point on the base 
and $d_X$ is the directional derivative 
(with respect to the given trivialization). 
We can extend the Dubrovin connection 
in the direction of the parameter $z$ 
(see Definition \ref{def:QDM}) 
and get a flat $H^*(X)$-bundle 
over $H^*(X)\times \C^*$. 
% \[
% \nabla_{z\partial_z} = z\partial_z - \frac{1}{z}
% E\circ_\tau + \mu, \quad 
% E,\  \mu  \text{ are given in 
% (\ref{eq:def_E}), (\ref{eq:def_mu})}.  
% \] 
A general solution to the differential equation 
$\nabla s(\tau,z)=0$ is of the form 
$s(\tau,z) = L(\tau,z) z^{-\mu} z^{c_1(X)} \phi$ 
for some $\phi\in H^*(X)$. 
Here $\mu$ is the grading operator (\ref{eq:def_mu}) 
and $L(\tau,z)$ is the fundamental 
solution (\ref{eq:fundamentalsol_L}) 
which is asymptotic to $e^{-\tau/z}$ 
in the large radius limit (\ref{eq:largeradiuslimit}).  
Let $\delta_1,\dots,\delta_n$ be the Chern roots of 
the tangent bundle $TX$ and define a transcendental 
characteristic class $\hGamma(T\cX)$ by 
(see (\ref{eq:hGamma}) for orbifold case) 
\begin{align*}
\hGamma(TX) &:= 
\prod_{i=1}^n \Gamma(1+\delta_i) 
= \exp\biggl(-\gamma c_1(X) + \sum_{k\ge 2} (-1)^k (k-1)! \zeta(k) 
\ch_k(TX) \biggr),  
\end{align*} 
where $\gamma$ is the Euler constant and 
$\zeta(s)$ is the Riemann zeta function. 
For $V\in K(X)$, we define a $\nabla$-flat section  
$\cZ(V)$ to be (see (\ref{eq:Psi})) 
\[
\cZ(V)(\tau,z) := (2\pi)^{-n/2} 
L(\tau,z) z^{-\mu}z^{c_1(X)} 
\left( \hGamma(TX)\cup  
(2\pi\iu)^{\deg/2} \ch(V)\right),   
\]
where $n=\dim X$. 
These flat sections $\cZ(V)$, $V\in K(X)$ 
define an integral lattice 
in the space of $\nabla$-flat sections. 
We call it the \emph{$\hGamma$-integral structure}. 

The mirror of a compact toric orbifold  
is given by a Landau-Ginzburg (LG) model.  
It is a pair of a torus $Y_q=(\C^*)^n$ and 
a Laurent polynomial $W_q \colon Y_q \to \C$ on it 
($q$ is a parameter). 
The LG model defines a \emph{B-model $D$-module} 
which is underlain by a natural integral local 
system generated by \emph{Lefschetz thimbles} 
of $W_q$. 
Under mirror symmetry (Conjecture \ref{conj:mirrorthm}), 
the quantum $D$-module of a toric orbifold is isomorphic 
to the B-model $D$-module (Proposition \ref{prop:Dmoduleiso}).  
Our main theorem is the following: 

\begin{theorem}[Theorem \ref{thm:pulledbackintstr}] 
\label{thm:introd_main} 
Let $\cX$ be a weak Fano projective toric orbifold  
constructed from the initial data satisfying 
$\hrho \in \tC_\cX$ (see Section \ref{subsec:toricorbifolds}).  
Assume that mirror theorem 
(Conjecture \ref{conj:mirrorthm}) 
and Assumption \ref{assump:Ktheory} (c) 
hold for $\cX$. 
The $\hGamma$-integral structure on the quantum 
$D$-module corresponds to the natural 
integral local system of the B-model $D$-module 
under the mirror isomorphism in Proposition \ref{prop:Dmoduleiso}. 
\end{theorem}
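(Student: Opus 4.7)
The plan is to exploit the mirror isomorphism of Proposition~\ref{prop:Dmoduleiso} to transport both integral structures to a single flat connection, and then verify the match on an explicit generating set. Since flat sections are determined by their values at any one basepoint, it suffices to fix a convenient point near the large radius limit, where all relevant series converge thanks to the hypothesis $\hrho\in\tC_\cX$, and compare both lattices there. The broad picture is that each Lefschetz thimble of $W_q$ defines a B-model flat section via the oscillatory integral $\int_\Gamma e^{W_q/z}\omega$, and this integral admits a Mellin-Barnes representation whose $\Gamma$-function content should match the A-side expression $L(\tau,z)z^{-\mu}z^{c_1(\cX)}\hGamma(T\cX)\cup(2\pi\iu)^{\deg/2}\ch(V)$ term by term.

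The first computational step is on the B-side. Writing $W_q=\sum_b q^b x^b$ and applying the Cahen-Mellin formula $e^{-t}=\frac{1}{2\pi\iu}\int_C\Gamma(s)t^{-s}\,ds$ to each monomial, the oscillatory integral reshapes into a multidimensional contour integral whose integrand is a product of $\Gamma(s_b)$ factors, one per ray of the stacky fan, paired against monomials in the $q^b$. Each Lefschetz thimble of $W_q$ corresponds to a specific cycle of integration, equivalently a specific selection of residues, dictated by the combinatorics of the stacky fan and of the critical locus of $W_q$.

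The second step is on the A-side. Using the mirror theorem (Conjecture~\ref{conj:mirrorthm}), I would rewrite $L(\tau,z)$ through the Givental $I$-function, which is an explicit hypergeometric expression with coefficients $\Gamma(1+D_i+\cdots)/\Gamma(1+D_i)$ indexed by the toric divisors $D_i$. Multiplying by $\hGamma(T\cX)=\prod_i\Gamma(1+\delta_i)$, as required in the definition of $\cZ(V)$, cancels the denominators and leaves a purely hypergeometric object that coincides, up to the prefactor $(2\pi\iu)^{\deg/2}\ch(V)$, with the Mellin-Barnes integrand produced in the previous step. The factor $\ch(V)$ then specifies which residues are selected, mirroring the choice of thimble on the B-side.

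The hard part is constructing the explicit correspondence $V\leftrightarrow\Gamma_V$ between classes in $K(\cX)$ and Lefschetz thimbles, and verifying that the orbifold corrections align on both sides. In the smooth toric case one expects thimbles through the critical points of $W_q$ to correspond to structure sheaves of torus-fixed points, but in the orbifold setting $K(\cX)$ decomposes over the inertia stack and the prefactor $(2\pi\iu)^{\deg/2}\ch(V)$ must be tracked through age shifts in the Mellin-Barnes residue calculation. Reconciling the stacky residue combinatorics with the orbifold Chern character, and ensuring that the resulting bases agree $\Z$-linearly rather than merely $\C$-linearly, is the principal technical obstacle; verifying this match on structure sheaves of torus-fixed components of the inertia stack, which form a generating set of $K(\cX)\otimes\Q$, should reduce the statement to a finite combinatorial check.
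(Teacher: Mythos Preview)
Your proposal heads in a plausible direction but misses the key reduction that makes the paper's argument tractable, and the ``hard part'' you flag is precisely where your plan stalls.

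You propose to construct the full correspondence $V\leftrightarrow\Gamma_V$ by matching each Lefschetz thimble against a Mellin--Barnes integral and checking agreement on a generating set of $K(\cX)$. The paper does \emph{not} do this. Instead it matches a \emph{single} explicit pair: the structure sheaf $\cO_\cX$ with the real Lefschetz thimble $\Gamma_\R$ (Theorem~\ref{thm:cc_match}). Once $\cZ_K(\cO_\cX)=s_{\Gamma_\R}$ lies in the B-model lattice $\tSol(\cX)_\Z$, the rest follows from structure: the Galois action preserves both lattices, $K(\cX)$ is generated over $\Z$ by line bundles (Borisov--Horja), so $\Sol(\cX)_\Z\subset\tSol(\cX)_\Z$; then unimodularity of both lattices (this is where Assumption~\ref{assump:Ktheory}(c) and the matching of pairings enter) forces equality. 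This entirely sidesteps the problem of describing arbitrary thimbles or identifying which $K$-class corresponds to which.

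For the single matching $(\cO_\cX,\Gamma_\R)$, the paper's computation is close in spirit to your Mellin--Barnes idea but is carried out via an \emph{equivariant perturbation}: one introduces parameters $\lambda_i$ deforming $W$ to $W+\sum\lambda_i\log w_i$, expands the perturbed integral over $\Gamma_\R$ as a Gamma-series indexed by a fixed point $\sigma\in\cX^T$ (Lemma~\ref{lem:expansion_oscint}), compares with the components $H^\lambda_{\sigma,v}$ of an equivariant $H$-function restricted to $T$-fixed points on $I\cX$ (Lemma~\ref{lem:expansion_H}), and identifies the coefficients as the localized Todd class, so that the non-equivariant limit yields the central charge formula via equivariant localization.

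Two further issues with your plan: your proposed generating set (structure sheaves of torus-fixed components of the inertia stack) generates $K(\cX)\otimes\Q$, not $K(\cX)$ itself, so even a perfect rational match would not give the integral statement; and the explicit description of general thimbles needed for your Mellin--Barnes step is not available --- the paper never writes down any thimble other than $\Gamma_\R$ and $\Gamma_{\rm c}$.
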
 

Conjecture \ref{conj:mirrorthm} 
will be proved in joint work \cite{CCIT:toric} 
with Coates, Corti and Tseng. 
In fact, both of the assumptions in the theorem 
are known to be true for toric \emph{manifolds}. 
This theorem follows from the following %aftsbm "folowing" added
equality of ``central charges". 
We define the \emph{quantum cohomology central charge}
of $V\in K(X)$ to be 
\[
Z(V)(\tau,z) := \frac{(2\pi z)^{n/2}}{(2\pi\iu)^n} 
\int_X \cZ(V)(\tau,z).  
\]
Under Conjecture \ref{conj:mirrorthm}, 
$Z(V)$ is given as a pairing 
of $\ch(V)$ and 
a cohomology-valued hypergeometric series $H(q,z)$  
(see (\ref{eq:cc_byH}) and (\ref{eq:H-series})). 

\begin{theorem}[Theorem \ref{thm:cc_match}]
\label{thm:introd_cc} 
Under the same assumptions  %aftsbm 
as Theorem \ref{thm:introd_main}, %aftsbm 
the quantum cohomology central charge of the 
structure sheaf $\cO_\cX$ is given by 
the oscillatory integral over 
the real Lefschetz thimble $\Gamma_\R$: 
\begin{equation}
\label{eq:introd_ccstr}
Z(\cO_\cX)(\tau(q),z) 
= \frac{1}{(2\pi\iu)^n} 
\int_{\Gamma_\R\subset Y_q} 
e^{-W_q(y)/z} \omega_q 
\end{equation} 
where $\tau=\tau(q)$ is a mirror map 
in Lemma \ref{lem:convergence}.  
\end{theorem}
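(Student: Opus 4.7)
The plan is to verify \eqref{eq:introd_ccstr} by expanding both sides as cohomology-valued power series in the Kähler parameter $q$ and matching coefficients. The key mechanism is that the $\hGamma$-class $\prod_j\Gamma(1+D_j)$ on the A-side exactly supplies the Gamma-function numerators that arise from carrying out the oscillatory integral on the B-side, while the hypergeometric denominators are produced symmetrically on both sides: by the $I$-function on the left, and by the series expansion of $e^{-W_q/z}$ on the right.

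First I would rewrite the left-hand side using the mirror theorem. Substituting $\ch(\cO_\cX)=\unit$ into the definitions of $\cZ$ and $Z$ and using that $L(\tau,z)\unit$ is essentially Givental's $J$-function, Conjecture \ref{conj:mirrorthm} identifies $J(\tau(q),z)$ with the hypergeometric $I$-function of the toric orbifold. For a weak-Fano toric orbifold with $\hrho\in\tC_\cX$, this $I$-function is an explicit series $\sum_{d}q^d \prod_{j=1}^m [\text{Gamma-type factor in }D_j/z\text{ and }\langle D_j,d\rangle]$ summed over degrees in the extended Mori cone, together with twisted contributions indexed by $\Boxop$. Combining this with the factorization $\hGamma(T\cX)=\prod_j\Gamma(1+D_j)$ coming from the toric Euler sequence, and then integrating against the fundamental class of $\cX$, yields a formal series in $q$ whose coefficient of $q^d$ is a single product of Gamma functions with arguments linear in $\langle D_j,d\rangle$ and $z$.

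Second I would expand the right-hand side directly. Writing $W_q=\sum_j a_j(q)\, y^{v_j}$ on $(\C^*)^n$ with $\omega_q=d\log y_1\wedge\cdots\wedge d\log y_n$ and $\Gamma_\R=(\R_{>0})^n$, I would introduce a monomial change of variables that rescales a distinguished ``principal'' term so that, after factoring an overall $q$-dependent prefactor, the integral splits as a product of one-dimensional Mellin-type integrals of the form $\int_0^\infty e^{-t/z}t^{s-1}\, dt/t=\Gamma(s)z^s$. Power-series expansion of the remaining exponential factors around the large radius limit, with convergence ensured by Lemma \ref{lem:convergence} and the condition $\hrho\in\tC_\cX$, produces a series whose coefficients are again products of Gamma functions, with arguments now expressed through the exponent vectors $v_j$. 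The toric dictionary identifying $D_j$ with $v_j$ then matches the two series term by term.

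The main obstacle I expect is the orbifold bookkeeping. One has to account for the twisted sectors of the Chen-Ruan cohomology of $\cX$, which appear on the A-side both through the age-shifted $\hGamma$-class and through the $\Boxop$-summation in the orbifold $I$-function; on the B-side these contributions arise from multivalued branches of the monomial change of variables and from additional summands in the expansion of $e^{-W_q/z}$ that correspond to non-integral exponents. Matching these contributions requires a careful identification of the age-shifted summands with specific branch/monomial residues, typically mediated by the reflection identity $\Gamma(s)\Gamma(1-s)=\pi/\sin(\pi s)$ together with the combinatorics of the extended stacky fan; this is where most of the technical work lies, and where Assumption \ref{assump:Ktheory} (c) and the hypothesis $\hrho\in\tC_\cX$ enter essentially to guarantee that the Mellin expansion and the orbifold $I$-function have overlapping domains of convergence.
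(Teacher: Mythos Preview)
Your overall plan---recognize both sides as Gamma-hypergeometric series in $q$ and match them---is the right intuition, and it is essentially what the paper does. However, you are missing the key technical device, and the step ``introduce a monomial change of variables \ldots\ so that the integral splits as a product of one-dimensional Mellin-type integrals'' is where your outline breaks down.

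The potential $W_q = \sum_{i=1}^m q^{\ell_i} y^{b_i}$ has $m$ monomials but the integration is over an $n$-dimensional torus with $m>n$. There is no single change of variables that splits the integral into $n$ one-dimensional Gamma integrals. What one can do is choose a maximal cone $\sigma$ of the fan (equivalently, a torus fixed point), use the $n$ variables $\{w_j : j\notin I^\sigma\}$ as coordinates, and Taylor-expand the remaining $m-n$ exponential factors. This yields a Gamma-series, but it is indexed by the \emph{sub}cone $\K_{{\rm eff},\sigma}\subset\K_{\rm eff}$, and the result depends on which $\sigma$ you picked. On the A-side the $H$-function is a sum over all of $\K_{\rm eff}=\bigcup_\sigma \K_{{\rm eff},\sigma}$, so a direct term-by-term match is not available: you must understand how the contributions from different $\sigma$ fit together.

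The paper resolves this by an \emph{equivariant perturbation}: it replaces $e^{W_q/z}$ by $e^{W_q/z}\prod_i w_i^{\lambda_i/2\pi\iu}$ and works with the equivariant $H$-function $H^\lambda$. For generic $\lambda$ the $N$ fixed-point components $H^\lambda_{\sigma,v}$ form a basis of solutions to the (equivariant) GKZ system, so the oscillatory integral is a linear combination $\sum c_{\sigma,v}(\lambda)H^\lambda_{\sigma,v}$. The coefficients $c_{\sigma,v}(\lambda)$ are then determined by comparing asymptotics as $q\searrow 0$ (Lemmas~\ref{lem:expansion_oscint} and~\ref{lem:expansion_H}), and they turn out to be exactly the localization contributions of $\tTd^\lambda(T\cX)$. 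The non-equivariant statement follows by letting $\lambda\to 0$. The equivariant parameters are not a mere convenience: they are what makes the Gamma integrals in Lemma~\ref{lem:expansion_oscint} converge (one needs $\Re(-\ov{D}^\lambda_j(\sigma)/2\pi\iu)$ large), and what separates the $\sigma$-contribution from the others (Lemma~\ref{lem:expansion_H} uses a moment-map argument to push the other fixed points to higher order).

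One smaller correction: Assumption~\ref{assump:Ktheory}(c) plays no role in the proof of Theorem~\ref{thm:cc_match}; it is used only to pass from the central-charge equality to the full matching of integral lattices in Theorem~\ref{thm:pulledbackintstr}.
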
 

The relationship between $K$-theory 
and quantum cohomology can be foreseen by 
Kontsevich's \emph{homological mirror symmetry}. 
The integral local system of the 
B-model VHS on a Calabi-Yau $Y$ can be measured  
by integration (period) over a Lagrangian $n$-cycle, 
an object of the Fukaya category of $Y$ (A-type D-brane). 
Therefore, by homological mirror symmetry, 
a coherent sheaf on $X$, 
an object of the derived category of $X$  (B-type D-brane)   
should have a pairing with the quantum $D$-module and 
give a (dual) flat section of the Dubrovin connection.  
The quantum cohomology central charge $Z(V)$ 
can be viewed as a ``period of $V$" and 
the equality (\ref{eq:introd_ccstr})  
should be generalized as 
\[
Z(V)(\tau(q),z)  = \frac{1}{(2\pi\iu)^n} 
\int_{\mir(V)} e^{-W_q/z} \omega_q,  
\]
where $\mir(V)$ is the 
Lefschetz thimble mirror to $V$. 
Theorem \ref{thm:introd_main} shows 
the existence of the map $V \mapsto \mir(V)$ 
on the $K$-group level. 
This shows a $K$-group version of 
Dubrovin's conjecture 
(Corollary \ref{cor:K_Dubrovin}).

In the context of toric mirror symmetry,  
closely related observations have been made   
by Horja \cite{horja1}, Hosono \cite{hosono} 
and Borisov-Horja \cite{borisov-horja-FM}. 
Borisov-Horja \cite{borisov-horja-FM} identified 
the space of solutions to the GKZ system 
(corresponding to a toric Calabi-Yau $\cX$) 
with the $K$-group of $\cX$ and showed 
that the analytic continuation of a solution 
corresponds to a Fourier-Mukai transformation 
between birational $\cX$'s.  
Hosono \cite{hosono} proposed a central charge formula 
for Calabi-Yau complete intersections 
in toric varieties in terms of 
an explicit hypergeometric series. 
Our observation is based on non-Calabi-Yau examples, 
but all of their results can be understood from  
the $\hGamma$-integral structure. 
After the preprint version 
\cite{iritani-realint-preprint} 
of this paper was written, 
a rational structure based 
on the same $\hGamma$-class was proposed by 
Katzarkov-Kontsevich-Pantev \cite{KKP} 
independently.

We hope that an integral structure 
exists globally on the K\"{a}hler moduli space --- 
the (maximal) base space where the quantum cohomology is 
analytically continued. 
A global existence of an integral structure 
is relevant to Yongbin Ruan's 
\emph{crepant resolution conjecture}. 
Roughly speaking, it says that 
for a crepant resolution $Y$ of an orbifold $\cX$, 
quantum cohomology of $\cX$ and $Y$ 
are related by analytic continuation. 
In joint work \cite{CIT:I} with Coates and Tseng, 
we proposed the picture that the \emph{semi-infinite 
variations of Hodge structures} (\seminf VHS) 
associated to quantum cohomology of $\cX$ and $Y$ 
match under a linear symplectic transformation 
$\U \colon \cH^\cX \to \cH^Y$ 
between Givental's symplectic spaces 
$\cH^\cX$, $\cH^Y$ 
(which are loop spaces on cohomology groups, 
see (\ref{eq:Giventalsp})).  
This implies that the quantum $D$-modules 
of $\cX$ and $Y$ are isomorphic after analytic continuation.  
In this paper, we furthermore conjecture that 
\emph{the isomorphism of quantum $D$-modules  
preserves the $K$-theory integral structures 
on the both sides}. 
Then the symplectic transformation $\U$ 
would be induced from an isomorphism 
$\U_K \colon K(\cX) \to K(Y)$ 
of $K$-groups (\ref{eq:CD_UK_U})  
($\Psi$ below involves the $\hGamma$ class,  
see (\ref{eq:Psi})):  
\[ 
\begin{CD}
K(\cX) @>{\U_K}>> K(Y) \\ 
@V{z^{-\mu}z^{\rho}\Psi^\cX}VV  
@VV{z^{-\mu}z^{\rho} \Psi^Y}V  \\
\cH^\cX\otimes_{\cO(\C^*)}\cO(\widetilde{\C^*}) 
 @>{\U}>> 
\cH^Y \otimes_{\cO(\C^*)}\cO(\widetilde{\C^*}).    
\end{CD} 
\] 
In view of Borisov-Horja \cite{borisov-horja-FM}, 
we hope that $\U_K$ is given by 
a geometric correspondence such as 
a Fourier-Mukai transformation. 
This picture (Proposal \ref{propo:crc_int}) 
gives us a natural reason why  
the quantum parameters should be specialized 
to roots of unity at the orbifold 
large radius limit point. 
In some cases, one can predict explicitly  
the specialization value/co-ordinate change 
using the $\hGamma$-class.

This paper is a revision of 
the preprint \cite{iritani-realint-preprint}, 
where we also studied possible \emph{real structures}  
on quantum cohomology \seminf VHS, 
yielding Hertling's TERP structure 
\cite{hertling-tt*, hertling-sevenheck}. 
We showed that the $(p,p)$-part of quantum cohomology 
\seminf VHS is \emph{pure} and \emph{polarized} 
near the large radius limit point 
with respect to the real structure induced 
from the $\hGamma$-integral structure 
\cite[Theorem 3.7]{iritani-realint-preprint}.  
These properties --- pure and polarized --- 
are semi-infinite analogues of the Hodge decomposition 
and Hodge-Riemann bilinear inequality and yield 
\emph{$tt^*$-geometry} 
\cite{cecotti-vafa-top-antitop, hertling-tt*} 
on quantum cohomology.  
The real structure part of the preprint 
\cite{iritani-realint-preprint} 
will appear in a separate paper \cite{iritani-real}. 

The paper is organized as follows. 
In Section 2, we introduce the $\hGamma$-integral 
structure in orbifold quantum cohomology after 
reviewing the basics on orbifold quantum cohomology. 
In Section 3, we introduce Landau-Ginzburg mirrors 
to toric orbifolds and construct 
the B-model $D$-module from the LG model. 
In Section 4, we formulate mirror symmetry 
for toric orbifolds in terms of a $D$-module, 
and prove the main theorem (Theorem \ref{thm:pulledbackintstr}). 
In Section 5, we propose the crepant resolution 
conjecture with an integral structure 
(Proposal \ref{propo:crc_int}) 
and study specialization values of quantum 
parameters using the notion of integral periods.

We assume the convergence of 
quantum cohomology throughout the paper. 
We consider only the even parity 
part of the cohomology, \emph{i.e.} 
$H^*(X)$ means  $\bigoplus_k H^{2k}(X)$. 
We also assume that a smooth Deligne-Mumford 
stack $\cX$ in this paper has the resolution property 
\emph{i.e.} every coherent sheaf is a quotient 
of a vector bundle, so that we can apply the 
orbifold Riemann-Roch (\ref{eq:orbifoldRR}) to $\cX$.  
(A toric orbifold has this property. 
See \cite[Theorem 2.1]{totaro}.) 
Note that the orbifold cohomology $H_{\rm orb}^*(\cX)$ 
is denoted also by $H_{\rm CR}^*(\cX)$ 
in the literature. 

\vspace{5pt}  
\noindent  
{\bf Acknowledgments} 
Thanks are due to 
Tom Coates, 
Alessio Corti, 
Hsian-Hua Tseng 
for many useful discussions and 
their encouragement. 
This project is motivated by joint works 
\cite{CIT:I, CCIT:An, CCIT:toric} with them. 
The author benefited from a number of  
valuable discussions with Martin Guest and Claus Hertling 
and expresses gratitude to them.  %aftsbm: modified 
The author thanks valuable comments from Jim Bryan, 
Yongbin Ruan and the referees of the paper. 
This research was supported by 
Inoue Research Award for Young Scientists, 
Grant-in-Aid for Young Scientists (B), 
19740039, 2007 
and EPSRC(EP/E022162/1). 
\vspace{5pt} 
 
\noindent{\bf Notation}
\vspace{5pt}
 
\begin{tabular}{ll}
$\iu$ & imaginary unit $\iu^2=-1$ \\ 
%$\cM$ & complex analytic space \\ 
%$\pi\colon \cM\times \C\to \cM$ & projection to the first factor  \\
% $\D_0 \subset \C$ & disc $\{z\in \C\;;\; |z|\le 1\}$\\
% $\D_\infty \subset \Proj^1\setminus \{0\}$ & 
% disc $\{z\in \C\cup \{\infty\}=\Proj^1 \; ; \; |z| \ge 1\} $  \\
%$(-)\colon \cM\times \C\to \cM\times \C$ & 
%map defined by $(\tau,z)\mapsto (\tau,-z)$ \\
$\cX$ & smooth Deligne-Mumford stack \\
$X$ & coarse moduli space of $\cX$ \\ 
$I\cX$ & inertia stack of $\cX$ \\
$\sfT=\{0\}\cup \sfT'$ & index set of inertia components; \\
$\inv \colon I\cX \to I\cX, \ \sfT \to \sfT$ 
& involution $(x,g)\mapsto (x,g^{-1})$ \\ 
$\iota_v$ & age of inertia component $v\in \sfT$ \\
%$\cl(\cdot)$ & closure \\
$n$, $n_v$ & $\dim_\C \cX$, $\dim_\C \cX_v$. 
% $\C\{z,z^{-1}\}$, $\C\{z\}$, $\C\{z^{-1}\}$ 
% & the space of holomorphic functions on $\C^*$, $\C$, 
% $\Proj^1\setminus\{0\}$. 
\end{tabular}

\section{Integral structure in quantum cohomology}
\label{sec:A-model} 

In this section, we review orbifold quantum cohomology 
and introduce the integral structure 
associated to the $K$-group and the $\hGamma$-class. 
Gromov-Witten theory for orbifolds 
has been developed by Chen-Ruan 
\cite{chen-ruan:new_coh_orb, chen-ruan:GW} 
in the symplectic category 
and by Abramovich-Graber-Vistoli \cite{AGV} in the algebraic category. 
The definition of the integral structure  
makes sense for both categories, but 
we work in the algebraic category. 

\subsection{Orbifold quantum cohomology} 
Let $\cX$ be a proper smooth Deligne-Mumford stack over $\C$.  
Let $I\cX$ denote the \emph{inertia stack} of $\cX$, 
defined by the fiber product $\cX\times_{\cX\times \cX} \cX$ 
of the two diagonal morphisms $\Delta\colon \cX\to \cX\times \cX$. 
A point on $I\cX$ is given by a pair $(x,g)$ of a point 
$x\in \cX$ and $g\in \Aut(x)$. 
We call $g$ the \emph{stabilizer} of $(x,g)\in I\cX$. 
Let $\sfT$ be the index set of components of the $I\cX$. 
Let $0\in \sfT$ be the distinguished element 
corresponding to the trivial stabilizer. 
Set $\sfT'=\sfT\setminus\{0\}$. 
We have  
\[
I\cX = \bigsqcup_{v\in \sfT} \cX_v = 
\cX_0 \cup \bigsqcup_{v\in \sfT'} \cX_v, \quad 
\cX_0 = \cX. 
\] 
We associate a rational number $\iota_v$ 
to each connected component $\cX_v$ of $I\cX$. 
This is called \emph{age} or \emph{degree shifting number}.  
Take a point $(x,g)\in \cX_v$ and let 
\[
T_x \cX = \bigoplus_{0\le f<1} (T_x\cX)_f
\]
be the eigenspace decomposition of $T_x\cX$ 
with respect to the stabilizer action, where 
$g$ acts on $(T_x \cX)_f$ by $\exp(2\pi\iu f)$. 
We define 
\[
\iota_v = \sum_{0\le f< 1} f \dim_\C (T_x \cX)_f. 
\]
This is independent of the choice of a point $(x,g)\in \cX_v$. 
The \emph{(even parity) orbifold cohomology group} 
$H_{\rm orb}^*(\cX)$ is defined to be the 
sum of the (even degree) cohomology of $\cX_v$, $v\in \sfT$: 
\[
H_{\rm orb}^k(\cX) = 
\bigoplus_{\substack{v\in \sfT \\ k-2\iota_v \equiv 0 (2)}} 
H^{k-2\iota_v}(\cX_v,\C).  
\]
The degree $k$ of the orbifold cohomology 
can be a fractional number in general. 
Each factor $H^*(\cX_v,\C)$ in the right-hand side 
is same as the cohomology group of $\cX_v$ as a topological space.  
If not otherwise stated, we will use $\C$ 
as the coefficient of cohomology groups. 
We have an involution $\inv \colon I\cX \to I\cX$ defined by 
$\inv(x,g) = (x,g^{-1})$. 
This induces an involution $\inv \colon \sfT \to \sfT$. 
The \emph{orbifold Poincar\'{e} pairing} is defined to be 
\[
(\alpha, \beta)_{\rm orb} : = \int_{I\cX} \alpha \cup \inv^*(\beta) 
= \sum_{v\in \sfT} \int_{\cX_v} \alpha_v \cup \beta_{\inv(v)},   
\]
where $\alpha_v$, $\beta_{v}$ are 
the $v$-components of $\alpha$, $\beta$.  
This pairing is symmetric, non-degenerate over $\C$ 
and of degree $-2n$, where $n=\dim_\C\cX$. 
% Take a homogeneous $\C$-basis 
% $\{\phi_i\}_{i=1}^N$ of $H_{\rm orb}^*(\cX)$.  
% Let $\{\phi^i\}_{i=1}^N$ be the basis dual to $\{\phi_i\}$ with 
% respect to the orbifold Poincar\'{e} pairing, \emph{i.e.} 
% $(\phi_i,\phi^j)_{\rm orb}=\delta_i^j$. 

Now we assume that the coarse moduli space $X$ 
of $\cX$ is projective.  
The \emph{genus zero Gromov-Witten invariants} 
are integrals of the form: 
\begin{equation}
\label{eq:GWcorrelator}
\corr{\alpha_1\psi^{k_1},\dots,\alpha_l \psi^{k_l}}_{0,l,d}^\cX 
= \int_{[\cX_{0,l,d}]^{\rm vir}} \prod_{i=1}^l \ev_i^*(\alpha_i) \psi_i^{k_i}
\end{equation} 
where $\alpha_i \in H_{\rm orb}^*(\cX)$, $d\in H_2(X,\Z)$ 
and $k_i$ is a non-negative integer. 
$[\cX_{0,l,d}]^{\rm vir}$ is the virtual fundamental class 
of the moduli stack $\cX_{0,l,d}$ of 
genus zero, $l$-pointed stable maps to $\cX$ of degree $d$; 
$\ev_i\colon \cX_{0,l,d} \to I\cX$ is the evaluation map\footnote
{The map $\ev_i$ here is defined only 
as a map of topological spaces (not as a map of stacks). 
The evaluation map defined in \cite{AGV} 
is a map of stacks but takes values in the 
\emph{rigidified inertia stack}, 
which is the same as $I\cX$ as a topological space but 
is different as a stack. } 
at the $i$-th marked point; 
$\psi_i$ is the first Chern class 
of the line bundle over $\cX_{0,l,d}$ 
whose fiber at a stable map is the cotangent space 
of the coarse curve at the $i$-th marked point. 
(Our notation is taken from \cite{CCLT:wp}; 
$\cX_{0,l,d}$ is denoted by $\cK_{0,l}(\cX,d)$ in \cite{AGV}.)  
The correlator (\ref{eq:GWcorrelator}) is 
non-zero only when $d$ belongs to 
$\Eff_\cX\subset H_2(X,\Z)$, the semigroup generated 
by effective stable maps, and $\sum_{i=1}^l (\deg \alpha_i + 2k_i) = 
2n + 2\pair{c_1(T\cX)}{d} + 2l -6$.

Let $\{\phi_k\}_{k=1}^N$ and 
$\{\phi^k\}_{k=1}^N$ be bases of $H_{\rm orb}^*(\cX)$ 
which are dual with respect to the orbifold Poincar\'{e} pairing, 
\emph{i.e.} $(\phi_i, \phi^j)_{\rm orb} = \delta_i^j$. 
The \emph{orbifold quantum product} $\circ_\tau$ 
is a formal family of commutative and associative 
products on $H_{\rm orb}^*(\cX)\otimes \C[\![\Eff_\cX]\!]$ 
parametrized by $\tau\in H^*_{\rm orb}(\cX)$. 
This is defined by the formula: 
\begin{equation*} 
\alpha \bullet_\tau \beta = 
\sum_{d\in \Eff_\cX} \sum_{l\ge 0} \sum_{k=1}^N    
\frac{1}{l!} 
\corr{\alpha,\beta,\tau,\dots,\tau, 
\phi_k}_{0,l+3,d}^\cX Q^d \phi^k,   
\end{equation*} 
where $Q^d$ is the element of the group ring $\C[\Eff_\cX]$ 
corresponding to $d\in \Eff_\cX$. 
We decompose $\tau\in H_{\rm orb}^*(\cX)$ as 
\begin{equation}
\label{eq:decomp_tau}
\tau = \tau_{0,2} + \tau', \quad  
\tau_{0,2}\in H^2(\cX), \quad 
\tau' \in \bigoplus_{k\neq 1} H^{2k}(\cX) \oplus 
\bigoplus_{v\in \sfT'} H^*(\cX_v).
\end{equation} 
Using the divisor equation \cite{tseng:QRR, AGV}, 
we find 
\begin{align} 
\label{eq:quantumproduct_divisor}
\alpha \bullet_\tau \beta = \sum_{d\in \Eff_\cX} \sum_{l\ge 0} 
\sum_{k=1}^N 
\frac{1}{l!} 
\corr{\alpha,\beta,\tau',\dots,\tau',
\phi_k}_{0,l+3,d}^{\cX} 
e^{\pair{\tau_{0,2}}{d}} Q^d \phi^k. 
\end{align} 
Therefore, the quantum product can be viewed 
as a formal power series in $e^{\tau_{0,2}}Q$ and $\tau'$. 
When this is a convergent power series, we can put $Q=1$ 
and define
\[
\circ_\tau := \bullet_\tau|_{Q=1}.  
\]
Under the following convergence assumption, 
the product $\circ_\tau$ defines an analytic family of 
commutative rings $(H_{\rm orb}(\cX), \circ_\tau)$ 
over $U$: 
\begin{assumption} 
\label{assump:converge}
The orbifold quantum product $\circ_\tau$ 
is convergent over 
an open set $U\subset H_{\rm orb}^*(\cX)$ of the form: 
\[
U = \left\{ \tau\in H^*_{\rm orb}(\cX)\;;\; 
\Re \pair{\tau_{0,2}}{d} \le -M, \ 
\forall d\in \Eff_\cX\setminus\{0\}, \ 
\|\tau'\| \le e^{-M} \right\} 
\]
for a sufficiently big $M>0$, 
where $\tau = \tau_{0,2}+ \tau'$ is the decomposition 
in (\ref{eq:decomp_tau}) and $\|\cdot\|$ is some norm 
on $H_{\rm orb}(\cX)$.  
\end{assumption} 

The domain $U$ here contains the following limit direction:  
\begin{equation}
\label{eq:largeradiuslimit}
\Re\pair{\tau_{0,2}}{d} \to -\infty, \quad 
\forall d\in \Eff_\cX\setminus\{0\}, 
\quad \tau' \to 0.  
\end{equation} 
This is called the \emph{large radius limit}. 
In the large radius limit, $\circ_\tau$ goes to the 
orbifold cup product $\cup_{\rm orb}$ 
due to Chen-Ruan \cite{chen-ruan:new_coh_orb}. 
(For manifolds, $\cup_{\rm orb}$ is the same as 
the cup product.)

\subsection{Quantum $D$-modules and Galois action}
\label{subsec:QuantumDmod} 
We associate a meromorphic flat connection 
(quantum $D$-module) to the orbifold quantum cohomology. 
We introduce certain automorphisms of 
the quantum $D$-module, which we call \emph{Galois actions}. 

Take a homogeneous basis 
$\{\phi_k\}_{k=1}^N$ of $H_{\rm orb}^*(\cX)$ 
and let $\{t^k\}_{k=1}^N$ be the linear 
co-ordinate system on $H_{\rm orb}^*(\cX)$ 
dual to $\{\phi_k\}_{k=1}^N$. 
Let $\tau = \sum_{k=1}^N t^k \phi_k$ be a 
general point on $U\subset H^*_{\rm orb}(\cX)$. 
Let $(\tau,z)$ be a general point on 
$U\times \C$ and $(-)\colon U\times \C\to U\times \C$ 
be the map sending $(\tau,z)$ to $(\tau,-z)$. 

\begin{definition} 
\label{def:QDM} 
The \emph{quantum $D$-module} $QDM(\cX)$ 
or \emph{A-model $D$-module} 
is the tuple $(F,\nabla,(\cdot,\cdot)_F)$ 
consisting of the trivial holomorphic vector bundle 
$F := H^*(\cX)\times (U\times \C) \to U\times \C$,  
the meromorphic flat connection $\nabla$  
\begin{align*} 
&\nabla_k = \nabla_{\parfrac{}{t^k}} 
= \parfrac{}{t^k} + \frac{1}{z} \phi_k \circ_\tau \\ 
&\nabla_{z\partial_z}  
= z \parfrac{}{z} - \frac{1}{z} E\circ_\tau + \mu,  
\end{align*} 
and the $\nabla$-flat pairing $(\cdot,\cdot)_F$: 
\[
(\cdot,\cdot)_F \colon (-)^* \cO(F) \otimes \cO(F) \to \cO_{U\times \C} 
\]
induced from the orbifold Poincar\'{e} pairing 
$F_{(\tau,-z)}\times F_{(\tau,z)}  = H_{\rm orb}^*(\cX)
\times H_{\rm orb}^*(\cX) \to \C$. 
Here $E\in \cO(F)$ is the \emph{Euler vector field}  
\begin{equation}
\label{eq:def_E}
E :=c_1(T\cX) + 
\sum_{k=1}^N (1- \frac{1}{2}\deg \phi_k) t^k \phi_k   
\end{equation} 
and $\mu\in \End(H_{\rm orb}^*(\cX))$ 
is the \emph{Hodge grading operator} 
\begin{equation}
\label{eq:def_mu} 
\mu(\phi_k): = (\frac{1}{2}\deg\phi_k -\frac{n}{2}) \phi_k. 
\end{equation} 
The flat connection $\nabla$ is called the \emph{Dubrovin connection} 
or \emph{the first structure connection}. 
The standard argument (as in \cite{cox-katz,manin}) 
and the WDVV equation in orbifold Gromov-Witten theory \cite{AGV} 
show that the Dubrovin connection is flat.  
\qed
\end{definition} 

Note that the connection $\nabla$ defines a map: 
\[
\nabla \colon \cO(F) \to \cO(F)(U\times \{0\}) 
\otimes_{\cO_{U\times \C}}  
(\pi^*\Omega_U^1 \oplus \cO_{U\times \C} \frac{dz}{z}),  
\]
where $\pi\colon U\times \C \to U$ is the projection. 
By identifying  $\phi_i$ with the vector field 
$\partial/\partial t^i$, one can regard $E$ 
as the vector field over $U$: 
\begin{equation}
\label{eq:E_vectorfield}
E = \sum_{k=1}^N r_k \parfrac{}{t^k} + \sum_{k=1}^N 
(1-\frac{1}{2} \deg \phi_k) t^k\parfrac{}{t^k}, 
\end{equation} 
where we put $c_1(\cX) = \sum_{k=1}^N r_k \phi_k$. 
The Euler vector field satisfies the property: 
\begin{equation} 
\label{eq:Euler_reg} 
\Grading := 2 ( \nabla_{z\partial_z} + \nabla_E + \frac{n}{2})  
\quad \text{is regular at $z=0$}. 
\end{equation} 
The operator $\Grading\colon \cO(F) \to \cO(F)$ 
defines the grading for sections of $F$.

Let $H^2(\cX,\Z)$ denote the cohomology of the constant sheaf $\Z$ 
on the topological \emph{stack} $\cX$ 
(not on the topological \emph{space}). 
This group is the set of isomorphism classes of 
topological orbifold line bundles on $\cX$. 
Let $L_\xi \to \cX$ be the orbifold line bundle corresponding to 
$\xi\in H^2(\cX,\Z)$. 
Let $0\le f_v(\xi)<1$ be the rational number such that 
the stabilizer of $\cX_v$ ($v\in \sfT$) acts on $L_\xi|_{\cX_v}$ by 
a complex number $\exp(2\pi \iu f_v(\xi))$. 
This number $f_v(\xi)$ is called the \emph{age}  
of $L_\xi$ along $\cX_v$. 

\begin{proposition}
\label{prop:Galois} 
For $\xi\in H^2(\cX,\Z)$, 
the bundle isomorphism of $F$ defined by 
\begin{align*} 
H^*_{\rm orb}(\cX) \times (U\times \C) 
& \longrightarrow 
H^*_{\rm orb}(\cX) \times (U\times \C) \\ 
(\alpha, \tau,z) & 
\longmapsto ( dG(\xi) \alpha, G(\xi)\tau, z)  
\end{align*}
gives an automorphism of the quantum $D$-module, 
\emph{i.e.} 
preserves the flat connection $\nabla$ and 
the pairing $(\cdot,\cdot)_F$.  
Here $G(\xi), dG(\xi) \colon 
H^*_{\rm orb}(\cX)\to H^*_{\rm orb}(\cX)$ 
are defined by 
\begin{align}
\label{eq:Galois}
\begin{split}  
G(\xi)(\tau_0 \oplus \bigoplus_{v \in \sfT'} \tau_v) 
&=(\tau_0- 2\pi \iu \xi_0) \oplus \bigoplus_{v\in \sfT'} 
e^{2\pi \iu f_v(\xi)} \tau_v,  \\ 
dG(\xi)(\tau_0 \oplus \bigoplus_{v \in \sfT'} \tau_v) 
&=\tau_0 \oplus \bigoplus_{v\in \sfT'} 
e^{2\pi \iu f_v(\xi)} \tau_v,   
\end{split} 
\end{align}
where $\tau_v \in H^*( \cX_v)$ and $\xi_0$ is the image of 
$\xi$ in $H^2( \cX,\Q)$. 
We call this \emph{Galois action} 
of $H^2(\cX,\Z)$ on $QDM(\cX)$.  
\end{proposition}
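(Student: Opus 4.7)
The heart of the argument is the \emph{age identity} for orbifold line bundles pulled back along stable maps: for a genus zero orbifold stable map $f\colon (C,x_1,\dots,x_l)\to\cX$ of degree $d\in\Eff_\cX$ whose $i$-th marked point lies over $\cX_{v_i}$, the pull-back $f^*L_\xi$ has coarse-curve degree $\pair{\xi_0}{d}-\sum_i f_{v_i}(\xi)\in\Z$, whence
\[
\sum_{i=1}^l f_{v_i}(\xi)\equiv \pair{\xi_0}{d}\pmod{\Z}.
\]
Combined with the multilinearity of the Gromov-Witten correlators, this gives the fundamental scaling property: for any $\alpha_i\in H^*(\cX_{v_i})$,
\[
\corr{dG(\xi)\alpha_1,\dots,dG(\xi)\alpha_l}^\cX_{0,l,d}
= e^{2\pi\iu\pair{\xi_0}{d}}\corr{\alpha_1,\dots,\alpha_l}^\cX_{0,l,d}.
\]

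The involution $\inv$ sends a point with stabilizer $g$ acting on $L_\xi$ by $e^{2\pi\iu f_v(\xi)}$ to the point with $g^{-1}$ acting by $e^{-2\pi\iu f_v(\xi)}$, so $f_v(\xi)+f_{\inv(v)}(\xi)\in\Z$. Consequently $dG(\xi)$ is an isometry of $(\cdot,\cdot)_{\rm orb}$ (and hence of $(\cdot,\cdot)_F$), and for a homogeneous dual basis with $\phi_k\in H^*(\cX_{v_k})$ one has $dG(\xi)\phi^k=e^{-2\pi\iu f_{v_k}(\xi)}\phi^k$.

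For the flat connection I use the divisor-equation form (\ref{eq:quantumproduct_divisor}). The definition of $G(\xi)$ yields $(G(\xi)\tau)_{0,2}=\tau_{0,2}-2\pi\iu\xi_0$ and $(G(\xi)\tau)'=dG(\xi)\tau'$, so the Novikov weight acquires a factor $e^{-2\pi\iu\pair{\xi_0}{d}}$. Rewriting $\phi_k=e^{-2\pi\iu f_{v_k}(\xi)}dG(\xi)\phi_k$ in the correlator and invoking the scaling property introduces the compensating factor $e^{2\pi\iu\pair{\xi_0}{d}}e^{-2\pi\iu f_{v_k}(\xi)}$; after collecting these and replacing $e^{-2\pi\iu f_{v_k}(\xi)}\phi^k$ by $dG(\xi)\phi^k$, one obtains
\[
dG(\xi)\alpha\bullet_{G(\xi)\tau}dG(\xi)\beta = dG(\xi)\bigl(\alpha\bullet_\tau\beta\bigr),
\]
and specialising $Q=1$ yields the same identity for $\circ_\tau$. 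This already proves that $\nabla_k=\partial_{t^k}+z^{-1}\phi_k\circ_\tau$ is preserved.

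It remains to treat $\nabla_{z\partial_z}$, for which it suffices to check that $E(G(\xi)\tau)=dG(\xi)E(\tau)$ and that $[\mu,dG(\xi)]=0$. Both facts are immediate: $dG(\xi)$ and $\mu$ are diagonal in the age decomposition, $c_1(T\cX)\in H^2(\cX)$ is fixed by $dG(\xi)$, and the shift $-2\pi\iu\xi_0$ along $H^2(\cX)$ in the definition of $G(\xi)$ is annihilated by the coefficient $1-\frac{1}{2}\deg\phi_k$, which vanishes precisely on $H^2$. I expect the main obstacle to be the age identity in the first step: although standard in the orbifold theory, it is the sole place where the integrality of $\xi\in H^2(\cX,\Z)$ on the stack (rather than its rational image on the coarse space) is genuinely needed, and it must be stated carefully since $d\in H_2(X,\Z)$ lives on the coarse moduli while $\xi$ is an honest stacky class.
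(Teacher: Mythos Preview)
Your proof is correct and follows essentially the same approach as the paper: both hinge on the age identity $\sum_i f_{v_i}(\xi)\equiv \pair{\xi_0}{d}\pmod{\Z}$ for the pulled-back orbifold line bundle $f^*L_\xi$, derive from it the scaling law for Gromov--Witten correlators under $dG(\xi)$, and then combine this with the divisor-equation form (\ref{eq:quantumproduct_divisor}) of the quantum product. The paper's proof is considerably terser --- it stops at ``the lemma follows from this claim and (\ref{eq:quantumproduct_divisor})'' --- whereas you spell out the isometry of $dG(\xi)$ for the orbifold Poincar\'e pairing and the compatibility with $\nabla_{z\partial_z}$ via the Euler vector field and the grading operator $\mu$; these verifications are indeed needed but are left implicit in the paper.
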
 
\begin{proof} 
For $\alpha_1,\dots,\alpha_l\in H_{\rm orb}^*(\cX)$, 
we claim that 
\[
\langle \alpha_1,\alpha_2,\dots,\alpha_l \rangle_{0,l,d} 
= e^{- 2 \pi \iu \pair{\xi_0}{d}}  
\langle dG(\xi)\alpha_1, dG(\xi)\alpha_2, \dots, 
dG(\xi)\alpha_l \rangle_{0,l,d}.  
\]
If there exists an orbifold stable map 
$f\colon (C,x_1,\dots,x_l) \to \cX$ of degree $d$, 
we have an orbifold line bundle $f^*L_\xi$ on $C$ 
such that the monodromy at $x_k$ 
equals $\exp(2\pi \iu f_{v_k}(\xi))$ 
where $\ev_k(f)\in \cX_{v_k}$. Then we must have 
\[
\deg f^*L_\xi - \sum_{k=1}^l f_{v_k} \in \Z, \quad \emph{i.e. } 
e^{-2\pi \iu \pair{\xi_0}{d}} 
\prod_{i=1}^l e^{ 2 \pi \iu f_{v_i}(\xi)} =1.  
\]
The claim follows from this. The lemma follows from this claim 
and (\ref{eq:quantumproduct_divisor}). 
\end{proof} 

Without loss of generality, 
we can assume that $U$ is invariant under the Galois action. 
By the Galois action, the quantum $D$-module 
descends to the quotient 
$F/H^2(\cX,\Z) \to (U/H^2(\cX,\Z))\times \C$. 
We refer to this flat connection over 
$(U/H^2(\cX,\Z))\times \C$ also as 
the \emph{quantum $D$-module}.

\subsection{The space of solutions to 
the quantum differential equation}

The equation $\nabla s =0$ for a section 
$s$ of $F$ is called 
the \emph{quantum differential equation}. 
A fundamental solution $L(\tau,z)$ to the quantum 
differential equation can be given by 
gravitational descendants. 
Let $\pr\colon I\cX \to \cX$ be the natural projection.  
We define the action of a class $\tau_0 \in H^*(\cX)$ on 
$H_{\rm orb}^*(\cX)$ by 
\[
\tau_0 \cdot \alpha = \pr^*(\tau_0) \cup \alpha, \quad 
\alpha \in H_{\rm orb}^*(\cX),  
\]
where the right-hand side is the cup product on $I\cX$.
We define 
\begin{equation}
\label{eq:fundamentalsol_L}
L(\tau,z) \alpha := e^{-\tau_{0,2}/z} \alpha - 
\sum_{\substack{(d,l)\neq (0,0) \\ d\in \Eff_\cX, 1\le k\le N}}
\frac{\phi^k}{l!} \corr{\phi_k, \tau',\dots,\tau', 
\frac{e^{-\tau_{0,2}/z} \alpha}{z+\psi}}_{0,l+2,d}^\cX 
e^{\pair{\tau_{0,2}}{d}},  
\end{equation} 
where $\tau=\tau_{0,2} + \tau'$ 
is the decomposition in (\ref{eq:decomp_tau}) 
and 
$1/(z+\psi)$ in the correlator should be expanded in the series 
$\sum_{k=0}^\infty (-1)^k z^{-k-1}\psi^k$.  
The following proposition is well-known for manifolds 
\cite{pandharipande, cox-katz}. 

\begin{proposition}
\label{prop:fundamentalsol_A} 
$L(\tau,z)$ satisfies the following differential equations: 
\begin{align}
\label{eq:diffeq_L}
\nabla_k L(\tau,z) \alpha =0, \quad 
\nabla_{z\partial_z} L(\tau,z) \alpha = 
L(\tau,z) (\mu \alpha -\frac{\rho}{z}\alpha),  
\end{align}
where $\alpha\in H^*_{\rm orb}(\cX)$, 
$\rho := c_1(T\cX)\in H^2(\cX)$ and 
$\mu$ is the grading operator (\ref{eq:def_mu}). 
The flat section $L(\tau,z)\alpha$ 
(flat in the $\tau$-direction) is characterized 
by the asymptotic initial condition: 
\begin{equation}
\label{eq:asymptotic_initial}
L(\tau,z) \alpha \sim e^{-\tau_{0,2}/z} \alpha 
\end{equation} 
in the large radius limit (\ref{eq:largeradiuslimit}) 
with $\tau'=0$.  
Set 
\[
z^{-\mu} z^\rho := \exp(-\mu \log z) \exp(\rho \log z).   
\] 
Then we have 
\begin{gather}
\label{eq:diffeq_L_zmuzrho} 
\nabla_k (L(\tau,z) z^{-\mu}z^\rho \alpha) = 0, \quad 
\nabla_{z\partial_z} (L(\tau,z) z^{-\mu}z^\rho \alpha) 
= 0, \\
\label{eq:unitarity_L} 
(L(\tau,-z) \alpha, L(\tau,z)\beta)_{\rm orb}  
= (\alpha,\beta)_{\rm orb},  \\
\label{eq:Galois_L} 
dG(\xi) L(G(\xi)^{-1} \tau, z) \alpha   
= L(\tau,z) e^{-2\pi\iu \xi_0/z} e^{2\pi\iu f_v(\xi)} 
\alpha,\  \text{if} \ \alpha\in H^*(\cX_v),  
\end{gather}
where $dG(\xi), G(\xi)$ are the Galois actions  
for $\xi\in H^2(\cX,\Z)$ in Section \ref{subsec:QuantumDmod}.  
\end{proposition}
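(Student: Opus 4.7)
The plan is to reduce each of the five assertions to standard identities in genus-zero orbifold Gromov--Witten theory --- the topological recursion relation (TRR), the divisor and string equations, the degree/dimension axiom, and the WDVV equation --- all of which are established for Deligne--Mumford stacks in the work of Abramovich--Graber--Vistoli and Tseng. The underlying strategy is exactly the one used for smooth projective manifolds (see e.g.\ Pandharipande, Cox--Katz), and the orbifold modifications are bookkeeping once the correct splitting/divisor axioms are in place.

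For the flatness $\nabla_k L(\tau,z)\alpha = 0$, I would differentiate the right-hand side of (\ref{eq:fundamentalsol_L}) in $t^k$. Two kinds of terms appear: derivatives of the explicit $\tau'$-insertions inside the correlator, and derivatives of $e^{-\tau_{0,2}/z}$ and $e^{\pair{\tau_{0,2}}{d}}$, which by the divisor equation (when $\phi_k\in H^2$) also reduce to insertions of $\phi_k$. Combining everything into a single correlator with an unconstrained $\phi_k$-insertion and $1/(z+\psi)$ on the $\alpha$-slot, I would apply TRR at the node separating the $\phi_k$- and $\alpha$-insertions; the resulting sum over an internal dual basis $\{\phi_i\},\{\phi^i\}$ reconstructs the product $(1/z)\phi_k\circ_\tau L(\tau,z)\alpha$. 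For the second equation in (\ref{eq:diffeq_L}), I would apply $z\partial_z$ directly: the dimension axiom $\sum(\deg\alpha_i+2k_i)=2n+2\pair{\rho}{d}+2l-6$ converts the $z$-weights into the grading operator $\mu$, while the $\pair{\rho}{d}$ contribution together with the Euler action $-E\circ_\tau/z$ yields the term $-\rho/z$ on $\alpha$ via the divisor equation applied to $\rho$.

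The asymptotic characterization (\ref{eq:asymptotic_initial}) is immediate from (\ref{eq:fundamentalsol_L}) since $e^{\pair{\tau_{0,2}}{d}}\to 0$ for $d\ne 0$ and $\tau'\to 0$ in the large radius limit; uniqueness follows because the space of $\nabla_\tau$-flat sections over $U$ is a trivial $H^*_{\rm orb}(\cX)$-bundle, so the leading behaviour pins down $\alpha$. Equation (\ref{eq:diffeq_L_zmuzrho}) follows from (\ref{eq:diffeq_L}) and the commutation relations $[\mu,\rho]=\rho$ (since $\rho\in H^2$ so $\mu$ acts on $\rho$ with weight $1$) and $z^{-\mu}\rho\, z^{\mu}=z\rho$, which produce the required cancellation. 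For unitarity (\ref{eq:unitarity_L}), I would show both sides are $\nabla$-flat --- the $\tau$-flatness uses self-adjointness of $\circ_\tau$ under $(\cdot,\cdot)_{\rm orb}$, and the $z\partial_z$-flatness follows because $\mu$ is skew-adjoint and $\rho$ is symmetric under the pairing --- and match at the large radius limit where $L(\tau,\pm z)\alpha\to e^{\mp\tau_{0,2}/z}\alpha$ so that the two exponentials cancel. The Galois equivariance (\ref{eq:Galois_L}) is obtained by substituting $G(\xi)^{-1}\tau$ into (\ref{eq:fundamentalsol_L}) and invoking the correlator identity proved in Proposition \ref{prop:Galois}; the resulting factor $e^{-2\pi\iu\pair{\xi_0}{d}}$ is absorbed by $e^{\pair{G(\xi)^{-1}\tau_{0,2}}{d}}$, leaving the exponential $e^{-2\pi\iu\xi_0/z}$ from $e^{-G(\xi)^{-1}\tau_{0,2}/z}$ and the twist $e^{2\pi\iu f_v(\xi)}$ on the $v$-sector piece of $\alpha$.

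The main obstacle is the TRR step in Step 1: one must track signs carefully and, in the orbifold setting, ensure that at the splitting node the involution $\inv\colon I\cX\to I\cX$ correctly pairs twisted sectors on the two sides via the orbifold Poincar\'e pairing. This is the only point where the orbifold structure genuinely intervenes, and it is handled automatically once one uses the AGV splitting axiom together with the definition of $(\cdot,\cdot)_{\rm orb}$ as an integral over $I\cX$ twisted by $\inv^*$.
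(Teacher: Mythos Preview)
Your proposal is correct and takes essentially the same approach as the paper's proof. The paper packages two steps slightly more cleanly: for the second equation in (\ref{eq:diffeq_L}) it factors $L(\tau,z) = S(\tau,z)\circ e^{-\tau_{0,2}/z}$, notes that homogeneity of the invariants makes $S$ commute with $z\partial_z + E + \mu$, and then combines with the already-proved first equation rather than differentiating the correlators in $z$ directly; and for (\ref{eq:Galois_L}) it observes that $dG(\xi)L(G(\xi)^{-1}\tau,z)\alpha$ is $\nabla_\tau$-flat and identifies it via the asymptotic characterization (\ref{eq:asymptotic_initial}) rather than substituting termwise---but these are organizational choices, not different arguments.
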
 
\begin{proof} 
The first equation of (\ref{eq:diffeq_L})   
follows from the topological recursion relation 
\cite[2.5.5]{tseng:QRR} in orbifold Gromov-Witten theory. 
The proof for the case of manifolds can be found in 
\cite[Proposition 2]{pandharipande}, \cite[Chapter 10]{cox-katz} 
and the proof for orbifolds is completely parallel. 

For the second equation of (\ref{eq:diffeq_L}), 
note that we can decompose $L$ as 
$L(\tau,z) = S(\tau,z) \circ e^{-\tau_{0,2}/z}$
for some $\End(H_{\rm orb}^*(\cX))$-valued function $S(\tau,z)$. 
The homogeneity of Gromov-Witten invariants 
shows that $S$ preserves the degree, \emph{i.e.} 
$(z\partial_z +E + \mu) \circ S(\tau,z) 
= S(\tau,z) \circ (z\partial_z + E + \mu)$, where $E$ 
is regarded as the vector field (\ref{eq:E_vectorfield}).  
Therefore, $(z\partial_z +E+ \mu) \circ 
L(\tau,z) = L(\tau,z) \circ (z\partial_z + E + \mu - \rho/z)$. 
The second equation of (\ref{eq:diffeq_L}) 
follows from this and the first equation. 

The asymptotic initial condition (\ref{eq:asymptotic_initial}) 
is obvious from the definition (\ref{eq:fundamentalsol_L}). 

The equation (\ref{eq:diffeq_L_zmuzrho}) follows from 
(\ref{eq:diffeq_L}) and the fact that $z^{-\mu}z^{\rho}\alpha$ 
satisfies the differential equation 
$(z\partial_z + \mu -\rho/z) (z^{-\mu}z^\rho \alpha) =0$, 
which follows easily from the commutation relation 
$[\mu, \rho] = \rho$.  

To show the equation (\ref{eq:unitarity_L}), 
put $\bs' =L(\tau,-z)\alpha$ and $\bs = L(\tau,z)\beta$. 
By using (\ref{eq:diffeq_L}) and the Frobenius 
property $(\alpha\circ_\tau \beta, \gamma)_{\rm orb}=
(\alpha, \beta\circ_\tau \gamma)_{\rm orb}$, we have 
\[
\parfrac{}{t^k}(\bs',\bs)_{\rm orb} = 
\frac{1}{z}(\phi_k\circ_\tau \bs', \bs)_{\rm orb}
- \frac{1}{z}(\bs', \phi_k\circ_\tau \bs)_{\rm orb} =0.  
\] 
Hence $(\bs',\bs)_{\rm orb}$ is constant in $\tau$. 
Using the asymptotics 
$\bs' \sim e^{\tau_{0,2}/z} \alpha$ and 
$\bs \sim e^{-\tau_{0,2}/z} \beta$, 
we have 
\[(\bs',\bs)_{\rm orb} \sim 
(e^{-\tau_{0,2}/z} \alpha, e^{\tau_{0,2}/z}\beta)_{\rm orb} = 
(\alpha,\beta)_{\rm orb}
\]
and the equation (\ref{eq:unitarity_L}) follows. 

Since the Galois action preserves $\nabla$, 
it follows that 
$dG(\xi) L(G(\xi)^{-1} \tau,z)\alpha$ 
is flat in the $\tau$-direction. 
The equation (\ref{eq:Galois_L}) follows from the 
characterization (\ref{eq:asymptotic_initial}) 
and the asymptotics 
$dG(\xi) L( G(\xi)^{-1} \tau, z) \alpha 
\sim e^{-\tau_{0,2}/z} e^{-2\pi\iu \xi_0/z} e^{2\pi \iu f_v(\xi)} \alpha$.  
\end{proof} 

Although the convergence of $L(\tau,z)$ is not a priori clear, 
we know from the differential equations above 
and the convergence assumption of $\circ_\tau$ 
that $L(\tau,z)$ is convergent on $(\tau,z)\in U\times \C^*$. 

\begin{definition} 
The space $\Sol(\cX)$ of multi-valued $\nabla$-flat 
sections of the quantum 
$D$-module $(F,\nabla,(\cdot,\cdot)_F)$ 
is defined to be 
\[
\Sol(\cX) := 
\{s\in \Gamma(U\times \widetilde{\C^*}, \cO(F)) 
\;;\; \nabla s = 0\}, 
\]
where $\widetilde{\C^*}$ is the universal cover of $\C^*$. 
This is a finite dimensional $\C$-vector space 
with $\dim \Sol(\cX) = \dim H_{\rm orb}^*(\cX)$. 
The pairing $(\cdot,\cdot)_\Sol$ on $\Sol(\cX)$ is given by 
\begin{equation}
\label{eq:pairing_Sol}
(s_1,s_2)_{\Sol} :=  (s_1(\tau, e^{\pi\iu} z), s_2(\tau, z))_{\rm orb} 
\in \C, 
\end{equation} 
where $s_1(\tau,e^{\pi\iu}z)$ is the parallel translate 
of $s_1(\tau,z)$ along the counter-clockwise path 
$[0,1]\ni \theta \mapsto e^{\iu\pi\theta} z$. 
Note that the right-hand side is a complex number 
which does not depend on $(\tau,z)$. 
The Galois action in Proposition \ref{prop:Galois} 
defines an automorphism of $\Sol(\cX)$ 
for $\xi\in H^2(\cX,\Z)$: 
\begin{equation}
\label{eq:Galois_Sol}
G^{\Sol}(\xi) \colon \Sol(\cX) \to \Sol(\cX), \quad 
s(\tau,z) \mapsto dG(\xi) s(G(\xi)^{-1}\tau,z).  
\end{equation} 
Using the fundamental solution in Proposition 
\ref{prop:fundamentalsol_A}, 
we define the \emph{cohomology framing} $\cZ_{\rm coh} 
\colon H^*_{\rm orb}(\cX) \to \Sol(\cX)$ of $\Sol(\cX)$ 
by 
\begin{equation}
\label{eq:coh_framing}
\cZ_{\rm coh}(\alpha) := L(\tau,z) z^{-\mu} z^\rho \alpha.   
\end{equation} 
The pairing and the Galois action on $\Sol(\cX)$ 
can be written in terms of the cohomology framing as 
\begin{align} 
\label{eq:cohfr_property}
\begin{split}  
(\cZ_{\rm coh}(\alpha), \cZ_{\rm coh}(\beta))_{\Sol} 
& = (e^{\pi\iu \rho} \alpha, e^{\pi\iu \mu} \beta)_{\rm orb}, \\  
G^\Sol(\xi) (\cZ_{\rm coh}(\alpha)) &= \cZ_{\rm coh} 
( ( \bigoplus_{v\in \sfT} e^{-2\pi\iu\xi_0} e^{2\pi\iu f_v(\xi)}
) \alpha ).
\end{split}  
\end{align} 
Here $\xi_0\in H^2(\cX,\Q)$ and $f_v(\xi) \in [0,1) \cap \Q$ 
are introduced  before Proposition \ref{prop:Galois}. 
The first equation follows from (\ref{eq:unitarity_L}) 
and the second equation follows from (\ref{eq:Galois_L}). 
\qed 
\end{definition}

The Galois actions on $\Sol(\cX)$ 
can be viewed as the monodromy transformations of 
the flat bundle $F/H^2(\cX,\Z)\to (U/H^2(\cX,\Z))\times \C^*$
in the $\tau$-direction. 
The monodromy with respect to $z$ is given by 
\begin{equation}
\label{eq:z-monodromy_V}
\left [
\cZ_{\rm coh}(\alpha) \right]_{z\mapsto e^{2\pi\iu} z }  
= \cZ_{\rm coh} 
(e^{-2\pi \iu \mu} e^{2\pi \iu \rho} \alpha) 
\end{equation} 
This coincides with the Galois action 
$(-1)^n G^\Sol([K_\cX])$ 
and also corresponds to the Serre functor of 
the derived category $D(\cX)$. 
Here, $[K_\cX]$ is the class of 
the canonical line bundle. 
When $\cX$ is Calabi-Yau, 
\emph{i.e.} $K_\cX$ is trivial,  
the pairing $(\cdot,\cdot)_{\Sol}$ 
is either symmetric or anti-symmetric depending on
whether $n$ is even or odd.   
In general, this pairing is neither symmetric nor anti-symmetric.

\subsection{$\hGamma$-integral structure} 
\label{subsec:hGamma_intstr}
By an \emph{integral structure} in quantum cohomology 
we mean a $\Z$-local system $F_\Z \to U\times \C^*$ 
underlying the flat bundle $(F,\nabla)|_{U\times \C^*}$. 
This is given by an integral lattice $\Sol(\cX)_\Z$  
in the space $\Sol(\cX)$ 
of multi-valued flat sections of $QDM(\cX)$. 
There are a priori many choices of integral 
lattices in $\Sol(\cX)$. 
We introduce the $\hGamma$-integral structure 
which has several nice properties.  

Let $K(\cX)$ denote the Grothendieck group of topological 
orbifold vector bundles on $\cX$. 
See \emph{e.g.} \cite{adem-ruan, moerdijk} 
for vector bundles on orbifolds. 
For an orbifold vector bundle $\widetilde{V}$ 
on the inertia stack $I\cX$, 
we have an eigenbundle decomposition of $\widetilde{V}|_{\cX_v}$
\[
\widetilde{V}|_{\cX_v} = \bigoplus_{0\le f<1} 
\widetilde{V}_{v,f} 
\] 
with respect to the action of the stabilizer of $\cX_v$.    
Here, the stabilizer acts on 
$\widetilde{V}_{v,f}$ by $\exp(2\pi\iu f) \in \C$. 
Let $\pr \colon I\cX \to \cX$ be the projection. 
The Chern character $\tch \colon K(\cX) \to H^*(I\cX)$ 
is defined for an orbifold vector bundle $V$ on $\cX$ by 
\[
\tch(V) := \bigoplus_{v\in \sfT} \sum_{0\le f<1} e^{2\pi\iu f}
\ch((\pr^*V)_{v,f}) 
\]
where $\ch$ is the ordinary Chern character. 
For an orbifold vector bundle $V$ on $\cX$, 
let $\delta_{v,f,i}$, $i=1,\dots,l_{v,f}$ be the Chern roots of 
$(\pr^*V)_{v,f}$. 
The Todd class $\tTd\colon K(\cX) \to H^*(I\cX)$ is defined by  
\[
\tTd(V) = \bigoplus_{v\in \sfT} 
\prod_{0<f<1,1\le i\le l_{v,f}}\frac{1}{1-e^{-2\pi\iu f}e^{-\delta_{v,f,i}}}
\prod_{f=0,1\le i\le l_{v,0}} \frac{\delta_{v,0,i}}{1-e^{-\delta_{v,0,i}}}
\]
These characteristic classes appear in the following theorem. 

\begin{theorem}[Orbifold Riemann-Roch \cite{kawasaki-rr,toen}] 
Assume that $\cX$ has the resolution property 
(see \emph{e.g.} \cite{totaro}). 
For a holomorphic orbifold vector bundle $V$ on $\cX$,   
the Euler characteristic 
$\chi(V)$ is given by 
\begin{equation}
\label{eq:orbifoldRR}
\chi(V) := \sum_{i=0}^{\dim \cX} (-1)^i \dim H^i(\cX,V)
= \int_{I\cX} \tch(V)\cup \tTd(T\cX).  
\end{equation} 
\end{theorem}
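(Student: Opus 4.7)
The plan is to deduce the formula from the holomorphic Lefschetz fixed point theorem applied to a local quotient presentation of $\cX$. Since $\cX$ is a smooth Deligne-Mumford stack, it is étale-locally of the form $[U/G]$ for $U$ smooth affine and $G$ a finite group, with inertia stack $I[U/G]=\bigsqcup_{[g]}[U^g/C(g)]$ indexed by conjugacy classes $[g]\subset G$, where $U^g$ is the $g$-fixed locus and $C(g)$ the centralizer. Both sides of (\ref{eq:orbifoldRR}) are additive in $V$ and behave functorially under étale morphisms, so it suffices to establish the identity on global quotients and then globalize.

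For a global quotient $\cX=[U/G]$ with $U$ smooth and proper, the coherent cohomology satisfies $H^i(\cX,V)=H^i(U,\pi^*V)^G$ (where $\pi\colon U\to\cX$), so
\[
\chi(\cX,V)=\frac{1}{|G|}\sum_{g\in G}\Tr\bigl(g\mid\chi(U,\pi^*V)\bigr).
\]
The Atiyah-Bott-Segal holomorphic Lefschetz formula expresses each summand as an integral over $U^g$ of $\ch_g(\pi^*V|_{U^g})\cup\Td(TU^g)$ divided by the $g$-equivariant Euler class $\prod_{0<f<1,i}\bigl(1-e^{-2\pi\iu f}e^{-\delta_{f,i}}\bigr)$ of the conormal bundle. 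After grouping $g$ by conjugacy classes and replacing $|G|^{-1}\sum_{g\in[g]}$ by $|C(g)|^{-1}$, the identification $\bigsqcup_{[g]}[U^g/C(g)]=I\cX$ converts the sum into an integral on $I\cX$. A direct manipulation then matches the integrand with $\tch(V)\cup\tTd(T\cX)$: the $g$-eigenbundle weights $e^{2\pi\iu f}$ acting on $V$ assemble into $\tch(V)$, while the conormal denominator combined with $\Td(TU^g)$ produces $\tTd(T\cX)$ exactly as defined in the excerpt (with separate contributions from $f=0$ and $0<f<1$).

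The globalization step is the main obstacle. Following Toen, one bypasses explicit patching by proving a Grothendieck-Riemann-Roch theorem for the representable morphism $\cX\to\Spec\C$ factored through the inertia stack, using the natural rational isomorphism $K(\cX)\otimes\C\cong H^*(I\cX)$ induced by $V\mapsto\tch(V)$; the resolution property is invoked here to ensure that $K(\cX)$ is generated by locally free sheaves so that $\tch$ is well-defined at the level of the Grothendieck group. Kawasaki's alternative analytic argument instead uses a smooth partition of unity on the coarse moduli to reduce to the local Lefschetz calculation above. In either framework, the key verification is that $\tch$ and $\tTd$, being built from conjugation-invariant spectral data of stabilizer actions on $V$ and $T\cX$, are intrinsically defined on $I\cX$ independently of the chosen local quotient presentation, so the local Lefschetz contributions glue unambiguously to the claimed integral on $I\cX$.
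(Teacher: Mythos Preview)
The paper does not prove this statement; it is quoted as a known result from the literature, with citations to Kawasaki \cite{kawasaki-rr} and To\"{e}n \cite{toen}, and no proof is supplied in the text. So there is no ``paper's own proof'' to compare against.

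That said, your sketch is a faithful summary of the standard arguments in those references: the reduction to the holomorphic Lefschetz fixed point formula on a global quotient is exactly Kawasaki's approach, and the globalization via a Grothendieck--Riemann--Roch statement for the morphism to a point is To\"{e}n's. One caution: your opening sentence suggests that both sides of (\ref{eq:orbifoldRR}) are \'etale-local and can be checked on a local quotient presentation, but the left-hand side is a global Euler characteristic and is not local in $\cX$; you correctly flag later that ``the globalization step is the main obstacle,'' but the earlier phrasing is misleading. The substantive content of the proof lies entirely in that globalization, and your paragraph on it is more of a pointer to the literature than an argument. As an outline this is fine, but if you intend it as a self-contained proof it would need either Kawasaki's parametrix construction or To\"{e}n's descent argument spelled out.
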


Define a multiplicative 
characteristic class $\hGamma \colon K(\cX) \to H^*(I\cX)$ 
by 
\begin{equation}
\label{eq:hGamma}
\hGamma(V) := \bigoplus_{v\in \sfT} 
\prod_{0\le f<1} \prod_{i=1}^{l_{v,f}} 
\Gamma(1- f + \delta_{v,f,i}) 
\in H^*(I\cX),  
\end{equation} 
where $\delta_{v,f,i}$ is the same as above.  
The Gamma function on the right-hand side 
should be expanded in series at $1-f>0$. 
We assume the following conditions.

\begin{assumption} 
\label{assump:Ktheory} 
(a) The map $\tch\colon K(\cX) \to H^*(I\cX)$ 
becomes an isomorphism after tensored with $\C$. 

(b) The right-hand side of the orbifold 
Riemann-Roch formula (\ref{eq:orbifoldRR}) 
takes values in $\Z$ for any 
(not necessarily holomorphic)  
complex orbifold vector bundle $V$ on $\cX$. 
Define $\chi(V)$ to be the value of 
the right-hand side of (\ref{eq:orbifoldRR})
for any orbifold vector bundle $V$. 

(c) The pairing $(V_1,V_2)\mapsto 
\chi(V_1\otimes V_2)$ on $K(\cX)$ 
induces a surjective map $K(\cX) \to \Hom(K(\cX),\Z)$. 
\end{assumption}

\begin{remark} 
(i) When $\cX$ can be presented as a quotient 
$[Y/G]$ as a topological orbifold,  
where $Y$ is a compact manifold and $G$ is a compact Lie group 
acting on $Y$ with at most finite stabilizers,  
Part (a) of the assumption 
follows from Adem-Ruan's decomposition 
theorem \cite[Theorem 5.1]{adem-ruan}. 
% \[
% K(\cX)\otimes \Q \cong 
% \prod_{\{(C)\;;\; C\text{ cyclic subgroup of $G$}\}} 
% H^*(Y^{C}/Z(C), R(C)\otimes \Q/I)^{W(C)}.   
% \]
% Here, $(C)$ is the conjugacy class of the cyclic subgroup $C$ in $G$, 
% $Y^C\subset Y$ is the fixed point set of $C$, 
% $Z(C)$ is the centralizer, 
% $N(C)$ is the normalizer, 
% $W(C):=N(C)/Z(C)$, 
% $R(C)$ is the representation ring of $C$, and 
% $I\subset R(C)\otimes \Q$ is the ideal consisting of element whose 
% characters vanish on all generators of $C$. 
% The character defines a $W(C)$-equivariant isomorphism 
% $(R(C)\otimes \Q/I)\otimes_\Q \C \cong  \C^{C^\times}$ 
% where $C^\times $ is the set of generators in $C$.  
% $W(C)$ acts on $C^\times$ freely and 
% $C^\times/W(C) \cong \{(g)\;;\; g\in C^\times\}$ where 
% $(g)$ is the conjugacy class of $g$ in $G$. Thus we have  
% \[
% H^*(Y^C/Z(C),R(C)\otimes \Q/I)^{W(C)}\otimes_\Q \C 
% \cong \bigoplus_{\{(g)\;;\; g\in C^\times\}} H^*(Y^C/Z(C), \C)
% \]
% Therefore, $K(\cX)\otimes \C \cong 
% \prod_{(g)}H^*(Y^{\langle g\rangle} /Z(\langle g\rangle),\C) 
% \cong H^*(I\cX)$; 
% one can check that this isomorphism coincides with 
% $\tch$ defined above (see \cite{adem-ruan}). 
Note that an orbifold without generic stabilizers 
can be presented as a quotient orbifold $[Y/G]$ 
(see \emph{e.g.} \cite{adem-ruan}).

(ii) When $\cX$ is again a quotient orbifold $[Y/G]$,  
Part (b) follows from Kawasaki's index theorem \cite{kawasaki-Vind} 
for elliptic operators on orbifolds (whose proof uses 
the $G$-equivariant index).   
The right-hand side of (\ref{eq:orbifoldRR}) 
becomes the index of a certain elliptic operator 
$\ov{\partial}+\ov{\partial}^*\colon 
V\otimes \Omega^{0,{\rm even}}_{\cX} \to 
V\otimes \Omega^{0,{\rm odd}}_{\cX}$, 
where $\ov{\partial}$ is a 
not necessarily integrable $(0,1)$ connection 
and $\ov{\partial}^*$ is its adjoint. 
The author does not know a purely topological proof. 

(iii) Part (c) would follow from a universal coefficient theorem and 
Poincar\'{e} duality for 
orbifold $K$-theory (which are true for manifolds), 
but the author does not know a proof nor a reference.  
\qed 
\end{remark}

\begin{definition} 
\label{def:A-model_int} 
We define the \emph{$K$-group framing} 
$\cZ_K\colon K(\cX) \to \Sol(\cX)$ 
of the space $\Sol(\cX)$ of multi-valued flat sections 
of the quantum $D$-module by the formula: 
\begin{align}
\label{eq:Psi} 
\begin{split} 
& \cZ_K(V) : = \cZ_{\rm coh} (\Psi (V)) 
= L(\tau,z) z^{-\mu} z^\rho \Psi(V), \\  
& \text{where} \quad \Psi(V) := (2\pi)^{-n/2} 
\hGamma(T\cX) \cup (2\pi\iu)^{\deg/2} \inv^* (\tch(V)).  
\end{split} 
\end{align} 
Here $\deg\colon H^*(I\cX)\to H^*(I\cX)$ is a grading operator 
on $H^*(I\cX)$ defined by $\deg = 2k$ on $H^{2k}(I\cX)$\footnote
{Note that $\deg$ is the degree of the cohomology class 
as an element of $H^*(I\cX)$, 
\emph{not} as an element of $H_{\rm orb}^*(\cX)$.} 
and $\cup$ is the cup product in $H^*(I\cX)$. 
We call the image $\Sol(\cX)_\Z := \cZ_K(K(\cX))$ of the 
$K$-group framing the \emph{$\hGamma$-integral structure}. 
\qed 
\end{definition}

\begin{proposition} 
\label{prop:A-model_int} 
Assume Assumption \ref{assump:Ktheory}.  
The $\hGamma$-integral structure $\Sol(\cX)_\Z$ 
satisfies the following properties.  

(i) By Part (a) of the assumption, 
$\Sol(\cX)_\Z$ is a $\Z$-lattice in $\Sol(\cX)$: 
\[
\Sol(\cX) = \Sol(\cX)_\Z \otimes_\Z \C.
\]

(ii) 
The Galois action $G^\Sol(\xi)$ on $\Sol(\cX)$ 
in (\ref{eq:Galois_Sol}) 
preserves the lattice $\Sol(\cX)_\Z$ 
and corresponds to the tensor by the line bundle 
$L_\xi^\vee$ in $K(\cX)$: 
\[
\cZ_K(L_\xi^\vee \otimes V) = G^{\Sol}(\xi)(\cZ_K(V)).
\]
where $L_\xi$ is the line bunlde corresponding to 
$\xi\in H^2(\cX,\Z)$. 

(iii)  
The pairing $(\cdot,\cdot)_{\Sol}$ on $\Sol(\cX)$ 
in (\ref{eq:pairing_Sol}) 
corresponds to the Mukai pairing on $K(\cX)$ defined by 
$(V_1,V_2)_{K(\cX)} := \chi(V_2^\vee \otimes V_1)$:  
\[
(\cZ_K(V_1),\cZ_K(V_2))_{\Sol} = (V_1,V_2)_{K(\cX)}. 
\]
In particular, the pairing 
$(\cdot,\cdot)_{\Sol(\cX)}$ restricted 
on $\Sol(\cX)_\Z$ takes values in $\Z$ by Part (b) 
of the assumption and is unimodular by Part (c). 
\end{proposition}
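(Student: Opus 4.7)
The plan is to verify each of the three properties by direct computation from the definition (\ref{eq:Psi}) of $\Psi$, factoring the $K$-group framing as $\cZ_K = \cZ_{\rm coh}\circ\Psi$ and making essential use of the identities (\ref{eq:cohfr_property}) satisfied by the cohomology framing.

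For Part (i), since $\cZ_{\rm coh}\colon H_{\rm orb}^*(\cX)\to\Sol(\cX)$ is a $\C$-linear isomorphism by construction, it suffices to show that $\Psi\otimes_\Z\C\colon K(\cX)\otimes\C\to H_{\rm orb}^*(\cX)$ is an isomorphism. This is a composition of four maps, each of which I would show to be invertible: $\tch\otimes\C$ is an isomorphism by Assumption \ref{assump:Ktheory}(a); $\inv^*$ is an involution; $(2\pi\iu)^{\deg/2}$ is diagonal with nonzero entries; and cup product with $\hGamma(T\cX)$ is invertible because its restriction to each $\cX_v$ has nonzero scalar leading term $\prod_{0\le f<1}\Gamma(1-f)^{l_{v,f}}$.

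For Part (ii), I would work componentwise on the inertia stack. Since $L_\xi^\vee|_{\cX_v}$ is a line bundle on which the stabilizer acts with the single eigenvalue $e^{-2\pi\iu f_v(\xi)}$, a short eigenspace-decomposition calculation yields
\[
\tch(L_\xi^\vee\otimes V)|_{\cX_v} = e^{-2\pi\iu f_v(\xi)}\,e^{-c_1(L_\xi)|_{\cX_v}}\,\tch(V)|_{\cX_v}.
\]
Applying $\inv^*$ replaces $f_v(\xi)$ by $f_{\inv(v)}(\xi) = 1-f_v(\xi)$ (or $0$ when $f_v(\xi)=0$), and applying $(2\pi\iu)^{\deg/2}$ converts $e^{-c_1(L_\xi)|_{\cX_v}}$ into $e^{-2\pi\iu c_1(L_\xi)|_{\cX_v}}$. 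Therefore $\Psi(L_\xi^\vee\otimes V) = \bigl(\bigoplus_v e^{-2\pi\iu\xi_0}e^{2\pi\iu f_v(\xi)}\bigr)\Psi(V)$, which is precisely the scalar by which the Galois action in (\ref{eq:cohfr_property}) multiplies $\cZ_{\rm coh}$-framed sections.

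For Part (iii), expanding the pairing via (\ref{eq:cohfr_property}) gives $(\cZ_K(V_1),\cZ_K(V_2))_\Sol = (e^{\pi\iu\rho}\Psi(V_1), e^{\pi\iu\mu}\Psi(V_2))_{\rm orb}$, and comparing with the orbifold Riemann-Roch expression (\ref{eq:orbifoldRR}) for $\chi(V_2^\vee\otimes V_1)$ reduces the claim to a pointwise characteristic-class identity on each $\cX_v$ relating the product $\hGamma(T\cX)|_{\cX_v}\cdot\inv^*\hGamma(T\cX)|_{\cX_v}$, together with the phase factors produced by $e^{\pi\iu\rho}$, $e^{\pi\iu\mu}$, and the $(2\pi\iu)^{\deg/2}$ normalizations, to $\tTd(T\cX)|_{\cX_v}$. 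I would verify this by writing everything in terms of the Chern roots $\delta_{v,f,i}$ of $T\cX|_{\cX_v}$ and invoking the reflection formula $\Gamma(z)\Gamma(1-z)=\pi/\sin\pi z$, which converts the combined Gamma factors into the sine denominators appearing in the definition of $\tTd$. The $\Z$-valuedness and unimodularity assertions in the last sentence then follow from Parts (b) and (c) of Assumption \ref{assump:Ktheory}. The hardest step is precisely this pointwise identity: one must carefully track the powers of $2\pi\iu$, the $(-1)^{\deg/2}$ sign relating $\tch(V^\vee)$ to $\inv^*\tch(V)$, and the age shifts $\iota_v$ versus $\iota_{\inv(v)}$ (linked by $\iota_v+\iota_{\inv(v)}=n-n_v$) that enter through the action of $\mu$ on the $v$-th component $H^*(\cX_v)\hookrightarrow H_{\rm orb}^*(\cX)$; once these are balanced the reflection formula yields exactly $\tTd$, and orbifold Riemann-Roch closes the argument.
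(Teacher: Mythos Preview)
Your proposal is correct and follows essentially the same approach as the paper's own proof: Part~(i) via invertibility of the constituent maps in $\Psi$, Part~(ii) by a direct componentwise check (the paper merely says ``easy to check''), and Part~(iii) by expanding $(e^{\pi\iu\rho}\Psi(V_1),e^{\pi\iu\mu}\Psi(V_2))_{\rm orb}$ in Chern roots, applying the Gamma reflection formula to produce $\tTd(T\cX)$, and concluding by orbifold Riemann--Roch. Your identification of the bookkeeping hazards in Part~(iii)---the $(2\pi\iu)$-powers, the age shifts $\iota_v$ via $\mu|_{H^*(\cX_v)}=\iota_v-\tfrac{n}{2}+\tfrac{\deg}{2}$, and the relation between $\tch(V^\vee)$ and $\inv^*\tch(V)$---matches exactly what the paper tracks.
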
 

\begin{proof} 
Because $\hGamma_\cX\cup$ and $(2\pi\iu)^{\deg/2}$ 
are invertible operators over $\C$, 
Part (a) of Assumption \ref{assump:Ktheory} 
implies (i). 
It is easy to check the second statement (ii).  
For (iii), we calculate 
\begin{align*}
(\cZ_K(V_1), \cZ_K(V_2))_{\Sol} 
& = (e^{\pi \iu \rho}\Psi(V_1), 
e^{\pi \iu\mu} \Psi(V_2))_{\rm orb} \quad 
\text{by (\ref{eq:cohfr_property})}  \\ 
= \frac{1}{(2\pi)^n} 
\sum_{v\in \sfT} &
\int_{\cX_v} 
(e^{\pi\iu\rho} \hGamma(T\cX)_{\inv(v)} 
(2\pi\iu )^{\frac{\deg}{2}} \tch(V_1)_v ) \\  
& \qquad \cup 
(e^{\pi\iu(\iota_v - \frac{n}{2} + \frac{\deg}{2})} 
\hGamma(T\cX)_v 
(2\pi\iu )^{\frac{\deg}{2}}\tch(V_2)_{\inv(v)} ) 
\quad \text{by (\ref{eq:Psi})} \\ 
 = \frac{1}{(2\pi)^n} \sum_{v\in \sfT} & 
(2\pi\iu)^{\dim \cX_v} \times \\ 
\int_{\cX_v}  
\prod_{f,i} 
\Gamma(1-\ov{f}+\tfrac{\delta_{v,f,i}}{2\pi\iu}) & 
\Gamma(1-f- \tfrac{\delta_{v,f,i}}{2\pi\iu})
\cdot e^{\frac{\rho}{2}} \tch(V_1)_v \cdot
e^{\pi\iu(\iota_v-\frac{n}{2}+\frac{\deg}{2})}\tch(V_2)_{\inv(v)},  
\end{align*} 
where $\alpha_v$ denotes 
the $v$-component of $\alpha\in H_{\rm orb}^*(\cX)$. 
We used the fact that 
$\mu|_{H^*(\cX_v)} = \iota_v -\frac{n}{2} + \frac{\deg}{2}$ 
in the second step 
and that 
$\int_{\cX_v}((2\pi\iu)^{\frac{\deg}{2}} \alpha) = 
(2\pi\iu)^{\dim \cX_v} \int_{\cX_v} \alpha$ 
in the third step. 
We also used the fact that 
$\{\delta_{\inv(v),f,i} \}_{i}
= \{\delta_{v,\ov{f},i}\}_{i}$, 
where 
\[
\ov{f} := 
\begin{cases} 
1-f & \text{if $0<f<1$,} \\
0   & \text{if $f=0$.} 
\end{cases} 
\]
Using $\Gamma(1-z)\Gamma(z) = \pi/\sin(\pi z)$ 
and $\sum_{f,i}\delta_{v,f,i} =\pr^*\rho|_{\cX_v}$,   
we calculate    
\[
\prod_{f,i} 
\Gamma(1-\ov{f}+\tfrac{\delta_{v,f,i}}{2\pi\iu}) 
\Gamma(1-f-\tfrac{\delta_{v,f,i}}{2\pi\iu})
= (2\pi\iu)^{n-\dim \cX_v} 
e^{-\frac{\rho}{2}} e^{-\pi\iu \iota_v} \tTd(T\cX)_v.  
\]
The conclusion follows from 
the orbifold-Riemann-Roch (\ref{eq:orbifoldRR}). 
\end{proof} 

The lattice $\Sol(\cX)_\Z\subset \Sol(\cX)$ 
defines a $\Z$-local system $F_\Z \to U\times \C^*$ 
underlying the flat vector bundle $(F|_{U\times \C^*}, \nabla)$. 
Because $\Sol(\cX)_\Z$ is invariant under 
the Galois action, the local system $F_\Z\to U\times \C^*$ 
descends to a local system over 
$(U/H^2(\cX,\Z))\times \C^*$.

\begin{remark}
When we consider the \emph{algebraic part} 
of quantum cohomology, 
we can instead use the $K$-group 
of algebraic vector bundles or coherent sheaves
to define an integral structure.  
Let $A^*(\cX)_\C$ denote the Chow ring of $\cX$ over $\C$. 
We set $\HH^*(\cX_v) := \Image(A^*(\cX_v)_\C \to H^*(\cX_v))$ and 
define $\HH_{\rm orb}^*(\cX) := \bigoplus_{v\in \sfT} \HH^*(\cX_v)$. 
Under Assumption \ref{assump:converge},  
the algebraic quantum $D$-module is defined to be 
the holomorphic vector bundle 
\[
\HH^*_{\rm orb}(\cX) \times (U'\times \C) \to (U'\times \C), \quad 
U' = U \cap \HH^*_{\rm orb}(\cX)  
\]
endowed with the restriction of 
the Dubrovin connection to $U'$ and 
the orbifold Poincar\'{e} pairing.  
The Galois action on it is given by an element of $\Pic(\cX)$. 
Here we used the fact that 
the quantum product among classes 
in $\HH_{\rm orb}^*(\cX)$ again belongs to $\HH_{\rm orb}^*(\cX)$; 
this follows from the algebraic construction 
of orbifold Gromov-Witten theory \cite{AGV}.  
When we assume Hodge conjecture for all $\cX_v$, 
each $\HH^*(\cX_v)$ has Poincar\'{e} duality and
the orbifold Poincar\'{e} pairing is non-degenerate 
on $\HH_{\rm orb}^*(\cX)$. 
Definition \ref{def:A-model_int} 
applies to this algebraic quantum $D$-module 
with $K(\cX)$ being the algebraic $K$-group.  
\qed 
\end{remark} 

We introduce the \emph{quantum cohomology central charge} 
of $V\in K(\cX)$ associated to the $\hGamma$-class 
to be the function: 
\begin{equation}
\label{eq:qc_centralcharge}
Z(V)(\tau,z) := c(z) 
\int_{\cX} \cZ_K(V)(\tau,z) 
= c(z) (\unit,\cZ_K(V)(\tau,z))_{\rm orb} 
\end{equation} 
where $c(z) = (2\pi z)^{n/2}/(2\pi\iu)^n$ 
is a normalization factor, \emph{cf.}  
Hosono's central charge formula 
\cite[Definition 2.1]{hosono} for a Calabi-Yau $\cX$ 
given in terms of periods of the mirror.  %aftsbm made precise
For Calabi-Yau 3-folds, the author hopes 
that our $Z(V)$ gives the physics central 
charge of the B-type $D$-brane in the class $V$. 
This plays an important 
role in the Douglas-Bridgeland stability 
on derived categories \cite{douglas, bridgeland}.

\subsection{Givental's symplectic space, 
\seminf VHS and $J$-function} 
\label{subsec:Jfunct} 
Givental's symplectic space 
\cite{givental-quadratic, coates-givental} %aftsbm added refs 
is the loop space on $H_{\rm orb}^*(\cX)$ 
with a loop parameter $z$. 
This is identified with the space of sections 
of $QDM(\cX)$ which are flat only in the $\tau$-direction. 
In the Givental space, $QDM(\cX)$ 
can be realized as moving semi-infinite subspaces. 
This is an example of \emph{semi-infinite variation 
of Hodge structure} (\seminf VHS for short) 
due to Barannikov \cite{barannikov-qpI, barannikov-proj}. 
The $J$-function is the image of the unit section $\unit$ 
in this realization. 
The notion of \seminf VHS will be used only 
in Section \ref{sec:integralperiods}.

\begin{definition} 
Let $\cO(\C^*)$ denote the space of holomorphic functions 
on $\C^*$ with the co-ordinate $z$. 
The \emph{Givental space} $\cH$ 
is defined to be the free $\cO(\C^*)$-module: 
\begin{equation}
\label{eq:Giventalsp}
\cH = H_{\rm orb}^*(\cX) \otimes \cO(\C^*) 
\end{equation}
endowed with the pairing 
$(\cdot,\cdot)_{\cH} \colon \cH \times \cH \to \cO(\C^*)$ 
\begin{equation}
\label{eq:pairing_cH}
(\alpha(z), \beta(z))_{\cH} := (\alpha(-z), \beta(z))_{\rm orb}.    
\end{equation} 
and the symplectic form 
$\Omega(\alpha(z),\beta(z)) 
= \Res_{z=0} dz (\alpha(z),\beta(z))_{\cH}$. 
Using the fundamental solution $L(\tau,z)$, we 
identify $\cH$ with the space of sections of 
$QDM(\cX)$ which are flat in the $\tau$-direction.  
\begin{equation}
\label{eq:Giv_flat}
\cH\ni \alpha \longmapsto L(\tau,z)\alpha 
\in \Gamma(U\times \C^*,\cO(F)).  
\end{equation} 
Note that under this identification, 
$(\cdot,\cdot)_{\cH}$ corresponds to  
$(\cdot,\cdot)_F$ by (\ref{eq:unitarity_L}). 
The Galois action on flat sections 
(\ref{eq:Galois_Sol}) induces 
a map $G^\cH(\xi)\colon \cH\to \cH$: 
\begin{equation}
\label{eq:GaloisH}
G^{\cH}(\xi)(\tau_0 \oplus \bigoplus_{v\in \sfT'} \tau_v) 
= e^{-2\pi \iu \xi_0/z}\tau_0 \oplus \bigoplus_{v\in \sfT'} 
e^{-2\pi \iu \xi_0/z} e^{2\pi \iu f_v(\xi)} \tau_v,  
\end{equation} 
by (\ref{eq:Galois_L}). 
Here we used the decomposition 
$\cH^{\cX} = \bigoplus_{v\in \sfT} 
H^*( {\cX_v})\otimes \cO(\C^*)$. 
\qed 
\end{definition} 

We introduce the \seminf VHS 
associated to quantum cohomology.   
Let $\pi\colon U\times \C \to U$ be the 
natural projection. 
Under the identification (\ref{eq:Giv_flat}), 
the fiber $(\pi_*\cO(F))_\tau$ at $\tau\in U$ 
is identified with 
the semi-infinite subspace $\F_\tau$ of $\cH$: 
\[
\F_\tau := \J_\tau  
(H_{\rm orb}^*(\cX)\otimes \cO(\C)), 
\quad 
\J_\tau  := L(\tau,z)^{-1}. 
\] 
We call $\F_\tau$ the \emph{semi-infinite Hodge structure}. 
This satisfies the following properties: 
\begin{itemize}
\item $X \F_\tau  \subset z^{-1} \F_\tau$ 
for a tangent vector $X\in T_\tau U$;  
\item $\F_\tau$ is isotropic with respect 
to $\Omega$, \emph{i.e.} 
$(\F_\tau,\F_\tau)_{\cH} \subset \cO(\C)$;   
\item $(2 E + \nabla_{z\partial_z}) \F_\tau \subset \F_\tau$.   
\end{itemize} 
Here we regard $\tau \mapsto \F_\tau$ 
as a holomorphic map from $U$ to the Segal-Wilson Grassmannian 
(see \emph{e.g.} \cite{pressley-segal}).  
Also $\nabla_{z\partial_z}$ denotes 
the operator on $\cH$ induced from $\nabla_{z\partial_z}$. 
We call the family $\tau\mapsto \F_\tau$ 
\emph{(a moving subspace realization of) a \seminf VHS}. 
The first property is an analogue of 
Griffith transversality and the second
is the Hodge-Riemann bilinear relation. 
We refer the reader to \cite[Section 2]{CIT:I}, 
\cite[Section 2]{iritani-realint-preprint} 
for the details. 

\begin{remark} 
The \seminf VHS defines a 
Lagrangian cone $\cL$ in $\cH$: 
\begin{equation}
\label{eq:Giventalcone}
\cL := \bigcup_{\tau\in U} z \F_\tau.  
\end{equation} 
This plays an important role in Givental's theory. 
The cone $\cL$ can be written as the graph 
of the differential $d\cF_0$ of the genus 
zero descendant potential $\cF_0$ (with a dilaton shift). 
We refer the reader to \cite{coates-givental} 
for this connection.  
\end{remark} 

Using the fact that $L(\tau,z)^{-1}$ 
is the adjoint of $L(\tau,-z)$ with respect to 
the orbifold Poincar\'{e} pairing (see (\ref{eq:unitarity_L})),  
we can calculate the embedding $\J_\tau = 
L(\tau,z)^{-1} \colon 
(\pi_*\cO(F))_\tau \hookrightarrow \cH$ 
explicitly as follows: 
\begin{align}
\label{eq:Linv}
\J_\tau \alpha = e^{\tau_{0,2}/z}
\biggl(\alpha + 
\sum_{\substack{(d,l)\neq (0,0) \\ d\in \Eff_\cX}} \sum_{i=1}^N 
\frac{1}{l!} 
\corr{\alpha,\tau',\dots,\tau', 
\frac{\phi_i }{z-\psi}}_{0,l+2,d}^\cX 
e^{\pair{\tau_{0,2}}{d}} \phi^i\biggr).   
\end{align} 

\begin{definition}
The \emph{$J$-function} 
\cite{givental-mirrorthm-toric, cox-katz, coates-givental} 
%aftsbm added refs 
is the image of the unit section $\unit$ under the embedding 
$\J_\tau \colon (\pi_*\cO(F))_\tau \hookrightarrow \cH$: 
$J(\tau,z) := \J_\tau \unit = L(\tau,z)^{-1} \unit$. 
Because the unit section $\unit$ is invariant 
under the Galois action, we have 
\begin{equation}
\label{eq:Galois_J}
J(G(\xi)\tau,z) = G^{\cH}(\xi) J(\tau,z)    
\end{equation} 
which follows from (\ref{eq:Galois_L}). 
\qed  
\end{definition} 

The $J$-function is the unit section $\unit$ 
expressed in the $\tau$-flat frame $L(\tau,z)$. 
The \emph{$H$-function} $H_K(\tau,z)$ is defined 
to be the $K(\cX)\otimes \C$-valued function which 
expresses $\unit$ in terms of the $K$-group 
framing (\ref{eq:Psi}):     
\begin{equation}
\label{eq:H-funct}
H_K(\tau,z) := c(e^{-\pi\iu} z)\cdot  
\Psi^{-1}(z^{-\rho}z^{\mu} L(\tau,z)^{-1} \unit),      
\end{equation} 
\emph{i.e.} $c(e^{-\pi\iu} z) \unit = \cZ_K(H_K(\tau,z))(\tau,z)$. 
Here $c(e^{-\pi\iu}z) := (2\pi z)^{n/2}/(-2\pi)^n$ 
is a normalization factor. 
We also use $H^*(I\cX)$-valued function 
$H(\tau,z) := \tch(H_K(\tau,z))$. 
The quantum cohomology central charge 
(\ref{eq:qc_centralcharge}) can be written as 
(\emph{cf.} \cite[Eqn. (2.3)]{hosono}):  
\begin{equation}
\label{eq:cc_byH} 
Z(V)(\tau,z) 
=\chi(H_K(\tau,e^{\pi\iu}z)\otimes V^\vee)  
= \int_{I\cX} H(\tau,e^{\pi\iu}z) \cup \tch(V^\vee) 
\cup \tTd(T\cX).  
\end{equation} 
\begin{proof} 
We have 
$Z(V)(\tau,z)  
=(\cZ_K(H_K(\tau,e^{\pi\iu}z))(\tau,e^{\pi\iu}z), 
\cZ_K(V)(\tau,z))_{\rm orb}$. 
The formulas follows from this,  
Proposition \ref{prop:A-model_int}, (iii)  
and orbifold Riemann-Roch (\ref{eq:orbifoldRR}). 
\end{proof} 

\section{Landau-Ginzburg mirror of toric orbifolds}
\label{sec:LGmodel}

In this section, 
we describe the Landau-Ginzburg (LG) models 
which are mirror to compact toric orbifolds. 
The LG mirrors 
for toric manifolds have been proposed by Givental 
\cite{givental-ICM, givental-mirrorthm-toric} 
and Hori-Vafa \cite{hori-vafa} 
and they are easily adapted to the case of toric orbifolds. 
We also construct a meromorphic flat connection 
(B-model $D$-module) over 
the product of $\C$ with 
the parameter space $\cM$ of the LG models.  
The B-model $D$-module has been studied in singularity 
theory as the Brieskorn lattice. 
We give an analytical construction based on 
oscillatory integrals. 
See Sabbah \cite{sabbah-hypergeometric} 
for an algebraic construction (for a tame function 
on an algebraic variety) 
using the Fourier-Laplace transform 
of the algebraic Gau\ss-Manin system 
(see also \cite{saitoM, douai-sabbah-I}).

\subsection{Toric orbifolds} 
\label{subsec:toricorbifolds} 
To fix the notation, we give the definition 
of toric orbifolds and collect several facts.  
By a toric orbifold, we mean 
a toric Deligne-Mumford stack in the sense of 
Borisov-Chen-Smith \cite{borisov-chen-smith}. 
We only deal with a compact toric orbifold 
with a projective coarse moduli space 
and define a toric orbifold as a quotient of 
$\C^m$ by an algebraic torus $\T \cong (\C^*)^r$. 
The basic references for toric varieties (orbifolds)  
are made to \cite{oda, fulton, audin, borisov-chen-smith}. 

\subsubsection{Definition}  
\label{subsubsec:def_toricorbifolds}
We begin with the following data: 
\begin{itemize}
\item an $r$-dimensional algebraic torus $\T\cong (\C^*)^r$; we set 
$\bL:=\Hom(\C^*,\T)$; 
\item $m$ elements $D_1,\dots,D_m \in \bL^\vee = \Hom(\T,\C^*)$ 
such that $\bL^\vee\otimes \R= \sum_{i=1}^m \R D_i$; 
\item a vector $\eta\in \bL^\vee \otimes \R$.  
\end{itemize} 
The elements $D_1,\dots, D_m$ define a 
homomorphism $\T\rightarrow (\C^*)^m$. 
Let $\T$ act on $\C^m$ via this homomorphism. 
The vector $\eta$ defines a stability condition 
of this torus action. 
Set 
\[
\cA := \{ I\subset \{1,\dots,m\} \;;\; 
\sum_{i\in I} \R_{>0} D_i \ni \eta\}.   
\]
A toric orbifold $\cX$ is defined to be the quotient stack   
\[
\cX = [\cU_\eta/\T], \quad 
\cU_\eta := \C^m \setminus \bigcup_{I\notin \cA} \C^I,  
\]
where
$\C^I := \{(z_1,\dots,z_m)\in \C^m \;;\; z_i= 0 \text{ for } i\notin I\}$.
Under the following conditions, $\cX$ is a smooth Deligne-Mumford 
stack with a projective coarse moduli space:   
\begin{itemize}
\item[(A)] $\{1,\dots,m\} \in \cA$. 
\item[(B)] $\sum_{i\in I} \R D_i = \bL^\vee \otimes \R$ for $I\in \cA$. 
\item[(C)] $\{(c_1,\dots,c_m)\in \R_{\ge 0}^m \;;\; 
\sum_{i=1}^m c_i D_i =0\} = \{0\}$. 
\end{itemize} 
The conditions (A), (B) and (C) ensure 
that $\cX$ is non-empty, that the stabilizer is finite 
and that $\cX$ is compact respectively. 
The generic stabilizer of $\cX$ is given by the kernel of 
$\T \to (\C^*)^m$ and $\dim_\C\cX=n:=m-r$.

We can also construct $\cX$ as a symplectic quotient as follows 
(see also \cite{audin}).  
Let $\T_\R$ denote the maximal compact subgroup of $\T$ 
isomorphic to $(S^1)^r$. 
Let $\mathfrak{h} \colon \C^m \to \bL^\vee\otimes\R$ be the moment 
map for the $\T_\R$-action on $\C^m$: 
\[
\mathfrak{h}(z_1,\dots,z_m) = \sum_{i=1}^m |z_i|^2 D_i.  
\]
The $\T_\R$-action on the level set $\mathfrak{h}^{-1}(\eta)$ 
has only finite stabilizers and we have an isomorphism of 
symplectic orbifolds: 
\begin{equation}
\label{eq:X_symplecticquot}
\cX \cong \mathfrak{h}^{-1}(\eta) / \T_\R.  
\end{equation} 

By renumbering the indices if necessary, we can assume that 
\[
\{1,\dots,m\} \setminus \{i\} \in \cA \quad \text{ if and only if } 
\quad 1\le i\le m' 
\]
where $m'$ is less than or equal to $m$. 
We can easily check that 
$I\supset \{m'+1,\dots,m\}$ for any $I\in \cA$ 
and $D_{m'+1},\dots,D_m$ are linearly independent 
over $\R$. 
The elements $D_1,\dots,D_m$ define the following exact sequence 
\begin{equation}
\label{eq:exactsequence_toric}
\begin{CD}
0 @>>> \bL @>{(D_1,\dots,D_m)}>> \Z^m @>{\beta}>> \bN @>>> 0,  
\end{CD}  
\end{equation} 
where $\bN$ is a finitely generated abelian group. 
By the long exact sequence associated with 
the functor $\Tor_\bullet(-,\C^*)$, 
we find that the torsion part 
$\bN_{\rm tor}=\Tor_1(\bN,\C^*)$ of $\bN$ is isomorphic to  
the generic stabilizer $\Ker(\T\to (\C^*)^m)$. 
The free part 
$\bN_{\rm free}=\bN/\bN_{\rm tor}$ is of rank $n=\dim_\C\cX$. 
Let $b_1,\dots,b_m$ be the images in $\bN$ 
of the standard basis of $\Z^m$ under $\beta$. 
The {\it stacky fan} of $\cX$, in the sense of 
Borisov-Chen-Smith \cite{borisov-chen-smith}, is given by 
the following data: 
\begin{itemize}
\item vectors $b_1,\dots, b_{m'}$ in $\bN$;   
\item a complete simplicial fan $\Sigma$ in $\bN\otimes \R$ such that 

(i) the set of one dimensional cones 
is $\{\R_{\ge 0} b_1, \dots, \R_{\ge 0} b_{m'}\}$; 
%aftsbm m' 

(ii) 
$\sigma_I = \sum_{i\notin I} \R_{\ge 0} b_i$ defines a cone 
of $\Sigma$ if and only if $I \in \cA$. 
\end{itemize} 
The toric variety defined by the fan $\Sigma$ is 
the coarse moduli space of $\cX$. 
The conditions (B) and (C) correspond to that $\Sigma$ is simplicial 
and that $\Sigma$ is complete, 
\emph{i.e.} the union of all cones in $\Sigma$ 
is $\bN\otimes \R$. 
An element of $\cA$ may be referred to 
as an ``anticone".

\begin{remark}
Borisov-Chen-Smith \cite{borisov-chen-smith} 
defined a toric Deligne-Mumford stack 
starting from data of a stacky fan. 
Our construction can give every toric Deligne-Mumford stack 
in their sense which has a projective coarse moduli space. 
Note that the vectors $b_{m'+1},\dots, b_{m}$ do not appear 
as data of a stacky fan. 
The stacky fan together with these extra vectors %aftsbm
gives an \emph{extended stacky fan} in the sense of %aftsbm
Jiang \cite{jiang-toricstackbundle}.  %aftsbm 
When we start from a stacky fan, our initial 
data can be given as the kernel of the map $\beta$ 
by \emph{choosing} extra vectors 
$b_{m'+1},\dots, b_m \in \bN$ such that  
$\beta$ is surjective. 
These redundant data allows us to 
define $\cX$ as a quotient 
by a \emph{connected} torus $\T$. 
\end{remark}

\subsubsection{K\"{a}hler cone and a choice of a nef basis}
\label{subsubsec:KC_nefbasis}
Since every element of $\cA$ contains $\{m'+1,\dots,m\}$, 
it is convenient to put 
\[
\cA' = \{ I'\subset \{1,\dots,m'\}\;
;\; I'\cup\{m'+1,\dots,m\}\in \cA\}. 
\]
We can easily see that $\cU_\eta$ factors as 
\[
\cU_\eta = \cU'_\eta \times (\C^*)^{m-m'}, \quad 
\cU'_\eta = \C^{m'} \setminus \bigcup_{I'\notin \cA'} \C^{I'}. 
\]
Thus we can write 
\[
\cX = [\cU'_\eta/\G], \quad 
\G:=\Ker(\T\to (\C^*)^m \to (\C^*)^{\{m'+1,\dots,m\}}).
\] 
Note that $\G$ is isomorphic to $(\C^*)^{r'}$ 
times a finite abelian group 
for $r':=r-(m-m')$.  
Every character $\xi \colon \G \to \C^*$ of $\G$ 
defines an orbifold line bundle 
$L_\xi:=\cU'_\eta\times_{\G,\xi} \C \to \cX$. 
Under this correspondence between $\xi$ and $L_\xi$, 
the Picard group $\Pic(\cX)$ is identified 
with the character group $\Hom(\G,\C^*)$ 
and also with $H^2(\cX,\Z)$ (via $c_1$): 
\[
\Pic(\cX) \cong \Hom(\G,\C^*) 
\cong \bL^\vee/\textstyle\sum_{i=m'+1}^m \Z D_i 
\cong H^2(\cX,\Z).  
\]
The image $\ov{D}_i$ of $D_i$ in $H^2(\cX,\R)$ 
is the Poincar\'{e} dual of 
the toric divisor $\{z_i=0\}\subset \cX$ for 
$1\le i\le m'$. 
Over rational numbers, we have 
\begin{align*}
H^2(\cX,\Q) & \cong  
\bL^\vee\otimes \Q/\textstyle \sum_{i=m'+1}^m \Q D_i,  \\
H_2(\cX,\Q) & \cong
\Ker((D_{m'+1},\dots, D_m)\colon \bL\otimes \Q \to \Q^{m-m'}) 
\subset \bL\otimes \Q. 
\end{align*} 
Now we introduce a canonical splitting (over $\Q$) 
of the surjection $\bL^\vee \otimes \Q \to H^2(\cX,\Q)$.   
For $m'<j\le m$,  $b_j$ is contained in some cone in $\Sigma$ 
since $\Sigma$ is complete. Namely, 
\begin{equation}
\label{eq:bjcontainedinacone}
b_j = \sum_{i\notin I_j} c_{ji} b_i, \quad \text{ in } \bN\otimes \Q, 
\quad c_{ji}\ge 0, \quad \exists I_j \in \cA, 
\end{equation} 
where $I_j$ is the ``anticone" of  
the cone containing $b_j$.  
By the exact sequence 
(\ref{eq:exactsequence_toric}) tensored with $\Q$, 
we can find $D^\vee_j \in \bL\otimes \Q$ such that 
\[
\pair{D_i}{D^\vee_j} = 
\begin{cases} 
1 \quad & i=j \\
-c_{ji} \quad & i\notin I_j \\
0 \quad & i\in I_j\setminus \{j\}.  
\end{cases}
\]
Note that $D^\vee_j$ is uniquely determined by these conditions. 
These vectors $D^\vee_j$ define a decomposition 
\begin{align}
\label{eq:decomp_LQ} 
\bL^\vee \otimes \Q  &= 
\Ker((D_{m'+1}^\vee,\dots,D_{m}^\vee)\colon 
\bL^\vee\otimes \Q\to \Q^{m-m'}) 
\oplus \bigoplus_{j=m'+1}^m \Q D_j.  
\end{align} 
The first factor $\Ker(D_{m'+1}^\vee,\dots,D_{m}^\vee)$ 
is identified with $H^2(\cX,\Q)$ under the surjection 
$\bL^\vee\otimes \Q \to H^2(\cX,\Q)$. 
Via this decomposition, we henceforth 
regard $H^2(\cX,\Q)$ as a subspace of $\bL^\vee\otimes \Q$. 
We define an {\it extended K\"{a}hler cone} $\tC_\cX$ as  
\[
\tC_\cX  = \bigcap_{I\in \cA} (\sum_{i\in I} \R_{>0} D_i) 
\subset \bL^\vee \otimes \R. 
\] 
Then $\eta\in \tC_\cX$ and the image of $\eta$ in $H^2(\cX,\R)$ 
is the class of the reduced symplectic form. 
The set $\tC_\cX$ is the connected component of the set of 
regular values of the moment map 
$\mathfrak{h}\colon \C^m \to \bL^\vee\otimes \R$, 
which contains $\eta$. 
The extended K\"{a}hler cone depends not only on $\cX$ 
but also on the choice of our initial data. 
The genuine {\it K\"{a}hler cone} $C_\cX$ of $\cX$ is 
the image of $\tC_\cX$ 
under $\bL^\vee \otimes \R \to H^2(\cX,\R)$: 
\[
C_\cX = \bigcap_{I'\in \cA'} (\sum_{i\in I'} \R_{>0} \ov{D}_i) \subset 
H^2(\cX,\R) = H^{1,1}(\cX,\R) 
\]
where $\ov{D}_i$ is the image of $D_i$ in $H^2(\cX,\R)$.  
The next lemma means that the extended K\"{a}hler cone also ``splits". 
\begin{lemma}
\label{lem:decomp_extendedKaehler} 
$\tC_\cX = C_\cX + \sum_{j=m'+1}^m \R_{>0} D_j$ 
in $\bL^\vee \otimes \R \cong 
H^2(\cX,\R)\oplus \bigoplus_{j=m'+1}^m \R D_j$. 
\end{lemma}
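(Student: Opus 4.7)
The plan is to exploit the decomposition (\ref{eq:decomp_LQ}) together with the dual vectors $D_j^\vee$ for $j>m'$. Setting $V:=\bigoplus_{j>m'}\R D_j$ and $V_{>0}:=\sum_{j>m'}\R_{>0} D_j$, we have $\bL^\vee\otimes\R=H^2(\cX,\R)\oplus V$, and the functionals $D_j^\vee$ ($j>m'$) vanish on $H^2(\cX,\R)$ and form the basis of $V^\vee$ dual to $\{D_j\}_{j>m'}$ (since every $I\in\cA$ contains $\{m'+1,\dots,m\}$, the relation $\pair{D_k}{D_j^\vee}=0$ for $k\in\{m'+1,\dots,m\}\setminus\{j\}$ drops out of the defining conditions of $D_j^\vee$). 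The crucial observation I would single out is that $\pair{D_i}{D_j^\vee}\le 0$ for every $i\le m'$ and $j>m'$: this is immediate from $c_{ji}\ge 0$ in (\ref{eq:bjcontainedinacone}) together with the vanishing when $i\in I_j$.

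For the inclusion $\tC_\cX\subseteq C_\cX+V_{>0}$, I would take $\eta_0\in\tC_\cX$ and split $\eta_0=\eta_0'+v_0$ with $\eta_0'\in H^2(\cX,\R)$ and $v_0\in V$. To show $v_0\in V_{>0}$, for each $j>m'$ I apply $D_j^\vee$ to the positive expansion $\eta_0=\sum_{i\in I_j}c_i' D_i$ with $c_i'>0$ coming from $I_j\in\cA$: since $\pair{D_i}{D_j^\vee}=0$ for $i\in I_j\setminus\{j\}$ and $\pair{D_j}{D_j^\vee}=1$, only the term $c_j'>0$ survives, so $\pair{\eta_0}{D_j^\vee}>0$. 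To show $\eta_0'\in C_\cX$, for each $I'\in\cA'$ I use the positive expansion of $\eta_0$ along $I=I'\cup\{m'+1,\dots,m\}\in\cA$ and project to $H^2(\cX,\R)$, where $\ov{D}_j=0$ for $j>m'$, leaving $\eta_0'=\sum_{i\in I'}c_i\ov{D}_i\in \sum_{i\in I'}\R_{>0}\ov{D}_i$.

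The reverse inclusion is the main point. Given $\eta=\eta'+v$ with $\eta'\in C_\cX$ and $v=\sum_{j>m'}s_j D_j\in V_{>0}$, and fixing $I=I'\cup\{m'+1,\dots,m\}\in\cA$, I choose $c_i>0$ with $\eta'=\sum_{i\in I'}c_i\ov{D}_i$ in $H^2(\cX,\R)$, lift to $\bL^\vee\otimes\R$, and write the $V$-part of the lift as $w=\sum_{j>m'}w_jD_j$, where $w_j=\sum_{i\in I'}c_i\pair{D_i}{D_j^\vee}$. By the sign observation, $w_j\le 0$, hence $s_j-w_j>0$, and $\eta=\sum_{i\in I'}c_i D_i+\sum_{j>m'}(s_j-w_j)D_j$ is a positive combination of $\{D_i\}_{i\in I}$. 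Since $I\in\cA$ was arbitrary, $\eta\in\tC_\cX$.

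The main obstacle is precisely in the reverse inclusion: a priori, adding an arbitrarily small $v\in V_{>0}$ to $\eta'$ cannot be expected to overwhelm the $V$-component that appears when one lifts a positive $H^2(\cX,\R)$-expansion of $\eta'$ to $\bL^\vee\otimes\R$. The non-positivity $\pair{D_i}{D_j^\vee}\le 0$ — an avatar of the combinatorial fact that $b_j$ with $j>m'$ expands with nonnegative coefficients in terms of the rays of its containing cone — is exactly what forces that spurious $V$-component to contribute nonpositive coefficients, so arbitrarily small positive $v$ suffices.
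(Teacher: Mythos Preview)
Your proof is correct and follows essentially the same approach as the paper: both rely on the key sign observation $\pair{D_i}{D_j^\vee}\le 0$ for $i\le m'$, $j>m'$ (equivalently $\ov{D}_i=D_i+\sum_{j>m'}c_{ji}D_j$ with $c_{ji}\ge 0$), and both use the anticone $I_j\in\cA$ to show $\tC_\cX\subset\{D_j^\vee>0\}$. The paper packages the reverse inclusion as a cone identity $\sum_{i\in I'}\R_{>0}\ov{D}_i+\sum_{j>m'}\R_{>0}D_j=\sum_{k\in I}\R_{>0}D_k\cap\bigcap_{j>m'}\{D_j^\vee>0\}$ and then intersects over $I'\in\cA'$, whereas you verify both inclusions directly element by element; this is only a cosmetic difference in organization.
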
 
\begin{proof}
First note that for $1\le i\le m'$, 
$\ov{D}_i = D_i + \sum_{j>m'} c_{ji} D_j$, 
where $c_{ji} = -\pair{D_i}{D_j^\vee}\ge 0$. 
Take $I'\in \cA'$ and put $I=I'\cup \{m'+1,\dots,m\}$. 
It is easy to check that 
\[
\sum_{i\in I'} \R_{>0} \ov{D}_i + \sum_{j=m'+1}^{m} \R_{>0}D_j 
 = \sum_{k\in I} \R_{>0} D_k \cap \bigcap_{j=m'+1}^m 
\{D_j^\vee>0\},  
\]
where we regard $D_j^\vee$ 
as a linear function on $\bL^\vee\otimes \R$. 
Thus $C_\cX+\sum_{j>m'} \R_{>0} D_j = \tC_\cX \cap 
\bigcap_{j=m'+1}^m \{D_j^\vee>0\}$. 
For $j>m'$, take $I_j\in \cA$ appearing in 
(\ref{eq:bjcontainedinacone}). 
Then $\tC_\cX \subset \sum_{k\in I_j} \R_{>0} D_k\subset 
\{D_j^\vee>0\}$. The conclusion follows. 
\end{proof} 

We choose an integral basis 
$\{p_1,\dots,p_r\}$ 
of $\bL^\vee$ such that 
$p_a$ is in the closure $\cl(\tC_\cX)$ of 
$\tC_\cX$ for all $a$ 
and $p_{r'+1},\dots,p_r$ are in $\sum_{i=m'+1}^m \R_{\ge 0} D_i$. 
Since the decomposition (\ref{eq:decomp_LQ}) 
is defined over $\Q$, it is not always possible to choose 
$p_1,\dots,p_{r'}$ from $\cl(C_\cX)$. 
The images $\ov{p}_1,\dots, \ov{p}_{r'}$ 
of $p_1,\dots,p_{r'}$ 
in $H^{2}(\cX,\R)$ are nef and those of 
$p_{r'+1},\dots, p_r$ are zero. 
Define a matrix $(\sfm_{ia})$ by  
\begin{equation}
\label{eq:Dprel}
D_i = \sum_{a=1}^r \sfm_{ia} p_a, 
\quad \sfm_{ia} \in \Z.  
\end{equation} 
Then the class $\ov{D}_i$ of the toric divisor 
$\{z_i=0\}$ is given by 
\begin{equation}
\label{eq:ovDprel}
\ov{D}_i = \sum_{a=1}^{r'} \sfm_{ia} \ov{p}_a.  
\end{equation} 
Then $\ov{D}_j =0$ for $m'< j\le m$. 

\subsubsection{Inertia components and orbifold cohomology}
We introduce subsets $\K$, $\K_{\rm eff}$ of $\bL\otimes \Q$ by 
\begin{align*}
\K &= \{d\in \bL\otimes \Q \;;\; \{i\in \{1,\dots,m\} 
\;;\; \pair{D_i}{d}\in \Z \}\in \cA\}, \\
\K_{\rm eff} & = \{ d\in \bL\otimes \Q \; ; \; 
\{i \in \{1,\dots,m\}  \; ; 
\; \pair{D_i}{d} \in \Z_{\ge 0} \} \in \cA\}.    
\end{align*} 
The sets $\K$ and $\K_{\rm eff}$ are not closed 
under addition, but $\bL$ acts on $\K$. 
The set $\K_{\rm eff}\cap H_2(\cX,\R)$ consists 
of classes of stable maps from 
$\Proj(1,a)$ to $\cX$ for some $a\in \N$. 
It follows from the definition that the 
$\K_{\rm eff}$ pairs with $\tC_\cX$ positively. 
The index set $\sfT$ of components of the inertia stack 
$I\cX$ is given by $\Boxop$ \cite{borisov-chen-smith}: 
\[
\Boxop := \Big \{ v\in \bN \;;\; v = \sum_{k\notin I} c_k b_k \text{ in } 
\bN \otimes \Q, \ c_k\in [0,1), \ I \in \cA  \Big \}. 
\]
For a real number $r$, let $\ceil{r}$, $\floor{r}$ 
and $\{r\}$ denote the ceiling, floor and fractional part 
of $r$ respectively. 
For $d\in \K$, we define $v(d)\in \Boxop$ by 
\[
v(d) := \sum_{i=1}^m \ceil{\pair{D_i}{d}} b_i \in \bN. 
\]
Note that $v(d)$ belongs to $\Boxop$ because 
\[
v(d) = \sum_{i=1}^m (\{-\pair{D_i}{d}\} + \pair{D_i}{d}) b_i 
= \sum_{i=1}^m \{-\pair{D_i}{d}\} b_i \quad \text{in $\bN\otimes \Q$} 
\]
by the exact sequence (\ref{eq:exactsequence_toric}).  
This map $d\mapsto v(d)$ factors through $\K \to \K/\bL$ 
and identifies $\K/\bL$ with $\Boxop$. 
The corresponding inertia component\footnote
{When $d\in \K_{\rm eff}\cap H_2(\cX,\Q)$, 
the evaluation image of a stable map 
$\Proj(1,a)\to \cX$ of degree $d$ 
at the stacky marked point $\Proj(a)\in \Proj(1,a)$ 
lies in $\cX_{\inv(v(d))}$.} $\cX_{v(d)}$ 
is defined by 
\[
\cX_{v(d)} := \{[z_1,\dots,z_m] \in \cX \; ;\; z_i = 0 \text{ if } 
\pair{D_i}{d} \notin \Z \}.  
\]
The stabilizer along $\cX_{v(d)}$ is defined to be   
$\exp(-2\pi\sqrt{-1}d) \in 
\bL\otimes \C^*\cong \T$, which acts on $\C^m$ by 
\[
(e^{-2\pi\iu\pair{D_1}{d}}, \cdots, e^{-2\pi\iu\pair{D_m}{d}}).  
\] 
It is easy to check that $\cX_{v(d)}$ depends only on 
the element $v(d) \in \Boxop$. 
The age of $\cX_{v(d)}$ is given by 
\begin{equation} 
\label{eq:ageofX_v}
\iota_{v(d)} = \sum_{i=1}^m \{ - \pair{D_i}{d} \} 
= \sum_{i=1}^{m'} \{ -\pair{D_i}{d} \}.   
\end{equation} 
The inertia stack and orbifold cohomology are given by 
\begin{equation}
\label{eq:toricorbcoh} 
I\cX = \bigsqcup_{v\in \Boxop} \cX_v, \quad 
H_{\rm orb}^i(\cX) = \bigoplus_{v\in \Boxop} H^{i - 2 \iota_v} (\cX_v).  
\end{equation} 
Denote by $\unit_v$ the unit class of $H^*(\cX_v)$. 
Each inertia component $\cX_v$ is again a toric orbifold 
and its cohomology ring is generated by the 
degree two classes $\ov{p}_1,\dots,\ov{p}_{r'}$: 
\begin{align}
\label{eq:coh_presentation} 
\begin{split} 
&H^*(\cX_{v(d)}) = 
\C[\ov{p}_1,\dots,\ov{p}_{r'}] \unit_v 
\cong 
\C[\ov{p}_1,\dots,\ov{p}_{r'}]/\frJ_{v(d)}, \\ 
& \text{where} \quad 
\frJ_{v(d)} 
:= \left\langle \textstyle\prod_{i\in I} \ov{D}_i\;;\;  
\{1\le i \le m\;;\; \pair{D_i}{d}\in \Z\} 
\setminus I \notin \cA \right\rangle.  
\end{split} 
\end{align} 
Here we regard 
$\ov{D}_i$ as a linear form  (\ref{eq:ovDprel}) 
in $\ov{p}_a$. 
For $\xi\in \bL^\vee$, let $[\xi]$ be the image of $\xi$ 
in $\bL^\vee/\sum_{j=m'+1}^m \Z D_j \cong H^2(\cX,\Z)$. 
The age $f_v(\xi) = f_v([\xi]) \in [0,1)$ of the line bundle 
$L_\xi$ (see Section \ref{subsec:QuantumDmod}) is given by 
\begin{equation}
\label{eq:toric_fv}
f_{v(d)}(\xi) = \{-\pair{\xi}{d}\}, \quad d\in \K. 
\end{equation}

\subsubsection{Weak Fano condition} 
\label{subsubsec:weakFano} 
The first Chern class $\rho=c_1(T\cX)\in H^2(\cX,\Q)$ of 
$\cX$ is the image of the vector $\hrho \in \bL^\vee$: 
\[
\hrho := D_1 +\cdots + D_m = \sum_{a=1}^r \rho_a p_a, \quad 
\rho_a := \sum_{i=1}^m \sfm_{ia}.  
\] 
We call $\cX$ \emph{weak Fano} if $\rho$ is in the closure $\cl(C_\cX)$ 
of the K\"{a}hler cone $C_\cX$. 
We shall need a little stronger condition 
$\hrho\in \cl(\tC_\cX)$. 
This condition $\hrho \in\cl(\tC_\cX)$ 
depends not only on $\cX$ but also 
on our initial data in Section 
\ref{subsubsec:def_toricorbifolds}, \emph{i.e.} 
the choice of the vectors $b_{m'+1},\dots,b_m\in \bN$.  
\begin{lemma}
\label{lem:weakFano} 
We have $\hrho \in \cl(\tC_\cX)$ if and only if 
$\rho \in \cl(C_\cX)$ (i.e. $\cX$ is weak Fano) 
and $\age(b_j) := \sum_{i\notin I_j} c_{ji} \le 1$ for all $j>m'$.
If $b_j\in \Boxop$, $\age(b_j)$ coincides 
with $\iota_{b_j}$ in (\ref{eq:ageofX_v});  
See (\ref{eq:bjcontainedinacone}) for the definition 
of $I_j$ and $c_{ji}$.  
\end{lemma}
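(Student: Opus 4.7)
The plan is to reduce the ``if and only if'' to an explicit computation of the coefficients in the canonical decomposition of $\hrho$ provided by (\ref{eq:decomp_LQ}), using the dual vectors $D_j^\vee$. The key input is Lemma \ref{lem:decomp_extendedKaehler}, which says $\cl(\tC_\cX) = \cl(C_\cX) + \sum_{j=m'+1}^m \R_{\ge 0} D_j$. Via the direct sum decomposition $\bL^\vee\otimes\R = H^2(\cX,\R)\oplus \bigoplus_{j=m'+1}^m \R D_j$ (the real version of (\ref{eq:decomp_LQ})), membership of $\hrho$ in $\cl(\tC_\cX)$ is equivalent to \emph{both} conditions: (i) the $H^2(\cX,\R)$-component lies in $\cl(C_\cX)$, and (ii) the $D_j$-coefficient is nonnegative for every $j>m'$.

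By construction the $H^2(\cX,\R)$-component of $\hrho$ is precisely $\rho$, since the projection $\bL^\vee\otimes\R\to H^2(\cX,\R)$ induced by (\ref{eq:decomp_LQ}) agrees with the surjection sending $D_i$ to $\ov{D}_i$. So condition (i) is the weak Fano condition $\rho\in\cl(C_\cX)$. To evaluate the $D_j$-coefficient $a_j$, I would apply $D_j^\vee$ to $\hrho=\rho+\sum_{k>m'}a_kD_k$. Since $D_j^\vee$ kills $H^2(\cX,\R)$ and satisfies $\pair{D_k}{D_j^\vee}=\delta_{jk}$ for $k>m'$, one gets $a_j=\pair{\hrho}{D_j^\vee}$. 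Using the defining relations of $D_j^\vee$ (namely $\pair{D_j}{D_j^\vee}=1$, $\pair{D_i}{D_j^\vee}=-c_{ji}$ for $i\notin I_j$, and $\pair{D_i}{D_j^\vee}=0$ for $i\in I_j\setminus\{j\}$) one reads off
\[
a_j=\sum_{i=1}^m \pair{D_i}{D_j^\vee}=1-\sum_{i\notin I_j} c_{ji}=1-\age(b_j).
\]
Hence condition (ii), $a_j\ge 0$ for all $j>m'$, becomes $\age(b_j)\le 1$, as required.

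For the supplementary claim that $\age(b_j)=\iota_{b_j}$ when $b_j\in\Boxop$, I would exhibit an explicit $d\in\K$ with $v(d)=b_j$ and apply (\ref{eq:ageofX_v}). The natural candidate is $d=D_j^\vee$. When $b_j\in\Boxop$ the expansion $b_j=\sum_{i\notin I_j}c_{ji}b_i$ has $c_{ji}\in[0,1)$, so the pairings $\pair{D_i}{D_j^\vee}$ all lie in $(-1,1]$ with the only integer value $1$ occurring at $i=j$. This shows $D_j^\vee\in\K$, and $v(D_j^\vee)=\sum_i\{-\pair{D_i}{D_j^\vee}\}b_i = \sum_{i\notin I_j}c_{ji}b_i = b_j$. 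The same calculation applied to (\ref{eq:ageofX_v}) gives $\iota_{b_j}=\sum_{i\notin I_j}c_{ji}=\age(b_j)$.

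There isn't really a hard step here; the whole argument is an exercise in unwinding the definitions of $D_j^\vee$, of the extended K\"ahler cone, and of $\age$. The only thing one has to be careful with is bookkeeping --- tracking which indices appear in the various ``anticones'' (remembering that $\{m'+1,\dots,m\}\subset I_j$ for any $I_j\in\cA$, so in particular $j\in I_j$ when $j>m'$) and keeping the canonical identification of $H^2(\cX,\Q)$ as a subspace of $\bL^\vee\otimes\Q$ consistent throughout.
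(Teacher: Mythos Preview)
Your proof is correct and follows essentially the same route as the paper. The paper's argument is simply to write down the decomposition $\hrho = \rho + \sum_{j>m'}(1-\age(b_j))D_j$ (obtained from $\ov{D}_i = D_i + \sum_{j>m'} c_{ji}D_j$) and appeal to Lemma~\ref{lem:decomp_extendedKaehler}; you compute the same coefficients via the dual vectors $D_j^\vee$, and you also supply a verification of the supplementary claim $\age(b_j)=\iota_{b_j}$ which the paper leaves implicit.
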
 
\begin{proof}
From $\ov{D}_i = D_i + \sum_{j>m'} c_{ji}D_j$, we have 
\[
\hrho = \rho + \sum_{j>m'} (1-\age (b_j)) D_j  
\]
The conclusion follows from Lemma \ref{lem:decomp_extendedKaehler}. 
\end{proof} 

When $\hrho\in \cl(\tC_\cX)$, we can choose a basis 
$p_1,\dots,p_r\in \cl(\tC_\cX)$ so that $\hrho$ is in the 
cone generated by $p_a$'s. 
Thus \emph{in this case, we can assume 
$\rho_a\ge 0$ without loss of generality}. 

\begin{remark} 
The condition $\hrho \in \cl(\tC_\cX)$ depends on 
the choice of our initial data. 
This can be achieved  
if $\cX$ is weak Fano and if in addition 
its stacky fan satisfies 
\[
\{v\in \Boxop \;;\; \age(v)\le 1\} \cup \{b_1,\dots,b_{m'}\} 
\text{ generates $\bN$ over $\Z$.} 
\]
If this holds, we can choose $b_{m'+1},\dots,b_m \in \Boxop$ 
so that $\{b_1,\dots,b_m\}$ generates $\bN$ and 
$\age(b_j)\le 1$ for $m'<j\le m$.    
Then the exact sequence (\ref{eq:exactsequence_toric}) 
determines $D_1,\dots,D_m$ and 
$\hrho=D_1+\dots+D_m\in \cl(\tC_\cX)$ holds. 
If $\cX$ is simply-connected 
in the sense of orbifold ($\pi_1^{\rm orb}(\cX)=\unit$), 
$\bN$ is generated by $b_1,\dots,b_{m'}$. 
\end{remark} 

\begin{remark} 
\label{rem:meaning_Dj}
The vectors $D_j$, $m' < j \le m$ in $\bL^\vee$ 
correspond to the following elements 
in the twisted sector:  
\begin{equation}
\label{eq:frD}
\frD_j = \prod_{i\notin I_j} 
\ov{D}_i^{\floor{c_{ji}}} \unit_{v(D^\vee_j)} 
\in H_{\rm orb}^{*}(\cX), \quad \text{where } 
v(D^\vee_j) = b_j + \sum_{i\notin I_j} \ceil{-c_{ji}} b_i. 
\end{equation} 
This correspondence can be seen from 
the expansion (\ref{eq:mirrormap_exp}) 
of the mirror map $\tau(q)$ below. 
We have $\frD_j=\unit_{b_j}$ when $b_j\in \Boxop$. 
Therefore, if $\hrho\in \cl(\tC_\cX)$ and 
$b_{m'+1},\dots,b_{m}$ are mutually different elements in $\Boxop$, 
we can identify $\bL^\vee\otimes \C$ 
with the subspace $H^2(\cX) \oplus \bigoplus_{j>m'} H^0(\cX_{b_j})$ 
of $H^{\le 2}_{\rm orb}(\cX)$. 
\end{remark} 

\subsection{Landau-Ginzburg model} 
\label{subsec:LGmodel} 
We introduce the Landau-Ginzburg (LG) model mirror 
to compact toric orbifolds. 
We use the notation from 
Section \ref{subsec:toricorbifolds}. 

\subsubsection{Definition} 
\label{subsubsec:LGmodel_def} 
By applying the exact functor $\Hom(-,\C^*)$ to 
the short exact sequence (\ref{eq:exactsequence_toric}), 
we have 
\begin{equation}
\label{eq:fibration_LG}
\begin{CD}
\unit @>>> \Hom(\bN,\C^*) 
@>>> Y:= (\C^*)^m @>{\pr}>> \cM:= \Hom(\bL,\C^*) @>>> \unit.  
\end{CD} 
\end{equation} 
The {\it Landau-Ginzburg model} (LG model for short) 
associated to a toric orbifold 
is the family $\pr \colon Y \to \cM$ of affine varieties 
given by the third arrow and a fiberwise Laurent polynomial 
$W\colon Y \to \C$, called potential, given by 
\[
W=w_1+\cdots+w_m 
\] 
where $w_1,\dots,w_m$ are the standard 
$\C^*$-valued co-ordinates on $Y=(\C^*)^m$. 
Roughly speaking, 
the base space $\cM = \bL^\vee \otimes \C^*$ 
corresponds to the extended (and complexified) 
K\"{a}hler moduli space $H^{\le 2}_{\rm orb}(\cX)$ 
of $\cX$ under mirror symmetry (see Remark \ref{rem:meaning_Dj}). 
The basis of $\bL$ dual to $p_1,\dots,p_r\in \bL^\vee$ 
in Section \ref{subsubsec:KC_nefbasis} defines 
$\C^*$-valued co-ordinates 
$q_1,\dots,q_r$ on $\cM=\Hom(\bL,\C^*)$. 
Then the projection is given by (see (\ref{eq:Dprel})) 
\begin{equation}
\label{eq:fibration_LG_formula}
\pr(w_1,\dots,w_m) 
= (q_1,\dots,q_r), \quad q_a = \prod_{i=1}^m w_i^{\sfm_{ia}}.  
\end{equation}
Let $Y_q := \pr^{-1}(q)$ be the fiber at $q\in \cM$ 
and set $W_q:=W|_{Y_q}$. 
Note that $Y_q$ has $|\bN_{\rm tor}|$ connected components and 
each connected component is isomorphic to 
$\Hom(\bN_{\rm free},\C^*) \cong (\C^*)^n$. 
Let $e_1,\dots,e_n$ be an arbitrary basis of $\bN_{\rm free}$ and 
$y_1,\dots,y_n$ be the corresponding 
$\C^*$-valued co-ordinate on $\Hom(\bN_{\rm free},\C^*)$. 
We choose a splitting of the exact sequence dual to  
(\ref{eq:exactsequence_toric}) over rational numbers. 
Namely, we take a matrix $(\ell_{ia})_{1\le i\le m, 1\le a\le r}$ with 
$\ell_{ia}\in \Q$ such that $p_a = \sum_{i=1}^m D_i \ell_{ia}$. 
This splitting defines a multi-valued section of $\pr:Y\to \cM$ 
and identifies $Y_q$ with $\Hom(\bN,\C^*)$. 
Under this identification, $y_1,\dots,y_n$ give 
co-ordinates on each connected component of $Y_q$ and we have 
\begin{equation}
\label{eq:W_q}
W|_{Y_q} = W_q = q^{\ell_1} y^{b_1} + \cdots + q^{\ell_m} y^{b_m}, \quad
q^{\ell_i} = \prod_{a=1}^r q_a^{\ell_{ia}}, \quad  
y^{b_i} = \prod_{j=1}^n y_j^{b_{ij}},  
\end{equation} 
where $b_i = \sum_{j=1}^n b_{ij} e_j$ in $\bN_{\rm free}$. 
Here, the choice of the branches of fractional powers of $q_a$ 
appearing in $q^{\ell_i}$ depends on a connected component of $Y_q$. 

\subsubsection{Kouchnirenko's condition} 
When constructing the B-model $D$-module, 
we shall need to restrict the parameter 
$q\in \cM$ to some Zariski open subset $\cMo\subset \cM$ 
so that $W_q$ satisfies the  ``non-degeneracy condition at infinity" 
due to Kouchnirenko \cite[1.19]{kouchnirenko}. 
\begin{definition}
Let $\hS$ denote the convex hull of 
$b_1,\dots,b_m\in \bN\otimes \R$. 
We call the Laurent polynomial $W_q(y)$ of the form (\ref{eq:W_q}) 
{\it non-degenerate at infinity} 
if for every face 
$\Delta$ of $\hS$ (where $0\le \dim\Delta \le n-1$),   
$W_{q,\Delta}(y):=\sum_{b_i\in \Delta} q^{\ell_i} y^{b_i}$ 
does not have critical points on $y\in (\C^*)^n$. 
Let $\cMo$ be the subset of $\cM$ consisting of $q$ 
for which $W_q$ is non-degenerate at infinity.   
\end{definition} 

\begin{proposition}
\label{prop:kouchnirenko} 
(i) Under the condition (C) 
in Section \ref{subsubsec:def_toricorbifolds}, 
$0\in \bN\otimes \R$ is in the interior of $\hS$. 
Therefore, the Laurent polynomial $W_q$ is 
\emph{convenient} in the sense of 
Kouchnirenko \cite[1.5]{kouchnirenko}. 

(ii) $\cMo$ is an open and dense subset of $\cM$ in Zariski topology.  

(iii) For $q\in \cMo$, $W_q(y)$ has   
$|\bN_{\rm tor}|\times n!\Vol(\hS)$ critical points 
on $Y_q$ (counted with multiplicities). 
\end{proposition}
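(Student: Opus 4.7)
For (i), I will use Gale duality to convert condition (C) into the convex-geometric statement about $\hS$. Applying $\Hom(-,\R)$ to the exact sequence (\ref{eq:exactsequence_toric}) tensored with $\R$ gives a dual exact sequence
\[
0 \to \Hom(\bN,\R) \to (\R^m)^\vee \to \bL^\vee\otimes\R \to 0,
\]
in which the first map is $\xi\mapsto(\xi(b_i))_i$ and the second sends the standard dual basis to $(D_i)_i$. Thus a relation $\sum_{i=1}^m c_i D_i = 0$ corresponds precisely to $(c_i) = (\xi(b_i))$ for some $\xi \in \Hom(\bN,\R)$. Condition (C) is therefore equivalent to: no non-zero $\xi \in \Hom(\bN,\R)$ satisfies $\xi(b_i)\ge 0$ for all $i$, which by standard convex duality means $\sum_i \R_{\ge 0} b_i = \bN\otimes\R$, \emph{i.e.}\ $0 \in \mathrm{int}(\hS)$. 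Since the Newton polytope of $W_q$ equals $\hS$, the polynomial $W_q$ is convenient in Kouchnirenko's sense.

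For (ii), Zariski openness of $\cMo$ is classical: for each face $\Delta$ of $\hS$, the condition that $W_{q,\Delta}$ has a critical point on $(\C^*)^n$ is the vanishing of a resultant in the coefficients $(q^{\ell_i})_{b_i \in \Delta}$, and hence defines a Zariski closed subset of $\cM$; the finite union over faces is the complement of $\cMo$. For density, the exact sequence (\ref{eq:fibration_LG}) realizes $Y\to\cM$ as the quotient of $(\C^*)^m$ by the fiber action of $\Hom(\bN,\C^*)$, and this action on the Laurent polynomial $W$ of (\ref{eq:W_q}) amounts to a reparametrization $y_j\mapsto \alpha(e_j)y_j$ of $Y_q$; it therefore preserves non-degeneracy at infinity. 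Consequently the non-degenerate locus on $\cM$ is the descent of the non-degenerate locus on $(\C^*)^m$, which is Zariski open and dense by the standard fact that a generic Laurent polynomial with support $\{b_1,\dots,b_m\}$ is non-degenerate \cite{kouchnirenko}.

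For (iii), with (i) and (ii) in hand, Kouchnirenko's theorem \cite{kouchnirenko} applies on each of the $|\bN_{\rm tor}|$ connected components of $Y_q \cong \Hom(\bN_{\rm free},\C^*)\cong(\C^*)^n$: a convenient Laurent polynomial on $(\C^*)^n$ non-degenerate at infinity and with Newton polytope $\hS$ has exactly $n!\Vol(\hS)$ critical points counted with multiplicity. Summing over components yields the total count $|\bN_{\rm tor}|\cdot n!\Vol(\hS)$.

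The chief obstacle is the density in (ii), which requires the observation that the $\Hom(\bN,\C^*)$-action on $(\C^*)^m$ merely reparametrizes the fiber coordinates on $Y_q$ and therefore preserves non-degeneracy; this justifies the descent of genericity from $(\C^*)^m$ to $\cM$. Once this is in place, (i) and (iii) are bookkeeping on top of Gale duality and Kouchnirenko's theorem respectively.
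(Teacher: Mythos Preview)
Your proof is correct and reaches the same conclusions by essentially the same appeals to Kouchnirenko, but the packaging of (i) and (ii) differs from the paper's in ways worth noting.

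For (i), the paper argues directly: condition (C) says the cone $\sum_i \R_{\ge 0} D_i$ is pointed, so there exists $d\in\bL$ with $c_i:=\pair{D_i}{d}>0$ for all $i$; the exact sequence (\ref{eq:exactsequence_toric}) then gives $\sum_i c_i b_i=0$, a strictly positive relation that forces $0\in\mathrm{int}(\hS)$. Your Gale-dual argument is the contrapositive of this: instead of producing a positive relation among the $b_i$, you show that no nonzero linear functional is nonnegative on all of them. Both are one-line applications of separation/Farkas, and each yields the other by duality; the paper's version has the minor advantage of producing an explicit relation $\sum c_ib_i=0$, which is reused elsewhere in the paper (e.g.\ in the proof of Lemma \ref{lem:PScond}).

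For (ii) and (iii), the paper simply cites \cite[6.3]{kouchnirenko} and \cite[1.16]{kouchnirenko}. Your argument for (ii) is more self-contained: you observe that the $\Hom(\bN,\C^*)$-action on $(\C^*)^m$ is a reparametrization of the fibre coordinates $y$, hence preserves non-degeneracy, so density on $\cM$ descends from density on the coefficient torus $(\C^*)^m$. This is a clean way to make the citation explicit, and is correct. Your (iii) is identical to the paper's.
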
 
\begin{proof}
The condition (C) implies that 
there exists $d\in \bL$ such that $c_i:=\pair{D_i}{d}>0$. 
Then by the exact sequence (\ref{eq:exactsequence_toric}),  
we have $\sum_{i=1}^m c_i b_i=0$. This proves (i). 
The statements (ii) and (iii) are due to Kouchnirenko. 
(ii) follows from (i) 
and the same argument as in \cite[6.3]{kouchnirenko}. 
One of main theorems in \cite[1.16]{kouchnirenko} 
states that $W_q(y)$ has $n!\Vol(\hS)$ number of critical points 
on each connected component of $Y_q$. 
(iii) follows from this and $|\pi_0(Y_q)| = |\bN_{\rm tor}|$. 
\end{proof} 

The following lemma shows that the 
Kouchnirenko's condition holds on 
a certain ``cylindrical end" of $\cM$.   
A proof is given in the Appendix \ref{subsec:qsmall}. 

\begin{lemma}
\label{lem:qsmall_kouchnirenko} 
Let $q_1,\dots,q_r$ be the co-ordinates on $\cM$ 
dual to the basis $p_1,\dots,p_r\in \cl(\tC_\cX)$ 
chosen in Section \ref{subsubsec:KC_nefbasis}. 
There exists $\epsilon>0$ such that $q\in \cMo$ 
if $0<|q_a|<\epsilon$ for all $a$. 
\end{lemma}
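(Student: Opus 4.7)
Because $\hat S$ has only finitely many proper faces, it suffices to find, for each proper face $\Delta$ of $\hat S$, some $\epsilon_\Delta > 0$ so that $W_{q,\Delta}(y) = \sum_{i\in I_\Delta} q^{\ell_i} y^{b_i}$ (with $I_\Delta := \{i : b_i \in \Delta\}$) has no critical point on $(\C^*)^n$ whenever $0<|q_a|<\epsilon_\Delta$ for all $a$; then take $\epsilon := \min_\Delta \epsilon_\Delta$. I fix one such $\Delta$ below.

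By Proposition~\ref{prop:kouchnirenko}(i), $0$ lies in the interior of $\hat S$, so there is a rational primitive linear functional $\lambda \in \Hom(\bN,\Q)$ with $\langle \lambda, b_i\rangle = c_\Delta < 0$ for $i \in I_\Delta$ and $\langle \lambda, b_i\rangle > c_\Delta$ otherwise. The critical-point system
\[
\sum_{i\in I_\Delta} q^{\ell_i} y^{b_i}\, b_i = 0 \quad \text{in } \bN_{\rm free}\otimes \C
\]
admits a solution $y \in (\C^*)^n$ iff the sparse discriminant (in the sense of Gelfand--Kapranov--Zelevinsky) of the configuration $\{b_i\}_{i\in I_\Delta}$ vanishes at the coefficient vector $(q^{\ell_i})_{i\in I_\Delta}$. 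Equivalently, the bad locus $\Sigma_\Delta \subset \cM$ is the zero set of a single Laurent polynomial $\Phi_\Delta(q)$ in $q_1,\dots,q_r$, and the lemma reduces to showing $\Phi_\Delta(q) \ne 0$ whenever each $|q_a|$ is sufficiently small.

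My approach is a Newton-polytope argument. Each monomial of the sparse discriminant pulls back, under the substitution $a_i = q^{\ell_i}$, to a Laurent monomial in $q$ with exponent vector in $\bL^\vee \otimes \Q$. The hypothesis $p_a \in \cl(\tC_\cX) = \bigcap_{I\in\cA}\sum_{i\in I}\R_{>0} D_i$ ensures that each $p_a$ pairs non-negatively with every element of $\K_{\rm eff}$, providing a uniform lower bound on how the pairings $\langle p_a,\cdot\rangle$ evaluate on the exponents of $\Phi_\Delta$. Combined with the gauge freedom $(\ell_{ia})_i \mapsto (\ell_{ia} + \langle\mu_a,b_i\rangle)_i$ by $\mu_a \in \Hom(\bN,\Q)$, which corresponds to a change of section of $\pr\colon Y\to\cM$ and does not affect the critical-point set of $W_q$, I would show that the Newton polytope of $\Phi_\Delta$ in $\bL^\vee\otimes\R$ has a unique vertex minimising the pairing with $(p_1,\dots,p_r)$ and that its coefficient in the sparse discriminant is non-zero. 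This yields a bound $|\Phi_\Delta(q)| \geq c\prod_a |q_a|^{m_a}$ with $c > 0$ and some $m_a \in \Q$, so $\Phi_\Delta(q) \ne 0$ for all $q$ with $|q_a|$ sufficiently small.

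The main obstacle is the combinatorial identification of this extremal vertex and the verification that its coefficient in the sparse discriminant is non-zero. The relevant analysis involves the interplay between the face $\Delta \subset \hat S$, the fan $\Sigma$, and the extended K\"{a}hler cone $\tC_\cX$; by the principal $A$-determinant philosophy the extremal specialisation reduces to a vertex subdivision of $\Delta$ on which the discriminant is a trivial non-zero monomial. Uniqueness of the extremal vertex and the required non-vanishing ultimately stem from $\Sigma$ being simplicial and complete and from $0$ being in the interior of $\hat S$.
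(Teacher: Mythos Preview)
Your outline has genuine gaps. First, the assertion that the bad locus $\Sigma_\Delta\subset\cM$ is the zero set of a \emph{single} Laurent polynomial is not justified: the $A$-discriminant of a point configuration can be identically~$1$ when the configuration is defective, in which case the critical-value locus in coefficient space, while still a proper subvariety, has higher codimension and is not given by one equation. (Your implicit use of the Euler relation --- since $0\notin\Delta$, the functional $\lambda$ gives $\sum_j\lambda_j y_j\partial_{y_j}W_{q,\Delta}=c_\Delta W_{q,\Delta}$, so $dW_{q,\Delta}=0$ forces $W_{q,\Delta}=0$ --- does identify the critical locus with the singular locus of the hypersurface, but that does not rescue the hypersurface claim for $\Sigma_\Delta$.) Second, and more seriously, you yourself flag the ``main obstacle'': the identification of a unique extremal vertex of the Newton polytope of $\Phi_\Delta$ and the nonvanishing of its coefficient are only asserted, not argued. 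Invoking the principal $A$-determinant and ``vertex subdivision'' philosophy is not a proof; one would need a precise statement about how the secondary fan of $\{b_i\}_{i\in I_\Delta}$ interacts with $\tC_\cX$, and you give none.

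The paper's argument bypasses all of this with a direct valuation (Puiseux) argument. Suppose $0\in\overline{B_\Delta}$; take an arc $\alpha\colon\Spec\C[\![T]\!]\to\overline{B_\Delta}$ with $\alpha(0)=0$, lift to a critical point $y(T)$ over $\overline{\C(\!(T)\!)}$, and read off leading exponents $d_a>0$ of $\alpha_a(T)$ and $f_j$ of $y_j(T)$. Since $\sum_a d_a p_a$ lies in the open cone $\tC_\cX$, the induced piecewise-linear function $h$ on the fan with $h(b_i)=\sum_a\ell_{ia}d_a$ is \emph{strictly} convex and satisfies $h(b_j)<h_j$ for $j>m'$. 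Consequently the set of $b_i\in\Delta$ achieving the minimal $T$-exponent in $\sum_{b_i\in\Delta}\alpha(T)^{\ell_i}y(T)^{b_i}b_i$ lies in a single cone of $\Sigma$ and is linearly independent, so the leading term cannot vanish --- a contradiction. This is the tropical/initial-form version of exactly the extremal-monomial idea you were reaching for, but it needs no discriminant machinery and no Newton-polytope combinatorics for $\Phi_\Delta$: the role of the ``unique extremal vertex'' is played directly by the strict convexity of $h$, which comes for free from $p_a\in\cl(\tC_\cX)$ and $d_a>0$.
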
 

\begin{lemma}
\label{lem:rankmatch} 
Assume that $\hrho \in \cl(\tC_\cX)$. 
Then $\hS$ is the union of simplices 
$\hS(\sigma) := \{\sum_{b_i\in \sigma} c_i b_i\;;\; 
c_i \in [0,1], \sum_{b_i\in \sigma} c_i \le 1\}$ 
over maximal ($n$-dimensional) 
cones $\sigma$ of the fan $\Sigma$ 
of $\cX$. 
Moreover, we have 
$|\bN_{\rm tor}|\times n!\Vol(\hS)= \dim H_{\rm orb}^*(\cX)$.  
\end{lemma}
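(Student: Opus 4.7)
The plan is to prove the two assertions by first producing a triangulation of $\hS$ into the simplices $\hS(\sigma)$ indexed by maximal cones, and then converting volumes of these simplices into the orbifold cohomology dimension via lattice indices.

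I would begin with the triangulation. By Lemma~\ref{lem:weakFano}, the hypothesis $\hrho \in \cl(\tC_\cX)$ splits into $\rho \in \cl(C_\cX)$ (weak Fano) together with $\age(b_j) \le 1$ for every $j > m'$. The second condition combined with (\ref{eq:bjcontainedinacone}) realizes each extra $b_j$ ($j>m'$) as a non-negative rational combination of $\{b_i : i \le m'\}$ with total weight $\le 1$; together with $0 \in \hS$ (Proposition~\ref{prop:kouchnirenko}(i)) this gives $\hS = \mathrm{conv}(\{0, b_1, \ldots, b_{m'}\})$, and in particular shows that $\hS(\sigma)$ for a maximal cone $\sigma$ reduces to the standard simplex $\{\sum_{i\in \sigma^{(1)}} r_i b_i : r_i\ge 0, \sum r_i \le 1\}$ spanned by the ray generators of $\sigma$. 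The inclusion $\bigcup_\sigma \hS(\sigma) \subset \hS$ is immediate; for the reverse, I rewrite $v \in \hS$ as $v = \sum_{i\le m'} s_i b_i$ with $s_i \ge 0$, $\sum s_i \le 1$ (absorbing the $b_j$-terms via the same expansion), locate $v$ in the maximal cone $\sigma$ of $\Sigma$ containing it, and argue that the unique expression $v = \sum_{i\in\sigma^{(1)}} r_i b_i$ with $r_i \ge 0$ satisfies $\sum r_i \le 1$. This is the key step: the piecewise-linear function $f$ on $\Sigma$ with $f(b_i) = 1$ for $i \le m'$ is the support function of the anticanonical class $\rho$, so the weak Fano half of the hypothesis makes $f$ convex (equivalently, positively sub-additive), and then $\sum r_i = f(v) \le \sum s_i \le 1$.

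For the numerical identity, for a maximal simplicial cone $\sigma$ with $\sigma^{(1)} = \{i_1,\dots,i_n\}$ the lattice formula for the volume of a simplex gives $n!\Vol(\hS(\sigma)) = \mult(\sigma) := [\bN_{\rm free} : \sum_j \Z \bar b_{i_j}]$. Since the $b_{i_j}$ are $\Q$-linearly independent, the sublattice $\sum_j \Z b_{i_j} \subset \bN$ is torsion-free and hence meets $\bN_{\rm tor}$ trivially, so $|\bN / \sum_j \Z b_{i_j}| = |\bN_{\rm tor}| \cdot \mult(\sigma)$; the standard bijection $v \mapsto \sum_j c_{i_j} b_{i_j}$ with $c_{i_j} \in [0,1)$ identifies the left side with $|\Boxop \cap \sigma|$. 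On the other side, by (\ref{eq:toricorbcoh}) $\dim H_{\rm orb}^*(\cX) = \sum_{v\in\Boxop}\dim H^*(\cX_v)$, and each $\cX_v$ is a toric orbifold whose fan is the star of the smallest cone containing $v$, so $\dim H^*(\cX_v)$ equals the number of maximal cones of $\Sigma$ through $v$. Double-counting then yields $\dim H_{\rm orb}^*(\cX) = \sum_\sigma |\Boxop \cap \sigma| = |\bN_{\rm tor}|\sum_\sigma \mult(\sigma) = |\bN_{\rm tor}|\, n!\Vol(\hS)$, the last equality using the triangulation established above.

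The main obstacle is the convexity step: converting $\rho \in \cl(C_\cX)$ into sub-additivity of the support function $f$, and verifying that the rewriting of a point of $\hS$ in the form $\sum_{i\le m'} s_i b_i$ stays within the budget $\sum s_i \le 1$ because $\age(b_j)\le 1$. Both halves of $\hrho \in \cl(\tC_\cX)$ conspire exactly at this point; the rest is routine lattice-and-polytope combinatorics.
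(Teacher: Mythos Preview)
Your proof is correct and follows essentially the same route as the paper. Both arguments hinge on the convexity of the piecewise-linear support function $h$ of the anticanonical class (nef by the weak Fano half of $\hrho\in\cl(\tC_\cX)$), together with $\age(b_j)\le 1$ for $j>m'$, to pin down $\hS$ as the union of the $\hS(\sigma)$; and both compute $\dim H^*_{\rm orb}(\cX)$ by double-counting pairs $(v,\sigma)$ with $v\in\Boxop\cap\sigma$. The paper packages the first step more compactly by observing directly that $\bigcup_\sigma\hS(\sigma)=h^{-1}((-\infty,1])$ is convex and contains every $b_i$, while you argue pointwise via sub-additivity of $h$; and the paper invokes Poincar\'e--Hopf / torus fixed points for $\dim H^*(\cX_v)$ where you cite the star-fan description, but these are the same fact. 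Your lattice-index computation $|\Boxop\cap\sigma|=|\bN_{\rm tor}|\cdot n!\Vol(\hS(\sigma))$ fills in a step the paper leaves implicit.
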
 
\begin{proof} 
By Lemma \ref{lem:weakFano}, $\rho= c_1(\cX)$ is nef. 
This implies that the piecewise linear function 
$h\colon \bN\otimes \R \to \R$ on the fan $\Sigma$ 
(linear on each maximal cone in $\Sigma$) 
defined by $h(b_i) = 1$ for $1\le i\le m'$ is convex (see \cite{oda}). 
Therefore, $\bigcup_{\sigma:\dim \sigma = n} 
\hS(\sigma) = h^{-1}((-\infty,1])$ is convex. 
Because $b_j$, $j>m'$ is contained in this by 
Lemma \ref{lem:weakFano}, we have 
$\hS=\bigcup_{\sigma: \dim\sigma =n} \hS(\sigma)$. 

Because odd cohomology groups of 
$\cX_v$ vanish, 
$\dim H^*(\cX_v)$ is equal to 
the Euler number of $\cX_v$, so is equal to 
the number of torus fixed points on $\cX_v$ 
(for the natural torus action) by Poincar\'{e}-Hopf. 
Torus fixed points on $\cX_v$ are parametrized 
by maximal cones $\sigma$ in the fan $\Sigma$ 
such that $\sigma$ contains 
the image of $v\in \Boxop$ in $\bN\otimes \R$. 
Hence,  
\[
\sum_{v\in \Boxop} \dim H^*(\cX_v) 
= \sum_{\sigma: \dim \sigma = n} \sharp\{v\in \Boxop\;;\; v\in \sigma\} 
= \sum_{\sigma: \dim \sigma =n} |\bN_{\rm tor}| \times 
n! \Vol(\hS(\sigma)).    
\] 
The conclusion follows. 
\end{proof}

\subsubsection{Jacobi ring and Batyrev ring}
The \emph{Jacobi ring} $J(W)$ is  
the ring of functions on the (fiberwise) critical set of $W$:   
\[
J(W) := \C[w_1^\pm,\dots,w_m^\pm]\Big/
\left \langle 
y_1\parfrac{W}{y_1},\dots, y_n\parfrac{W}{y_n} \right \rangle. 
\]
Note that $J(W)$ 
is a $\C[q^\pm]:=\C[q_1^\pm,\dots,q_r^{\pm}]$-algebra. 
Denote by  $J(W_q) = J(W) \otimes_{\C[q^\pm]} \C_q$ 
the fiber of $J(W)$ at $q\in \cM=\Spec\C[q^\pm]$. 
By Proposition \ref{prop:kouchnirenko}, 
$J(W_q)$ is of dimension 
$|\bN_{\rm tor}| \times n! \Vol(\hS)$ 
when $q\in \cMo$.  
The \emph{Batyrev ring} is defined by 
\[
\textstyle 
B(\cX) := 
\C[q^\pm][\sfp_1,\dots,\sfp_r] 
\Big/ 
\left\langle q^d 
\prod_{i : \pair{D_i}{d}<0} 
\sfw_i^{-\pair{D_i}{d}} - 
\prod_{i: \pair{D_i}{d}>0} 
\sfw_i^{\pair{D_i}{d}} \; ;\;  
d\in \bL \right\rangle 
\] 
where $q^d := \prod_{a=1}^r q_a^{\pair{p_a}{d}}$ 
and $\sfw_i :=\sum_{a=1}^r \sfm_{ia} \sfp_a$. 
By the condition (C) in 
Section \ref{subsubsec:def_toricorbifolds},  
there exists $d\in \bL$ such that 
$c_i := \pair{D_i}{d}>0$ for all $i$. 
Hence $\prod_{i=1}^m \sfw_i^{c_i} = q^d$ 
holds in $B(\cX)$ and 
therefore $\sfw_i$ is invertible in $B(\cX)$. 
With this fact in mind, Batyrev ring is given 
by the simple relations (note that $\sfm_{ia}$ can be negative) 
\begin{equation}
\label{eq:char_var}
q_a = \prod_{i=1}^m \sfw_i^{\sfm_{ia}} 
= \prod_{i=1}^m \left(
\sum_{b=1}^r \sfm_{ib} \sfp_b\right)^{\sfm_{ia}}, \quad 
1\le a\le r.   
\end{equation} 
% The spectrum $\Spec B(\cX)$ gives 
% the characteristic variety 
% of the quantum/B-model $D$-module and 
% $\sfp_a$ is the symbol of the differential operator 
% $z q^a (\partial/\partial q^a)$.   
The following was shown in 
\cite[Lemma 5.10, Proposition 5.11]{iritani-coLef} 
for toric manifolds. 

\begin{proposition}
\label{prop:Jac-Bat} 
(i) The map $B(\cX) \to J(W)$, 
$\sfp_a \mapsto [q_a (\partial W_q/ \partial q_a)]$ 
defines an isomorphism of $\C[q^\pm]$-algebras.  

(ii) Let $\cMoo$ be the subset of $\cMo$ 
consisting of $q\in \cMo$ such that 
all the critical points of $W_q$ are non-degenerate. 
Then $\cMoo$ is open and dense in $\cMo$.  
\end{proposition}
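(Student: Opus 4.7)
The plan is to prove (ii) first (to secure a generic point), then establish (i) by identifying both rings with the critical scheme of $W_q$ fiberwise.

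For (ii), openness is automatic, since $\cMoo$ is the complement in $\cMo$ of the discriminant locus of the family $W_q$, a closed algebraic subset. Density amounts to exhibiting one $q_0 \in \cMoo$: I would argue on the cylindrical end $\{0<|q_a|<\epsilon\}$ of Lemma \ref{lem:qsmall_kouchnirenko} by asymptotic analysis, since as $q_a\to 0$ the critical points of $W_q$ cluster according to the maximal cones of the fan $\Sigma$ and the dominant-monomial terms within each cluster make the Hessian generically non-degenerate.

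For (i), I would first check that for every $d\in \bL$ the Batyrev relation holds already in $\C[w_1^\pm,\dots,w_m^\pm]$ under the substitution $\sfw_i\mapsto w_i = q^{\ell_i} y^{b_i}$: the $q$-exponents match by $d = \sum_i \pair{D_i}{d}\,\ell_i$ (the splitting identity $s\circ D = \id_{\bL\otimes\Q}$ for $s(e_i)=\ell_i$), and the $y$-exponents match by $\sum_i \pair{D_i}{d}\, b_i = 0$ (the composition $\beta\circ D = 0$ in (\ref{eq:exactsequence_toric})). Hence $\phi\colon \sfp_a\mapsto \sum_i \ell_{ia}\, w_i$ defines a $\C[q^\pm]$-algebra map $B(\cX)\to J(W)$. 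Since $\sum_j \pair{D_i}{\ell_j}\, D_j = D_i$ in $\bL^\vee\otimes\Q$ (apply both sides to any $d\in \bL$), the coefficient vector $(\pair{D_i}{\ell_j}-\delta_{ij})_j$ lies in $\ker(D^T)=\Image(\beta^T)$, so $\phi(\sfw_i)-w_i = \sum_j(\pair{D_i}{\ell_j}-\delta_{ij})\, w_j$ is a linear combination of the relations $y_k\partial W/\partial y_k=\sum_j b_{jk}\, w_j$ and vanishes in $J(W)$. In particular $\phi$ is surjective, since the $w_i$ generate $J(W)$ over $\C[q^\pm]$.

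For injectivity I would work fiberwise at a point $q_0\in \cMoo$. The map $(\sfp_a)\mapsto(w_i:=\sfw_i(\sfp))$ realises $\Spec B(\cX_{q_0})$ as $\{(w_i)\in (\C^*)^m : (w_i)\in \Image(D)\otimes\C,\ \prod_i w_i^{\sfm_{ia}}=q_{0,a}\}$, which is exactly the critical locus of $W_{q_0}$ on $Y_{q_0}$: by duality of (\ref{eq:exactsequence_toric}) the first condition is the critical-point equation $y_k\partial W/\partial y_k=\sum_i b_{ik} w_i =0$, and the second cuts out $Y_{q_0}\subset (\C^*)^m$. Both fibers are therefore reduced of cardinality $|\bN_{\rm tor}|\cdot n!\Vol(\hS)=\dim H^*_{\rm orb}(\cX)$ by Proposition \ref{prop:kouchnirenko} and Lemma \ref{lem:rankmatch}, so $\phi$ is an isomorphism at $q_0$, and hence an isomorphism of $\C[q^\pm]$-algebras by the already-established surjectivity. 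The main obstacle is the non-emptiness argument in (ii); for toric manifolds this is carried out in \cite[Lemma 5.10, Proposition 5.11]{iritani-coLef}, and the orbifold adaptation requires only bookkeeping of the $|\bN_{\rm tor}|$ connected components of $Y_{q_0}$.
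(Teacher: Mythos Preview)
Your argument for (i) has a genuine gap in the final inference. From ``$\phi_{q_0}$ is an isomorphism'' and ``$\phi$ is surjective'' you cannot conclude that $\phi$ is a global isomorphism of $\C[q^\pm]$-algebras: the projection $\C[q^\pm][x]/\bigl(x(q_1-1)\bigr)\twoheadrightarrow \C[q^\pm]$ is surjective and an isomorphism on every fibre with $q_1\neq 1$, yet has nonzero kernel. Even your fibre claim ``both fibers are therefore reduced'' is not justified for $B(\cX_{q_0})$: identifying $\Spec B(\cX_{q_0})$ with a subset of $(\C^*)^m$ via $(\sfp_a)\mapsto(\sfw_i)$ only embeds it as a closed \emph{subscheme}, and a closed subscheme of a reduced scheme need not be reduced.

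The fix is already in your hands, and is exactly what the paper does. Your ``realises'' map is the ring homomorphism $\psi\colon J(W)\to B(\cX)$, $[w_i]\mapsto \sfw_i$; it is well-defined because $\sum_i b_{ik}\sfw_i=0$ identically (exactness $\beta\circ D=0$), because $\prod_i\sfw_i^{\sfm_{ia}}=q_a$ is a defining relation of $B(\cX)$, and because $\sfw_i$ is invertible in $B(\cX)$. It is \emph{surjective} since $\sfp_a=\sum_i\ell_{ia}\sfw_i$ lies in its image. You have already checked $\phi(\sfw_i)=[w_i]$, i.e.\ $\phi\circ\psi=\id_{J(W)}$. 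A surjection with a one-sided inverse is an isomorphism, so $\phi$ and $\psi$ are inverse isomorphisms---globally, with no appeal to (ii), no choice of $q_0$, and no dimension count. The paper records this in one line: ``The inverse map, sending $[w_i]$ to $\sfw_i$, is also well-defined.''

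For (ii) the paper reverses your order: having (i) in hand, $\Spec J(W)\cong\Spec B(\cX)$ is the graph of the map $\sfp\mapsto q$ in (\ref{eq:char_var}), so non-degeneracy of all critical points at $q$ is equivalent to this map being a local isomorphism at each preimage. The Jacobian $\partial\log q_a/\partial\sfp_b=\sum_i\sfm_{ia}\sfw_i^{-1}\sfm_{ib}$ is positive definite when all $\sfw_i>0$ (such real $\sfp$ exist by condition~(C)), giving a single explicit point in $\cMoo$. This is considerably tighter than an asymptotic analysis near $q=0$, and avoids the ``bookkeeping'' you defer to \cite{iritani-coLef}.
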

\begin{proof} 
(i) Since $\pr_*(w_i (\partial/\partial w_i) ) 
= \sum_{a=1}^r \sfm_{ia} q_a (\partial/\partial q_a)$, 
we have 
\[
\left[\sum_{a=1}^r \sfm_{ia} q_a \parfrac{W_q}{q_a} \right] = 
\left[w_i \parfrac{W}{w_i} \right ]= 
[w_i] \quad \text{ in } J(W). 
\]
This shows that $\sfw_i$ maps to an invertible element 
$[w_i]\in J(W)$ satisfying  
$\prod_{i=1}^m [w_i]^{\sfm_{ia}} = q_a$. 
Thus the map $B(\cX)\to J(W)$ is well-defined. 
The inverse map, sending $[w_i]$ to $\sfw_i$, 
is also well-defined. The details are left to the reader.  

(ii) The isomorphism in (i) induces an isomorphism  
$\Spec B(\cX) \cong \Spec J(W)$ over $\cM$. 
Since $\Spec B(\cX)$ can be written as the graph 
of the map $\sfp \mapsto q$ (\ref{eq:char_var}), 
it suffices to show that this map is 
a local isomorphism at generic $\sfp$. 
This follows from the fact that 
the Jacobian $\partial \log q_a/\partial \sfp_b 
= \sum_{i=1}^m \sfm_{ia} \sfw_i^{-1} \sfm_{ib}$ of the map 
(\ref{eq:char_var}) is positive definite when $\sfw_i>0$. 
(Note that we can choose $\sfp_b\in \R$ 
so that $\sfw_i = \sum_{b=1}^r \sfm_{ib} \sfp_b >0$ 
for all $i$ again by the condition (C)). 
\end{proof}

\subsection{B-model $D$-module} 
Here we describe the B-model $D$-module in two steps. 
First we construct a local system over 
$\cMo\times \C^*$ 
using the Morse theory for $\Re(W_q/z)$. 
Then we extend the local system to a meromorphic 
flat connection over $\cMo\times \C$ 
using de Rham forms and oscillatory integrals. 

\subsubsection{Local system of Lefschetz thimbles} 
Let $f_{q,z}\colon Y_q \to \R$ be the real part of the function 
$y\mapsto W_q(y)/z$. 
The following lemma allows us to use Morse theory 
for the improper function $f_{q,z}(y)$. 
\begin{lemma}
\label{lem:PScond}
For each $\epsilon>0$, the family of topological spaces 
\[ 
\bigcup_{(q,z)\in \cMo\times \C^*}
\{y\in Y_q \;;\; \|df_{q,z}(y)\|\le \epsilon \} \to \cMo\times \C^*
\]
is proper, i.e. pull-back of a compact set is compact.  
Here the norm $\|df_{q,z}(y)\|$ is taken with respect to 
the complete K\"{a}hler metric 
$\frac{1}{\iu} \sum_{i=1}^n d\log y_i \wedge d\ov{\log y_i}$ 
on $Y_q$. 
\end{lemma}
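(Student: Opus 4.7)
The strategy is a proof by contradiction: if the family failed to be proper, a diverging sequence $y_\nu \in Y_{q_\nu}$ would produce a critical point of $W_{q_0,\Delta}$ on $(\C^*)^n$ for some proper face $\Delta$ of $\hS$, violating the Kouchnirenko non-degeneracy at infinity at the limit parameter $q_0 \in \cMo$. In the logarithmic coordinates $\log y_j$ on a connected component of $Y_q \cong (\C^*)^n$ the K\"ahler metric is flat, and a direct calculation gives
\[
\|df_{q,z}(y)\|^2 = \frac{1}{2|z|^2}\sum_{j=1}^n \Bigl|y_j\parfrac{W_q}{y_j}(y)\Bigr|^2,\qquad y_j\parfrac{W_q}{y_j}(y) = \sum_{i=1}^m b_{ij}\,q^{\ell_i}y^{b_i}.
\]
Fixing a compact subset $K \subset \cMo \times \C^*$, the condition $\|df_{q,z}\| \le \epsilon$ yields a uniform bound $|y_j \partial_{y_j} W_q(y)| \le C$. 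Since $Y_q$ has only finitely many connected components, properness reduces to showing that every sequence $(q_\nu,z_\nu,y_\nu)$ with $(q_\nu,z_\nu) \to (q_0,z_0) \in K$ and all $y_\nu$ in one fixed component admits a subsequence with $y_\nu$ converging in $(\C^*)^n$.

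Suppose this fails, so $\rho_\nu := \log|y_\nu| \in \R^n$ satisfies $|\rho_\nu| \to \infty$. A standard Newton-polytope extraction --- iteratively selecting the leading and successive sub-leading directions of $\rho_\nu$ and restricting to the corresponding maximising face of $\hS$ --- lets us pass to a subsequence and select a non-empty $S \subset \{1,\dots,m\}$ such that $\Delta := \operatorname{conv}\{b_i : i \in S\}$ is a face of $\hS$ (a face of a face of $\hS$ is a face of $\hS$), and such that $\pair{b_i}{\rho_\nu} - \max_j \pair{b_j}{\rho_\nu}$ stays bounded for $i \in S$ and tends to $-\infty$ for $i \notin S$. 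Proposition \ref{prop:kouchnirenko}(i) gives $0 \in \operatorname{int}(\hS)$, so $\max_j \pair{b_j}{\rho_\nu} \to +\infty$ and $\Delta$ is a proper face of $\hS$. Setting $F_\nu := \max_i |q_\nu^{\ell_i} y_\nu^{b_i}|$, a final subsequence extraction (for the phases) yields limits $\alpha_i := \lim_\nu q_\nu^{\ell_i} y_\nu^{b_i}/F_\nu \in \overline{\D}$ that vanish for $i \notin S$ and satisfy $\max_i|\alpha_i| = 1$. Dividing the bound $|\sum_i b_{ij}\, q_\nu^{\ell_i} y_\nu^{b_i}| \le C$ by $F_\nu \to \infty$ gives $\sum_i b_{ij}\alpha_i = 0$ for every $j$, i.e.\ $\xi = \unit$ is a critical point of the Laurent polynomial $P(\xi) := \sum_{i \in S}\alpha_i\, \xi^{b_i}$.

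To conclude, fix $i_0 \in S$ with $|\alpha_{i_0}| = 1$; by the construction of $S$ the ratios $y_\nu^{b_i - b_{i_0}}$ have bounded logarithms for $i \in S$ and so converge along a subsequence to $\lambda^{b_i - b_{i_0}}$ for a well-defined $\lambda \in (\C^*)^n$. Combined with $q_\nu \to q_0$ in $\cM$, this gives $\alpha_i = \alpha_{i_0}\, q_0^{\ell_i - \ell_{i_0}} \lambda^{b_i - b_{i_0}}$ for every $i \in S$, whence
\[
P(\xi) = c \cdot W_{q_0,\Delta}(\lambda\xi),\qquad c \in \C^*.
\]
Therefore $\lambda$ is a critical point of $W_{q_0,\Delta}$ on $(\C^*)^n$, contradicting $q_0 \in \cMo$. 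The most delicate step is the tropical extraction establishing the face property of $\operatorname{conv}\{b_i : i \in S\}$ while simultaneously controlling all the relevant ratios; this is a combinatorial sub-sequence/bookkeeping argument built on iterating the ``leading direction'' construction through nested faces of $\hS$.
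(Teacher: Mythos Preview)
Your approach is correct and follows essentially the same contradiction strategy as the paper: extract a divergent sequence, divide by the dominant monomial, and obtain a critical point of $W_{q_0,\Delta}$ for a proper face $\Delta$, contradicting Kouchnirenko non-degeneracy at $q_0\in\cMo$.

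The one place where you work harder than necessary is the ``tropical extraction'' establishing that $\operatorname{conv}\{b_i:i\in S\}$ is a face of $\hS$. The paper avoids any iterative argument: after sorting so that $|y_{(k)}^{b_1}|\ge\cdots\ge|y_{(k)}^{b_m}|$ and passing to a subsequence with $y_{(k)}^{b_i-b_1}\to\alpha_i\neq 0$ for $i\le l$ and $\to 0$ for $i>l$, one writes $\log y_{(k)}=\xi'_{(k)}+\xi''_{(k)}$ according to the orthogonal decomposition $\bN\otimes\C=V\oplus V^\perp$ with $V:=\operatorname{span}_\C\{b_i-b_1:i\le l\}$. Since the pairings of $\xi_{(k)}$ with a spanning set of $V$ converge, $\xi'_{(k)}$ converges (this also furnishes your $\lambda$ directly as $\tilde y=\exp(\xi')$). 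Meanwhile $\pair{\Re(\xi''_{(k)})}{b_i-b_1}=0$ for $i\le l$ and $\to-\infty$ for $i>l$, so for any single sufficiently large $k$ the vector $\Re(\xi''_{(k)})$ is a supporting functional for $\hS$ meeting it exactly in $\{b_1,\dots,b_l\}$. This yields the face property in one stroke, replacing your iterative nested-face bookkeeping.
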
 

A similar result for polynomial functions can be found 
in \cite[Proposition 2.2 and Remarque]{pham_GM} 
and this lemma may be well-known to specialists. 
A proof is given in the Appendix \ref{subsec:proof_PS}
since the author was not able to find a suitable reference. 
Lemma \ref{lem:PScond} implies that $f_{q,z}$ 
satisfies the Palais-Smale condition, 
so that usual Morse theory applies to $f_{q,z}$ 
(see \emph{e.g.} \cite{palais}). 
Take $(q,z)\in \cMo\times \C^*$. 
Since  the set $\{y\in Y_q\; ;\; \|df_{q,z}(y)\|<\epsilon\}$ 
is compact, we can choose $M\ll 0$ so that 
this set is contained in $\{y\in Y_q\;;\;f_{q,z}(y)>M\}$. 
Then the relative homology group 
$H_n(Y_q, \{y\in Y_q\;;\; f_{q,z}(y)\le M\};\Z)$ 
is independent of the choice of such $M$ and 
we denote this by 
\begin{equation}
\label{eq:relative_hom}
R_{\Z,(q,z)}^\vee = 
H_n(Y_q, \{y\in Y_q\;;\; f_{q,z}(y) \ll 0\};\Z), \quad 
(q,z) \in \cMo \times \C^*.    
\end{equation} 
The number of critical points of $f_{q,z}(y)$ is 
$N:=|\bN_{\rm tor}|\times n!\Vol(\hS)$ 
by Proposition \ref{prop:kouchnirenko}. 
If all the critical points of $W_q(y)$ 
are non-degenerate, by the standard argument in Morse theory,  
we know that $Y_q$ is obtained from $\{f_{q,z}(y)\le M\}$ by attaching 
$N$ $n$-handles and so 
$R_{\Z,(q,z)}^\vee$ is a free abelian group of rank $N$. 
If $W_{q}(y)$ has a critical point $y_0$ 
of multiplicity $\mu_0>1$, one can find\footnote{
We can find $\tilde{f}_{q,z}$ in the following way: 
Let $\rho\colon \R_{\ge 0} \to [0,1]$ be a $C^\infty$-function 
such that $\rho(r)=1$ for $0\le r\le 1/2$ and $\rho(r)=0$ for $r\ge 1$. 
Let $U_0$ be an $\epsilon$-neighborhood of $y_0$ 
(in the above K\"{a}hler metric) 
which does not contain other critical points. 
Let $t=(t_1,\dots,t_n)$ be co-ordinates given by $y_i=y_{0,i} e^{t_i}$. 
For $a=(a_1,\dots,a_n)\in \C^n$, put 
$f^a_{q,z}(y) = f_{q,z}(y) +  \rho(|t|/\epsilon)\Re(at)$. 
Then for a generic, sufficiently small $a$, $\tilde{f}_{q,z}=f^a_{q,z}$ 
satisfies the conditions above (here,  
new critical points are all in $|t|<\epsilon/2$).} 
a small $C^\infty$-perturbation $\tilde{f}_{q,z}$ of 
$f_{q,z}$ on a small neighborhood $U_0$ of $y_0$ such that 
$\tilde{f}_{q,z}$ has just $\mu_0$ 
non-degenerate critical points in $U_0$ 
with Morse index $n$. 
By considering such a perturbation and 
Morse theory for $f_{q,z}$ in families (parametrized by $q$ and $z$), 
we obtain the following. 

\begin{proposition}
\label{prop:locsys_Lefschetz} 
The relative homology groups 
$R^\vee_{\Z,(q,z)}$ in (\ref{eq:relative_hom}) 
form a local system of rank $|\bN_{\rm tor}|\times 
n! \Vol(\hS)$ over $\cMo\times \C^*$. 
\end{proposition}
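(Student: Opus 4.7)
The plan is first to establish the local system structure by a family-version gradient flow argument, and then to compute the rank using the generic Morse stratum $\cMoo \times \C^*$.

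Fix $(q_0, z_0) \in \cMo \times \C^*$. Since $\pr \colon Y \to \cM$ is a principal $\Hom(\bN,\C^*)$-bundle by (\ref{eq:fibration_LG}), it admits a local analytic trivialization over some neighborhood $V$ of $q_0$; combining with a small open neighborhood $W \subset \C^*$ of $z_0$, one obtains, after identifying $Y|_V \cong Y_{q_0} \times V$, a continuous family $\{f_{q,z}\}_{(q,z) \in U}$ of smooth functions on the fixed manifold $Y_{q_0}$, where $U := V \times W$ is taken with compact closure. Applying Lemma \ref{lem:PScond} over $\ov{U}$, there exist $\epsilon > 0$ and a compact set $K \subset Y_{q_0}$ such that the $\epsilon$-almost-critical set of $f_{q,z}$ is contained in $K$ for all $(q,z) \in U$; shrinking $U$ if necessary, one then finds a real number $M$ with $\{y : f_{q,z}(y) \le M\}$ disjoint from $K$ for every $(q,z) \in U$.

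Next I would construct a continuous family of complete gradient-like vector fields $V_{q,z}$ on $Y_{q_0}$ with $V_{q,z}(f_{q,z}) > 0$ outside a small neighborhood of the critical set of $f_{q,z}$. The Palais-Smale condition supplied by Lemma \ref{lem:PScond}, together with the uniform compactness of the critical set, ensures that the negative-time flow of $V_{q,z}$ sweeps any fixed compact region into $\{f_{q,z} \le M\}$ in finite time, uniformly in $(q,z) \in U$. Integrating this family of flows produces a continuous family of pair-homotopy equivalences
\[
(Y_{q_0}, \{f_{q_0,z_0} \le M\}) \simeq (Y_{q_0}, \{f_{q,z} \le M\})
\]
for $(q,z) \in U$, which trivializes the relative homology groups (\ref{eq:relative_hom}) over $U$ and hence exhibits the desired local system structure.

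To compute the rank, I would restrict to $\cMoo \times \C^* \subset \cMo \times \C^*$, which is open and dense by Proposition \ref{prop:Jac-Bat}(ii). For $q \in \cMoo$ and any $z \in \C^*$, every complex critical point of $W_q$ is non-degenerate, and the Hessian of the real part of a non-degenerate holomorphic Morse function has signature $(n,n)$; therefore $f_{q,z}$ is a Morse function on the $2n$-real-dimensional manifold $Y_q$ whose critical points all have index exactly $n$. By Proposition \ref{prop:kouchnirenko}(iii) there are precisely $N := |\bN_{\rm tor}| \cdot n!\Vol(\hS)$ such critical points, so the standard Morse-theoretic handle decomposition (valid under Palais-Smale, see e.g.\ \cite{palais}) gives $R^\vee_{\Z,(q,z)} \cong \Z^N$. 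Since the rank of a local system is locally constant and $\cMoo \times \C^*$ meets every connected component of $\cMo \times \C^*$, the rank is $N$ everywhere.

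The main technical obstacle is the construction of the family of gradient-like vector fields with good uniform behavior: one must ensure that the flow preserves the sublevel sets $\{f_{q,z} \le M\}$ continuously in $(q,z)$ and that no trajectory escapes to infinity in finite time. Lemma \ref{lem:PScond} is exactly tailored to prevent the latter, while the uniformity is achieved by a standard partition-of-unity construction on $U$ combined with the compactness of $K$. Degenerate critical points (multiplicity $\mu_0 > 1$) pose no further difficulty for the rank count thanks to the local perturbation $\tilde f_{q,z}$ described after (\ref{eq:relative_hom}), since they occur on a proper analytic subvariety of $\cMo$ and the rank has already been pinned down on the dense stratum $\cMoo \times \C^*$.
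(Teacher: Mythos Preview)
Your proposal is correct and follows essentially the same approach as the paper's sketch: Morse theory for $f_{q,z}$ in families, made legitimate by the Palais--Smale condition of Lemma~\ref{lem:PScond}, together with the count of critical points from Proposition~\ref{prop:kouchnirenko}(iii). The only minor variation is that the paper computes the rank directly at degenerate fibers via the local perturbation $\tilde f_{q,z}$ (splitting a multiplicity-$\mu_0$ critical point into $\mu_0$ index-$n$ Morse points), whereas you instead invoke the density of $\cMoo$ (Proposition~\ref{prop:Jac-Bat}(ii)) and local constancy of the rank of a local system; both routes are equally valid.
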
 

When all the critical points $\crit_1,\dots,\crit_N$ 
of $W_q\colon Y_q \to \C$ are non-degenerate, 
a basis of the local system $R^\vee_\Z$ is given by a set of 
{\it Lefschetz thimbles} $\Gamma_1,\dots, \Gamma_N$:  
the image of $\Gamma_i$ under $W_q/z$ is given by 
a curve $\gamma_i:[0,\infty) \to \C$ such that 
$\gamma(0)=W_q(\crit_i)/z$, that 
$\Re\gamma_i(t)$ decreases monotonically 
to $-\infty$ as $t\to \infty$ 
and that $\gamma_i$ does not pass through critical values 
other than $W_q(\crit_i)/z$;   
$\Gamma_i$ is the union of cycles in $W_q^{-1}(z\gamma_i(t))$ 
collapsing to $\crit_i$ along the path $\gamma_i(t)$ as $t\to 0$.  
When the imaginary parts 
$\Im(W_q(\crit_1)/z),\dots,\Im(W_q(\crit_N)/z)$ 
are mutually different, $\Gamma_i$ can be taken to be 
the union of downward gradient flowlines 
of $f_{q,z}(y)$ emanating from $\crit_i$. 
(Note that the gradient 
flow of $f_{q,z}=\Re(W_q/z)$ with respect to a K\"{a}hler metric 
coincides with the Hamiltonian flow  
generated by $\Im(W_q/z)$.) 
Then $\gamma_i$ becomes a half-line parallel to the real axis. 
The intersection pairing defines a unimodular pairing:
\begin{equation}
\label{eq:pairing_Rvee}
R_{\Z,(q,-z)}^\vee \times R_{\Z,(q,z)}^\vee \to\Z.  
\end{equation} 
Let $R_\Z \to \cMo \times \C^*$ 
be the local system dual to $R_\Z^\vee$ 
and $\cR := R_\Z \otimes \cO_{\cMo\times \C^*}$ be the associated 
locally free sheaf on $\cMo\times \C^*$. 
The sheaf $\cR$ is equipped with 
the Gau\ss-Manin connection $\nabla\colon 
\cR \to \cR \otimes \Omega^1_{\cMo\times \C^*}$ 
and the pairing 
$(\cdot,\cdot)_{\cR}\colon 
((-)^*\cR) \otimes \cR \to \cO_{\cMo\times \C^*}$ 
induced from the local system $R_\Z^\vee$. 

\subsubsection{The extension across 
$z=0$ via de Rham forms} 

Let $\omega_1$ be the following holomorphic volume form 
on $Y_1 = \Hom(\bN,\C^*)$: 
\[
\omega_1= 
\frac{1}{|\bN_{\rm tor}|}
\frac{dy_1\cdots dy_n}{y_1\cdots y_n} \quad 
\text{on each connected component.}
\]
This is characterized as a unique translation-invariant 
holomorphic $n$-form $\omega_1$ satisfying 
$\int_{\Hom(\bN,S^1)} \omega_1 = (2\pi \iu)^n$. 
By translation, $\omega_1$ defines a holomorphic volume form 
$\omega_q$ on each fiber $Y_q$. 
Let $\pr\colon \Yo\to \cMo$ be the restriction of the family 
$\pr\colon Y\to \cM$ to $\cMo$. 
Consider a relative holomorphic $n$-form $\varphi$ 
of $\Yo\times \C^* \to \cMo \times \C^*$ 
of the form 
\begin{equation}
\label{eq:relative_nform}
\varphi = f(q,z,y) e^{W_q(y)/z} \omega_q, \quad 
f(q,z,y) \in \cO_{\cMo \times \C^*}[y_1^\pm,\dots,y_n^\pm]   
\end{equation} 
where $\cO_{\cMo\times \C^*}$ is the analytic structure sheaf. 
This relative $n$-form 
gives a holomorphic section $[\varphi]$ of $\cR$ via the integration 
over Lefschetz thimbles: 
\begin{equation}
\label{eq:pairing_R_Rvee}
\pair{[\varphi]}{\Gamma} = \frac{1}{(-2\pi z)^{n/2}} \int_{\Gamma} 
f(q,z,y) e^{W_q(y)/z} \omega_q
\in \cO_{\cMo \times \C^*}  
\end{equation} 
The convergence of this integral is ensured by the fact that 
$f(q,z,y)$ has at most polynomial growth in $y$ 
and that $\Re(W_q(y)/z)$ goes to $-\infty$ at the end of $\Gamma$. 
More technically, as done in \cite{pham_GM}, 
one may prove the convergence of the integral 
by replacing the end of $\Gamma$ with a semi-algebraic chain.  

\begin{definition}
A section of $\cR$ on an open set 
$U\times \{0<|z|<\epsilon\}\subset \cMo\times \C^*$ 
is defined to be \emph{extendible to $z=0$}  
if it is the image of a relative $n$-form $\varphi$ 
of the form (\ref{eq:relative_nform}) such that 
$f(q,z,y)$ in (\ref{eq:relative_nform}) is regular at $z=0$. 
The sections extendible to $z=0$ define the extension 
$\cRz$ of the sheaf $\cR$ to $\cMo \times \C$. 
\end{definition} 

Let $\cR'$ be the $\cO_{\cMo \times \C^*}$-submodule of $\cR$ 
consisting of the sections which locally 
arise from relative $n$-forms $\varphi$ 
of the form (\ref{eq:relative_nform}). 
The Gau\ss-Manin connection on $\cR$ preserves the subsheaf $\cR'$. 
In fact, we have  
\begin{align}
\label{eq:GM_deRham} 
\begin{split} 
\nabla_{a}[\varphi] &= 
\left[\left(
\partial_a f+ \frac{1}{z} (\partial_aW_q) f\right) 
e^{W_q/z} \omega_q \right],  \\
\nabla_{z\partial_z}[\varphi] &= 
\left[\left ( 
z\partial_z f - \frac{1}{z} W_q f -\frac{n}{2}f \right ) 
e^{W_q/z} \omega_q\right], 
\end{split} 
\end{align} 
where $\varphi$ is given in (\ref{eq:relative_nform}) 
and $\partial_a = q_a (\partial/\partial q_a)$. 
Take a point $q$ in the open subset 
$\cMoo\subset \cMo$ appearing 
in Proposition \ref{prop:Jac-Bat}.  
Let $\Gamma_1,\dots,\Gamma_N$ be Lefschetz thimbles of $W_q(y)/z$ 
corresponding to critical points $\crit_1,\dots,\crit_N$.  
Then we have the following asymptotic expansion 
as $z \to 0$ with $\arg(z)$ fixed: 
\begin{equation}
\label{eq:asymptotic_exp_in_z} 
\frac{1}{(-2\pi z)^{n/2}} 
\int_{\Gamma_i} f(q,z,y) e^{W_q(y)/z} \omega_q 
\sim \frac{1}{|\bN_{\rm tor}|} 
\frac{e^{W_q(\crit_i)/z} }{\sqrt{\Hess(W_q)(\crit_i)}} 
(f(q,0,\crit_i) +O(z)) 
\end{equation} 
where $f(q,z,y) \in \cO_{\cMo\times \C}[y_1^\pm,\dots,y_n^\pm]$ 
is regular at $z=0$ and 
$\Hess(W_q)$ is the Hessian of $W_q$ calculated in 
co-ordinates $\log y_1,\dots,\log y_n$. 
Let $\phi_i(y)$ be a regular function on $Y_q$ 
which represents the delta-function supported on 
$\crit_i$ in the Jacobi ring $J(W_q)$. 
Put $\varphi_i = \phi_i(y) e^{W_q/z} \omega_q$.  
By the asymptotics of $\pair{[\varphi_i]}{\Gamma_j}$,  
we know that $[\varphi_1],\dots,[\varphi_N]$ form a basis of $\cR$ for 
sufficiently small $|z|>0$. 
Since $\cR'$ is preserved by the Gau\ss-Manin connection, 
we have $\cR=\cR'$ on the whole $\cMo\times \C^*$. 
In other words, $\cR$ is generated by 
relative $n$-forms of the form (\ref{eq:relative_nform}). 

Let $\Gamma_1^\vee,\dots,\Gamma_N^\vee$ be 
the Lefschetz thimbles of $W_q/(-z)$. 
These are dual to $\Gamma_1,\dots,\Gamma_N$ 
with respect to the intersection pairing (\ref{eq:pairing_Rvee}).  
Then the pairing on $\cR$ can be written as   
\begin{equation}
\label{eq:B-model_pairing}
([\varphi(-z)],[\varphi'(z)])_{\cR} 
= \frac{1}{(2\pi \iu z )^n}\sum_{i=1}^N  
\int_{\Gamma_i^\vee} \varphi(-z)  \cdot \int_{\Gamma_i} 
\varphi'(z).  
\end{equation} 
% We define an extension $\cRz$ of $\cR$ to $\cMo \times \C$ 
% as follows: a section of $\cR$ on an open set 
% $U\times \{0<|z|<\epsilon\}$ 
% is defined to be extendible to $z=0$ if it is the image of 
% a relative $n$-form $s$ of the form (\ref{eq:relative_nform}) such that 
% $f(q,z,y)$ in (\ref{eq:relative_nform}) is regular at $z=0$. 
When $[\varphi]$ and $[\varphi']$ are extendible to $z=0$, 
we have from (\ref{eq:B-model_pairing}) 
and (\ref{eq:asymptotic_exp_in_z}) 
\[
([\varphi],[\varphi'])_{\cR} 
\sim \frac{1}{|\bN_{\rm tor}|^2}
\sum_{i=1}^N \frac{f(q,0,\crit_i) f'(q,0,\crit_i)}{\Hess W_q(\crit_i)} 
+ O(z) 
\]
where we put $\varphi = f(q,z,y) e^{W_q(y)/z} \omega_q$ and 
$\varphi'=f'(q,z,y) e^{W_q(y)/z} \omega_q$. 
This shows that $([\varphi],[\varphi'])_{\cR}$ is 
regular at $z=0$ and the value at $z=0$ equals the residue 
pairing on $J(W_q)$.  
By continuity, we have at all $q\in \cMo$: 
\[
([\varphi],[\varphi'])_{\cR}|_{z=0} = \frac{1}{|\bN_{\rm tor}|^2}
\Res_{\Yo/\cMo} \left [ 
\frac{f(q,0,y) f'(q,0,y) \frac{dy_1\cdots dy_n}{y_1\cdots y_n}}
{y_1\parfrac{W_q}{y_1},\dots, y_n\parfrac{W_q}{y_n}} 
\right].  
\]
Let $\phi'_1,\dots,\phi'_N$ be an arbitrary basis 
of the Jacobi ring and 
put $s_i:=[\phi'_i(y) e^{W_q(y)/z}\omega_q]$. 
Then the Gram matrix $(s_i,s_j)_{\cR}$ is non-degenerate 
in a neighborhood of $z=0$ 
since the residue pairing is non-degenerate. 
This implies that $s_1,\dots,s_N$ 
form a local basis of $\cRz$ around $z=0$. 
Summarizing, 
\begin{proposition}[{\cite[Lemma 2.19]{CIT:I}}]
\label{prop:locfree_cRz} 
The $\cO_{\cMo\times \C^*}$-module $\cR$ 
is generated by relative $n$-forms 
of the form (\ref{eq:relative_nform}). 
The extension $\cRz$ of $\cR$ to $\cMo\times \C$ 
is locally free and the pairing on 
$\cR$ extends to a non-degenerate pairing 
$((-)^*\cRz) \otimes \cRz \to \cO_{\cMo\times \C}$.   
\end{proposition}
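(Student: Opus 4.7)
My plan is to verify the three claims in sequence, following the outline already implicit in the preceding discussion. To show that the sheaf $\cR$ is generated by relative $n$-forms of the form (\ref{eq:relative_nform}), I would work at a point $q_0\in \cMoo$ where all critical points of $W_{q_0}$ are non-degenerate. Choose functions $\phi_i(y)$ on $Y_{q_0}$ representing the delta class at each critical point $\crit_i$ in $J(W_{q_0})$, and form $\varphi_i = \phi_i(y) e^{W_q/z} \omega_q$ in a neighborhood of $q_0$. The stationary phase expansion (\ref{eq:asymptotic_exp_in_z}) applied to the pairing with Lefschetz thimbles $\Gamma_j$ yields a diagonal leading term proportional to $e^{W_{q_0}(\crit_i)/z} \delta_{ij}/\sqrt{\Hess W_{q_0}(\crit_i)}$ as $z\to 0$ with $\arg(z)$ fixed; in particular the matrix $(\pair{[\varphi_i]}{\Gamma_j})_{ij}$ is invertible for sufficiently small $|z|>0$. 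So $[\varphi_1],\dots,[\varphi_N]$ form a local basis of $\cR$ on an open set. Since $\cR'$ is preserved by the Gau\ss-Manin connection by the formulas (\ref{eq:GM_deRham}) and $\cR$ is a flat local system of rank $N$, flat translation (combined with connectedness of $\cMo\times \C^*$) forces $\cR'=\cR$ everywhere.

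For local freeness of $\cRz$ near $z=0$ at an arbitrary $q\in \cMo$, I would choose any basis $\phi'_1,\dots,\phi'_N$ of the Jacobi ring $J(W_q)$ (which has dimension $N$ by Proposition \ref{prop:kouchnirenko}), extend it to a neighborhood, and set $s_i := [\phi'_i(y) e^{W_q/z}\omega_q]$. These are extendible to $z=0$ by definition. I then compute the Gram matrix of $(s_i, s_j)_{\cR}$ using (\ref{eq:B-model_pairing}): summing the stationary phase asymptotics at $q\in \cMoo$ over critical points, the oscillatory factors cancel between paired thimbles $\Gamma_k$ and $\Gamma_k^\vee$, so the pairing is regular at $z=0$ with value equal to the residue pairing
\[
([s_i],[s_j])_{\cR}\big|_{z=0} = \frac{1}{|\bN_{\rm tor}|^{2}} \Res_{\Yo/\cMo}\!\left[\frac{\phi'_i(y)\,\phi'_j(y)\,\frac{dy_1\cdots dy_n}{y_1\cdots y_n}}{y_1\partial_{y_1}W_q,\dots,y_n\partial_{y_n}W_q}\right].
\]
This residue pairing is non-degenerate on $J(W_q)$ because $J(W_q)$ is a complete-intersection Gorenstein Artin algebra. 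Hence the Gram matrix is non-degenerate near $z=0$, which forces $s_1,\dots,s_N$ to be a local basis of $\cRz$ and simultaneously shows that the pairing extends non-degenerately to $\cO_{\cMo\times \C}$.

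The main technical obstacle is handling points $q\in \cMo\setminus\cMoo$, where critical points of $W_q$ may coalesce, so the delta-basis construction and the simple asymptotic (\ref{eq:asymptotic_exp_in_z}) are unavailable. The cleanest remedy is a continuity argument: by Proposition \ref{prop:Jac-Bat}(ii), $\cMoo$ is open and dense in $\cMo$, and the family of Jacobi rings $J(W_q)$ is a flat family of Gorenstein Artinian algebras of constant length $N$ over $\cMo$ (by the isomorphism with the Batyrev ring $B(\cX)$ in Proposition \ref{prop:Jac-Bat}(i)), so the residue pairing varies holomorphically and remains non-degenerate on all of $\cMo$. The regularity at $z=0$ of $(s_i,s_j)_{\cR}$ and its identification with the residue pairing, established over $\cMoo$ via stationary phase, therefore extend by continuity (and the Hartogs-type reasoning via the generating property of $\cR'$ proved in the first step) to all of $\cMo$, completing the proof.
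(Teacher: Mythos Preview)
Your proposal is correct and follows essentially the same approach as the paper: the paper's argument (given in the text preceding the proposition) uses the delta-basis at a non-degenerate point with the asymptotics (\ref{eq:asymptotic_exp_in_z}) to get $\cR'=\cR$ via preservation under the Gau\ss-Manin connection, then uses a Jacobi-ring basis and the stationary-phase computation of $(s_i,s_j)_\cR$ to identify the value at $z=0$ with the residue pairing, whose non-degeneracy yields both local freeness and non-degeneracy of the extended pairing, with the extension from $\cMoo$ to $\cMo$ handled by continuity. Your write-up is slightly more explicit about why the residue pairing is non-degenerate (Gorenstein Artin) and why the Jacobi family is flat, but the structure is the same.
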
 

In the algebraic construction by Sabbah, 
the corresponding results were proved 
in \cite[Corollary 10.2]{sabbah-hypergeometric} 
(see also \cite[Proposition 2.13]{douai-sabbah-I}). 

The \emph{Euler vector field} $E$  
on $\cMo$ is defined by 
\begin{equation}
\label{eq:B-model_Euler} 
E := \pr_*\left(\sum_{i=1}^m w_i\parfrac{}{w_i}\right)  
= \sum_{a=1}^r \rho_a q_a \parfrac{}{q_a}, \quad 
\rho_a := \sum_{i=1}^m \sfm_{ia}. 
\end{equation} 
The grading operator $\Grading$ 
acting on sections of $\cRz$ is defined by 
\begin{align}
\label{eq:B-model_grading}
\Grading[\varphi] 
&= 2\left[\left(z\parfrac{f}{z} + \sum_{i=1}^m w_i \parfrac{f}{w_i}\right) 
e^{W/z} \omega \right]  
\end{align} 
for a section $[\varphi]$ of the form (\ref{eq:relative_nform}). 
This grading operator can be written in terms of 
the Gau\ss-Manin connection and the Euler vector field 
(\emph{cf.} the grading operator (\ref{eq:Euler_reg}) 
for the A-model):  
\begin{lemma} 
\label{lem:B-model_grading} 
$\Grading = 2(\nabla_E + \nabla_{z\partial_z} + \frac{n}{2} )$.
\end{lemma}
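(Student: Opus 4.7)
The plan is to relate $\sum_{i=1}^m w_i\partial_{w_i}$, which lives on the total space $Y$, to the Euler vector field $E$ on $\cMo$ via a horizontal-plus-vertical splitting of the fibration, and then use integration by parts along fibers to express the vertical part cohomologically.

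First, using the splitting $w_i = q^{\ell_i} y^{b_i}$ from Section \ref{subsubsec:LGmodel_def}, I decompose the vector field $\tilde{E} := \sum_{i=1}^m w_i \partial_{w_i}$ on $Y$ with respect to the coordinates $(q,y)$ as $\tilde{E} = E + V$, where $E = \sum_a \rho_a q_a \partial_{q_a}$ (with $y$ fixed) is the horizontal lift matching $\pr_* \tilde E$, and $V = \sum_j \gamma_j y_j \partial_{y_j}$ (with $q$ fixed) is a fiberwise-linear vertical vector field, for some constants $\gamma_j$. Two tautologies then become the engine of the proof: applying $\tilde E = \sum_i w_i \partial_{w_i}$ to $W = \sum_i w_i$ gives $\tilde E(W) = W$, hence $V(W_q) = W_q - E(W_q)$; and $\omega_q$, being the translation-invariant form $\prod_j dy_j/y_j$ on each component, satisfies $\mathcal{L}_V \omega_q = 0$.

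Next, I perform integration by parts along the fibers. Since $\omega_q$ is a relative top form, $\mathcal{L}_V(g\, e^{W_q/z}\omega_q)$ is fiberwise exact, hence zero in $\cR$. Expanding $\mathcal{L}_V$ via the Leibniz rule and using $\mathcal{L}_V\omega_q = 0$ and $V(W_q) = W_q - E(W_q)$ yields
\begin{equation*}
\bigl[V(f)\, e^{W_q/z}\omega_q\bigr] \;=\; -\tfrac{1}{z}\bigl[f\,V(W_q)\, e^{W_q/z}\omega_q\bigr] \;=\; \tfrac{1}{z}\bigl[(E(W_q) - W_q)\, f\, e^{W_q/z}\omega_q\bigr]
\end{equation*}
in $\cR$. (This is of course the same integration-by-parts mechanism already used in deriving the Gau\ss-Manin formulas \eqref{eq:GM_deRham}.)

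Finally, I combine. By the definition of $\Grading$,
\begin{equation*}
\Grading[\varphi] \;=\; 2\bigl[(z\partial_z f + \tilde E(f))\, e^{W_q/z}\omega_q\bigr] \;=\; 2\bigl[(z\partial_z f + E(f) + V(f))\, e^{W_q/z}\omega_q\bigr],
\end{equation*}
and substituting the identity just established turns this into $2[(z\partial_z f + E(f))e^{W_q/z}\omega_q] + \tfrac{2}{z}[(E(W_q)-W_q)f\, e^{W_q/z}\omega_q]$. On the other hand, applying \eqref{eq:GM_deRham} directly, $2(\nabla_E + \nabla_{z\partial_z} + \tfrac{n}{2})[\varphi]$ expands into the same expression, the $-n[\varphi]$ from $2\nabla_{z\partial_z}$ cancelling against the $+n[\varphi]$ summand. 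The only delicate point is the vanishing of exact relative forms in $\cR$ used in the integration by parts, but this is built into the construction of $\cR$ from relative $n$-forms in Section~3.3 and already underlies \eqref{eq:GM_deRham}, so no genuine obstacle arises.
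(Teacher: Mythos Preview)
Your proof is correct and follows essentially the same approach as the paper: both decompose $\sum_{i=1}^m w_i\partial_{w_i}$ as $E$ plus a vertical vector field $V=\sum_j c_j y_j\partial_{y_j}$ and use that the vertical contribution is cohomologically trivial. The paper packages the calculation slightly more compactly by first noting that $(z\partial_z + \sum_i w_i\partial_{w_i})$ annihilates $e^{W/z}$, so the grading can be rewritten as this operator applied to $fe^{W/z}$, after which the vertical piece $[(\sum_j c_j y_j\partial_{y_j})(fe^{W/z})\,\omega]$ is immediately exact; your explicit integration-by-parts and term-matching via \eqref{eq:GM_deRham} accomplishes the same thing with a bit more bookkeeping.
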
 
\begin{proof}
% By the multi-valued splitting of the fibration 
% (\ref{eq:fibration_LG}) appearing 
% in Section \ref{subsubsec:LGmodel_def}, 
% we regard $E$ as a vector field on $Y$. 
Using the co-ordinate system 
$(q_a, y_i)$ on $Y$ in Section \ref{subsubsec:LGmodel_def}, 
we can write $\sum_{i=1}^m w_i \parfrac{}{w_i} 
= E + \sum_{i=1}^n c_i y_i\parfrac{}{y_i}$ for some $c_i\in \Q$. 
Here we lift $E$ to a vector field on $Y$ by 
using the co-ordinates $(q_a,y_i)$.   
By $(\sum_{i=1}^m w_i \parfrac{}{w_i})W = W$, 
we have 
\begin{align*} 
\frac{1}{2} \Grading[\varphi] &= 
\left[\left(\left(z\partial_z + 
\textstyle\sum_{i=1}^m w_i \partial_{w_i}\right) 
(f e^{W/z})\right) \omega\right] \\ 
& = \left(
\nabla_{z\partial_z} + \frac{n}{2} + \nabla_E \right)[\varphi] 
+ \left[\left(\left(\textstyle\sum_{i=1}^n 
c_i y_i \partial_{y_i}\right) 
(f e^{W/z})\right) \omega \right]. 
\end{align*} 
The second term is zero in cohomology since it is exact. 
\end{proof} 

\begin{definition}[\emph{cf.} Definition \ref{def:QDM}] 
\label{def:BDM}
Let $\pi\colon \cMo\times \C \to \cMo$ be the projection 
and $(-)\colon \cMo\times \C \to \cMo \times \C$ 
be the map sending $(q,z)$ to $(q,-z)$. 
The \emph{B-model $D$-module of the LG model} 
is the tuple $(\cRz,\nabla,(\cdot,\cdot)_{\cRz})$ 
consisting of 
the locally free sheaf $\cRz$ over $\cMo\times \C$, 
the meromorphic flat connection (\ref{eq:GM_deRham})    
\[
\nabla \colon \cRz \to \cRz(\cMo\times \{0\}) 
\otimes_{\cO_{\cMo\times \C}}  
(\pi^*\Omega^1_{\cMo} \oplus \cO_{\cMo\times \C} \frac{dz}{z})  
\]
and the $\nabla$-flat pairing (\ref{eq:B-model_pairing}) 
\[(\cdot,\cdot)_{\cRz} 
\colon (-)^*\cRz \otimes_{\cO_{\cMo\times \C}} 
\cRz \to \cO_{\cMo\times \C} 
\] 
satisfying $((-)^*s_1,s_2)_{\cRz} = (-)^*((-)^*s_2,s_1)_{\cRz}$.  
This is also equipped with the grading operator  
$\Grading\colon \cRz\to \cRz$ in (\ref{eq:B-model_grading}). 
\end{definition} 

Note that the B-model $D$-module is underlain by 
the integral local system of Lefschetz thimbles. 

\begin{proposition}
\label{prop:BDM_diffgen} 
The B-model $D$-module $\cRz$ is generated by 
$[e^{W_q/z} \omega_q]$ and its derivatives 
$z\nabla_{a_1}z \nabla_{a_2} \cdots z 
\nabla_{a_k} [e^{W_q/z} \omega_q]$ 
as an $\cO_{\cMo\times \C}$-module, 
where $\nabla_a = \nabla_{q_a (\partial/\partial q_a)}$. 
\end{proposition}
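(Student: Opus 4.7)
The plan is to combine the formula (\ref{eq:GM_deRham}) for $\nabla$ in de Rham form with the Batyrev/Jacobi isomorphism of Proposition \ref{prop:Jac-Bat} and a Nakayama-type argument based on the local freeness of $\cRz$ established in Proposition \ref{prop:locfree_cRz}.

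First I would compute iterated derivatives directly. From (\ref{eq:GM_deRham}), with $f=1$, one has
\begin{equation*}
z\nabla_a[e^{W_q/z}\omega_q] = [(\partial_a W_q)e^{W_q/z}\omega_q],
\end{equation*}
and inductively
\begin{equation*}
z\nabla_{a_k}\cdots z\nabla_{a_1}[e^{W_q/z}\omega_q]
= \left[\left(\prod_{i=1}^k \partial_{a_i} W_q + z\, R_{a_1,\dots,a_k}(q,z,y)\right) e^{W_q/z}\omega_q\right],
\end{equation*}
where $R_{a_1,\dots,a_k}$ is a Laurent polynomial in $y$ depending regularly on $(q,z)$. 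Next, I would invoke Proposition \ref{prop:Jac-Bat}(i): since the Batyrev ring $B(\cX)$ is generated as a $\C[q^\pm]$-algebra by $\sfp_1,\dots,\sfp_r$, and the isomorphism $B(\cX)\cong J(W)$ sends $\sfp_a$ to $[q_a\partial_a W_q]$, finite products of the $\partial_a W_q$ span $J(W_q)$ over $\C$ at every $q\in \cMo$. In particular one can pick finitely many monomials $\phi_1,\dots,\phi_N$ in $\partial_a W_q$'s giving a basis of $J(W_q)$ at a base point, and they remain a basis on a neighbourhood because the algebraic family $\cO_{\Yo}/(y_i\partial_{y_i}W_q)$ is locally free of rank $N$ on $\cMo$.

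Now fix $q_0\in \cMo$ and let $s_{a_1,\dots,a_k}:= z\nabla_{a_k}\cdots z\nabla_{a_1}[e^{W_q/z}\omega_q]$. By the computation above, the values of the $s_{a_1,\dots,a_k}$ at $z=0$ in the fiber $\cRz|_{(q_0,0)}$ coincide with the classes $[\prod_i\partial_{a_i}W_{q_0}\cdot e^{W_{q_0}/z}\omega_{q_0}]|_{z=0}$, which by the Jacobi-ring basis discussion from Proposition \ref{prop:locfree_cRz} span this fiber over $\C$. Because $\cRz$ is locally free of rank $N$ on $\cMo\times\C$ (Proposition \ref{prop:locfree_cRz}), Nakayama's lemma applied to the local ring $\cO_{\cMo\times\C,(q_0,0)}$ implies that finitely many $s_{a_1,\dots,a_k}$ generate $\cRz$ as a module over $\cO_{\cMo\times\C,(q_0,0)}$, and hence on an open neighbourhood of $(q_0,0)$. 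Away from $z=0$, local generation on $\cMo\times\C^*$ follows from the already-established fact (in the proof of Proposition \ref{prop:locfree_cRz}) that $\cR = \cR'$, together with the same argument applied to $[\phi(y)e^{W_q/z}\omega_q]$ with $\phi\in J(W_q)$ expressed as a polynomial in $\partial_a W_q$.

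The main subtlety, and thus the main obstacle, is the passage from pointwise spanning in the Jacobi ring to generation of the locally free sheaf $\cRz$: one must ensure that the leading-in-$z$ terms really do capture a basis at $z=0$, and that the Nakayama step is valid (requiring local freeness of $\cRz$ near $z=0$ and finiteness of the index set of derivatives used). Both ingredients are already in place — local freeness from Proposition \ref{prop:locfree_cRz} and finite generation of $J(W_q)$ by monomials in $\partial_aW_q$ from Proposition \ref{prop:Jac-Bat} — so the argument should go through without new analytic input.
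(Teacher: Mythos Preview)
Your argument near $z=0$ is correct and essentially the same as the paper's: the identification $\cRz|_{z=0}\cong J(W)$, the fact that $z\nabla_a$ acts as multiplication by $\partial_a W_q$, and Nakayama give generation on a neighbourhood of $\cMo\times\{0\}$. The gap is in your treatment of points $(q_0,z_0)$ with $z_0\neq 0$. The statement $\cR=\cR'$ from Proposition~\ref{prop:locfree_cRz} only says that every section arises from some de Rham form; it does not say that the particular sections $s_{a_1,\dots,a_k}$ span the fibre at $(q_0,z_0)$. Your phrase ``the same argument applied to $[\phi(y)e^{W_q/z}\omega_q]$ with $\phi\in J(W_q)$ expressed as a polynomial in $\partial_a W_q$'' does not work: for $z_0\neq 0$ the fibre $\cR_{(q_0,z_0)}$ is \emph{not} the Jacobi ring, the relations are $y_i\partial_{y_i}g + z_0^{-1}(y_i\partial_{y_i}W_q)g$ rather than the Jacobi ideal, and the proof of Proposition~\ref{prop:locfree_cRz} only establishes that $[\phi_i e^{W_q/z}\omega_q]$ form a basis for \emph{small} $|z|$, extending to all $z$ via the full Gau\ss--Manin connection. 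Having only the $q$-direction connection $\nabla_a$ preserve $\tcRz$ on $\cMo\times\C^*$ is not enough to force $\cRz/\tcRz=0$ there: a coherent sheaf with a flat partial connection in the $q$-direction can still be supported on a hypersurface $\{z=z_0\}$.

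The paper supplies exactly the missing ingredient: using $\Grading[e^{W_q/z}\omega_q]=0$ and Lemma~\ref{lem:B-model_grading} one gets
\[
\nabla_{z^2\partial_z}[e^{W_q/z}\omega_q]
=\Bigl(\sum_{a=1}^r \rho_a\, z\nabla_a - \tfrac{n}{2}z\Bigr)[e^{W_q/z}\omega_q],
\]
so the submodule $\tcRz$ generated by the iterated $z\nabla_a$-derivatives is closed under $\nabla_{z^2\partial_z}$ as well. On $\cMo\times\C^*$ this means $\tcRz$ is preserved by the \emph{full} flat connection, hence $\cRz/\tcRz$ is locally free of constant rank there; since it vanishes near $z=0$ it vanishes everywhere. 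You should add this Euler/grading step to close the argument.
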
 
\begin{proof} 
By the discussion preceding Proposition 
\ref{prop:locfree_cRz}, the restriction 
$\cRz|_{\cMo\times\{0\}}$ is identified with the bundle 
$J(W)$ of Jacobi rings over $\cMo$ by 
the map $[f(q,z,y) e^{W_q/z}\omega_q] \mapsto [f(q,0,y)]$. 
Under this identification, the action of $z\nabla_a$ 
corresponds to the multiplication by 
$q_a (\partial W_q/\partial q_a)$ by (\ref{eq:GM_deRham}). 
Because $J(W) \cong B(\cX)$ by Proposition 
\ref{prop:Jac-Bat} and $B(\cX)$ is generated by 
$\sfp_a$'s, $J(W)$ is generated by 
$q_a (\partial W_q/\partial q_a)$ 
as a $\C[q^\pm]$-algebra. 
Therefore, $\cRz$ is generated by 
$z\nabla_{a_1}\cdots z\nabla_{a_k} 
[e^{W_q/z} \omega_q]$ in the neighborhood of $z=0$. 
Let $\tcRz$ be the $\cO_{\cMo\times \C}$-submodule 
of $\cRz$ generated by these derivatives. 
From $\Grading [e^{W_q/z} \omega_q] = 0$ 
and Lemma \ref{lem:B-model_grading}, one finds that 
\[
\nabla_{z^2\partial_z} [e^{W_q/z} \omega_q] 
= \left( \sum_{a=1}^r \rho_a z \nabla_a - \frac{n}{2} z \right) 
[e^{W_q/z} \omega_q]. 
\] 
Hence $\tcRz$ is preserved by $\nabla_{z^2 \partial_z}$. 
Therefore, $\tcRz = \cRz$. 
\end{proof} 

\section{Mirror symmetry for toric orbifolds and 
integral structures} 
\label{sec:intstr_toricmirror} 
Under mirror symmetry, 
the A-model $D$-module (quantum $D$-module) 
should be isomorphic to the B-model $D$-module. 
We give a precise mirror symmetry conjecture for a weak Fano toric 
orbifold and check that the mirror symmetry 
matches up the $\hGamma$-integral structure 
in the A-side and 
the natural integral structure 
in the B-side. 

\subsection{$I$-function and mirror theorem} 
A Givental style mirror theorem 
for a toric orbifold can be stated as 
the equality of the $I$-function and the $J$-function. 
This has been proved for weak Fano toric manifolds 
\cite{givental-mirrorthm-toric} and 
weighted projective spaces \cite{CCLT:wp}. 
A general case for toric orbifolds 
will be proved in \cite{CCIT:toric}. 

\begin{definition}[\cite{CCIT:toric}] 
\label{def:I-funct} 
The \emph{$I$-function} of a toric orbifold $\cX$ is 
an $H_{\rm orb}^*(\cX)$-valued power series 
on $\cM$ defined by 
\[
I(q,z) = e^{\sum_{a=1}^{r} \ov{p}_a \log q_a/z} 
\sum_{d \in \K_{\rm eff}} q^d  
\frac{\prod_{i: \pair{D_i}{d}<0} \prod_{\pair{D_i}{d}\le \nu <0} 
(\ov{D}_i + (\pair{D_i}{d}-\nu) z) }
{\prod_{i: \pair{D_i}{d}>0} \prod_{0\le \nu< \pair{D_i}{d}}
(\ov{D}_i + (\pair{D_i}{d}-\nu) z) }
\unit_{v(d)} 
\]
where $q^d = q_1^{\pair{p_1}{d}}\dots q_r^{\pair{p_r}{d}}$ 
and the index $\nu$ moves in $\Z$. 
Recall that $\ov{p}_a$ and $\ov{D}_j$ 
are the images of 
$p_a$ and $D_j$ under the projection 
$\bL^\vee\otimes \Q \to H^2(\cX,\Q)$.   
Note that $\ov{p}_a = 0$ for $a>r'$, 
$\ov{D}_j =0$ for $j>m'$ and 
$\pair{p_a}{d}\ge 0$ for $d\in \K_{\rm eff}$. 
\end{definition} 

Choose $e_0\in \N$ such that $e_0 \K \subset \bL$. 
Then $e^{-\sum_{a=1}^r \ov{p}_a\log q_a/z} I(q,z)$ 
belongs to $H^*_{\rm orb}(\cX) \otimes \C[z,z^{-1}]
[\![q_1^{1/e_0},\dots,q_r^{1/e_0}]\!]$.  
The $I$-function can be also written in the form: 
\begin{equation}
\label{eq:I-funct_another}
I(q,z) = e^{\sum_{a=1}^r \ov{p}_a \log q_a/z} 
\sum_{d\in \K} q^d \prod_{i=1}^m 
\frac{\prod_{\nu = \ceil{\pair{D_i}{d}} }^{\infty} 
(\ov{D}_i + (\pair{D_i}{d}-\nu) z)}  
{\prod_{\nu =0}^\infty 
(\ov{D}_i + (\pair{D_i}{d}-\nu) z)}\unit_{v(d)}. 
\end{equation}  
Note that all but finite factors cancel 
in the infinite products. 
The summand with $d\in \K\setminus \K_{\rm eff}$ 
vanishes in $H_{\rm orb}^*(\cX)$ 
because we have 
$(\prod_{i:\pair{D_i}{d}\in \Z_{<0}} \ov{D}_i) 
\unit_{v(d)}$ in the numerator 
and this is zero in $H^*(\cX_{v(d)})$ by 
the presentation (\ref{eq:coh_presentation}).  

The $I$-function defines an analytic function 
when $\hrho \in \cl(\tC_\cX)$. 
See Section \ref{subsubsec:weakFano} 
for the condition $\hrho \in \cl(\tC_\cX)$.  

\begin{lemma} 
\label{lem:convergence} 
The $I$-function is a convergent power series in $q_1,\dots,q_r$ 
if and only if 
$\hrho$ is in the closure $\cl(\tC_\cX)$ 
of the extended K\"{a}hler cone.  
In this case, the $I$-function has the asymptotics
\[
I(q,z) = 1 + \frac{\tau(q)}{z} + o(z^{-1})
\]
where $\tau(q)$ is a multi-valued function 
taking values in $H^{\le 2}_{\rm orb}(\cX)$. 
The map $q \mapsto \tau(q)$ 
is called the \emph{mirror map}. 
\end{lemma}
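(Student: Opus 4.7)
For convergence, my plan is to estimate each coefficient $C_d(z)$ of $q^d\unit_{v(d)}$ in (\ref{eq:I-funct_another}) via Stirling's formula. A careful count, accounting for which linear factors collapse to the pure nilpotent $\ov{D}_i$ when $\pair{D_i}{d}\in \Z_{<0}$, gives the degree-in-$z$ identity
\[
\deg_z C_d = -\pair{\hrho}{d} - \iota_{v(d)} - |I^-(d)|, \quad I^-(d) := \{i : \pair{D_i}{d} \in \Z_{<0}\}.
\]
Rewriting $C_d$ as a ratio of Gamma functions, via $\prod_{\nu=0}^{k-1}(a - \nu z) = z^k \Gamma(a/z+1)/\Gamma(a/z - k + 1)$ and its analog after regularizing the half-infinite products, then substituting $d = t d_0$ with $d_0 \in \K_{\rm eff}$ and $t$ running over positive integers with $td_0 \in \K_{\rm eff}$, Stirling yields
\[
\log\bigl|q^{td_0} C_{td_0}(z)\bigr| = -(t\log t)\pair{\hrho}{d_0} + t\, A(q,d_0) + O(\log t),
\]
where $A(q,d_0) := \pair{\log|q|}{d_0} + \pair{\hrho}{d_0} - \sum_i \pair{D_i}{d_0}\log|\pair{D_i}{d_0}|$ and $\pair{\log|q|}{d_0} := \sum_a \pair{p_a}{d_0}\log|q_a|$. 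For this to tend to $-\infty$ on every nonzero ray once $|q|$ is small, one needs $\pair{\hrho}{d_0} \ge 0$ for every $d_0 \in \K_{\rm eff}$: the $-t\log t$ term handles $\pair{\hrho}{d_0} > 0$ directly, while on the boundary $\pair{\hrho}{d_0} = 0$ the linear-in-$t$ term is driven negative because $p_a \in \cl(\tC_\cX)$ ensures $\pair{p_a}{d_0}\ge 0$ with not all of them vanishing, so $\pair{\log|q|}{d_0}\to-\infty$ as the $|q_a|$ shrink. To translate the pairing condition into a condition on $\hrho$, I invoke polyhedral cone duality in $\bL \otimes \R$: the positive hull $\R_{\ge 0}\cdot \K_{\rm eff} = \bigcup_{I \in \cA}\{d : \pair{D_i}{d} \ge 0\ \forall i \in I\}$ dualizes (union of cones $\leftrightarrow$ intersection of duals) to $\bigcap_{I \in \cA}\sum_{i \in I}\R_{\ge 0} D_i = \cl(\tC_\cX)$, so $\pair{\hrho}{\K_{\rm eff}} \subset \R_{\ge 0}$ precisely when $\hrho \in \cl(\tC_\cX)$.

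For the asymptotic, the degree identity above implies that under $\hrho \in \cl(\tC_\cX)$ every summand contributes at order $z^{-\pair{\hrho}{d}-\iota_{v(d)}-|I^-(d)|}$ with a nonpositive exponent, vanishing only when $d = 0$. Combined with the expansion $e^{\sum_a \ov{p}_a \log q_a / z} = 1 + \sum_a \ov{p}_a \log q_a /z + O(z^{-2})$, this pins the $z^0$-coefficient of $I(q,z)$ at $\unit$. The $z^{-1}$-coefficient $\tau(q)$ then collects the linear term $\sum_a \ov{p}_a \log q_a \in H^2(\cX)$ from the prefactor plus the leading contributions of the $d \ne 0$ summands with $\pair{\hrho}{d} + \iota_{v(d)} + |I^-(d)| = 1$. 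Since these three nonnegative integers must partition $1$, exactly three mutually exclusive cases occur: $\pair{\hrho}{d}=1$ contributes a scalar multiple of $\unit \in H^0$; $\iota_{v(d)} = 1$ contributes a scalar multiple of $\unit_{v(d)} \in H^2_{\rm orb}(\cX)$; and $|I^-(d)| = 1$ contributes a scalar multiple of $\ov{D}_i \in H^2(\cX)$. Each such contribution lies in $H^{\le 2}_{\rm orb}(\cX)$, confirming $\tau(q) \in H^{\le 2}_{\rm orb}(\cX)$.

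The main obstacle will be the polynomial-degree bookkeeping: the naive count of linear factors in numerator minus denominator overshoots the true $z$-degree by $|I^-(d)|$, because each integer $\pair{D_i}{d} \in \Z_{<0}$ produces a factor $\ov{D}_i + 0\cdot z$ that collapses to a nilpotent cohomology class; without tracking this collapse, the enumeration of $z^{-1}$ contributions misses the third case. A related delicate point is the borderline convergence case $\pair{\hrho}{d_0} = 0$, where the logarithmic dominance disappears and the argument rests on the explicit choice of the nef basis $p_a \in \cl(\tC_\cX)$ made in Section \ref{subsubsec:KC_nefbasis}.
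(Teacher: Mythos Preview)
The paper states Lemma~\ref{lem:convergence} without proof, so there is no argument to compare against. Your overall strategy --- Stirling for convergence, cone duality for the equivalence with $\hrho\in\cl(\tC_\cX)$, and the degree count for the asymptotic --- is the natural one and is essentially correct. There is, however, a genuine gap in the case analysis for the $z^{-1}$ coefficient.

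You assert that $\pair{\hrho}{d}$, $\iota_{v(d)}$, and $|I^-(d)|$ are ``three nonnegative integers'' partitioning $1$, and split into three cases accordingly. But $\pair{\hrho}{d}$ and $\iota_{v(d)}$ are \emph{not} integers in general: $d$ lies in $\bL\otimes\Q$, not in $\bL$, and $\iota_{v(d)}=\sum_i\{-\pair{D_i}{d}\}$ is merely rational (think of non-Gorenstein examples such as $\Proj(1,1,3)$, where $d=1/3$ gives $\pair{\hrho}{d}=5/3$, $\iota_{v(d)}=4/3$). What \emph{is} an integer is the sum $\pair{\hrho}{d}+\iota_{v(d)}=\sum_i\ceil{\pair{D_i}{d}}$. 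So for $N_d=1$ you have two cases, $(\pair{\hrho}{d}+\iota_{v(d)},\,|I^-(d)|)\in\{(1,0),(0,1)\}$, and the first of these does not split further into your cases ``$\pair{\hrho}{d}=1$'' and ``$\iota_{v(d)}=1$'': the possibility $\pair{\hrho}{d}\in(0,1)$, $\iota_{v(d)}=1-\pair{\hrho}{d}\in(0,1)$, $|I^-(d)|=0$ is not covered by your trichotomy.

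The fix is immediate and in fact cleaner than the case split. The leading coefficient of $C_d(z)\unit_{v(d)}$ (when $N_d=1$) is a scalar multiple of $\bigl(\prod_{i\in I^-(d)}\ov{D}_i\bigr)\unit_{v(d)}$, whose orbifold degree is
\[
2|I^-(d)|+2\iota_{v(d)}=2(N_d-\pair{\hrho}{d})=2(1-\pair{\hrho}{d})\le 2,
\]
using only $\pair{\hrho}{d}\ge 0$. Equivalently, the homogeneity of $I(q,z)$ (degree $0$ under $\deg z=2$, $\deg q_a=2\rho_a$, plus the orbifold degree) forces the $z^{-1}\cdot q^d$ term to lie in $H^{2-2\pair{\hrho}{d}}_{\rm orb}(\cX)\subset H^{\le 2}_{\rm orb}(\cX)$.

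Two smaller points. First, your formula $\deg_z C_d=-N_d$ is only an upper bound: the leading coefficient $\bigl(\prod_{i\in I^-(d)}\ov{D}_i\bigr)\unit_{v(d)}$ may vanish in $H^*(\cX_{v(d)})$ (cf.\ the remark after (\ref{eq:I-funct_another})). This does not affect either conclusion, since you only need the inequality. Second, for the ``if'' direction of convergence, the ray-by-ray Stirling estimate must be upgraded to a bound uniform in the direction $d/|d|$ in order to sum; since $\K_{\rm eff}$ is covered by finitely many translates of the simplicial cones $\{d:\pair{D_i}{d}\ge 0,\ i\in I\}$, $I\in\cA$, this is routine, but it should be said.
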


When $\hrho \in \cl(\tC_\cX)$, 
the mirror map $\tau$ takes the form 
\begin{equation}
\label{eq:mirrormap_exp} 
\tau(q) = \sum_{a=1}^{r'} (\log q_a) \ov{p}_a +
 \sum_{j=m'+1}^m q^{D^\vee_j} \frD_j 
+ \text{ h.o.t.}, 
\end{equation} 
where h.o.t.\ (higher order term) 
is a power series in $q_1^{1/e_0},\dots q_r^{1/e_0}$.  
Thus $\tau$ is a local embedding (resp. isomorphism) 
near $q=0$ 
if $\ov{p}_1,\dots,\ov{p}_{r'},\frD_{m'+1},\dots,\frD_m$ 
are linearly independent (resp. basis of $H^{\le 2}_{\rm orb}(\cX)$).   
See (\ref{eq:frD}) for $\frD_j$.  
The following ``mirror theorem"  
will be proved in \cite{CCIT:toric}. 
\begin{conjecture}
\label{conj:mirrorthm} 
Assume that $\hrho\in \cl(\tC_\cX)$. Then the $I$-function 
and the $J$-function coincide via the co-ordinate change 
$\tau = \tau(q)$: 
\[
I(q,z) = J(\tau(q),z),  
\]
where $\tau(q)$ is the mirror map 
in Lemma \ref{lem:convergence}. 
\end{conjecture}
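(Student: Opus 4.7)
The plan is to prove $I(q,z) = J(\tau(q),z)$ by showing that both lie on Givental's Lagrangian cone $\cL \subset \cH^\cX$ introduced in (\ref{eq:Giventalcone}), and then applying a uniqueness statement. The cone $\cL$ has the characterizing property that every point of the form $z\unit + \tau + O(z^{-1})$ (with $\tau\in H^{\le 2}_{\rm orb}(\cX)$) equals $z J(\tau,z)$; this follows from the string/divisor equation and the fact that $\cL$ is the graph of $d\cF_0$ with a dilaton shift. By Lemma \ref{lem:convergence}, $zI(q,z) = z\unit + \tau(q) + O(z^{-1})$, so once we know $zI(q,z)\in \cL$, the identity $I(q,z) = J(\tau(q),z)$ is immediate.

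The heart of the argument is therefore to show $zI(q,z) \in \cL$. The plan is to work equivariantly with respect to the big torus $T = (\C^*)^{m}/\text{(generic stabilizer)}$ acting on $\cX$ through the presentation $\cX = [\cU_\eta/\T]$; every $T$-fixed point of $\cX$ corresponds to a maximal cone of $\Sigma$, and the $T$-fixed locus of $I\cX$ is indexed by pairs $(\sigma, v)$ with $v\in \Boxop\cap \sigma$. I would define the equivariant $I$-function by the hypergeometric modification of (\ref{eq:I-funct_another}) in which each Chern root $\ov{D}_i$ is replaced by its $T$-equivariant lift $\ov{D}_i^T$, and similarly the equivariant $J$-function by inserting $T$-equivariant insertions into (\ref{eq:Linv}). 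Then I would prove that the equivariant $I^T$ lies on the equivariant Lagrangian cone $\cL^T$, using Atiyah-Bott localization on the moduli stacks $\cX_{0,1,d}$ of one-pointed genus-zero orbifold stable maps of degree $d$. The $T$-fixed loci consist of maps factoring through invariant orbi-curves joining $T$-fixed points; their edge contributions produce precisely the hypergeometric factor in (\ref{eq:I-funct_another}), with the ceilings $\ceil{\pair{D_i}{d}}$ arising from the orbifold twisting at the nodes (which forces the gerbe structure at the stacky marked point). This is carried out for toric manifolds in Givental's original work and adapted to weighted projective spaces in \cite{CCLT:wp}; the general orbifold version is the content of \cite{CCIT:toric}.

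More precisely, I would use Brown's/Givental's \emph{recursion characterization} of points on $\cL$: a family $F(q,z)$ belongs to the (equivariant) Lagrangian cone $\cL^T$ if and only if (i) its non-equivariant limit exists and lies in the formal Givental space, (ii) each component $F_{(\sigma,v)}$ along a $T$-fixed locus is a rational function of $z$ whose poles occur only at $z = c_1(L)/k$ for appropriate line bundles $L$ and positive integers $k$, and (iii) the principal parts at these poles satisfy an explicit recursion relating $F_{(\sigma,v)}$ to $F_{(\sigma',v')}$ at neighboring fixed loci. One checks directly from (\ref{eq:I-funct_another}) and the combinatorics of the fan $\Sigma$ that $I^T(q,z)$ satisfies (ii)–(iii). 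Condition (i) is the substance of Lemma \ref{lem:convergence}, whose hypothesis $\hrho \in \cl(\tC_\cX)$ is what prevents positive powers of $z$ from appearing and gives the bound $I(q,z) = \unit + \tau(q)/z + o(z^{-1})$ needed for the uniqueness step.

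The main obstacle is verifying the pole/recursion structure at the orbifold $T$-fixed loci, i.e.\ step (ii)–(iii): the orbifold structure produces extra fractional poles coming from the age shifts $f_v(\xi)$, and one must check that these match the ceiling functions in (\ref{eq:I-funct_another}) and that the gluing at stacky nodes (with its automorphism factor $|\bN_{\rm tor}|$ absorbed into the virtual fundamental class) reproduces the correct recursion. Once this is verified, equivariant uniqueness gives $zI^T(q,z) = zJ^T(\tau^T(q), z)$, and passing to the non-equivariant limit (legitimate because both sides are regular in the equivariant parameters by (i)) yields the conjectured identity.
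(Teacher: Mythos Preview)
The paper does not prove this statement: it is labelled a \emph{conjecture} and the sentence immediately preceding it defers the proof to \cite{CCIT:toric} (``The following `mirror theorem' will be proved in \cite{CCIT:toric}''). So there is no proof in the paper to compare your proposal against.

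That said, your outline is the expected strategy and matches what \cite{givental-mirrorthm-toric} does for toric manifolds and \cite{CCLT:wp} for weighted projective spaces, and is presumably what \cite{CCIT:toric} carries out in general: show the equivariant $I$-function lies on the equivariant Lagrangian cone $\cL^T$ via a localization/recursion characterization, then use the asymptotics from Lemma~\ref{lem:convergence} and the string/divisor equations to identify it with $zJ(\tau(q),z)$. One small caveat: the uniqueness assertion you use (every point of $\cL$ of the form $z\unit + \tau + O(z^{-1})$ with $\tau\in H^{\le 2}_{\rm orb}(\cX)$ equals $zJ(\tau,z)$) needs the tangency/ruling structure of $\cL$ in addition to the graph description, and in the orbifold setting the divisor equation only applies to the untwisted $H^2(\cX)$-part of $\tau$, so the twisted components of $\tau(q)$ must be handled via the string equation instead; this is standard but worth stating carefully.
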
 

We remark that the equality $I=J$ above is consistent 
with monodromy transformations on $\cM$. 
Take a loop $[0,1]\ni \theta 
\mapsto e^{-2\pi\iu \xi \theta} q 
= (e^{-2\pi\iu\xi_1 \theta} q_1, \dots, e^{-2\pi\iu\xi_r \theta} q_r)\in \cM$ 
for $\xi = \sum_{a=1}^r \xi_a p_a\in \bL^\vee$. 
The monodromy of $I(q,z)$ along this loop is given by  
\[
I(e^{-2\pi\iu \xi}q,z) = G^\cH(\xi) I(q,z)
\] 
where $G^\cH(\xi)=G^{\cH}([\xi])$ 
is the Galois action (\ref{eq:GaloisH}) 
of the class $[\xi]$ in 
$\bL^\vee/\sum_{j>m'} \Z D_j \cong H^2(\cX,\Z)$.   
Therefore, we have 
\begin{equation}
\label{eq:tauGalois} 
\tau(e^{-2\pi\iu\xi}q) = G(\xi) \tau(q)  
\end{equation} 
where $G(\xi)=G([\xi])$ is given in (\ref{eq:Galois}). 
These two equations are compatible with 
the behavior (\ref{eq:Galois_J}) of $J(\tau,z)$. 
This moreover shows that $\tau$ 
induces a single-valued map 
\begin{equation}
\label{eq:mirrormap_quot} 
\tau \colon 
\{(q_1,\dots,q_r)\in \cM\;;\; 0<|q_a|<\epsilon \}  
\longrightarrow H^{\le 2}_{\rm orb}(\cX)/H^2(\cX,\Z) 
\end{equation} 
for a sufficiently small $\epsilon>0$. 

\subsection{GKZ system and an 
isomorphism of $D$-modules} 
\label{subsec:GKZ} 
The mirror theorem $I=J$ implies that 
the B-model $D$-module is isomorphic to 
the A-model $D$-module (quantum $D$-module) 
pulled back by the mirror map $\tau$. 
The $I$-function generates 
a confluent version of the 
Gelfand-Kapranov-Zelevinsky (GKZ) $D$-module 
\cite{GKZ:hypergeom} 
studied by Adolphson \cite{adolphson}.  
This turns out to be isomorphic 
to the B-model $D$-module. 
%aftsbm added reference to GKZ and Adolphson's works, 
% and the word "confluent"  

Set $\partial_a := q_a (\partial/\partial q_a)$. 
We write $q^\pm, z\partial$ as shorthand 
for $q_1^\pm,\dots,q_r^\pm$ and 
$z\partial_1,\dots, z\partial_r$. 
Introduce a differential operator $\cP_d\in 
\C[z,q^\pm]\langle z\partial \rangle$ 
for $d\in \bL$ as  
\[
\cP_d:=
q^d \prod_{i:\pair{D_i}{d}<0} 
\prod_{\nu =0}^{-\pair{D_i}{d}-1} (\cD_i - \nu z) 
- \prod_{i: \pair{D_i}{d}>0} 
\prod_{\nu =0}^{\pair{D_i}{d}-1} (\cD_i- \nu z).  
\]
Here we put $\cD_i := \sum_{a=1}^r \sfm_{ia} z \partial_a$.    
Note that $\cP_d$ is well-defined
since $\pair{D_i}{d}\in \Z$ when $d\in \bL$. 
Define the GKZ $D$-module $M_{\rm GKZ}$ by 
\[
M_{\rm GKZ} := \C[z,q^\pm]
\langle z\partial \rangle 
\Big/ \sum_{d\in\bL} 
\C[z,q^\pm]\langle z\partial \rangle \cP_d. 
\]
A grading operator $\Grading$ 
on $M_{\rm GKZ}$ is defined by 
\begin{equation}
\label{eq:GKZ_grading} 
\Grading([f(z,q) (z\partial)^k]) = 
\left [ \left ( 2 |k| f + 2 z \parfrac{f}{z} + 
2 E f \right )  (z\partial)^k \right],  
\end{equation} 
where $k\in (\Z_{\ge 0})^r$ is a multi-index,  
$|k| = \sum_{a=1}^r k_a$ and 
$E = \sum_{a=1}^r \rho_a \partial_a$ is 
the Euler vector field (\ref{eq:B-model_Euler}) 
of the B-model $D$-module. 
This is well-defined because of the 
homogeneity of the relation $\cP_d$. 
Using the grading operator $\Grading$, 
we can introduce a flat connection 
$\nabla \colon M_{\rm GKZ} \to \frac{1}{z} 
M_{\rm GKZ} \otimes (\C\frac{dz}{z} \oplus 
\bigoplus_{a=1}^r \C \frac{dq_a}{q_a})$ by 
(\emph{cf.} (\ref{eq:Euler_reg}), Lemma \ref{lem:B-model_grading})  
\begin{align*}
 \nabla_a [P(z,q,z\partial)]  &:= 
 \frac{1}{z} [z\partial_a P(z,q,z\partial)],  
 \quad 1\le a\le r;  \\ 
\nabla_{z\partial_z} 
& :=  \frac{1}{2} \Grading  - \nabla_E - \frac{n}{2},   
\end{align*} 
%aftsbm: added the grading operator and flat connection 
% on M_{\rm GKZ} above 

%aftsbm: the following proposition was 
% completely rewritten. 
\begin{proposition}
\label{prop:coherent_GKZ} 
The $\cO_{\cMo}[z]$-module $\tM_{\rm GKZ} := 
M_{\rm GKZ}\otimes_{\C[z,q^\pm]} \cO_{\cMo}[z]$ 
is finitely generated as an $\cO_{\cMo}[z]$-module. 
The fiber of $\tM_{\rm GKZ}$ at every point $(q,z) 
\in \cMo \times \C$ has dimension less than or equal to 
$|\bN_{\rm tor}| \times n! \Vol(\hS)$. 
\end{proposition}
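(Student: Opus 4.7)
My plan is to first analyze the special fiber $\tM_{\rm GKZ}/z\tM_{\rm GKZ}$ by identifying it with the Batyrev ring, then deduce finite generation via a filtration argument, and finally obtain the fiber-dimension bound by combining upper semicontinuity with the flat connection on $\cMo\times\C^*$.

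\textbf{Reduction to the Batyrev ring at $z=0$.} Setting $z=0$ in the generating relation $\cP_d$ makes the shifted product $\prod_{\nu=0}^{|\pair{D_i}{d}|-1}(\cD_i - \nu z)$ collapse to $\cD_i^{|\pair{D_i}{d}|}$, so modulo $z$ the relation becomes
\[
q^d \prod_{i:\,\pair{D_i}{d}<0} \cD_i^{-\pair{D_i}{d}} \;\equiv\; \prod_{i:\,\pair{D_i}{d}>0} \cD_i^{\pair{D_i}{d}}.
\]
Under the identification $[z\partial_a]\leftrightarrow \sfp_a$, so that $\cD_i = \sum_a \sfm_{ia}(z\partial_a)$ becomes $\sfw_i=\sum_a \sfm_{ia}\sfp_a$, these are precisely the defining relations \eqref{eq:char_var} of $B(\cX)$, giving $\tM_{\rm GKZ}/z\tM_{\rm GKZ}\cong B(\cX)\otimes_{\C[q^\pm]}\cO_{\cMo}$. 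Combined with the isomorphism $B(\cX)\cong J(W)$ of Proposition~\ref{prop:Jac-Bat} and the critical-point count of Proposition~\ref{prop:kouchnirenko}(iii), this module is locally free of rank $N:=|\bN_{\rm tor}|\cdot n!\Vol(\hS)$ over $\cO_{\cMo}$; in particular, every fiber at $(q,0)\in\cMo\times\{0\}$ has $\C$-dimension exactly $N$.

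\textbf{Finite generation.} Filter $\tM_{\rm GKZ}$ by $(z\partial)$-degree and consider the associated graded $\mathrm{gr}\,\tM_{\rm GKZ}$ as a module over $\cO_{\cMo}[z][\xi_1,\ldots,\xi_r]$, where $\xi_a$ is the symbol of $z\partial_a$. Farkas' lemma applied to condition~(C) of Section~\ref{subsubsec:def_toricorbifolds} yields $d_0\in \bL$ with $\pair{D_i}{d_0}>0$ for every $i$; the initial form of the relation $\cP_{d_0}$ is then the nonzero homogeneous polynomial $-\prod_i \cD_i^{\pair{D_i}{d_0}}$ of degree $\pair{\hrho}{d_0}$ in the $\xi_a$. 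Using such initial forms for a sufficient collection of $d\in\bL$ to cover all positive coordinate directions, a standard Noetherian/Gr\"obner argument shows that $\mathrm{gr}\,\tM_{\rm GKZ}$ is finitely generated over $\cO_{\cMo}[z]$, and lifting a generating set produces finitely many generators of $\tM_{\rm GKZ}$ itself.

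\textbf{Fiber bound.} On $\cMo\times\C^*$ the only potential pole of $\nabla$ (a factor $1/z$ in $\nabla_a$) is regular, so the coherent sheaf $\tM_{\rm GKZ}|_{\cMo\times\C^*}$ with integrable connection is locally free of locally constant rank. Since $\cMo$ is a Zariski-open dense subset of the smooth irreducible variety $\cM\cong(\C^*)^r$, the product $\cMo\times\C^*$ is connected, so this rank is a single integer. Upper semicontinuity of fiber dimension for the coherent sheaf $\tM_{\rm GKZ}$, together with the equality $\dim\tM_{\rm GKZ}|_{(q,0)}=N$ from the first step, gives $\dim\leq N$ on an open neighborhood of $\cMo\times\{0\}$; hence the locally constant rank on $\cMo\times\C^*$ is $\leq N$, yielding the desired bound on all of $\cMo\times\C$.

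The hardest step is the finite-generation argument: a naive Nakayama-type approach produces generators only over the $z$-adic completion $\cO_{\cMo}[\![z]\!]$, so the Gr\"obner/filtration step is essential to stay within the polynomial ring $\cO_{\cMo}[z]$. The key observation that makes it work is that the initial forms of the $\cP_d$'s are homogeneous in the $(z\partial)$-variables and contain no additional factors of $z$ at leading order, so the reduction of a high-degree monomial $[(z\partial)^k]$ terminates after finitely many degree-lowering steps.
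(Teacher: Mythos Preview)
Your first and third steps are essentially the same as the paper's: the identification $\tM_{\rm GKZ}/z\tM_{\rm GKZ}\cong B(\cX)\otimes\cO_{\cMo}$ and the passage from local freeness on $\cMo\times\C^*$ plus the $z=0$ computation to the global fiber bound are correct (the paper phrases the last step as Nakayama's lemma, which is equivalent).

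The genuine gap is in your finite-generation step. You filter by $(z\partial)$-degree and pass to the associated graded, which is the right move; but the claim that ``a sufficient collection of $d\in\bL$'' gives initial forms whose common zero locus in the $\xi$-variables is $\{0\}$ is not justified, and in general it is \emph{false} without using the hypothesis $q\in\cMo$. For your chosen $d_0$ with all $\pair{D_i}{d_0}>0$, the initial form $\prod_i \sfw_i^{\pair{D_i}{d_0}}$ vanishes on the union of the hyperplanes $\{\sfw_i=0\}$, which is $(r-1)$-dimensional. Taking more $d$'s with $\pair{\hrho}{d}\neq 0$ only gives further monomials in the $\sfw_i$, and their common zero locus is a union of coordinate subspaces in the $\sfw$-variables, typically still positive-dimensional in the $\sfp$-variables. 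What actually forces $\sfp=0$ is the interplay with the \emph{binomial} symbols $\sigma(\cP_d)$ for $d$ with $\pair{\hrho}{d}=0$, and this interplay depends on $q$.

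The paper supplies exactly this missing argument, following Adolphson: if $\sigma(\cP_d)(q,\sfp)=0$ for all $d\in\bL$ and $\sfp\neq 0$, then the set $\{i:\sfw_i\neq 0\}$ is the set of $i$ with $b_i$ on some proper face $\Delta$ of $\hS$, and the nonvanishing $\sfw_i$'s produce a critical point of $W_{q,\Delta}$ in $(\C^*)^n$. This contradicts the Kouchnirenko nondegeneracy defining $\cMo$. Hence the symbol ideal cuts out only $\sfp=0$ over $\cMo$, Nullstellensatz gives $\sfp_a^k=0$ in the graded ring, and finite generation follows. Your argument never invokes the definition of $\cMo$, so it cannot succeed as written; you should replace the ``sufficient collection / Gr\"obner'' hand-wave with this characteristic-variety analysis.
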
 
\begin{proof} 
For a differential operator 
$P = \sum_{k} P_k(z,q) (z\partial)^k 
\in \cO_{\cMo}[z]\langle z\partial \rangle$ 
of rank $s$, 
its principal symbol $\sigma(P)$ is defined to be  
$\sigma(P) :=\sum_{|k| = s} P_k(z,q) \sfp^k$ 
(the highest order term in $z\partial$), 
where $k\in (\Z_{\ge 0})^r$ is a multi-index
and $|k| = \sum_{a=1}^r k_a$. 
For example, 
\[
\sigma(\cP_d) =
\begin{cases}  
- \prod_{i: \pair{D_i}{d} >0} 
\sfw_i^{\pair{D_i}{d}} & \text{if }\pair{\hrho}{d}>0 ; \\ 
q^d \prod_{i: \pair{D_i}{d}<0} 
 \sfw_i^{-\pair{D_i}{d}} 
- \prod_{i:\pair{D_i}{d}>0} \sfw_i^{\pair{D_i}{d}} 
& \text{if } \pair{\hrho}{d} =0;  \\ 
q^d \prod_{i: \pair{D_i}{d}<0} \sfw_i^{-\pair{D_i}{d}} 
& \text{if } \pair{\hrho}{d} < 0.  
\end{cases} 
\]
Recall that 
$\sfw_i = \sum_{a=1}^r \sfm_{ia} \sfp_a$ 
and $\hrho = \sum_{i=1}^m D_i \in \bL^\vee$. 
By a standard argument, we know that  
$\tM_{\rm GKZ}$ is finitely generated as 
an $\cO_{\cMo}[z]$-module 
once we know that 
\[
B_{\rm c}(\cX):= 
\cO_{\cMo}[\sfp_1,\dots,\sfp_r]/
\langle \sigma(\cP_d)\;;\; d\in \bL \rangle   
\]
is a finitely generated $\cO_{\cMo}$-module. 
Adolphson \cite[Section 3]{adolphson} showed 
that the characteristic variety 
of the GKZ $D$-module is supported on 
the zero section when the corresponding 
Laurent polynomials $W_q$ are non-degenerate.  
Although the $D$-module in \cite{adolphson} 
is a little different from ours and 
it is assumed that $\bN$ is torsion free there, 
the same argument as in \cite[Section 3]{adolphson} shows\footnote
{Note that $\sigma(P_d)$ and $\sfw_i$ correspond 
to $\sigma(\Box_l)$ and $y_i$ in \cite{adolphson}.} that 
if $\sigma(\cP_d)(q,\sfp) =0$ for all $d\in \bL$,  
\begin{itemize} 
\item 
Either $(\sfp_1,\dots,\sfp_r) = 0$ or   
there exists a proper face $\Delta$ of $\hS$ such 
that $\sfw_i \neq 0$ if and only if $b_i \in \Delta$  
(\cite[Lemmas 3.1, 3.2]{adolphson});  

\item  In the latter case,  
$W_{q,\Delta}(y)$ has a critical point in $(\C^*)^n$  
(\cite[Lemma 3.3]{adolphson}).  
\end{itemize} 
Thus, $\sfp_1 = \dots = \sfp_r =0$ 
if $q\in \cMo$ and $\sigma(\cP_d)(q,\sfp)=0$ for all $d\in \bL$. 
By Hilbert's Nullstellensats, $\sfp_a^k$ vanishes 
in $B_{\rm c}(\cX)$ for a sufficiently big $k>0$, 
so $B_{\rm c}(\cX)$ is finitely generated 
as an $\cO_{\cMo}$-module. 

Since a coherent sheaf admitting a flat connection 
is locally free, we know that $\tM_{\rm GKZ}$ is 
locally free away from $z=0$.   
On the other hand, the restriction to $z=0$ of $\tM_{\rm GKZ}$ 
is isomorphic to the Batyrev ring: 
\[
\tM_{\rm GKZ}/z \tM_{\rm GKZ} \cong 
B(\cX) \otimes_{\C[q^\pm]} \cO_{\cMo}.  
\]
This is isomorphic to the Jacobi ring by 
Proposition \ref{prop:Jac-Bat} (i) 
and of rank $|\bN_{\rm tor}|\times n!\Vol(\hS)$ 
by Proposition \ref{prop:kouchnirenko} (iii). 
The conclusion follows from 
Nakayama's lemma. 
% Choose lifts $\hat{f}_1,\dots,\hat{f}_N \in 
% \cO_{\cMo}[z]\langle z\partial \rangle$ satisfying 
% $\sigma(\hat{f}_i) = f_i$. 
% We show that $[\hat{f}_1],\dots ,[\hat{f}_N]$ 
% freely generate $M_{\rm GKZ}\otimes_{\C[z,q^\pm]} \cO_{\cMo}[z]$.  
% Take a differential operator 
% $P \in \cO_{\cMo}[z]\langle z\partial \rangle$ 
% of rank $s$. 
% Because the relations of $B_c(\cX)$ 
% are given by $\sigma(\cP_{d})$, $d\in \bL$, 
% we have $\sigma(P) = \sum_{i=1}^N c_i(z,q) f_i(q,\sfp) 
% + \sum_{j} h_j(z,q,\sfp) \sigma(\cP_{d_j})$ 
% for some $c_i\in \cO_{\cMo}[z], h_j \in \cO_{\cMo}[z,\sfp]$. 
% Taking homogeneous part if necessary, 
% we can assume that $h_j$ is homogeneous 
% in $\sfp_1,\dots,\sfp_r$. 
% For arbitrary lifts $\hat{h}_j$ of $h_j$, 
% $P' = P- \sum_{i=1}^N c_i \hat{f_i} - \sum_j \hat{h}_j \cP_{d_j}$ 
% is of rank less than $s$. 
% By induction on $s$, 
% we know that $[P] \in M_{\rm GKZ}
% \otimes_{\C[z,q^\pm]}\cO_{\cMo}[z]$ becomes 
% an $\cO_{\cMo}[z]$-linear combination of 
% $[\hat{f}_1],\dots, [\hat{f}_N]$. 
\end{proof} 

\begin{remark}
\label{rem:adolphson} 
The rank of the ``confluent" GKZ $D$-module was 
calculated in \cite{adolphson} 
under weaker assumptions  
(it is not assumed that $\hS$ 
contains the origin in its interior). 
Our $D$-module $M_{\rm GKZ}$ is a dimensional reduction 
of the original GKZ system 
in \cite{GKZ:hypergeom,adolphson} and 
is also referred to as the \emph{Horn system}. 
It is also homogenized by $z$. 
The argument above is an adaptation 
(and a shortcut) of \cite{adolphson} to 
our $D$-module $M_{\rm GKZ}$. 
We will see in the proof of Proposition \ref{prop:Dmoduleiso} 
that $\tM_{\rm GKZ}$ is exactly of rank 
$|\bN_{\rm tor}|\times n! \Vol(\hS)$. 
\end{remark} 

\begin{lemma} 
\label{lem:GKZ-ann-I} 
Assume that $\hrho \in \cl(\tC_\cX)$. 
Then the $I$-function and the oscillatory integrals 
(associated to the LG model in Section 
\ref{subsec:LGmodel}) satisfy the 
GKZ-type differential equations: 
\[ 
\cP_d I(q,z) = \cP_d \left(
\int_\Gamma e^{W_q/z} \omega_q \right) = 0, 
\quad d\in \bL,  
\] 
where $\Gamma$ is an arbitrary Lefschetz thimble. 
% (ii) The homomorphism of 
% $\C[z,q^\pm]\langle z\partial \rangle$-modules 
% \[ 
% M_{\rm GKZ} \to \C[z,q^\pm]
% \langle z\partial \rangle I(q,z), \quad 
% [f(z,q,z\partial) ] \mapsto f(z,q,z\partial)I(q,z) 
% \] 
% is an isomorphism. (Note that this is well-defined by (i).)
\end{lemma}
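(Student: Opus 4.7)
The plan is to prove the two annihilation statements by distinct but related methods. For the oscillatory integral I would lift the calculation to the total space $Y=(\C^*)^m$, and for the $I$-function I would verify $\cP_d I=0$ by a direct term-by-term cancellation in the defining series \eqref{eq:I-funct_another}.

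On $Y$ the commuting operators $zw_i\partial_{w_i}$ satisfy $(zw_i\partial_{w_i})(e^{W/z})=w_i e^{W/z}$, and a one-variable induction gives $\prod_{\nu=0}^{k-1}(zw_i\partial_{w_i}-\nu z)(e^{W/z})=w_i^{k}e^{W/z}$. Combined with the toric identity $\prod_{i=1}^{m}w_i^{\pair{D_i}{d}}=q^d$ on $Y$, this yields $\cP_d^{(Y)}(e^{W/z})=0$, where $\cP_d^{(Y)}$ is obtained from $\cP_d$ by substituting $\cD_i\mapsto zw_i\partial_{w_i}$. The crux is then the transfer identity
\[
\cD_i\int_{\Gamma}h\vert_{Y_q}\,\omega_q=\int_{\Gamma}(zw_i\partial_{w_i}h)\vert_{Y_q}\,\omega_q
\]
for functions $h=P(w)e^{W/z}$ with $P$ polynomial. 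To prove it, I would decompose $zw_i\partial_{w_i}=V_i^h+V_i^v$ using the splitting $(\ell_{ia})$ of the dual of \eqref{eq:exactsequence_toric}: the horizontal part $V_i^h$ is the Gauss--Manin lift of $\cD_i$ and yields $\int(V_i^h h)\,\omega_q=\cD_i\int h\,\omega_q$ by differentiation under the integral, while the vertical part acts by $V_i^v(w_j)=(\delta_{ij}-\sum_a\sfm_{ia}\ell_{ja})w_j$. The vector $(\delta_{ij}-\sum_a\sfm_{ia}\ell_{ja})_{j=1}^m\in\Q^m$ annihilates every $D_k$ and hence, by \eqref{eq:exactsequence_toric} dualized over $\Q$, lies in the image of $\beta^\vee\colon\Hom(\bN,\Q)\to\Q^m$. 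Therefore $V_i^v$ restricts on each fiber $Y_q$ to a translation-invariant vector field preserving $\omega_q$, the form $(V_i^v h)\omega_q=L_{V_i^v}(h\omega_q)$ is exact, and the boundary contribution at the non-compact end of $\Gamma$ vanishes by the exponential decay of $e^{W_q/z}$ (Lemma \ref{lem:PScond}). Iterating the identity (noting that $zw_j\partial_{w_j}h$ is again of the required form) gives $\cP_d\int_{\Gamma}e^{W_q/z}\omega_q=\int_{\Gamma}\cP_d^{(Y)}(e^{W/z})\vert_{Y_q}\omega_q=0$.

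For the $I$-function, applying $\cD_i$ to the basic monomial $e^{\sum_a\ov p_a\log q_a/z}q^{d'}$ yields $(\ov D_i+z\pair{D_i}{d'})$ times the same monomial, so $\cP_d$ acts diagonally on each summand of \eqref{eq:I-funct_another}; denote by $\Phi_{d'}$ the $q$-independent coefficient of $e^{\sum_a\ov p_a\log q_a/z}q^{d'}$ in $I$. Re-indexing the first piece of $\cP_d I$ by $d''=d'+d$, and using that $d\in\bL$ implies $v(d''-d)=v(d'')$ together with $\ceil{\pair{D_i}{d''-d}}=\ceil{\pair{D_i}{d''}}-\pair{D_i}{d}$, the infinite-product numerators in $\Phi_{d''-d}$ and $\Phi_{d''}$ match after the index shift, and a direct computation of the ratio of denominators shows that the finite $I_-$-product $\prod_{i\in I_-}\prod_{\nu=0}^{-\pair{D_i}{d}-1}(\ov D_i+(\pair{D_i}{d''-d}-\nu)z)$ appearing in the first term of $\cP_d I$ exactly absorbs the residual $I_-$-denominator coming from $\Phi_{d''-d}/\Phi_{d''}$. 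What remains on both sides is the common factor $\prod_{i\in I_+}\prod_{\nu=0}^{\pair{D_i}{d}-1}(\ov D_i+(\pair{D_i}{d''}-\nu)z)\cdot\Phi_{d''}$, so the coefficient of $q^{d''}$ in $\cP_d I$ vanishes for every $d''\in\K$.

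The main obstacle is the transfer identity in Part 1: correctly identifying the discrepancy between $zw_i\partial_{w_i}$ on $Y$ and the Gauss--Manin horizontal lift of $\cD_i$ as a translation-invariant vertical field via the exact sequence \eqref{eq:exactsequence_toric}, and justifying the boundary vanishing on the non-compact Lefschetz thimble using the rapid decay of $e^{W_q/z}$. Part 2 is routine combinatorial bookkeeping once the ratio $\Phi_{d''-d}/\Phi_{d''}$ is written out.
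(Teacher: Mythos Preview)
Your proposal is correct and follows essentially the same approach as the paper. For the $I$-function both you and the paper reduce $\cP_d I=0$ to the difference equation for the series coefficients via $\cD_i(e^{\sum_a\ov p_a\log q_a/z}q^{\delta})=(\ov D_i+z\pair{D_i}{\delta})\cdot(\text{same monomial})$; for the oscillatory integral the paper simply refers to the manifold case in \cite{iritani-coLef}, and your horizontal/vertical decomposition on $Y$ (with the vertical piece yielding an exact fiberwise form) is exactly the standard unpacking of that argument.
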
 
\begin{proof} 
% In case of toric manifolds, the lemma follows from 
% \cite[Proposition 4.5]{iritani-efc} and 
% \cite[Proposition 5.2]{iritani-coLef}. 
We use the expression (\ref{eq:I-funct_another}) 
of the $I$-function. Put 
\[
\Box_d := \prod_{i=1}^m 
\frac{\prod_{\nu = \ceil{\pair{D_i}{d}} }^{\infty} 
(\ov{D}_i + (\pair{D_i}{d}-\nu) z)}  
{\prod_{\nu =0}^\infty 
(\ov{D}_i + (\pair{D_i}{d}-\nu) z)}, \quad d\in \bL\otimes \Q.  
\]
Using $\cD_i (e^{\sum_{a=1}^r \ov{p}_a \log q_a/z}q^{\delta})   
= e^{\sum_{a=1}^r \ov{p}_a \log q_a/z} 
q^{\delta} (\ov{D}_i+\pair{D_i}{\delta}z)$, 
one finds that $\cP_d I(q,z) =0$ for $d\in \bL$ 
is equivalent to the difference equation: 
\[
\Box_{\delta -d} 
\prod_{i:\pair{D_i}{d}<0} \prod_{\nu=0}^{-\pair{D_i}{d}-1} 
(\ov{D}_i+(\pair{D_i}{\delta}-\nu)z)  
= \Box_{\delta} \prod_{i:\pair{D_i}{d}>0}\prod_{\nu=0}^{\pair{D_i}{d}-1} 
(\ov{D}_i+(\pair{D_i}{\delta}-\nu)z)    
\]
for all $\delta \in \K$. This is easy to check. 

We omit the proof for oscillatory integrals 
since it is completely parallel to the case 
of toric manifolds (see \emph{e.g.} 
\cite[Proposition 5.1]{iritani-coLef}).  
% For (ii), it suffices to show that the map 
% $M_{\rm GKZ} \to \C[z,q^\pm]\langle z\partial\rangle I(q,z)$ 
% is injective. 
% Let $f(z,q,z\partial) \in \C[z,q^\pm]\langle z\partial \rangle$ 
% be a differential operator annihilating $I(q,z)$. 
% Put $f(z,q,z\partial) = \sum_{k} f_k(z,z\partial)q^{d_k}$. 
% Then by the condition (C) in 
% Section \ref{subsubsec:def_toricorbifolds}, 
% there exists $d\in \bL$ such that 
% $\pair{D_i}{d+d_k}>0$ for all $i$ and $k$. 
% Then we have 
% \begin{align}
% \label{eq:annihilatingI} 
% \begin{split}  
% q^d f(z,q,z\partial) 
% &= \sum_k f_k(z,z\partial_a - \pair{p_a}{d}z) q^{d+d_k} \\
% &= \sum_k f_k(z,z\partial_a - \pair{p_a}{d}z) 
% \Bigl( \cP_{d+d_k} + 
% \prod_{i=1}^m \prod_{\nu=0}^{\pair{D_i}{d+d_k}-1} 
% (\cD_i - \nu z) \Bigr).  
% \end{split} 
% \end{align}  
% By $f(z,q,z\partial)I(q,z) = \cP_{d+d_k}I(q,z)=0$,  
% we have $g(z,z\partial) I(q,z) =0$ for  
% $g(z,z\partial) := 
% \sum_k f_k(z,z\partial_a -\pair{p_a}{d}z) 
% \prod_{i=1}^m \prod_{\nu=0}^{\pair{D_i}{d+d_k}-1} 
% (\cD_i - \nu z)$. 
% We have 
% \[
% g(z,z\partial) I(q,z) = 
% e^{\sum_{a=1}^r \ov{p}_a \log q_a /z} 
% \sum_{d\in \K} q^d 
% g(z, \ov{p}_a + \pair{p_a}{d}z) \Box_d \unit_{v(d)} 
% = 0.  
% \] 
% If $\pair{D_i}{d}>0$ for all $i$, 
% $\Box_d$ is invertible and so 
% $g(z, \ov{p}_a + \pair{p_a}{d}z) \unit_{v(d)}= 0$. 
% Hence $g(z,\pair{p_a}{d}z)=0$ if 
% $\pair{D_i}{d}>0$ for all $i$. 
% Therefore $g(z,z\partial)=0$. 
% By (\ref{eq:annihilatingI}), we know that 
% $f(z,q,z\partial)\in 
% \sum_{d\in\bL} 
% \C[z,q^\pm]\langle z\partial\rangle \cP_d$.  
\end{proof}

\begin{lemma} 
\label{lem:diffI_twsector} 
For $\delta\in \K$ such that 
$\pair{D_i}{\delta}>0$ for all $i$, we have 
\[
q^{-\delta} \left(\prod_{i=1}^m 
\prod_{\nu=0}^{\ceil{\pair{D_i}{\delta}}-1}  
(\cD_i - \nu z) \right) I(q,z) 
= e^{\sum_{a=1}^r \ov{p}_a \log q_a/z} 
(\unit_{v(\delta)} + O(q^{1/e_0}))   
\]
for $e_0\in \N$ satisfying $e_0 \K \subset \bL$. 
\end{lemma}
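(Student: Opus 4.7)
The plan is to perform a direct term-by-term calculation on the series expansion (\ref{eq:I-funct_another}) of the $I$-function. Using the formula $\cD_i(e^{\sum_a \ov{p}_a \log q_a/z} q^d) = e^{\sum_a \ov{p}_a \log q_a/z} q^d (\ov{D}_i + \pair{D_i}{d}z)$ (which is immediate from $\cD_i = \sum_a \sfm_{ia} z\partial_a$, $\ov{D}_i = \sum_a \sfm_{ia}\ov{p}_a$, and $\pair{D_i}{d} = \sum_a \sfm_{ia}\pair{p_a}{d}$), I would apply the operator $\prod_i \prod_{\nu=0}^{\ceil{\pair{D_i}{\delta}}-1}(\cD_i - \nu z)$ to each summand. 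This inserts the factor $\prod_i\prod_{\nu=0}^{\ceil{\pair{D_i}{\delta}}-1}(\ov{D}_i + (\pair{D_i}{d}-\nu)z)$, which combines with the coefficient $\Box_d$ of the $I$-function to simplify, after cancellation of the common finite product, to
\[
A_d := \prod_{i=1}^m \frac{\prod_{\nu=\ceil{\pair{D_i}{d}}}^{\infty}(\ov{D}_i+(\pair{D_i}{d}-\nu)z)}{\prod_{\nu=\ceil{\pair{D_i}{\delta}}}^{\infty}(\ov{D}_i+(\pair{D_i}{d}-\nu)z)}.
\]
So after multiplying by $q^{-\delta}$ the left-hand side becomes $e^{\sum_a \ov{p}_a \log q_a/z}\sum_{d\in \K_{\rm eff}} q^{d-\delta} A_d \unit_{v(d)}$.

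For $d = \delta$ the numerator and denominator inside $A_\delta$ coincide factor-by-factor, so $A_\delta = 1$ and this summand contributes exactly $\unit_{v(\delta)}$. (Note that the hypothesis $\pair{D_i}{\delta}>0$ for all $i$ together with $\delta \in \K$ forces $\delta \in \K_{\rm eff}$, since $\{i\;;\;\pair{D_i}{\delta}\in \Z_{\ge 0}\} = \{i\;;\;\pair{D_i}{\delta}\in \Z\}\in \cA$.) The remaining task is to show that all other summands contribute terms of order $O(q^{1/e_0})$.

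For $d \in \K_{\rm eff}$ with $d \ne \delta$, I would split into two cases. If $\pair{p_a}{d}\ge \pair{p_a}{\delta}$ for every $a$, then since $\{p_1,\dots,p_r\}$ is a basis some inequality is strict, so $q^{d-\delta}$ is a fractional power of $q$ with strictly positive exponent in at least one $q_a$, hence lies in $O(q^{1/e_0})$. Otherwise, $\pair{p_a}{d} < \pair{p_a}{\delta}$ for some $a$, and one must prove $A_d\,\unit_{v(d)} = 0$ in $H^*(\cX_{v(d)})$. The strategy here is to identify the factors of $\ov{D}_i$ hidden inside $A_d$: whenever $\pair{D_i}{d} \in \Z$ and $\ceil{\pair{D_i}{d}} \le \ceil{\pair{D_i}{\delta}} - 1$, the factor at $\nu = \pair{D_i}{d}$ in the $i$-th numerator of $A_d$ equals $\ov{D}_i$. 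Collecting such indices $i$ gives a monomial $\prod_i \ov{D}_i$ dividing $A_d$, which I would then show lies in the Stanley-Reisner ideal $\frJ_{v(d)}$ described in (\ref{eq:coh_presentation}), so that $A_d \unit_{v(d)} = 0$.

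The main obstacle is precisely the combinatorial step in this last case: verifying that the set of indices $i$ for which $\pair{D_i}{d}$ is an integer falling in the gap $[\ceil{\pair{D_i}{d}},\ceil{\pair{D_i}{\delta}}-1]$ is large enough to trigger a Stanley-Reisner relation killing $\unit_{v(d)}$. I expect this to follow by dualizing the hypothesis $\pair{p_a}{d-\delta}<0$ via the decomposition $p_a \in \sum_{i\in I}\R_{\ge 0}D_i$ valid for every $I\in \cA$ (since $p_a \in \cl(\tC_\cX)$): the failure of $\pair{p_a}{d}\ge \pair{p_a}{\delta}$ forces the complementary set $\{i\;;\;\pair{D_i}{d}\in\Z\}\setminus \{i\;;\;\text{factor }\ov{D}_i\text{ extracted}\}$ out of $\cA$, which is exactly the defining condition for membership in $\frJ_{v(d)}$.
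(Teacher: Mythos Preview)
Your proposal is correct and follows essentially the same route as the paper's proof: apply the operator term-by-term to the expression (\ref{eq:I-funct_another}) to obtain the coefficient $A_d$, observe that $d=\delta$ gives the main term $\unit_{v(\delta)}$, and for $d$ with $\pair{p_a}{d-\delta}<0$ extract the factor $\prod_{i:\pair{D_i}{d}\in\Z,\ \pair{D_i}{d}<\pair{D_i}{\delta}}\ov{D}_i$ and use $p_a\in\cl(\tC_\cX)$ to show $I:=\{i:\pair{D_i}{d}\in\Z,\ \pair{D_i}{d-\delta}\ge 0\}\notin\cA$, whence the factor lies in $\frJ_{v(d)}$. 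The combinatorial step you flagged as the main obstacle is exactly the one-line contradiction the paper gives: if $I\in\cA$ then $p_a=\sum_{i\in I}c_iD_i$ with $c_i\ge 0$, so $\pair{p_a}{d-\delta}=\sum_{i\in I}c_i\pair{D_i}{d-\delta}\ge 0$.
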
 
\begin{proof}
Using the expression (\ref{eq:I-funct_another}), 
we find that the left-hand side is 
\[
e^{\sum_{a=1}^r \ov{p}_a \log q_a/z} 
\sum_{d\in \K} q^{d-\delta} \prod_{i=1}^m 
\frac{\prod_{\nu= \ceil{\pair{D_i}{d}}}^\infty 
(\ov{D}_i+(\pair{D_i}{d}-\nu)z) } 
{\prod_{\nu = \ceil{\pair{D_i}{\delta}}}^\infty 
(\ov{D}_i+(\pair{D_i}{d}-\nu)z) } 
\unit_{v(d)}. 
\]
We claim that the summand vanishes when 
$\pair{p_a}{d-\delta}<0$ for some $a$.  
Note that there remains a factor 
$(\prod_{i: \pair{D_i}{d}\in \Z, 
\pair{D_i}{d}<\pair{D_i}{\delta}} 
\ov{D}_i) \unit_{v(d)}$ 
in the numerator.  
Thus by (\ref{eq:coh_presentation}), 
it suffices to show that 
$I:=\{i\;;\; \pair{D_i}{d}\in \Z, \ 
\pair{D_i}{d-\delta}\ge 0\}\notin \cA$. 
Suppose $I\in \cA$. 
Because $p_a\in \cl(\tC_\cX)$, there exists $c_i\ge 0$ 
for $i\in I$ such that $p_a = \sum_{i\in I} c_i D_i$ 
by the definition of $\tC_\cX$.  
Then $\pair{p_a}{d-\delta} = \sum_{i\in I} c_i \pair{D_i}{d-\delta}
\ge 0$. This is a contradiction. 
\end{proof} 

By the condition (C) in Section \ref{subsubsec:def_toricorbifolds}, 
for each $v\in \Boxop$, there exists $\delta\in \K$ 
such that $v(\delta) = v$ and $\pair{D_i}{\delta}>0$ for all $i$.  
Thus by Lemma \ref{lem:diffI_twsector} 
and the presentation (\ref{eq:toricorbcoh}), 
(\ref{eq:coh_presentation}) of $H_{\rm orb}^*(\cX)$, 
we can find differential operators 
$P_i(z,q,z\partial) \in 
\C[z,q^{\pm1/e_0}]\langle z\partial \rangle$, $1\le i\le N$ 
such that 
\begin{equation}
\label{eq:diffI_asymp} 
P_i(z,q,z\partial) I(q,z) = 
e^{\sum_{a=1}^r \ov{p}_a \log q_a/z} (\phi_i + O(q^{1/e_0})), 
\end{equation} 
where $\phi_i$, $1\le i\le N$ 
is a basis of $H_{\rm orb}^*(\cX)$. 
Under Conjecture \ref{conj:mirrorthm}, we have 
\begin{equation}
\label{eq:diffI_Birkhoff}
P_i(z,q,z\partial) I(q,z) 
= P_i(z,q,z\partial) J(\tau(q),z) 
= L(\tau(q),z)^{-1} 
P_i(z,q,z \tau^*\nabla) \unit. 
\end{equation} 
Here $L(\tau,z)$ is the fundamental solution 
in (\ref{eq:fundamentalsol_L}) and 
$\nabla$ is the Dubrovin connection: 
$\tau^*\nabla$ is shorthand for 
$\tau^*\nabla_1,\dots,\tau^*\nabla_r$ 
with $\tau^*\nabla_a :=  
\nabla_{\tau_*(q_a (\partial /\partial q_a))}$.  
Since $L(\tau(q),z)^{-1} = \unit + O(z^{-1})$ 
(regular at $z=\infty$) 
and $P_i(z,q,z\tau^*\nabla)\unit$ is 
regular at $z=0$, 
the equation (\ref{eq:diffI_Birkhoff}) 
can be viewed as the Birkhoff factorization 
(see \emph{e.g.} \cite{pressley-segal}) 
of the element 
\[
S^1 \ni z \longmapsto 
\begin{bmatrix} 
\vert &   & \vert \\
P_1 I & \dots & P_N I \\
\vert &   & \vert 
\end{bmatrix} 
\ \sim \ e^{\sum_{a=1}^r \ov{p}_a\log q_a/z} 
(\unit + O(q^{1/e_0})) 
\]
in the loop group $LGL(N,\C)$.   
Here the asymptotics (\ref{eq:diffI_asymp}) 
show that the matrix $[P_1 I,\dots, P_N I]$ is invertible and 
admits the (unique) Birkhoff factorization\footnote
{The convergence of quantum cohomology is not a priori known. 
However the Birkhoff factorization here 
can be done uniquely over the ring of 
formal power series in $q_1^{1/e_0},\dots, q_r^{1/e_0}$ 
after removing the factor $e^{\sum_{a=1}^r\ov{p}_a \log q_a/z}$.   
See \cite[Theorem 3.9]{iritani-efc}.} 
when $|q_a|$ is sufficiently small. 
In particular, it follows that the fundamental 
solution $L(\tau(q),z)$ is analytic 
for small values of $|q_a|$ 
and that the quantum cohomology/$D$-module is convergent 
over the image of $\tau$. 
Note that by (\ref{eq:diffI_asymp}), we have 
\begin{equation}
\label{eq:diffunit_asymp} 
P_i(z,q,z \tau^*\nabla) \unit = \phi_i + O(q^{1/e_0}) 
\end{equation} 
and that these vectors form a basis 
of $H^*_{\rm orb}(\cX)$ for small $|q_a|$. 

Now we formulate toric mirror symmetry 
as an isomorphism of $D$-modules. 
\begin{proposition} 
\label{prop:Dmoduleiso}
Assume that our initial data satisfies 
$\hrho \in \cl(\tC_\cX)$ 
and that Conjecture \ref{conj:mirrorthm} holds for $\cX$. 
The B-model $D$-module (in Definition \ref{def:BDM}) 
is isomorphic to the pull back  
of the A-model $D$-module (in Definition \ref{def:QDM}) 
under the mirror map $\tau$ in (\ref{eq:mirrormap_quot}): 
\[
\Mir \colon 
(\cRz,\nabla,(\cdot,\cdot)_{\cRz})\Big|_{V_\epsilon\times \C} 
\cong (\tau\times \id)^* ((F, \nabla, (\cdot,\cdot)_F)/H^2(\cX,\Z)) 
\] 
where $V_\epsilon = \{(q_1,\dots, q_r)\in \cM\;;\; 
0<|q_a|<\epsilon \}$ and $\epsilon>0$ 
is a sufficiently small real number. 
The right-hand side is the quotient 
by the Galois action. 
The isomorphism $\Mir$ sends $[e^{W_q/z}\omega_q]$ 
to the unit section $\unit$ of $F$. 
\end{proposition}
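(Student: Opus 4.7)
The plan is to factor the desired isomorphism through the confluent GKZ $D$-module $\tM_{\rm GKZ}$ introduced in Section \ref{subsec:GKZ}, realizing both $\cRz$ and $(\tau\times\id)^*F$ as quotients of $\tM_{\rm GKZ}$ of matching rank, and then to verify the pairing is preserved by flatness plus an asymptotic comparison near $q=0$.

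First I would construct two surjections of $D$-modules
\[
\Phi_B \colon \tM_{\rm GKZ}\longrightarrow \cRz|_{V_\epsilon\times\C},
\qquad
\Phi_A \colon \tM_{\rm GKZ}\longrightarrow (\tau\times\id)^*(F/H^2(\cX,\Z))|_{V_\epsilon\times\C}.
\]
For $\Phi_B$, send the class $[P(z,q,z\partial)]$ to $P(z,q,z\nabla)[e^{W_q/z}\omega_q]$; this is well defined by Lemma \ref{lem:GKZ-ann-I} (the generator $[e^{W_q/z}\omega_q]$ is annihilated by every $\cP_d$), and it is surjective by Proposition \ref{prop:BDM_diffgen}. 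For $\Phi_A$, send $[P(z,q,z\partial)]$ to $P(z,q,z\tau^*\nabla)\unit$. Well-definedness amounts to $\cP_d(z,q,z\tau^*\nabla)\unit=0$, which follows because $L(\tau(q),z)^{-1}\unit=J(\tau(q),z)=I(q,z)$ by Conjecture \ref{conj:mirrorthm}, and $L(\tau,z)^{-1}$ conjugates $z\tau^*\nabla_a$ into $z\partial_a$, so $L(\tau(q),z)^{-1}\cP_d(z,q,z\tau^*\nabla)\unit=\cP_d(z,q,z\partial)I(q,z)=0$ by Lemma \ref{lem:GKZ-ann-I}. Surjectivity of $\Phi_A$ on the small polydisk $V_\epsilon$ follows from (\ref{eq:diffunit_asymp}): the sections $P_i(z,q,z\tau^*\nabla)\unit$ specialize at $q=0$ to a basis $\phi_i$ of $H^*_{\rm orb}(\cX)$, so they form a global frame after shrinking $\epsilon$.

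Next I would invoke Proposition \ref{prop:coherent_GKZ}: every fiber of $\tM_{\rm GKZ}$ has dimension at most $N:=|\bN_{\rm tor}|\times n!\Vol(\hS)$. By Lemma \ref{lem:rankmatch} this number equals $\dim H^*_{\rm orb}(\cX)$, which is both the rank of $\cRz$ (Proposition \ref{prop:locfree_cRz}) and of $F$. Therefore $\Phi_B$ and $\Phi_A$ are fiberwise surjections of vector spaces of dimensions $\le N$ onto vector spaces of dimension $N$, hence bijections on each fiber and thus isomorphisms of locally free sheaves. Composing gives the desired $D$-module isomorphism $\Mir:=\Phi_A\circ\Phi_B^{-1}$, which by construction sends $[e^{W_q/z}\omega_q]$ to $\unit$ and intertwines the Gau\ss-Manin and Dubrovin connections (the grading/Euler data match by Lemma \ref{lem:B-model_grading} and (\ref{eq:B-model_Euler}) compared with (\ref{eq:Euler_reg}) and the coincidence of the two Euler vector fields via $\tau$).

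The hard part will be verifying that $\Mir$ is compatible with the pairings. Both $(\cdot,\cdot)_{\cRz}$ and $\tau^*(\cdot,\cdot)_F$ are $\nabla$-flat, so $\Mir^*\tau^*(\cdot,\cdot)_F$ and $(\cdot,\cdot)_{\cRz}$ are two flat bilinear forms on the same flat bundle; it suffices to check they agree on any single fiber, and the natural place is the limit $q\to 0$. On the B-side the restriction of $(\cdot,\cdot)_{\cRz}$ to $z=0$ is the residue pairing on $J(W_q)$, which under the isomorphism $J(W_q)\cong B(\cX)$ of Proposition \ref{prop:Jac-Bat} and the specialization $q\to 0$ reduces to a sum of residues concentrated at the torus fixed points, one for each pair $(\sigma,v)$ with $v\in\Boxop\cap\sigma$ as in the proof of Lemma \ref{lem:rankmatch}. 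On the A-side the restriction of the (pulled-back) pairing at the large radius limit is the orbifold Poincar\'e pairing, which by localization is also a sum over the same fixed-point data. A careful bookkeeping of the Jacobians $\Hess(W_q)$ arising from (\ref{eq:asymptotic_exp_in_z}) against the Euler classes of fixed loci should identify the two contributions term by term, using the presentation (\ref{eq:coh_presentation}). This asymptotic matching (essentially a stationary-phase computation at the large radius cusp) is the technical step that I expect to require the most care; the remainder of the argument is formal.
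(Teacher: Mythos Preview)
Your construction of $\Mir$ via the two surjections $\Phi_A,\Phi_B$ from $\tM_{\rm GKZ}$ is exactly what the paper does (compare (\ref{eq:GKZ_Amodel}) and (\ref{eq:GKZ_Bmodel})), and your rank comparison is the same. The treatment of the grading and the $z$-connection is also the same in spirit.

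The pairing argument, however, is where your plan diverges from the paper, and your outline has a gap. First, the point $q=0$ is not in $V_\epsilon$, so ``check on a single fiber by flatness'' does not literally apply there; you would need to control the limit $q\to 0$, but in that limit the critical locus of $W_q$ runs off to infinity and the residue pairing on $J(W_q)$ degenerates, so a direct term-by-term localization comparison is delicate. Second, even at fixed $q$ and $z=0$, matching the residue pairing on $J(W_q)$ with the orbifold Poincar\'e pairing requires knowing the isomorphism $\Mir|_{z=0}\colon J(W_q)\to (H^*_{\rm orb}(\cX),\circ_{\tau(q)})$ explicitly enough to compare Frobenius structures, and this is not immediately available.

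The paper proceeds quite differently (Appendix \ref{subsec:pairing}). It transports the B-model pairing to a pairing $\Pair{\cdot}{\cdot}_{\rm B}$ on the Givental space $\cH$ via $L(\tau(q),z)$, then uses Galois (monodromy) invariance to deduce that $\Pair{\cdot}{\cdot}_{\rm B}$ is $H^2(\cX)$-adjoint and orthogonal on inertia components, uses $\nabla_{z\partial_z}$-flatness to get homogeneity of degree $-2n$, and combines regularity at $z=0$ with the Lefschetz decomposition to force $\Pair{\alpha}{\beta}_{\rm B}\in\C$ for $\alpha,\beta\in H^*_{\rm orb}(\cX)$. The Frobenius property together with the fact that $\tau_*(\partial_a)\circ$ generates quantum cohomology then shows $(\cdot,\cdot)_{\rm B}$ is a constant multiple of $(\cdot,\cdot)_F$. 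Finally the constant is fixed not by a stationary-phase comparison but by invoking Theorem \ref{thm:cc_match}: the identification of $\Gamma_\R$ with $\cO_\cX$ and $\Gamma_{\rm c}$ with $\cO_{\pt}$ gives $\sharp(\Gamma_\R\cap\Gamma_{\rm c})=1=\chi(\cO_\cX^\vee\otimes\cO_{\pt})$, pinning the constant to $1$. So the pairing step actually relies on the central-charge computation, which is considerably deeper than the asymptotic bookkeeping you sketch.
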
 
\begin{proof}
First we identify the GKZ $D$-module with 
the A-model $D$-module. 
Consider a $D$-module homomorphism: 
\begin{align}
\label{eq:GKZ_Amodel} 
\begin{split}  
M_{\rm GKZ} \otimes_{\C[z,q^\pm]} 
\cO_{V_\epsilon\times \C}&\longrightarrow 
\cO((\tau\times \id)^* (F/H^2(\cX,\Z)) \\ 
[P(z,q,z\partial)]  &\longmapsto 
P(z,q, z\tau^*\nabla) \unit 
\end{split}
\end{align} 
We claim that this map is an isomorphism. 
By Lemma \ref{lem:GKZ-ann-I} and (\ref{eq:diffI_Birkhoff}), 
this map is well-defined. 
The equation (\ref{eq:diffunit_asymp}) shows 
that this is surjective for some 
small $\epsilon>0$. 
By Lemma \ref{lem:qsmall_kouchnirenko}, 
we may assume $V_\epsilon \subset \cMo$. 
Then we can deduce the claim by comparing 
the ranks (Proposition \ref{prop:coherent_GKZ} and  
Lemma \ref{lem:rankmatch}). %aftsbm: modified this sentence 
% according to the change of the proposition 
% \ref{prop:coherent_GKZ}. 
Next consider a $D$-module homomorphism: 
\begin{align}
\label{eq:GKZ_Bmodel}
\begin{split} 
M_{\rm GKZ} \otimes_{\C[z,q^\pm]} 
\cO_{V_\epsilon\times \C} 
& \longrightarrow \cRz|_{V_\epsilon \times \C}\\ 
[P(z,q,z\partial)] & \longmapsto 
P(z,q,z\nabla) [e^{W_q/z} \omega_q] 
\end{split} 
\end{align} 
where $\nabla$ is the flat connection 
of the B-model $D$-module. 
This is well-defined by Lemma \ref{lem:GKZ-ann-I} 
and surjective by Proposition \ref{prop:BDM_diffgen}. 
Thus it is an isomorphism again 
by comparison of the ranks 
(Propositions \ref{prop:locsys_Lefschetz} 
and \ref{prop:coherent_GKZ}). 
By composing the two isomorphisms 
(\ref{eq:GKZ_Amodel}), (\ref{eq:GKZ_Bmodel}), 
we get the desired isomorphism 
$\Mir\colon \cRz|_{V_\epsilon\times \C} 
\cong \cO((\tau\times \id)^*(F/H^2(\cX,\Z)))$ 
sending $[e^{W_q/z}\omega_q]$ to $\unit$. 

It is clear that $\nabla_a=\nabla_{q^a(\partial /\partial q_a)}$ 
corresponds to $\tau^*\nabla_a$ under the map $\Mir$. 
It is easy to check that the isomorphisms 
(\ref{eq:GKZ_Amodel}) and (\ref{eq:GKZ_Bmodel}) 
preserve the grading operators (see (\ref{eq:Euler_reg}), 
(\ref{eq:GKZ_grading}) and (\ref{eq:B-model_grading}); 
%aftsbm added (\ref{eq:GKZ_grading}) 
we use the homogeneity of the series 
$e^{-\sum_{a=1}^r \ov{p}_a\log q_a/z}I(q,z)$). 
Hence $\Mir$ preserves $\Grading$ 
and so sends $\nabla_{z\partial_z}$ to 
$\tau^*\nabla_{z\partial_z}$  
(we use the fact that %aftsbm inserted "the fact"
$\tau$ preserves the Euler vector field). 

The proof of $(\cdot,\cdot)_{\cRz} 
= (\tau\times \id)^*(\cdot,\cdot)_F$ is 
given in Appendix \ref{subsec:pairing}. 
\end{proof} 

\begin{corollary}
Under the same assumptions %aftsbm "s"  
as Proposition \ref{prop:Dmoduleiso}, 
the quantum cohomology of a toric orbifold 
$\cX$ is generically semisimple, \emph{i.e.} 
$(H_{\rm orb}^*(\cX),\circ_\tau)$ is isomorphic to 
the direct sum of $\C$ as a ring for a generic 
$\tau\in U$. 
\end{corollary}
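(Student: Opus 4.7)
The plan is to deduce semisimplicity on the A-side from the corresponding (obvious) statement on the B-side via the mirror $D$-module isomorphism of Proposition \ref{prop:Dmoduleiso}, and then to extend from the (real) mirror locus to all of $U$ by an analyticity/openness argument.

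First I would restrict the isomorphism $\Mir$ to the slice $z=0$ and extract the underlying ring isomorphism. By (\ref{eq:GM_deRham}) the action of $z\nabla_a$ on the B-model generator $[e^{W_q/z}\omega_q]$ is multiplication by $q_a(\partial W_q/\partial q_a)$; together with Proposition \ref{prop:locfree_cRz} and Proposition \ref{prop:Jac-Bat}(i) this identifies the fiber $\cRz|_{z=0}$ at $q$ with the Jacobi ring $J(W_q)$ as a commutative algebra. On the A-side, the fiber of the quantum $D$-module at $z=0$ with its $z\tau^*\nabla$-action is, tautologically, the quantum cohomology ring $(H^*_{\rm orb}(\cX),\circ_{\tau(q)})$ acting on $\unit$. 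Since $\Mir$ sends $[e^{W_q/z}\omega_q]$ to $\unit$ and intertwines $z\nabla_a$ with $z\tau^*\nabla_a$, restricting to $z=0$ yields a ring isomorphism
\[
J(W_q) \;\cong\; \bigl(H^*_{\rm orb}(\cX),\circ_{\tau(q)}\bigr) \qquad \text{for every } q\in V_\epsilon.
\]

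Next I would invoke Proposition \ref{prop:Jac-Bat}(ii): the locus $\cMoo\subset \cMo$ on which all critical points of $W_q$ are non-degenerate is open and dense. For such $q$ the Jacobi ring $J(W_q)$ is the ring of functions on the finite reduced zero-scheme of $dW_q$, hence is a direct sum of $N=\dim H^*_{\rm orb}(\cX)$ copies of $\C$. Since $V_\epsilon\cap\cMoo$ is non-empty (it is an open dense subset of $V_\epsilon\subset \cMo$ by Lemma \ref{lem:qsmall_kouchnirenko}), there exists $q_0\in V_\epsilon\cap\cMoo$ and the ring $(H^*_{\rm orb}(\cX),\circ_{\tau(q_0)})$ is semisimple at the point $\tau_0:=\tau(q_0)\in U$.

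Finally I would globalize to a generic statement on $U$. Consider the analytic function on $U$ given by the discriminant $\Delta(\tau):=\det(\Tr(\phi_i\circ_\tau \phi_j\circ_\tau))_{i,j}$ of the trace form of $\circ_\tau$; its non-vanishing is equivalent to semisimplicity of $(H^*_{\rm orb}(\cX),\circ_\tau)$. By the previous step $\Delta(\tau_0)\neq 0$, so $\Delta$ does not vanish identically on the connected set $U$, and therefore $\{\tau\in U:\circ_\tau\text{ is semisimple}\}=\{\Delta\neq 0\}$ is an open dense subset of $U$. The only nontrivial input is the ring part of the mirror isomorphism at $z=0$; the rest is formal. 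The main potential obstacle is being careful that $\Mir|_{z=0}$ is indeed a ring isomorphism (not merely a module isomorphism), but this is forced by the facts that both sides are cyclically generated at $z=0$ by the distinguished sections $[e^{W_q/z}\omega_q]$ and $\unit$ which correspond under $\Mir$, and that $\Mir$ intertwines the connections.
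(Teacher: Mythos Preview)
Your argument is correct and follows the same route as the paper: identify the $z=0$ fiber of $\Mir$ with a ring isomorphism $J(W_q)\cong (H^*_{\rm orb}(\cX),\circ_{\tau(q)})$ and invoke Proposition~\ref{prop:Jac-Bat}(ii). The paper's proof is a two-sentence sketch of exactly this; you have additionally spelled out the openness/analyticity step passing from the image of $\tau$ to generic $\tau\in U$, which the paper leaves implicit.
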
 
\begin{proof}
The quantum cohomology of $\cX$ 
is identified with the Jacobi ring $J(W_q)$ of the mirror. 
The conclusion follows from Proposition \ref{prop:Jac-Bat} (ii). 
\end{proof} 

\begin{remark} 
When $\cX$ is not weak Fano, 
the mirror theorem Conjecture \ref{conj:mirrorthm} 
should be replaced with the Coates-Givental \cite{coates-givental} 
style statement that the $I$-function 
is on the Givental's Lagrangian cone 
(\ref{eq:Giventalcone}).  
The $D$-module isomorphism cannot hold since 
the ranks are different ($|\bN_{\rm tor}|\times n!\Vol(\hS) 
> \dim H_{\rm orb}(\cX)$), but the quantum $D$-module 
should be isomorphic to a certain completion 
of the GKZ $D$-module at the large radius limit
$q=0$ and the semisimplicity of quantum cohomology should still hold. 
The details will appear in \cite{CCIT:toric}. 
(See \cite{iritani-genmir,iritani-coLef} 
for toric manifolds.) 
\end{remark}

\subsection{The integral structures match} 
\begin{theorem} 
\label{thm:pulledbackintstr} 
Let $\cX$ be a weak Fano projective 
toric orbifold defined by 
initial data satisfying $\hrho\in \cl(\tC_\cX)$. 
Assume that Conjecture \ref{conj:mirrorthm} and 
Assumption \ref{assump:Ktheory}, (c) 
hold for $\cX$. 
Then the mirror isomorphism $\Mir$ in Proposition 
\ref{prop:Dmoduleiso} sends 
the natural integral structure 
(lattice of Lefschetz thimbles) 
of the B-model $D$-module to the $\hGamma$-integral structure 
(Definition \ref{def:A-model_int})  
of the A-model $D$-module. 
\end{theorem}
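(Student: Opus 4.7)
The plan is to identify the two integral lattices inside $\Sol(\cX)$ by combining the central charge formula (Theorem \ref{thm:cc_match}) for the structure sheaf with Galois invariance on both sides. Through the mirror isomorphism $\Mir$ of Proposition \ref{prop:Dmoduleiso}, the Lefschetz thimble lattice $R_\Z^\vee$ pushes forward to a lattice $\Sol(\cX)_\Z^{\rm LG}\subset \Sol(\cX)$, and the theorem asserts $\Sol(\cX)_\Z^{\rm LG} = \cZ_K(K(\cX))$.

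First I would establish the single matching $\cZ_K(\cO_\cX) = s_{\Gamma_\R}$, where $s_{\Gamma_\R}\in \Sol(\cX)_\Z^{\rm LG}$ is the image of the real Lefschetz thimble. Theorem \ref{thm:cc_match} supplies the scalar equality of central charges $(\unit, \cZ_K(\cO_\cX))_{\rm orb} = (\unit, s_{\Gamma_\R})_{\rm orb}$ along the image of the mirror map. To upgrade this to equality of full $H^*_{\rm orb}(\cX)$-valued flat sections, I use the identity
\[
(\phi\circ_\tau X,\, s)_{\rm orb} = z(d_X\phi,\, s)_{\rm orb} - z\, d_X(\phi,\, s)_{\rm orb},
\]
valid for any flat section $s$ of $F$, any vector field $X$ on $U$ (identified with a section of $F$ via $T_\tau U \cong H^*_{\rm orb}(\cX)$), and any section $\phi$ of $F$, which follows from $\nabla_X s = 0$ and the Frobenius property of $\circ_\tau$. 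Taking $\phi = \unit$ and iterating, pairings of $s$ with arbitrary iterated quantum products of the mirror-map directions $E_a = \tau_*(q_a\partial/\partial q_a)$ starting from $\unit$ are polynomial differential operators applied to the central charge. By the Batyrev-ring isomorphism of Proposition \ref{prop:Jac-Bat} combined with Conjecture \ref{conj:mirrorthm}, these iterated quantum products span $H^*_{\rm orb}(\cX)$ at points in the image of $\tau$, forcing $\cZ_K(\cO_\cX) = s_{\Gamma_\R}$.

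Next I would propagate the matching to all orbifold line bundles via Galois invariance. Proposition \ref{prop:A-model_int}(ii) gives $\cZ_K(L_\xi^\vee) = G^{\Sol}(\xi)(\cZ_K(\cO_\cX))$. On the B-side, the monodromy of $\Gamma_\R$ around the loop $q\mapsto e^{-2\pi\iu\xi}q$ in $\cMo$ remains an integral Lefschetz thimble and, under $\Mir$, corresponds to $G^{\Sol}(\xi)(s_{\Gamma_\R})$ by the mirror-map identity $\tau(e^{-2\pi\iu\xi}q)=G(\xi)\tau(q)$ of equation (\ref{eq:tauGalois}). Hence $\cZ_K(L_\xi^\vee)\in \Sol(\cX)_\Z^{\rm LG}$ for every $\xi\in\bL^\vee$. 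To extend to all of $K(\cX)$ by $\Z$-linearity, I would use that $K(\cX)$ of a toric Deligne-Mumford stack is $\Z$-generated by orbifold line bundles; under Assumption \ref{assump:Ktheory}(a) this reduces via $\tch$ to generation of $H^*(I\cX)$ by Chern characters of orbifold line bundles, which follows from the presentation (\ref{eq:coh_presentation}) of each $H^*(\cX_v)$ by divisor classes coming from $H^2(\cX,\Z)$ together with the twisted-sector units, which themselves arise as age shifts of orbifold line bundles.

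Finally, to close the remaining inclusion I would invoke unimodularity: the Mukai pairing on $\cZ_K(K(\cX))$ is unimodular by Proposition \ref{prop:A-model_int}(iii) under Assumption \ref{assump:Ktheory}(c), the intersection pairing on $R_\Z^\vee$ is unimodular by construction, and $\Mir$ identifies the two pairings. Two unimodular lattices in a common vector space with one contained in the other must coincide. I expect the main obstacle to be the K-theoretic generation statement---that $K(\cX)$ is $\Z$-generated by orbifold line bundles in the required topological setting---since this blends the toric Picard structure with bookkeeping of age shifts across inertia components; a secondary technical point is the Batyrev-ring surjection in the second step, where one must check that iterated quantum multiplication by the $E_a$'s reaches all twisted sectors at points in the image of the mirror map, a property tied closely to the weak-Fano hypothesis $\hrho\in\cl(\tC_\cX)$.
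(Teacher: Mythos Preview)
Your overall strategy coincides with the paper's proof (Section \ref{subsubsec:pr_main}): (i) use Theorem \ref{thm:cc_match} to get $\cZ_K(\cO_\cX)=s_{\Gamma_\R}$; (ii) propagate to all line bundles via Galois invariance; (iii) use that $K(\cX)$ is $\Z$-generated by line bundles to obtain $\Sol(\cX)_\Z\subset\Sol(\cX)_\Z^{\rm LG}$; (iv) close up by unimodularity of both lattices. Your upgrading of the scalar central-charge equality to the full flat-section equality is a valid rephrasing of the paper's argument; the paper invokes Proposition \ref{prop:BDM_diffgen} (the B-model $D$-module is generated by $[e^{W_q/z}\omega_q]$ and its covariant derivatives) together with $\Mir([e^{W_q/z}\omega_q])=\unit$, which says exactly that the pairings of a flat section against iterated $z\nabla_a$-images of $\unit$ determine it. Your route through the identity for $(\phi\circ_\tau X,s)_{\rm orb}$ and the Batyrev ring lands at the same point, though the cleaner reference is Proposition \ref{prop:BDM_diffgen} rather than Proposition \ref{prop:Jac-Bat}.

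There is, however, a genuine gap in your justification of step (iii). You assert that under Assumption \ref{assump:Ktheory}(a) the $\Z$-generation of $K(\cX)$ by line bundles ``reduces via $\tch$'' to a cohomological statement about $H^*(I\cX)$. This reduction is invalid: Assumption (a) only says that $\tch\otimes\C$ is an isomorphism, which is far too coarse to detect $\Z$-generation on the $K$-group side; likewise the presentation (\ref{eq:coh_presentation}) is over $\C$ and carries no integral information about $K(\cX)$. The paper does not attempt to prove this step at all --- it simply cites Borisov--Horja \cite{borisov-horja-K}, where it is established that the $K$-theory of a smooth toric Deligne--Mumford stack is generated by line bundles. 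You should invoke that result directly rather than attempt the cohomological reduction you sketch; you correctly flagged this as the main obstacle, but the proposed workaround does not succeed.
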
 

First we draw a corollary on Dubrovin's conjecture 
\cite[4.2.2]{dubrovin-analyticth} from this theorem. 
Since the $\hGamma$-integral structure  
is defined to be the image of the $K$-group, 
we can identify 
the integral lattice $R_{\Z,(q,z)}^\vee$ 
generated by Lefschetz thimbles 
with (the dual\footnote{We identify the dual of 
the $K$-group with the $K$-group itself by the Mukai pairing.} 
of) the $K$-group $K(\cX)$. 
This also identifies the pairings on the both sides.  
Let $V_1,\dots,V_N \in K(\cX)$ correspond to 
a basis $\Gamma_1,\dots,\Gamma_N$ of Lefschetz thimbles
whose images under $W_q$ are straight half-lines. 
Then we have 
\[
\chi(V_i^\vee \otimes V_j) = 
\sharp(\Gamma_i \cap e^{\pi\iu} \Gamma_j), 
\]
where $e^{\pi\iu}\Gamma_j$ is the parallel 
translate of $\Gamma_j\in H_n(Y_q,\{y\;;\;\Re(W_q(y)/z)\ll 0\})$ 
along the path 
$[0,1]\ni \theta \mapsto e^{\pi\iu\theta} z$ 
(\emph{cf.} (\ref{eq:pairing_Sol})).  
On the other hand,  
the quantum differential equation in $z$  
\begin{equation}
\label{eq:qde_z} 
\nabla_{z\partial_z} \psi(z) = 
\left( 
z\parfrac{}{z} - \frac{1}{z} E\circ  
+ \mu \right) \psi(z) = 0
\end{equation} 
is irregular singular at $z=0$ and 
defines a \emph{Stokes matrix} 
(see \cite{dubrovin-analyticth,dubrovin-painleve}). 
Under mirror symmetry, the Stokes matrix 
is given by the intersection numbers 
$\sharp(\Gamma_i \cap e^{\pi\iu} \Gamma_j)$ 
by Picard-Lefschetz theory 
(since a solution $\psi$ is given by 
oscillatory integrals over $\Gamma_i$'s; 
see \emph{e.g.} \cite{cecotti-vafa-classification, ueda-cubic}).  
Hence, 

\begin{corollary}[$K$-group version of Dubrovin's conjecture]  
\label{cor:K_Dubrovin} 
Under the same assumptions %aftsbm "s" 
as Theorem \ref{thm:pulledbackintstr},  
there exist $V_1,\dots, V_N \in K(\cX)$ 
such that the matrix $S=(S_{ij})$, 
$S_{ij} := \chi(V_i^\vee \otimes V_j)$ 
is a Stokes matrix of the quantum 
differential equation of $\cX$. 
(In particular, $S$ is upper-triangular and 
$S_{ii}=1$.) 
\end{corollary}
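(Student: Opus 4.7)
The plan is to construct the required classes via the mirror correspondence with Lefschetz thimbles and then invoke Picard-Lefschetz theory for oscillatory integrals. First I would fix a semisimple base point $q \in \cMoo$ (which exists by Proposition \ref{prop:Jac-Bat}~(ii)) together with a generic $z_0 \in \C^*$ so that the $N$ critical values $u_1,\ldots,u_N$ of $W_q$ are non-degenerate and the numbers $\Im(u_i/z_0)$ are pairwise distinct. The Lefschetz thimbles $\Gamma_1,\ldots,\Gamma_N$ built as downward gradient flow trajectories of $\Re(W_q/z_0)$ emanating from the critical points then form an ordered integral basis of $R^\vee_{\Z,(q,z_0)}$, and their images under $W_q$ are straight horizontal half-lines.

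By Theorem \ref{thm:pulledbackintstr}, the integral lattice of Lefschetz thimbles corresponds under $\Mir$ to the $\hGamma$-integral structure in $\Sol(\cX)$, which is the image of $K(\cX)$ under $\cZ_K$ (Definition \ref{def:A-model_int}). I therefore obtain classes $V_1,\ldots,V_N \in K(\cX)$ with $\cZ_K(V_i)$ matched to $\Gamma_i$, and matching the Mukai pairing in Proposition \ref{prop:A-model_int}~(iii) with the intersection pairing (\ref{eq:pairing_Rvee}) yields
\[
\chi(V_i^\vee \otimes V_j) \;=\; \sharp(\Gamma_i \cap e^{\pi\iu}\Gamma_j),
\]
where $e^{\pi\iu}\Gamma_j$ is the parallel translate of $\Gamma_j$ along the counter-clockwise half-loop $\theta \mapsto e^{\pi\iu\theta} z_0$.

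It remains to identify this intersection matrix with a Stokes matrix of the quantum differential equation (\ref{eq:qde_z}) at its irregular singularity $z=0$. Under the mirror isomorphism of Proposition \ref{prop:Dmoduleiso}, flat sections of the Dubrovin connection are realised as oscillatory integrals $\int_\Gamma e^{W_q/z} \omega_q$ over cycles in the Lefschetz thimble lattice. The stationary phase asymptotics (\ref{eq:asymptotic_exp_in_z}) show that along the ray $\arg z = \arg z_0$ the thimble $\Gamma_i$ produces the unique flat section with leading behaviour a normalised multiple of $e^{u_i/z}/\sqrt{\Hess(W_q)(\crit_i)}$ times the idempotent of the semisimple quantum product at $\tau(q)$ associated to $\crit_i$. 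After ordering the $\Gamma_i$ by decreasing $\Im(u_i/z_0)$, these oscillatory integrals therefore form the canonical Stokes basis in the relevant sector, and classical Picard-Lefschetz theory (as used e.g.\ in \cite{cecotti-vafa-classification, ueda-cubic}) identifies the Stokes matrix for the adjacent Stokes transition with $\bigl(\sharp(\Gamma_i \cap e^{\pi\iu}\Gamma_j)\bigr)_{ij}$.

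The hard part will be making this last identification rigorous: one must check that the distinguished analytic lifts of the formal solutions at $z=0$, characterised up to Stokes transitions by their asymptotic exponentials $e^{u_i/z}$, really coincide with the oscillatory integrals $\int_{\Gamma_i} e^{W_q/z}\omega_q$ in each Stokes sector, and that the intersection pairing matches the resulting Stokes pairing with the correct signs and normalisations. Granted this, the unit diagonal $S_{ii}=1$ follows from the single transverse self-intersection $\Gamma_i \cap e^{\pi\iu}\Gamma_i = \{\crit_i\}$, and upper-triangularity follows because $e^{\pi\iu}\Gamma_j$ can encounter $\Gamma_i$ only when $i$ precedes $j$ in the chosen ordering by $\Im(u_i/z_0)$.
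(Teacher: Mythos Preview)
Your proposal is correct and follows essentially the same argument as the paper: identify Lefschetz thimbles with $K$-classes via Theorem \ref{thm:pulledbackintstr}, match the Mukai pairing with the intersection pairing to get $\chi(V_i^\vee \otimes V_j) = \sharp(\Gamma_i \cap e^{\pi\iu}\Gamma_j)$, and then invoke Picard-Lefschetz theory (citing \cite{cecotti-vafa-classification, ueda-cubic}) to identify this intersection matrix with the Stokes matrix. The paper gives this argument in the paragraph preceding the corollary and, like you, defers the rigorous identification of the Stokes matrix with the thimble intersection matrix to the cited references rather than spelling it out.
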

 
\begin{remark} 
Dubrovin's conjecture \cite{dubrovin-analyticth} 
furthermore asserts that $V_1,\dots,V_N$ here should come 
from an \emph{exceptional collection} in the derived category. 
This should follow from homological mirror symmetry. 
For toric varieties, different versions 
of homological mirror symmetry have been obtained 
(or announced) by Abouzaid \cite{abouzaid}, 
Fang-Liu-Treumann-Zaslow \cite{FLTZ} and Bondal-Ruan 
\cite{bondal-ruan}. The author is not sure 
if their results imply Dubrovin's conjecture 
since, except for the approach by Bondal-Ruan, 
they do not deal with Lefschetz thimbles directly. 
For a weighted projective space $\cX$,  
$\Gamma_1,\dots,\Gamma_N$ are the monodromy 
transforms (in $q$) of 
the real Lefschetz thimble $\Gamma_\R$ (see 
Theorem \ref{thm:cc_match} below),  %aftsbm added Theorem \ref{}
so these actually correspond to 
an exceptional collection 
$\cO(-a),\cO(-a+1),\dots, \cO(b)$ for some $a,b$. 
(Dubrovin's conjecture for $\cX=\Proj^n$ 
was proved by Guzzetti \cite{guzzetti}.) 
For general $\cX$, it might be difficult to calculate 
$V_i$ corresponding to $\Gamma_i$ whose image under 
$W_q$ is a straight half-line. 
\end{remark} 

Theorem \ref{thm:pulledbackintstr} 
follows from the matching of the central charges 
from quantum cohomology and LG model. 
Consider the fibration formed by real points on 
(\ref{eq:fibration_LG}):  
\[
\begin{CD}
\unit @>>> \Hom(\bN,\R_{>0}) @>>> Y_\R := (\R_{>0})^m 
@>{\pr|_{Y_\R}}>> \cM_\R :=\Hom(\bL,\R_{>0}) @>>> \unit. 
\end{CD}
\]
Here we regard $\R_{>0}$ as an abelian group 
with respect to the multiplication.  
This exact sequence splits and the section 
given by the matrix $(\ell_{ia})$ 
in Section \ref{subsubsec:LGmodel_def} 
is single-valued over the real locus $\cM_\R$. 
For $q\in \cM_\R$, the real Lefschetz thimble 
$\Gamma_\R\subset Y_q$ is defined to be 
\[
\Gamma_\R:=Y_q\cap Y_\R 
= \{(y_1,\dots,y_n)\in Y_q \;;\; y_i >0 \}
\cong \Hom(\bN,\R_{>0}). 
\] 
The oscillatory integral 
$\int_{\Gamma_\R} e^{-W_q/z} \omega_q$ is well-defined 
for $q\in \cM_\R$ and $z>0$. 
We also define $\Gamma_{\rm c} \subset Y_q$ 
to be the parallel translate of the 
monodromy-invariant compact cycle 
\[
\Gamma_{\rm c} := \Hom(\bN,S^1) \subset Y_{q=1}. 
\]
Note that $\Gamma_{\rm c}$ is a disjoint union 
of $|\bN_{\rm tor}|$ number of tori $(S^1)^n$. 

\begin{theorem}
\label{thm:cc_match}
Assume that $\hrho \in \cl(\tC_{\cX})$ and that 
Conjecture \ref{conj:mirrorthm} holds. 
The quantum cohomology central charges  
(\ref{eq:qc_centralcharge}) 
of the structure sheaf $\cO_\cX$ and 
the skyscraper sheaf $\cO_\pt$ 
are given by the oscillatory integrals 
over the real Lefschetz thimble $\Gamma_\R$ 
and the compact cycle $\Gamma_{\rm c}$ respectively:  
\begin{align}
\label{eq:cc_str} 
Z(\cO_\cX)(\tau(q),z) &= \frac{1}{(2\pi \iu)^n}
\int_{\Gamma_\R\subset Y_q} e^{-W_q/z} \omega_q,  
\quad q\in \cM_\R, z>0; 
\\ 
\label{eq:cc_sky} 
Z(\cO_\pt)(\tau(q),z) 
& = \frac{1}{(2\pi\iu)^n}
\int_{\Gamma_{\rm c}\subset Y_q} e^{-W_q/z} \omega_q, 
\quad (q,z)\in \cM \times \C^*,  
\end{align} 
where $\tau(q)$ is the mirror map. 
In the equation (\ref{eq:cc_str}), the branches of 
$\log z$, $\tau(q)$ in the definition of the left-hand side is 
chosen so that $\log z \in \R, 
\tau(q) \in H^{\le 2}_{\rm orb}(\cX,\R)$. 
\end{theorem}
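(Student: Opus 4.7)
The plan is to reduce both identities to explicit hypergeometric series via the mirror theorem (Conjecture \ref{conj:mirrorthm}) and compare residue-type expansions of the oscillatory integrals.

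First, for the left-hand sides, I would use formula (\ref{eq:cc_byH}):
\[
Z(V)(\tau(q),z) = \int_{I\cX} H(\tau(q),e^{\pi\iu}z) \cup \tch(V^\vee) \cup \tTd(T\cX),
\]
together with the defining relation $c(e^{-\pi\iu}z)\unit=\cZ_K(H_K(\tau(q),z))(\tau(q),z)$, to express $H_K$ in terms of the $J$-function. Conjecture \ref{conj:mirrorthm} rewrites this as $H_K(\tau(q),z) = c(e^{-\pi\iu}z)\Psi^{-1}(z^{-\rho}z^\mu I(q,z))$, so that $H(\tau(q),e^{\pi\iu}z)\cup\tTd(T\cX)$ becomes an explicit expression involving $I(q,z)$ together with the combination $\tTd(T\cX)/\hGamma(T\cX)$. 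Using $\Gamma(1-z)\Gamma(z)=\pi/\sin(\pi z)$, exactly as in the proof of Proposition \ref{prop:A-model_int}, this combination reduces to an explicit product of Gamma values. I would package this to obtain a closed formula for $Z(\cO_\cX)(\tau(q),z)$ as a hypergeometric series in $q$ read off from the $I$-function multiplied by Gamma factors $\prod_i \Gamma(1+\pair{D_i}{d})^{-1}$ on each inertia component; for $Z(\cO_\pt)(\tau(q),z)$, since $\tch(\cO_\pt)$ is supported on the untwisted sector and $\int_\cX \cdot \cup \tTd(T\cX)$ extracts the top Chern class, only the untwisted component of $H$ contributes.

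Next, for the right-hand sides, I would compute the oscillatory integrals via a Mellin–Barnes representation. Writing $W_q=\sum_{i=1}^m q^{\ell_i} y^{b_i}$ and substituting
\[
e^{-q^{\ell_i}y^{b_i}/z} = \frac{1}{2\pi\iu}\int_{c_i-\iu\infty}^{c_i+\iu\infty}
\Gamma(s_i)(q^{\ell_i}y^{b_i}/z)^{-s_i}\,ds_i,\qquad c_i>0,
\]
into $\int_{\Gamma_\R} e^{-W_q/z}\omega_q$ turns the integration over $\Hom(\bN,\R_{>0})$ into a product of delta-functions enforcing $\sum_i s_i b_i=0$, \emph{i.e.} $s_i = -\pair{D_i}{d}$ for some $d\in \bL\otimes\C$. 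This leaves an integral over an $r$-dimensional contour in $\bL\otimes\C$ of $\prod_i \Gamma(-\pair{D_i}{d})\, q^{-d}z^{\pair{\hrho}{d}}$. Closing the contour appropriately and summing residues at the hyperplanes $\pair{D_i}{d}\in\Z_{\ge 0}$ (for a set of indices forming an anticone) produces a series whose summands are indexed by $d\in\K_{\rm eff}$, with the residue evaluating to a ratio of Gamma functions that matches precisely the summand of the $I$-function multiplied by the Gamma prefactor identified in the previous paragraph. For the compact cycle $\Gamma_{\rm c}=\Hom(\bN,S^1)$, the integral $\int_{\Gamma_{\rm c}}e^{-W_q/z}\omega_q$ computes the constant term of the exponential series in $w_1,\dots,w_m$, which is a purely combinatorial sum over $d\in\bL$ with $\pair{D_i}{d}\in\Z_{\ge 0}$, giving the untwisted-sector contribution of the $I$-function weighted by $\prod_i 1/\Gamma(1+\pair{D_i}{d})$; this will be shown to agree with $Z(\cO_\pt)$.

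The hardest step is the rigorous justification of the Mellin–Barnes contour manipulation and the convergence of the residue sum. Kouchnirenko's non-degeneracy and the assumption $\hrho\in\cl(\tC_\cX)$ (ensuring convergence of the $I$-function via Lemma \ref{lem:convergence}) should give the required control of contour shifts, but care is needed because the $r$-dimensional contour must be deformed to collect residues from exactly the anticones in $\cA$, and the relevant effective cone $\K_{\rm eff}$ is determined by the stability condition $\eta$. Once this match is in place, Theorem \ref{thm:pulledbackintstr} follows: the mirror isomorphism $\Mir$ sends $[e^{W_q/z}\omega_q]$ to $\unit$, so the pairing of $\Mir^{-1}(\cZ_K(\cO_\cX))$ with $[e^{W_q/z}\omega_q]$ computes $Z(\cO_\cX)$ up to the normalization $(2\pi z)^{n/2}/(2\pi\iu)^n$, identifying this flat section with the class of $\Gamma_\R$; similarly for $\cO_\pt$ and $\Gamma_{\rm c}$. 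Using the Galois action (Proposition \ref{prop:Galois}) and the $K$-group framing property of $\cZ_K$, the images of $L_\xi^\vee\otimes \cO_\cX$ and tensor products thereof generate all of $K(\cX)$ by Assumption \ref{assump:Ktheory}(c), and their mirror images generate the Lefschetz thimble lattice by monodromy, completing the identification.
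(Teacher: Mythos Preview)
Your treatment of (\ref{eq:cc_sky}) is essentially what the paper does: expand the exponential over $\Gamma_{\rm c}=\Hom(\bN,S^1)$ as a power series in $w_1,\dots,w_m$, pick out the constant term, and recognise the resulting sum over $\{(k_1,\dots,k_m)\in\Z_{\ge 0}^m:\sum k_i b_i=0\}$ as $(-1)^n i_{\pt}^*H(q,z)$.

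For (\ref{eq:cc_str}) the paper takes a genuinely different route. Rather than attempting the multidimensional Mellin--Barnes deformation you describe, the paper introduces Givental's \emph{equivariant} perturbation $W^\lambda = W + \sum_i \lambda_i \log w_i$ and proves a $T$-equivariant identity
\[
\cI^\lambda_{\Gamma_\R}(q,-z) \;=\; \int_{I\cX} H^\lambda(q,e^{\pi\iu}z)\cup \tTd^\lambda(T\cX)
\]
(Theorem \ref{thm:connection_cI_H}), from which (\ref{eq:cc_str}) follows by $\lambda\to 0$. The point of the equivariant parameters is that for generic $\lambda$ the $H$-function splits into $N=\dim H^*_{\rm orb}(\cX)$ distinct components $H^\lambda_{\sigma,v}$ indexed by $T$-fixed points of $I\cX$, and these form a basis of solutions to the (equivariant) GKZ system. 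Since $\cI^\lambda_{\Gamma_\R}$ is also a solution, it must be a linear combination $\sum c_{\sigma,v}(\lambda) H^\lambda_{\sigma,v}$. The coefficients $c_{\sigma,v}(\lambda)$ are then read off by a direct asymptotic expansion of both sides as $q\searrow +0$: the oscillatory integral is expanded by \emph{Taylor-expanding} $\exp(-\sum_{i\in I^\sigma}q^{\ell_i^\sigma}w_\sigma^{b_i})$ and integrating term by term against the Euler measure (Lemma \ref{lem:expansion_oscint}), yielding Gamma factors without any contour deformation. A moment-map argument (Lemma \ref{lem:expansion_H}) shows one can choose $\lambda$ so that each fixed-point expansion dominates the others, isolating $c_{\sigma,v}(\lambda)$. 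The answer is exactly the localisation contribution of $\tTd^\lambda(T\cX)$.

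What this buys: your Mellin--Barnes approach would in principle produce the same series, but the step you flag as ``hardest'' --- closing the $r$-dimensional contour so as to pick up residues indexed precisely by $\K_{\rm eff}$, with the correct anticones selected by the stability condition $\eta$ --- is a genuinely delicate multidimensional residue computation (essentially a Gelfand--Kapranov--Zelevinsky secondary-fan argument). The paper's equivariant trick circumvents this entirely: the equivariant parameters separate the solution space, and the comparison reduces to a one-variable Taylor expansion plus Atiyah--Bott localisation. If you want to pursue your route, you would need to carry out the contour deformation rigorously, which is possible (cf.\ Borisov--Horja \cite{borisov-horja-FM}) but more laborious than what the paper does.
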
 

The right-hand sides of (\ref{eq:cc_str}), 
(\ref{eq:cc_sky}) are considered as 
the \emph{LG central charges}  
(called \emph{BPS mass} in \cite{hori-vafa}) 
of $\Gamma_\R$ and $\Gamma_{\rm c}$.  
This corresponds to a compact toric  
version of Hosono's conjecture 
\cite[Conjecture 2.2]{hosono}, 
which was was stated 
for Calabi-Yau complete intersections 
in terms of hypergeometric series 
(in place of $Z(V)$) 
and periods (in place of oscillatory integrals). 

\begin{remark}
(i) The equality (\ref{eq:cc_str}) 
of central charges solves a connection 
problem for the quantum differential equation  
(\ref{eq:qde_z}) in $z$ which is regular singular at $z=\infty$ 
and irregular singular at $z=0$. 
The oscillatory integral admits an asymptotic expansion 
at $z=0$ and $Z(\cO_\cX)$ is (by definition)  
expanded in a power series in $z^{-1}$. 

(ii) This theorem suggests that, under homological mirror symmetry, 
the thimble $\Gamma_\R$ (or $\Gamma_{\rm c}$)   
(an object of Fukaya-Seidel category of the LG model), 
should correspond to the structure sheaf 
$\cO_\cX$ (or $\cO_\pt$) 
(an object of the derived category of coherent sheaves on $\cX$).  
This correspondence is consistent with the 
Strominger-Yau-Zaslow (SYZ) picture \cite{SYZ}. 
The cycle $\Gamma_\R$ (resp. $\Gamma_{\rm c}$) 
gives a Lagrangian section (resp. fiber) of 
the SYZ fibration, so should correspond 
to the structure (resp. skyscraper) sheaf. 
\end{remark}

\subsubsection{Proof of Theorem \ref{thm:pulledbackintstr} 
under Theorem \ref{thm:cc_match}} 
\label{subsubsec:pr_main}
Fix a point $q\in \cM_\R$ and $z>0$.  
The mirror isomorphism $\Mir$ in Proposition 
\ref{prop:Dmoduleiso} defines a map 
\[
R^\vee_{(q,-z)} = 
H_n(Y_q,\{y\in Y_q\;;\;\Re(W_q(y)/(-z))\ll 0\}) 
\to \Sol(\cX), \quad 
\Gamma \mapsto s_\Gamma(\tau,z) 
\]
such that 
\[
\left(\Mir[\varphi],  s_\Gamma(\tau(q),z)\right)_{\rm orb} = 
\pair{[\varphi]}{\Gamma}, \quad 
\forall [\varphi] \in \cRz_{(q,-z)}, 
\]
where the right-hand side is 
the pairing in (\ref{eq:pairing_R_Rvee}) 
and $\log z$ and $\tau(q)$ in the left-hand side 
are taken to be real as above. 
Let $\tSol(\cX)_\Z$ be the image of this map. 
We need to show that $\tSol(\cX)_\Z$ coincides 
with the $\hGamma$-integral structure $\Sol(\cX)_\Z$. 
From the definition (\ref{eq:qc_centralcharge}) 
of $Z(\cO_\cX)$, one can rewrite (\ref{eq:cc_str}) as  
\[
(\unit, \cZ_K(\cO_\cX)(\tau(q),z) )_{\rm orb} = 
\pair{[e^{-W_q/z} \omega_q]}{\Gamma_\R}. 
\]   
Because $\Mir$ sends 
$[e^{-W_q/z}\omega_q]\in \cRz_{(q,-z)}$ 
to $\unit \in F_{(\tau(q),-z)}$ and the B-model 
$D$-module is generated by $[e^{-W_q/z}\omega_q]$ 
and its derivatives (Proposition \ref{prop:BDM_diffgen}), 
we have $\cZ_K(\cO_\cX) = s_{\Gamma_\R} \in \tSol(\cX)_\Z$. 
%Since the Galois action acts on $K(\cX)$ 
%by line bundle tensors  
%(Proposition \ref{prop:A-model_int}) 
Because $K(\cX)$ is generated by line bundles \cite{borisov-horja-K} 
and $\tSol(\cX)_\Z$ is preserved by the Galois action, 
we have $\Sol(\cX)_\Z = \cZ_K(\Z[\Pic(\cX)]\cO_\cX) 
\subset \tSol(\cX)_\Z$. 
Because the pairing of the A-model and B-model 
coincide,  $\tSol(\cX)_\Z$ is a unimodular 
lattice in $\Sol(\cX)$. 
Under Assumption \ref{assump:Ktheory} (c), 
$\Sol(\cX)_\Z$ is also unimodular. 
Therefore $\Sol(\cX)_\Z = \tSol(\cX)_\Z$.

\subsection{Equivariant perturbation}
Here we prove Theorem \ref{thm:cc_match}. 
We will make use of Givental's \emph{equivariant mirror}  
which gives a perturbation of oscillatory integrals. 
This is considered as a mirror of 
equivariant quantum cohomology of toric orbifolds.  
We prove an equivariant version of (\ref{eq:cc_str}) 
and conclude (\ref{eq:cc_str}) by taking 
the non-equivariant limit. 
In this article, we do not formulate 
equivariant mirror symmetry.

\subsubsection{Equivariant oscillatory integrals} 
Let $T:=(\C^*)^m$ act on our toric orbifold $\cX=\C^m/\!/\T$ 
via the diagonal action of $(\C^*)^m$ on $\C^m$. 
Let $-\lambda_1,\dots,-\lambda_m$ be the equivariant variables 
corresponding to generators of $H^*_{T}(\pt)$. 
Here $\lambda_i$ denotes either 
a cohomology class or a complex number 
depending on the context. 
Givental's equivariant mirror \cite{givental-mirrorthm-toric} 
is given by the following perturbed potential $W^{\lambda}$: 
\[
W^{\lambda} := \sum_{i=1}^m (w_i + \lambda_i \log w_i) 
= W+ \sum_{i=1}^m \lambda_i \log w_i.   
\] 
Hereafter $\lambda_i$ denotes a complex number. 
This is a multi-valued function on each fiber $Y_q$. 
Morse theory for $\Re(W^\lambda(y)/z)$ will compute 
relative homology with coefficients in some local system. 
For a cycle $\Gamma\subset Y_q$ in such a relative homology, 
we can define the \emph{equivariant oscillatory integral}: 
\[
\int_{\Gamma} e^{W^{\lambda}/z} \omega_q 
= \int_{\Gamma} e^{W/z} \prod_{i=1}^m w_i^{\lambda_i/z} \omega_q.  
\]
For our purpose, it is more convenient to 
use the exponent $\lambda_i/(2\pi\iu)$ instead of  $\lambda_i/z$.  
Define 
\begin{equation}
\label{eq:equiv_osc_int}
\cI^\lambda_\Gamma(q,z) := 
\frac{1}{(2\pi\iu)^{n}} 
\int_{\Gamma} e^{\frac{w_1+\dots+w_m}{z}} 
\prod_{i=1}^m w_i^{\frac{\lambda_i}{2\pi\iu}} \omega_q. 
\end{equation} 
Again, the equivariant oscillatory integral 
$\cI^\lambda_{\Gamma_\R}(q,-z)$ for the real Lefschetz 
thimble $\Gamma_\R$ is well-defined when $q\in \cM_\R$ 
and $z>0$. 
%In what follows, $\lambda_i$ is assumed to be purely imaginary. 

\subsubsection{Equivariant $H$-function}
Recall that the quantum cohomology central charge can be 
written in terms of the $H$-function (\ref{eq:H-funct}) 
(see (\ref{eq:cc_byH})). 
Under the mirror theorem, we can write the $H$-function 
as a hypergeometric series with coefficients given by 
products of Gamma functions. 
This type of hypergeometric series 
has been used by Horja \cite{horja1},  
Hosono \cite{hosono} and Borisov-Horja 
\cite{borisov-horja-FM}\footnote
{We named it after Horja and Hosono.}. 

By abuse of notation, we write $H(q,z):= H(\tau(q),z)$. 
Using Gamma functions, we can write 
the $I$-function (\ref{eq:I-funct_another}) as   
\[
I(q,z) = e^{\sum_{a=1}^{r} \ov{p}_a \log q_a/z} 
\sum_{d\in \K_{\rm eff}} 
\frac{q^d}{z^{\pair{\hrho}{d}}} 
\prod_{i=1}^m 
\frac{\Gamma( 1- \{-\pair{D_i}{d}\} + \ov{D}_i/z)}
     {\Gamma( 1+ \pair{D_i}{d}+ \ov{D}_i/z) }
     \frac{\unit_{v(d)}}{z^{\iota_{v(d)}}}. 
\]
Using this expression and Conjecture \ref{conj:mirrorthm}, 
we calculate the $H$-function (\ref{eq:H-funct}) as 
\begin{align}
\nonumber  
H(q,z) &= (-1)^n z^{n/2} \inv^* (2\pi\iu)^{-\deg/2} 
\hGamma(T\cX)^{-1} z^{-\rho}z^\mu I(q,z) \\ 
\label{eq:H-series} 
& = (-1)^n \sum_{d\in \K_{\rm eff}}
x^{\frac{\ov{p}}{2\pi\iu}+d} 
\frac{\unit_{\inv(v(d))}} 
{\prod_{i=1}^m \Gamma ( 
1+ \pair{D_i}{d}+ \frac{\ov{D}_i}{2\pi \iu})},   
\end{align} 
where we used the fact that 
the $v(d)$-component of $\hGamma(T\cX)$  
(for $d\in \K_{\rm eff}$) 
is given by $\prod_{i=1}^m \Gamma(1 - \{-\pair{D_i}{d}\}+\ov{D}_i)$ 
and set 
\[
x^{\frac{\ov{p}}{2\pi\iu} +d} 
:= e^{\sum_{a=1}^{r} 
(\frac{\ov{p}_a}{2\pi\iu} + \pair{p_a}{d}) \log x_a}, 
\quad
\log x_a := \log q_a -\rho_a \log z \quad 
\text{(\emph{i.e.} $x_a = \frac{q_a}{z^{\rho_a}}$)}. 
\]  
We introduce $T$-equivariant $I$- and $H$-functions. 
As in Section \ref{subsubsec:KC_nefbasis}, 
$\xi\in \bL^\vee$ defines the 
orbifold line bundle $L_\xi$ on $\cX$:  
\[
L_\xi = \cU_\eta \times \C \Big/
(z_1,\dots,z_m,c) 
\sim (t^{D_1}z_1,\dots,t^{D_m}z_m,t^\xi c), \ t\in \T.   
\] 
The line bundle $L_\xi$ admits a canonical $T$-action: 
$T=(\C^*)^m$ acts diagonally on the first factor 
and the trivially on the second factor.  
By taking the $T$-equivariant first Chern class, 
we can associate to every element $\xi \in \bL^\vee$ 
an equivariant class $c_1^{T}(L_\xi)\in H^2_T(\cX)$. 
We denote by $\ov{p}^\lambda_1,\dots,
\ov{p}^\lambda_r\in H_T^2(\cX)$ 
the $T$-equivariant cohomology classes 
corresponding to $p_1,\dots,p_r\in \bL^\vee$. 
Note that $\ov{p}^\lambda_{r'+1},\dots,\ov{p}^\lambda_r$ 
may be non-zero. 
We denote by $\ov{D}^\lambda_i\in H_T^2(\cX)$ the 
\emph{$T$-equivariant Poincar\'{e} dual}  
of the toric divisor $\{z_i=0\}$. 
Note that $\ov{D}^\lambda_j=0$ for $j>m'$ 
even in equivariant cohomology 
(since $\{z_j=0\}$ is empty). 
When $e^{-\lambda_i}$ denotes 
the 1-dimensional $T$-representation given by the 
$i$-th projection $T \to \C^*$, 
the divisor $\{z_i=0\}$ becomes the zero-locus 
of a $T$-equivariant section of 
$L_{D_i} \otimes e^{-\lambda_i}$. 
Thus we have (\emph{cf.} (\ref{eq:Dprel})) 
\begin{equation}
\label{eq:Dprel_equiv}
\ov{D}^\lambda_i = \sum_{a=1}^r \sfm_{ia} 
\ov{p}^\lambda_a - \lambda_i  \quad 
\text{in $H_T^2(\cX)$.}  
\end{equation} 
The equivariant $I$-function is defined by the same formula 
in Definition \ref{def:I-funct} with all the appearance of 
$\ov{p}_a$, $\ov{D}_j$ replaced by 
$\ov{p}^\lambda_a$, $\ov{D}^\lambda_j$. 
The equivariant $H$-function 
$H^\lambda(q,z)$ is defined\footnote 
{The factor $z^{-\frac{\lambda_1+\dots+ \lambda_m}{2\pi\iu}}$ 
comes from the $T$-equivariant first Chern class  
$c_1^T(T\cX) = \sum_{a=1}^{r'} \rho_a \ov{p}^\lambda_a 
- (\lambda_1+\dots+ \lambda_m)$.} to be: 
\begin{equation}
\label{eq:equiv_H}
H^{\lambda}(q,z) := (-1)^n 
z^{-\frac{\lambda_1+\dots+\lambda_m}{2\pi\iu}} 
\sum_{d\in \K_{\rm eff}} x^{\frac{\ov{p}^\lambda}{2\pi\iu}+d} 
\frac{\unit_{\inv(v(d))}}
{\prod_{i=1}^m \Gamma (1+ \pair{D_i}{d}+ 
\frac{\ov{D}^\lambda_i}{2\pi\iu})}. 
\end{equation} 
We regard the equivariant $I$- and $H$-functions 
as functions taking values in 
$H_{{\rm orb},T}^*(\cX)$ and $H^*_{T}(I\cX)$ respectively.  
(Here $H_{{\rm orb},T}^*(\cX) := 
\bigoplus_{v\in\sfT} H^{*-2\iota_v}_T(\cX_v)$.) 
\begin{remark}
\label{rem:equiv}
The equivariant $I$- and $H$-functions 
should be understood as follows.  
For a toric orbifold $\cX$, $H_T^*(I\cX)$ is a free 
$H^*_T(\pt)=\C[\lambda_1,\dots,\lambda_m]$-module 
of rank $\dim H^*(I\cX)$. 
Thus we can regard $H_T^*(I\cX)$ as a finite-dimensional 
vector bundle over $\Spec H^*_T(\pt)$.  
The $I$-function (resp. $H$-function) makes sense as 
a multi-valued meromorphic (resp. holomorphic) 
section of the $H^*_{\rm orb}(\cX)$-bundle 
over the space $\{(q,z,\lambda) \in 
\cM \times \C^* \times \Spec H_T^*(\pt)\;;\; 0<|q_a|<\epsilon\}$. 
\end{remark}

\subsubsection{Oscillatory integral and $H$-function}
We prove a $T$-equivariant generalization 
of (\ref{eq:cc_str}). 
Since $Z(\cO_\cX)$ can be written 
in terms of the $H$-function (\ref{eq:cc_byH}), 
the following theorem proves (\ref{eq:cc_str})   
by the non-equivariant limit $\lambda_i\to 0$. 

\begin{theorem}
\label{thm:connection_cI_H} 
Assume that $\hrho\in \cl(\tC_\cX)$.  
The equivariant oscillatory integral 
(\ref{eq:equiv_osc_int}) 
and the equivariant $H$-function 
(\ref{eq:equiv_H}) are related by
\begin{align}
\label{eq:equivosc_H_str}
\cI_{\Gamma_\R}^\lambda (q,-z) &= 
\int_{I\cX} 
H^\lambda (q,e^{\pi\iu} z) \cup \tTd^\lambda(T\cX), \quad 
q\in \cM_\R, \ z>0,  
\end{align} 
where $\tTd^\lambda(T\cX)$ is the $T$-equivariant Todd class 
defined similarly to Section \ref{subsec:hGamma_intstr}. %aftsbm: period  
The branches of the logarithm in the right-hand side 
%aftsbm branches of ``the" logarithm; the added   
are chose so that $\log z\in \R, \log q_a \in \R$.  
\end{theorem}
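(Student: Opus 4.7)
The plan is to evaluate the oscillatory integral explicitly via a Mellin-Barnes calculation and match the resulting hypergeometric series with the expansion of $\int_{I\cX} H^\lambda(q,e^{\pi\iu}z)\cup \tTd^\lambda(T\cX)$. The underlying mechanism is classical (cf.\ Hosono \cite{hosono} and Borisov-Horja \cite{borisov-horja-FM}); the work is to adapt it to the orbifold equivariant setting. First I would unfold the constrained integral over $\Gamma_\R$ to one over $(\R_{>0})^m$. The exact sequence (\ref{eq:exactsequence_toric}) split over the reals identifies $(\R_{>0})^m$ as a principal $\Hom(\bL,\R_{>0})$-bundle over $\cM_\R$ whose fiber is $\Gamma_\R$ together with its $|\bN_{\rm tor}|$ monodromy translates. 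Comparing Haar measures yields
\[
\cI^\lambda_{\Gamma_\R}(q,-z) = \frac{1}{(2\pi\iu)^n|\bN_{\rm tor}|}\int_{(\R_{>0})^m} e^{-\sum_i w_i/z}\prod_{i=1}^m w_i^{\lambda_i/(2\pi\iu)}\prod_{a=1}^r \delta\Bigl(\log q_a - \sum_{i=1}^m\sfm_{ia}\log w_i\Bigr)\prod_i \frac{dw_i}{w_i}.
\]
Applying Fourier inversion to each delta function and then the Mellin identity $\int_0^\infty e^{-w/z} w^{s-1}\,dw = z^s\Gamma(s)$ (valid for $z>0$, $\Re s>0$) separately in each $w_i$, the integral collapses to the Barnes-type contour integral
\[
\frac{1}{|\bN_{\rm tor}|\,(2\pi\iu)^{n+r}}\int_{c_0+\iu\R^r}\Bigl(\prod_{a=1}^r q_a^{-c_a}\Bigr)\prod_{i=1}^m z^{s_i-\sum_a\sfm_{ia}c_a}\,\Gamma\Bigl(s_i - \sum_{a=1}^r\sfm_{ia}c_a\Bigr)\,dc_1\wedge\dots\wedge dc_r,
\]
with $s_i=\lambda_i/(2\pi\iu)$ and $c_0\in\R^r$ chosen so that every $\Gamma$-argument has positive real part.

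Next I would close the contour and sum residues. Under the substitution $c_a = \ov{p}^\lambda_a/(2\pi\iu)+\pair{p_a}{d}$, the relation $\ov{D}^\lambda_i = \sum_a \sfm_{ia}\ov{p}^\lambda_a - \lambda_i$ from (\ref{eq:Dprel_equiv}) turns the $i$-th $\Gamma$-argument into $-\ov{D}^\lambda_i/(2\pi\iu)-\pair{D_i}{d}$, and the poles along this family correspond to $d\in\K$; the weak Fano condition $\hrho\in\cl(\tC_\cX)$ guarantees that the Gamma product decays in directions normal to $\tC_\cX$, so the contour can be closed enclosing only the poles with $d\in\K_{\rm eff}$. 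Applying the reflection formula $\Gamma(x)\Gamma(1-x)=\pi/\sin(\pi x)$ converts each $\Gamma(-\ov{D}_i^\lambda/(2\pi\iu)-\pair{D_i}{d})$ into $\Gamma(1+\pair{D_i}{d}+\ov{D}_i^\lambda/(2\pi\iu))^{-1}$ up to a $\pi/\sin$-factor; the product of these $\pi/\sin$-factors is precisely the Chern-root expression for $\tTd^\lambda(T\cX)$, while the remaining $(-1)^n$ and the branch shift $z\mapsto e^{\pi\iu}z$ come from collecting the overall signs and powers of $z$ and $\iu$. Summing residues and pairing against the Chern roots via integration over $I\cX$ then matches the expansion (\ref{eq:equiv_H}) of $H^\lambda(q,e^{\pi\iu}z)$ paired with $\tTd^\lambda(T\cX)$, proving the identity.

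The principal difficulty is step two: justifying the contour deformation and showing that only poles corresponding to effective classes $\K_{\rm eff}\subset\K$ contribute, with no contribution from infinity. The weak Fano hypothesis $\hrho\in\cl(\tC_\cX)$ is essential here, paralleling the mechanism by which the summands over $d\in \K\setminus\K_{\rm eff}$ in (\ref{eq:I-funct_another}) vanish in cohomology. A less analytic alternative is to argue via $D$-modules: both sides are solutions of the equivariant GKZ system (equivariant analogues of Lemma \ref{lem:GKZ-ann-I} and Proposition \ref{prop:coherent_GKZ}), which is of finite rank, so it would suffice to match asymptotics as $q\to 0$; the oscillatory integral there is accessible by stationary phase at the non-degenerate critical points of $W^\lambda_q$, and the $H^\lambda$-series has an explicit Frobenius-type leading term that can be read off directly.
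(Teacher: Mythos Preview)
Your primary Mellin--Barnes route is \emph{not} what the paper does; the paper follows exactly your ``less analytic alternative'' via the equivariant GKZ system, though the execution differs in an important way from what you sketch.

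On the Mellin--Barnes side, the genuine obstacle is the multidimensional residue step. The poles of each $\Gamma(s_i-\sum_a\sfm_{ia}c_a)$ lie on affine hyperplanes in the $c$-space; an $r$-fold residue comes from the intersection of $r$ such hyperplanes, so residues are naturally indexed not by $d\in\K$ alone but by pairs (a choice of $r$ indices $I^\sigma$, i.e.\ a torus fixed point $\sigma$, together with $d\in\K_{\rm eff,\sigma}$). Organizing the resulting sum into $\int_{I\cX}(\cdots)$ is then precisely an application of the Atiyah--Bott localization formula, which you allude to only as ``pairing against the Chern roots.'' Moreover, justifying the deformation of an $r$-dimensional contour and controlling contributions at infinity via Stirling (even granting $\hrho\in\cl(\tC_\cX)$) is delicate; this is the heart of Borisov--Horja \cite{borisov-horja-FM}, and you would essentially be redoing that analysis. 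So the approach is viable in principle, but your proposal leaves the hardest two steps unresolved.

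For your alternative, the paper indeed shows both sides solve the equivariant GKZ system and that the fixed-point components $H^\lambda_{\sigma,v}$ form a basis of solutions for generic $\lambda$, so one writes $\cI^\lambda_{\Gamma_\R}(q,-1)=\sum_{(\sigma,v)} c_{\sigma,v}(\lambda)\,H^\lambda_{\sigma,v}(q,e^{\pi\iu})$ and determines $c_{\sigma,v}$ by asymptotics. But the relevant asymptotics are as $q\searrow 0$ with $z=1$ fixed, not $z\to 0$; ``stationary phase at critical points of $W^\lambda_q$'' gives the wrong limit. What the paper does instead (Lemma~\ref{lem:expansion_oscint}) is choose the fixed-point coordinates $\{w_j:j\notin I^\sigma\}$ on $\Gamma_\R$, Taylor-expand the factor $\exp(-\sum_{i\in I^\sigma}q^{\ell_i^\sigma}w_\sigma^{b_i})$ in powers of $q$, and integrate term-by-term using the Euler integral $\int_0^\infty e^{-w}w^{s-1}dw=\Gamma(s)$ for $\Re s$ large. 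A companion lemma (Lemma~\ref{lem:expansion_H}) pins down the $q\to 0$ behaviour of each $H^\lambda_{\tau,v}$; matching the two expansions identifies $c_{\sigma,v}(\lambda)$ with $\tTd^\lambda(T\cX)|_{(\sigma,v)}/(V(\sigma)\,e_T(T_\sigma\cX_v))$, and the localization formula then assembles the sum into $\int_{I\cX}H^\lambda\cup\tTd^\lambda(T\cX)$. A subtle point you did not anticipate: ensuring that for each fixed $\sigma$ one can choose $\lambda$ so that both the oscillatory expansion converges \emph{and} the other fixed points $\tau\neq\sigma$ contribute only higher-order terms requires a small argument with the moment map (Lemma~\ref{lem:expansion_H}).

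In short: your alternative is the right idea and is what the paper does, but replace ``stationary phase'' by a direct $q\to 0$ Taylor expansion in fixed-point coordinates; your Mellin--Barnes approach would, if completed, give a more direct series identity but demands a careful multidimensional residue/contour analysis that you have not supplied.
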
 

\begin{remark} 
Even if $\hrho\notin \cl(\tC_\cX)$, 
the left-hand side of (\ref{eq:equivosc_H_str}) 
makes sense as an analytic function in $q$ and $z$. 
In this case, the right-hand side could be understood as 
the asymptotic expansion in $q_1,\dots,q_r$ 
of the left-hand side in the limit $q_a \searrow +0$.  
\end{remark} 

By the localization theorem \cite{atiyah-bott} 
in equivariant cohomology, 
the inclusion $i\colon I\cX^T \rightarrow I\cX$ 
of the $T$-fixed point set $I\cX^T$ 
induces an isomorphism 
$i^* \colon H^*_T(I\cX)\otimes_{H^*_T(\pt)} \C(\lambda)
\to H^*(I\cX^T)\otimes_{H^*_T(\pt)} \C(\lambda)$, 
where $\C(\lambda)$ is the fraction field of 
$H^*_T(\pt)=\C[\lambda_1,\dots,\lambda_m]$. 
The number of fixed points in $I\cX$ 
is equal to $N: = \dim H^*_{\rm orb}(\cX)$  
(see the proof of Lemma \ref{lem:rankmatch}). 
A $T$-fixed point in $I\cX$ is labeled 
by a pair $(\sigma,v)$ of a fixed point 
$\sigma \in \cX^T$ and $v\in \Boxop$ 
such that $\sigma \in \cX_v$. 
Moreover, a fixed point $\sigma\in \cX^T$ 
is in one-to-one correspondence with 
a maximal cone of the fan $\Sigma$ 
spanned by $\{b_i\;;\; \sigma\in \{z_i=0\}\}$. 
By restricting $H^\lambda(q,z)$ to a fixed point $(\sigma,v)$, 
we get a function $H^\lambda_{\sigma,v}(q,z)$ 
in $q$, $z$ and $\lambda$. 
We call it a \emph{component} of the $H$-function.

\begin{lemma}
The equivariant $H$-function $H^\lambda(q,z)$ and 
the oscillatory integral $\cI^\lambda_{\Gamma_\R}(q,z)$ 
are solutions to the following GKZ-type 
differential equations:  
\begin{gather} 
\label{eq:equiv_GKZ} 
\cP^\lambda_d f(q,z) = 0, \quad  d\in \bL, \\  
\label{eq:equiv_GKZ_z}
\left (z\parfrac{}{z} + \sum_{a=1}^r \rho_a \partial_a
\right) f(q,z) = 
\frac{\lambda_1+\cdots+\lambda_m}{2\pi\iu} f(q,z), 
\end{gather} 
where $\partial_a := q_a (\partial/\partial q_a)$, 
\[
\cP^\lambda_d := 
q^d \prod_{\pair{D_i}{d}<0} \prod_{\nu = 0}^{-\pair{D_i}{d}-1}
(\cD^\lambda_i - \nu z) - 
\prod_{\pair{D_i}{d}>0} \prod_{\nu =0}^{\pair{D_i}{d}-1} 
(\cD^\lambda_i - \nu z ),   
\] 
and $\cD^\lambda_i := \sum_{a=1}^r \sfm_{ia} 
z \partial_a - z \lambda_i/(2\pi\iu)$. 
The $N$ components $H^\lambda_{\sigma,v}(q,z)$ 
of the $H$-function form a basis of solutions to 
these differential equations for generic 
$\lambda_i$'s and small $q_a$'s.  
\end{lemma}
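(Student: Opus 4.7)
I will verify the two families of differential equations separately for $H^\lambda$ and for $\cI^\lambda_{\Gamma_\R}$, then establish the basis property by counting and by a leading-asymptotics argument at the large-radius limit $q=0$. The verifications of the equations are of hypergeometric type and follow the same template used in the non-equivariant case (cf.\ Lemma \ref{lem:GKZ-ann-I}); the substantive point is the basis statement.

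\textbf{GKZ relations for $H^\lambda$.} Using (\ref{eq:Dprel_equiv}), one gets $\cD_i^\lambda\bigl(x^{\frac{\ov p^\lambda}{2\pi\iu}+d}\bigr)=z\bigl(\pair{D_i}{d}+\ov D_i^\lambda/(2\pi\iu)\bigr)\,x^{\frac{\ov p^\lambda}{2\pi\iu}+d}$ term by term in the series. Combined with the identity $\Gamma(a+k)/\Gamma(a)=\prod_{\nu=0}^{k-1}(a+\nu)$, this reduces $\cP_d^\lambda H^\lambda=0$ to the shift of summation index $d'\mapsto d'-d$ produced by the $q^d$ prefactor. Equation (\ref{eq:equiv_GKZ_z}) follows because every summand of $H^\lambda$ is a monomial in $x_a=q_a/z^{\rho_a}$, annihilated by $z\partial_z+\sum_a\rho_a\partial_a$, while the overall prefactor contributes the scalar eigenvalue.

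\textbf{GKZ relations for $\cI^\lambda_{\Gamma_\R}$.} The key step is to identify, on the total space $Y$, the action of $\cD_i^\lambda$ on the integrand with the operator $T_i:=zw_i\partial_{w_i}+w_i$. This uses $\pr_*(w_i\partial_{w_i})=\sum_a\sfm_{ia}\partial_a$ together with the direct calculation
\[
zw_i\partial_{w_i}\bigl(e^{W/z}\textstyle\prod_j w_j^{\lambda_j/(2\pi\iu)}\bigr)=\bigl(w_i+z\lambda_i/(2\pi\iu)\bigr)\,e^{W/z}\textstyle\prod_j w_j^{\lambda_j/(2\pi\iu)}.
\]
Since the $T_i$ mutually commute and a short induction gives $\prod_{\nu=0}^{k-1}(T_i-\nu z)(1)=w_i^k$, the GKZ relation reduces inside the integral to the tautology $q^d\prod_{\pair{D_i}{d}<0}w_i^{-\pair{D_i}{d}}=\prod_{\pair{D_i}{d}>0}w_i^{\pair{D_i}{d}}$, immediate from $q^d=\prod_i w_i^{\pair{D_i}{d}}$. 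Equation (\ref{eq:equiv_GKZ_z}) comes from the positive rescaling $w_i\mapsto c w_i$, under which $(q,z,W)\mapsto(c^\rho q,cz,cW)$, $\prod w_i^{\lambda_i/(2\pi\iu)}\mapsto c^{\sum\lambda_i/(2\pi\iu)}\prod w_i^{\lambda_i/(2\pi\iu)}$, and $\Gamma_\R$, $\omega_q$ are preserved; differentiating at $c=1$ yields the Euler relation.

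\textbf{Basis property.} By Proposition \ref{prop:coherent_GKZ} together with Lemma \ref{lem:rankmatch}, the local dimension of the solution space of (\ref{eq:equiv_GKZ})--(\ref{eq:equiv_GKZ_z}) at a generic $(q,z,\lambda)$ equals $N=\dim H^*_{\rm orb}(\cX)$, which coincides with the number of $T$-fixed points $(\sigma,v)$ of $I\cX$ as computed in the proof of Lemma \ref{lem:rankmatch}. Restriction $i^*_{\sigma,v}$ commutes with $\cP_d^\lambda$ and with $z\partial_z+E$ (which involve only $q$ and $z$), so each $H^\lambda_{\sigma,v}$ is a scalar solution; it then suffices to prove linear independence. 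For this I would extract the leading monomial of $H^\lambda_{\sigma,v}$ as $q\searrow 0$: it is of the form
\[
\mathrm{const}\cdot z^{-\sum\lambda_i/(2\pi\iu)}\, x^{\,\tfrac{\ov p^\lambda|_\sigma}{2\pi\iu}+d_{\sigma,v}},
\]
where $d_{\sigma,v}\in\K_{\rm eff}$ is a minimal class (with respect to the cone of $\sigma$) satisfying $v(d_{\sigma,v})=\inv(v)$. The hard part will be showing that these $N$ leading exponents are pairwise distinct for generic $\lambda$: across different $\sigma$ this uses that the $T$-characters on the tangent spaces of $\cX$ at distinct torus-fixed points are distinct, so the linear forms $\lambda\mapsto \ov p^\lambda_a|_\sigma$ separate fixed points; for different $v$ at the same $\sigma$ it uses the bijection $\K/\bL\leftrightarrow\Boxop$, which forces $d_{\sigma,v}\neq d_{\sigma,v'}$ for $v\neq v'$. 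Once the leading monomials are distinct, a standard Vandermonde-type argument yields linear independence of the full series, completing the count.
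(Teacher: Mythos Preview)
Your verification of the differential equations is correct and matches the paper's approach (the paper simply says the proof is ``similar to Lemma~\ref{lem:GKZ-ann-I}'' and that (\ref{eq:equiv_GKZ_z}) expresses homogeneity); your explicit identification of $\cD_i^\lambda$ with the operator $T_i=zw_i\partial_{w_i}+w_i$ acting on the polynomial factor $f$ is exactly the right computation.

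There is one small gap in your basis argument. You invoke Proposition~\ref{prop:coherent_GKZ} directly to bound the solution space of the \emph{equivariant} system (\ref{eq:equiv_GKZ})--(\ref{eq:equiv_GKZ_z}), but that proposition concerns the non-equivariant operators $\cP_d$. The paper closes this gap by introducing the equivariant GKZ module $M_{\rm GKZ}^\lambda$ and observing that $\cP_d^\lambda$ has the \emph{same principal symbol} as $\cP_d$ and that $M_{\rm GKZ}^\lambda/zM_{\rm GKZ}^\lambda$ is independent of $\lambda$; only then does the proof of Proposition~\ref{prop:coherent_GKZ} carry over verbatim to give the bound $\le N$. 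You should add this remark; without it the citation is not justified. (Also, the proposition only gives $\le N$; your phrase ``equals $N$'' is premature before you have exhibited $N$ independent solutions.)

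Your linear-independence argument is genuinely different from the paper's. The paper proves an equivariant analogue of Lemma~\ref{lem:diffI_twsector}: it applies specific differential operators to $H^\lambda$ to produce sections whose leading terms are $\unit_{\inv(v(\delta))}/\prod\Gamma(\cdots)$, and concludes that suitable derivatives of $H^\lambda$ form a meromorphic frame of $H^*_T(I\cX)$; independence of the $N$ components then follows from the localization isomorphism. Your approach instead compares the leading $q$-exponents of the individual components $H^\lambda_{\sigma,v}$ directly, separating distinct $\sigma$'s by the genericity of $\lambda\mapsto \ov p^\lambda(\sigma)$ and distinct $v$'s at the same $\sigma$ by the $\K/\bL\cong\Boxop$ bijection. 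Both routes work. The paper's has the advantage of reusing machinery already set up and of avoiding any discussion of what ``minimal $d_{\sigma,v}$'' means; yours is more elementary but requires you to say carefully in what sense the leading term is unique (e.g.\ take the limit along a generic ray $q_a=t^{c_a}$ with $c_a>0$ so that the exponents $\sum_a c_a(\pair{p_a}{d}+\ov p_a^\lambda(\sigma)/(2\pi\iu))$ are totally ordered), and to check that the leading Gamma-coefficient does not vanish for generic $\lambda$.
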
 
\begin{proof}
The proof here is an equivariant generalization 
of the argument in Section \ref{subsec:GKZ}. 
The proof of (\ref{eq:equiv_GKZ}) 
for $f(q,z) = H^\lambda(q,z)$ or 
$\cI_{\Gamma_\R}^\lambda(q,z)$ 
is similar to Lemma \ref{lem:GKZ-ann-I}. 
(For $H^\lambda(q,z)$, we rewrite (\ref{eq:equiv_H}) 
as a summation over $d\in \K$; 
the terms with $d\in \K\setminus \K_{\rm eff}$ 
automatically vanish by relations in $H^*_T(I\cX)$.) 
The equation (\ref{eq:equiv_GKZ_z}) 
means the homogeneity of $f(q,z)$. 
The details are left to the reader.  

In order to show that the components of 
the $H$-function form a basis of solutions, 
we consider the \emph{equivariant GKZ $D$-module}: 
\[
M_{\rm GKZ}^\lambda := \C[z,q^\pm]\langle z\partial \rangle 
\Big/
\sum_{d\in \bL} \C[z,q^\pm]\langle z\partial \rangle \cP^\lambda_d 
\]
for fixed complex numbers $\lambda_1,\dots,\lambda_m$. 
This also admits a flat connection as in Section \ref{subsec:GKZ}. 
Since the differential operator $\cP^\lambda_d$ 
has the same principal symbol as $\cP_d$ 
and $M_{\rm GKZ}^\lambda/z M_{\rm GKZ}^\lambda$ 
is independent of $\lambda$, 
the same argument as the proof 
of Proposition \ref{prop:coherent_GKZ} shows that 
$M_{\rm GKZ}^\lambda \otimes_{\C[z,q^\pm]} \cO_{\cMo\times \C^*}$ 
is locally free of rank $\le N$. 
%aftsbm: modified the sentences above  
Therefore, we have at most $N$ linearly independent 
solutions to the GKZ-system (\ref{eq:equiv_GKZ}), 
(\ref{eq:equiv_GKZ_z}). 
On the other hand, similarly to Lemma \ref{lem:diffI_twsector}, 
we can show that 
\begin{align*} 
\left(q^{-\delta} \prod_{i=1}^m 
\prod_{\nu=0}^{\ceil{\pair{D_i}{\delta}}-1} 
(\cD^\lambda_i - \nu z)\right) & H^\lambda(q,z)  
= (-1)^n z^{\frac{\lambda_1+\dots+\lambda_m}{2\pi\iu} 
+ \iota_{v(\delta)}} x^{\frac{\ov{p}^\lambda}{2\pi\iu}} \times \\  
\times & \left( \frac{\unit_{\inv(v(\delta))}}{
\prod_{i=1}^m \Gamma (1- \{-\pair{D_i}{\delta}\} 
+ \frac{\ov{D}^\lambda_i}{2\pi\iu} )} 
+ O(q^{1/e_0}) \right ) 
\end{align*} 
for $\delta\in \K$ such that $\pair{D_i}{\delta}>0$ 
for all $i$. 
Because $H^*_T(I\cX)$ is generated by $\unit_v$, $v\in \Boxop$ 
over $H^*_T(\pt)[\ov{p}^\lambda_1,\dots,\ov{p}^\lambda_{r'}]$ 
(\emph{cf.} (\ref{eq:toricorbcoh}), (\ref{eq:coh_presentation})), 
suitable derivatives of $H^\lambda(q,z)$ 
form a meromorphic basis\footnote
{Here we regard $H^*_T(I\cX)$ as a vector bundle over 
$\Spec H_T^*(\pt)$ as in Remark \ref{rem:equiv}.} 
of $H^*_T(I\cX)$ (\emph{cf.} (\ref{eq:diffI_asymp})). 
This shows that $N$ components 
$H_{\sigma,v}^\lambda(q,z)$ 
of $H^\lambda(q,z)$ are linearly independent 
for generic values of $\lambda_1,\dots,\lambda_m$.  
\end{proof}

From this lemma, we know that there exist coefficient functions  
$c_{\sigma,v}(\lambda)$ such that 
\begin{equation}
\label{eq:oscint_H_coeff}
\cI^\lambda_{\Gamma_\R} (q,-z) = 
\sum_{(\sigma,v)\in I\cX^T} c_{\sigma,v}(\lambda) 
H_{\sigma,v}^\lambda(q,e^{\pi\iu} z). 
\end{equation} 
We will determine an analytic function 
$c_{\sigma,v}(\lambda)$ in $\lambda$ 
by putting $z=1$ and studying the asymptotic behavior of 
the both hand sides in the limit $q_a\searrow +0$. 

We start with the oscillatory integral. 
Take a fixed point $\sigma\in\cX^T$. 
Define $I^\sigma\in \cA$ by 
$I^\sigma=\{i\;;\; \sigma \notin \{z_i=0\}\}$. 
We can take $\{w_j \;;\; j\notin I^\sigma \}$ 
as a co-ordinate system on 
$Y_q\cap Y_\R=\Gamma_\R$.  
We can express $w_i$ for $i\in I^\sigma$ in terms of 
$\{w_j\;;\; j\notin I^\sigma\}$ and $q_a$, $a=1,\dots,r$  
by solving (\ref{eq:fibration_LG_formula}). 
Put 
\[
w_i = \prod_{a=1}^r q_a^{\ell_{ia}^\sigma} 
\prod_{j\notin I^\sigma} w_j^{b^{\sigma}_{ij}}, \quad i\in I^\sigma. 
\]
Here $(\ell_{ia}^\sigma)_{i\in I^\sigma,1\le a\le r}$ 
is the matrix inverse to $(\sfm_{ia})_{i\in I^\sigma, 1\le a\le r}$. 
Because $p_a\in \cl(\tC_\cX) \subset 
\sum_{i\in I^\sigma} \R_{\ge 0} D_i$, 
it follows that $\ell_{ia}^\sigma \ge 0$. 
We can see that $b^\sigma_{ij}\in \Q$ is determined by 
$b_i = \sum_{j\notin I^\sigma} b^\sigma_{ij} b_j$ in $\bN\otimes \R$. 
Let $V(\sigma)$ be $n!|\bN_{\rm tor}|$ times 
the volume of the convex hull of  
$\{b_j\;;\; j\notin I^\sigma\}\cup \{0\}$ in $\bN\otimes \R$.  
The holomorphic volume form $\omega_q$ 
can be written in terms of $\{w_j\;;\; j\notin I^\sigma\}$ 
as  
\[
\omega_q = \frac{1}{V(\sigma)} 
\prod_{j\notin I^\sigma} \frac{dw_j}{w_j}.  
\]
We set 
\[
\K_{\rm eff,\sigma} := 
\{d\in \bL\otimes \Q \;;\; \pair{D_i}{d}\in \Z_{\ge 0},  
\forall i\in I^\sigma\} 
= \bigoplus_{i\in I^\sigma} \Z_{\ge 0} \ell_i^\sigma.    
\]
Here, $\ell_i^\sigma\in \bL\otimes \Q$ is defined by 
$\pair{p_a}{\ell_i^\sigma} = \ell_{ia}^\sigma$. 
Then we have 
$\K_{\rm eff} = \bigcup_{\sigma\in \cX^T} \K_{\rm eff,\sigma}$. 
We denote by $\ov{p}^\lambda_a(\sigma)$ and 
$\ov{D}^\lambda_j(\sigma)$ the restrictions of 
$\ov{p}^\lambda_a, \ov{D}^\lambda_j \in H^*_T(\cX)$ 
to the fixed point $\sigma$. 
By using $\ov{D}^\lambda_i(\sigma) =0$ for $i\in I^\sigma$ and 
(\ref{eq:Dprel_equiv}), we calculate 
\begin{equation}
\label{eq:ovDjsigma}
\ov{p}^\lambda_a(\sigma) = 
\sum_{i\in I^\sigma} \lambda_i \ell_{ia}^\sigma, \quad 
\ov{D}^\lambda_j(\sigma) = 
-\lambda_j - \sum_{i\in I^\sigma} \lambda_i b_{ij}^\sigma, \quad 
j\notin I^\sigma.  
\end{equation} 
For a function $f(q_1,\dots,q_r)$ in $(q_1,\dots,q_r)\in (\R_{>0})^r$, 
we write $f(q_1,\dots,q_r) =O(M)$ for $M\in \R$ 
when $f(tq_1,\dots,tq_r) =O(t^M)$ as $t\searrow +0$. 
\begin{lemma}
\label{lem:expansion_oscint} 
Let $\sigma$ be a fixed point in $\cX$. 
For any $M>0$, there exists $M'>0$ such that the following holds. 
For $\lambda_1,\dots,\lambda_m$ such that 
$\Re(-\ov{D}^\lambda_j(\sigma)/(2\pi\iu)) >M'$ 
for all $j\notin I^\sigma$, $\cI^\lambda_{\Gamma_\R}(q,-1)$ with 
$(q_1,\dots,q_r)\in (\R_{>0})^r$ 
has the expansion  
\begin{align*} 
\cI^\lambda_{\Gamma_\R}&(q,-1) =
(-1)^n 
\frac{e^{(\lambda_1+\dots+\lambda_m)/2}}
{V(\sigma)} 
(e^{-\pi\iu \hrho}q)^{\frac{\ov{p}^\lambda(\sigma)}{2\pi\iu}}\times \\ 
 & \left(
\sum_{\substack{d\in \K_{\rm eff,\sigma}, \\ |d|<M}} 
\frac{(e^{-\pi\iu \hrho} q)^d} 
{\prod_{j\notin I^\sigma}
(1-e^{-2\pi\iu\pair{D_j}{d}-\ov{D}^\lambda_j(\sigma)}) 
\prod_{i=1}^m 
\Gamma(1+\pair{D_i}{d} + \frac{\ov{D}^\lambda_i(\sigma)}{2\pi\iu}) }  
+ O(M) \right),
\end{align*}
where $|d| := \sum_{a=1}^r \pair{p_a}{d}$ and we set 
\begin{align*}
(e^{-\pi\iu\hrho}q)^{\frac{\ov{p}^\lambda(\sigma)}{2\pi\iu}} &:= 
\prod_{a=1}^r 
(e^{-\pi\iu\rho_a}q_a)^{\frac{\ov{p}^\lambda_a(\sigma)}{2\pi\iu}}, 
\quad 
(e^{-\pi\iu\hrho}q)^d :
= \prod_{a=1}^r (e^{-\pi\iu\rho_a}q_a)^{\pair{p_a}{d}}. 
\end{align*}
\end{lemma}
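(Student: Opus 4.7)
The plan is a direct calculation by change of variables and Taylor expansion. I parametrize the real thimble $\Gamma_\R$ by the coordinates $v_j:=w_j$ for $j\notin I^\sigma$, using the monomial formulas $w_i=\prod_a q_a^{\ell^\sigma_{ia}}\prod_{j\notin I^\sigma}v_j^{b^\sigma_{ij}}$ for $i\in I^\sigma$ recorded just before the lemma. In these coordinates,
\[
\omega_q = \frac{1}{V(\sigma)}\prod_{j\notin I^\sigma}\frac{dv_j}{v_j}, \qquad \prod_{i=1}^m w_i^{\lambda_i/(2\pi\iu)} = \prod_a q_a^{\ov{p}^\lambda_a(\sigma)/(2\pi\iu)}\prod_{j\notin I^\sigma}v_j^{-\ov{D}^\lambda_j(\sigma)/(2\pi\iu)},
\]
the second identity being a bookkeeping check using (\ref{eq:ovDjsigma}). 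I factor $e^{-W_q}=e^{-\sum_{j\notin I^\sigma}v_j}\cdot e^{-\sum_{i\in I^\sigma}w_i}$ and Taylor-expand the second factor.

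Each multi-index $(n_i)_{i\in I^\sigma}\in (\Z_{\ge 0})^{I^\sigma}$ from this expansion corresponds bijectively to $d:=\sum_{i\in I^\sigma}n_i\ell^\sigma_i\in\K_{\rm eff,\sigma}$ with $n_i=\pair{D_i}{d}$. After substitution, the corresponding term carries a sign $(-1)^{\sum_{i\in I^\sigma}n_i}\prod_i 1/n_i!$, a factor $q^d$ (using $\sum_i n_i\ell^\sigma_{ia}=\pair{p_a}{d}$), and a product $\prod_{j\notin I^\sigma}v_j^{-\pair{D_j}{d}}$ coming from the key identity
\[
\sum_{i\in I^\sigma}\pair{D_i}{d}\,b^\sigma_{ij} = -\pair{D_j}{d} \qquad (j\notin I^\sigma),
\]
which follows from $\sum_{i=1}^m \pair{D_i}{d}\,b_i=0$ in $\bN\otimes\Q$ (exactness of (\ref{eq:exactsequence_toric})) together with $b_i=\sum_{j\notin I^\sigma}b^\sigma_{ij}b_j$ and the linear independence of $\{b_j:j\notin I^\sigma\}$. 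Integrating term by term over $(\R_{>0})^n$ against $e^{-\sum_j v_j}\prod_j dv_j/v_j$ reduces to Gamma integrals yielding $\prod_{j\notin I^\sigma}\Gamma(-\pair{D_j}{d}-\ov{D}^\lambda_j(\sigma)/(2\pi\iu))$.

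I then apply the reflection formula $\Gamma(z)\Gamma(1-z)=\pi/\sin(\pi z)$ to each such Gamma factor, converting it to $-2\pi\iu\,e^{-\pi\iu\pair{D_j}{d}-\ov{D}^\lambda_j(\sigma)/2}\big/\bigl[(1-e^{-2\pi\iu\pair{D_j}{d}-\ov{D}^\lambda_j(\sigma)})\,\Gamma(1+\pair{D_j}{d}+\ov{D}^\lambda_j(\sigma)/(2\pi\iu))\bigr]$. The $n$ copies of $-2\pi\iu$ combine with the overall prefactor $(2\pi\iu)^{-n}$ to give $(-1)^n$. The remaining exponentials are rearranged via the identities
\[
\sum_{j\notin I^\sigma}\pair{D_j}{d}=\pair{\hrho}{d}-\sum_{i\in I^\sigma}n_i, \qquad \sum_{j\notin I^\sigma}\ov{D}^\lambda_j(\sigma)=\sum_a\rho_a\,\ov{p}^\lambda_a(\sigma)-\sum_i\lambda_i,
\]
the second coming from (\ref{eq:Dprel_equiv}) together with $\ov{D}^\lambda_i(\sigma)=0$ for $i\in I^\sigma$. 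The Taylor sign $(-1)^{\sum n_i}$ cancels the matching factor from the first identity, so $q^d\cdot\prod_{j\notin I^\sigma}e^{-\pi\iu\pair{D_j}{d}}=(e^{-\pi\iu\hrho}q)^d$; the second identity converts $q^{\ov{p}^\lambda(\sigma)/(2\pi\iu)}\cdot\prod_{j\notin I^\sigma}e^{-\ov{D}^\lambda_j(\sigma)/2}$ into $e^{(\lambda_1+\dots+\lambda_m)/2}(e^{-\pi\iu\hrho}q)^{\ov{p}^\lambda(\sigma)/(2\pi\iu)}$, exactly matching the stated form.

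The main analytic obstacle is justifying the termwise integration and bounding the tail. Since $\pair{D_j}{d}$ is bounded on $\{d\in\K_{\rm eff,\sigma}:|d|<M\}$, one can choose $M'$ large enough (depending only on $M$ and the matrix $(b^\sigma_{ij})$) so that all Gamma integrals for $|d|<M$ converge absolutely and depend analytically on $\lambda$ in the region $\Re(-\ov{D}^\lambda_j(\sigma)/(2\pi\iu))>M'$. For the remainder $|d|\ge M$, standard Taylor-remainder estimates for the exponential, combined with the rapid decay of $e^{-\sum_{j\notin I^\sigma}v_j}$ dominating polynomial growth in $v$ and the scaling $q^d=O(t^{|d|})$ under $q\mapsto tq$, yield the stated $O(M)$ error as $q\searrow 0$.
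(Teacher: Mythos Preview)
Your proof is correct and follows essentially the same approach as the paper's: parametrize $\Gamma_\R$ by $\{w_j:j\notin I^\sigma\}$, Taylor-expand $\exp(-\sum_{i\in I^\sigma}w_i)$, integrate term by term to Gamma functions, and then invoke the reflection formula together with the identities $n_i=\pair{D_i}{d}$ and $\sum_{i\in I^\sigma}n_ib^\sigma_{ij}=-\pair{D_j}{d}$. The paper compresses the final rearrangement of exponentials into a single sentence, whereas you have spelled out how the Taylor sign, the $(-2\pi\iu)^n$ prefactor, and the two summation identities for $\sum_{j\notin I^\sigma}\pair{D_j}{d}$ and $\sum_{j\notin I^\sigma}\ov{D}^\lambda_j(\sigma)$ combine to produce the stated form; this extra bookkeeping is sound.
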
 
\begin{proof}
Using the notation above and (\ref{eq:ovDjsigma}), 
we can write 
\[
\cI_{\Gamma_\R}^\lambda(q,-1) = 
\frac{q^{\frac{\ov{p}^\lambda(\sigma)}{2\pi\iu}}}
{(2\pi\iu)^n V(\sigma)}
\int_{(0,\infty)^n} 
\exp \left( -\sum_{i\in I^\sigma} q^{\ell_i^\sigma} w_\sigma^{b_i} 
 \right )  
e^{-\sum_{j\notin I^\sigma} w_j}
w_\sigma^{-\frac{\ov{D}^\lambda(\sigma)}{2\pi\iu}}  
\frac{dw_\sigma}{w_\sigma},  
\]
where we put 
$w_\sigma^{b_i} := \prod_{j\notin I^\sigma} w_j^{b^\sigma_{ij}}$, 
$w_\sigma^{-\frac{\ov{D}^\lambda(\sigma)}{2\pi\iu}} := \prod_{j\notin I^\sigma} 
w_j^{-\frac{\ov{D}^\lambda_j(\sigma)}{2\pi\iu}}$ 
and $dw_\sigma/w_\sigma := \prod_{j\notin I^\sigma} (dw_j/w_j)$. 
Consider the Taylor expansion: 
\[
\exp\left( 
-\sum_{i\in I^\sigma} q^{\ell_i^\sigma} w_\sigma^{b_i} \right) 
= \sum_{\substack{n_i\ge 0\;;\; i\in I^\sigma, \\ 
|\sum_{i\in I^\sigma} n_i \ell_i^\sigma |<M} } 
\frac{\prod_{i\in I^\sigma} (-1)^{n_i}q^{ n_i\ell_i^\sigma}
w_\sigma^{n_ib_i}}{\prod_{i\in I^\sigma} n_i!}  
+O(M). 
\] 
When $\Re(-\ov{D}^\lambda_j(\sigma)/(2\pi\iu))$ is sufficiently big 
for all $j\notin I^\sigma$, each term in the right-hand side 
is integrable for the measure $e^{-\sum_{j\notin I^\sigma} w_j}
w_\sigma^{-\frac{\ov{D}^\lambda(\sigma)}{2\pi\iu}}  
(dw_\sigma/w_\sigma)$ on $(0,\infty)^n$. 
Therefore, we calculate 
\[
\cI_{\Gamma_\R}^\lambda(q,-1) = \frac{q^{\frac{\ov{p}^\lambda(\sigma)}{2\pi\iu}}}
{(2\pi\iu)^n V(\sigma)} 
\left( 
\sum_{\substack{d\in \K_{\rm eff,\sigma},\\ |d|<M} }
\frac{(-1)^{\sum_{i\in I^\sigma} n_i} q^d}
{\prod_{i\in I^\sigma}n_i!} 
\prod_{j\notin I^\sigma} 
\Gamma\left
(\sum_{i\in I^\sigma} n_i b_{ij}^\sigma -\tfrac{\ov{D}^\lambda_j(\sigma)}{2\pi\iu}
\right) +O(M)\right), 
\]
where $d=\sum_{i\in I^\sigma} n_i \ell_i^\sigma$. 
Using $n_i = \pair{D_i}{d}$, $\sum_{i\in I^\sigma} n_i b_{ij}^\sigma 
= -\pair{D_j}{d}$ and $\Gamma(z)\Gamma(1-z)= \pi /\sin (\pi z)$, 
we arrive at the formula in the lemma. 
\end{proof} 

Next we study the asymptotics of 
$H^\lambda_{\sigma,v}(q,e^{\pi\iu})$ 
in the limit $q\searrow +0$. 

\begin{lemma}
\label{lem:expansion_H}
Let $\sigma$ be a fixed point in $\cX$.  
For a given $M>0$, 
there exists an open set $V \subset (\C)^m$ 
such that both 
the expansion in Lemma \ref{lem:expansion_oscint} 
and the expansion 
\begin{align*} 
H_{\tau,v}^\lambda(q,e^{\pi\iu}) &= 
(-1)^n e^{(\lambda_1+\cdots+\lambda_m)/2} 
(e^{-\pi\iu\hrho} q)^{\frac{\ov{p}^\lambda(\sigma)}{2\pi\iu}} \\
\times &
\begin{cases}
\displaystyle
\sum_{\substack{d\in \K_{\rm eff,\sigma}; \\ \inv(v(d)) = v, |d|<M}}
\frac{(e^{-\pi\iu\hrho}q)^d}{\prod_{i=1}^m 
\Gamma(1+\pair{D_i}{d} + \frac{\ov{D}^\lambda_i(\sigma)}{2\pi\iu})}+O(M) \quad 
& \text{if }\tau = \sigma; \\
O(M) & \text{if }\tau\neq \sigma.
\end{cases} 
\end{align*} 
hold when $(\lambda_1,\dots,\lambda_m)\in V$. 
\end{lemma}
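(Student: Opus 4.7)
The plan is to derive the expansion of $H^\lambda_{\tau,v}(q,e^{\pi\iu})$ directly from the hypergeometric series (\ref{eq:equiv_H}) and analyze it via equivariant localization at the fixed point $\tau\in\cX_v^T$. First I would isolate the $v$-component, keeping only the terms with $\inv(v(d))=v$, and replace $\ov{p}^\lambda_a$, $\ov{D}^\lambda_i$ by their scalar restrictions $\ov{p}^\lambda_a(\tau)$, $\ov{D}^\lambda_i(\tau)$ given in (\ref{eq:ovDjsigma}). Setting $z=e^{\pi\iu}$ turns $x_a=q_a/z^{\rho_a}$ into $e^{-\pi\iu\rho_a}q_a$ and produces the natural prefactor $(e^{-\pi\iu\hrho}q)^{\ov{p}^\lambda(\tau)/(2\pi\iu)}$. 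A key vanishing further restricts the summation: since $\ov{D}^\lambda_i(\tau)=0$ for $i\in I^\tau$ and the constraint $v(d)=\inv(v)$ forces $\pair{D_i}{d}\in\Z$ for those same $i$, the Gamma factor $\Gamma(1+\pair{D_i}{d})$ kills every term with $\pair{D_i}{d}<0$ for some $i\in I^\tau$, leaving only $d\in\K_{\rm eff,\tau}$.

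For $\tau=\sigma$ the resulting prefactor coincides with the one in the target formula, and truncating the sum at $|d|<M$ produces an error of order $O(M)$ because $|d|=\sum_a\pair{p_a}{d}\ge 0$ for $d\in\K_{\rm eff,\sigma}$ (the inclusion $p_a\in\cl(\tC_\cX)\subset\sum_{i\in I^\sigma}\R_{\ge 0}D_i$ makes each pairing $\pair{p_a}{d}$ nonnegative). For $\tau\neq\sigma$ I would factor out the common prefactor $(e^{-\pi\iu\hrho}q)^{\ov{p}^\lambda(\sigma)/(2\pi\iu)}$ appearing in the lemma, leaving the correction $(e^{-\pi\iu\hrho}q)^{(\ov{p}^\lambda(\tau)-\ov{p}^\lambda(\sigma))/(2\pi\iu)}$ multiplied by a series with nonnegative $q$-degree. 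The total $q$-degree of the correction equals $-(2\pi)^{-1}\sum_a\Im(\ov{p}^\lambda_a(\tau)-\ov{p}^\lambda_a(\sigma))$, so forcing this quantity to exceed $M$ by constraining $\Im(\lambda)$ makes the whole contribution $O(M)$.

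Accordingly $V$ should be the cone in $\C^m$ cut out by Lemma \ref{lem:expansion_oscint}'s inequalities $\Re(-\ov{D}^\lambda_j(\sigma)/(2\pi\iu))>M'$ (for $j\notin I^\sigma$) together with the new inequalities $-\sum_a\Im(\ov{p}^\lambda_a(\tau)-\ov{p}^\lambda_a(\sigma))>2\pi M$ for each fixed point $\tau\neq\sigma$. The main obstacle is checking that this intersection of linear inequalities on $\Im(\lambda)\in\R^m$ is nonempty: this amounts to the combinatorial fact that the functionals $\lambda\mapsto\ov{D}^\lambda_j(\sigma)$ for $j\notin I^\sigma$ and $\lambda\mapsto\sum_a(\ov{p}^\lambda_a(\tau)-\ov{p}^\lambda_a(\sigma))$ for $\tau\neq\sigma$ share a common open half-space. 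Using (\ref{eq:ovDjsigma}) and comparing the expansions of $P=\sum_a p_a$ in the distinct $\R$-bases $\{D_i\}_{i\in I^\sigma}$ and $\{D_i\}_{i\in I^\tau}$ attached to the maximal cones of $\Sigma$, one reduces this to a separation statement on linear functionals that can be arranged by choosing $\Im(\lambda)$ in a suitable sub-cone and then rescaling to absorb any prescribed $M$.
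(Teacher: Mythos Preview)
Your derivation of the expansion of $H^\lambda_{\tau,v}$ from the series (\ref{eq:equiv_H}) is correct and parallels the paper (which simply cites the definition and passes directly to the inequality below). You also correctly isolate the two families of linear conditions on $\lambda$ that must hold simultaneously: those from Lemma~\ref{lem:expansion_oscint}, namely $\Re(-\ov{D}^\lambda_j(\sigma)/(2\pi\iu))>M'$ for $j\notin I^\sigma$, together with $\sum_a\Re\big((\ov{p}^\lambda_a(\tau)-\ov{p}^\lambda_a(\sigma))/(2\pi\iu)\big)>M$ for every fixed point $\tau\neq\sigma$. (Your expression for the $q$-degree carries a stray minus sign, but that is minor.)

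The genuine gap is in your last paragraph. You acknowledge that compatibility of these two cones of inequalities is the ``main obstacle,'' but the argument you sketch---comparing expansions of $\sum_a p_a$ in the bases $\{D_i\}_{i\in I^\sigma}$ versus $\{D_i\}_{i\in I^\tau}$ and invoking an unspecified separation statement---is not a proof, and a direct combinatorial verification is not obvious. This compatibility is the actual content of the lemma. The paper handles it by a symplectic argument: take $\lambda\in(\iu\R)^m$ and consider the Hamiltonian $\mathfrak{h}_{\eta,\lambda}(z)=-\sum_i(\lambda_i/2\pi\iu)|z_i|^2$ on $\cX=\mathfrak{h}^{-1}(\eta)/\T_\R$ with $\eta=\sum_a p_a$. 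One checks directly that $\mathfrak{h}_{\eta,\lambda}(\tau)=-\sum_a\ov{p}^\lambda_a(\tau)/(2\pi\iu)$ at each fixed point and that the weights of the induced $\R$-action on $T_\sigma\cX$ are $\{\ov{D}^\lambda_j(\sigma)/(2\pi\iu):j\notin I^\sigma\}$. Thus the first family of inequalities says exactly that $\sigma$ has Morse index $2n$ for $\mathfrak{h}_{\eta,\lambda}$; since an almost-periodic Hamiltonian on a compact symplectic manifold attains its global maximum at such a point (Mountain-Path Lemma plus the absence of odd-index critical points), one gets $\mathfrak{h}_{\eta,\lambda}(\sigma)>\mathfrak{h}_{\eta,\lambda}(\tau)$ for all $\tau\neq\sigma$, which is precisely the second family. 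Rescaling $\lambda$ then makes all gaps exceed $M$ and $M'$. In other words, the first system of inequalities already forces the second up to scaling, and no independent separation argument is needed.
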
 
\begin{proof} 
By the definition (\ref{eq:equiv_H}) of $H^\lambda(q,z)$,  
it suffices to show that both the expansion in 
Lemma \ref{lem:expansion_oscint} and 
the inequality 
\begin{equation}
\label{eq:psigma_ptau}
\sum_{a=1}^r \Re(\frac{\ov{p}^\lambda_a(\sigma)}{2\pi\iu}) +M 
< \sum_{a=1}^r \Re(\frac{\ov{p}^\lambda_a(\tau)}{2\pi\iu}), 
\quad \forall \tau \neq \sigma  
\end{equation} 
hold for some $(\lambda_1,\dots,\lambda_m)\in (\iu\R)^m$. 
(Note that these are open conditions for $\lambda$.)
Hereafter we take $\lambda_j$ to be purely imaginary. 
Recall that $\cX$ can be written as a symplectic quotient 
$\mathfrak{h}^{-1}(\eta)/\T_\R^r$ 
(\ref{eq:X_symplecticquot}) and is endowed with 
the reduced symplectic form depending on $\eta$. 
Without changing the orbifold $\cX$, 
we can choose the vector $\eta\in \bL\otimes \R$ 
to be $p_1+\cdots + p_r \in \tC_\cX$. 
Define a Hamiltonian function 
$\mathfrak{h}_{\eta,\lambda}\colon \cX\to \R$ by 
\[
\mathfrak{h}_{\eta,\lambda} (z_1,\dots,z_m) 
= -\sum_{i=1}^m \frac{\lambda_j}{2\pi\iu} |z_j|^2, \quad 
(z_1,\dots,z_m)\in \mathfrak{h}^{-1}(\eta).  
\]
This generates an (almost periodic) 
Hamiltonian $\R$-action $z_i \mapsto 
e^{-\lambda_i s}z_i, s\in \R$ on $\cX$. 
In general, an almost periodic Hamiltonian 
attains its global maximum at  
a critical point of index $2n = \dim_\R \cX$.  
(This follows from the so-called Mountain-Path Lemma and 
the fact that there are no critical points of odd index. 
See \emph{e.g.} \cite{audin}). 
Because the weights of $T_\sigma \cX$ for this $\R$-action are  
$\{\ov{D}^\lambda_j(\sigma)/(2\pi\iu)
\;;\; j\notin I^\sigma\}$, it follows that 
\begin{equation}
\label{eq:maximum} 
-\ov{D}^\lambda_j(\sigma)/(2\pi\iu) >0, \ \forall j \notin I^\sigma  
\Longrightarrow 
\text{$\mathfrak{h}_{\eta,\lambda}$ 
attains its unique maximum at $\sigma$} 
\end{equation} 
By (\ref{eq:ovDjsigma}), one can choose 
$(\lambda_1,\dots,\lambda_m)\in (\iu \R)^m$ 
so that $-\ov{D}^\lambda_j(\sigma)/(2\pi\iu)$, 
$j\notin I^\sigma$ are arbitrarily large positive numbers
and that the expansion in Lemma \ref{lem:expansion_oscint} holds.  
Then by (\ref{eq:maximum}), we know that 
$\mathfrak{h}_{\eta,\lambda}(\sigma)
>\mathfrak{h}_{\eta,\lambda}(\tau)$ 
for every other fixed point $\tau \neq \sigma$. 
On the other hand, using $\eta = p_1+\cdots+p_r$, 
we can easily show that 
$\mathfrak{h}_{\eta,\lambda}(\sigma) =
 -\sum_{a=1}^r \ov{p}^\lambda_a(\sigma)/(2\pi\iu)$.  
Therefore, by rescaling $\lambda_i$ if necessary, 
we can achieve the inequality (\ref{eq:psigma_ptau}). 
\end{proof}

Comparing the expansions in 
Lemmas \ref{lem:expansion_oscint} 
and \ref{lem:expansion_H}, 
we conclude 
\[
c_{\sigma,v}(\lambda) = 
\frac{1}{V(\sigma) 
\prod_{i\notin I^\sigma} 
(1-e^{-2\pi\iu f_{v}([D_i])-\ov{D}^\lambda_i(\sigma)})}, 
\]
where $c_{\sigma,v}$ is the coefficient 
appearing in (\ref{eq:oscint_H_coeff}) 
and $f_v([D_i])\in [0,1)$ is the rational number associated 
to $[D_i]\in H^2(\cX,\Z)$ (see Section \ref{sec:A-model} and 
(\ref{eq:toric_fv})). 
Hence, we find 
\[
c_{\sigma,v}(\lambda) = \frac{1}{V(\sigma)} 
\frac{\tTd^\lambda(T\cX)|_{(\sigma,v)} }{e_T(T_\sigma\cX_v)}, 
\]
where $\tTd^\lambda(T\cX)|_{(\sigma,v)}$ 
is the restriction of the equivariant Todd class $\tTd^\lambda(T\cX)$ 
to the fixed point $(\sigma,v)$ in $I\cX$ and 
$e_T(T_\sigma\cX_v)$ is the $T$-equivariant Euler class
of $T_\sigma \cX_v$ (regarded as a 
$T$-equivariant vector bundle over a point $\sigma$). 
Here $\lambda_i$ is regarded as an element of 
$H^2_T(\pt)$ and we used the fact that 
\begin{align*} 
% \tTd^\lambda(T\cX)|_{(\sigma,v)} &= 
% \prod_{i\notin I^\sigma, f_v([D_i]) \neq 0} 
% \frac{1}{(1-e^{-2\pi\iu f_{v}([D_i])-\ov{D}^\lambda_i(\sigma)})} 
% \prod_{i\notin I^\sigma, f_v([D_i])=0 } 
% \frac{\ov{D}^\lambda_i(\sigma)}
% {(1-e^{-2\pi\iu f_{v}([D_i])-\ov{D}^\lambda_i(\sigma)})}, \\ 
e_T(T_\sigma \cX_v) & = 
\prod_{i\notin I^\sigma, f_v([D_i]) = 0 } 
\ov{D}_i^\lambda(\sigma).  
\end{align*} 
Since $V(\sigma)$ is the order of 
the automorphism group $\Aut(\sigma)$ at $\sigma\in \cX$, 
the Equation (\ref{eq:equivosc_H_str}) 
follows from the localization 
theorem in $T$-equivariant cohomology \cite{atiyah-bott}. 

\subsubsection{Proof of (\ref{eq:cc_sky})} 
We use (\ref{eq:cc_sky}) when we prove the matching of 
pairings in Appendix \ref{subsec:pairing}. 
Since the both-hand sides of (\ref{eq:cc_sky}) 
are monodromy-invariant, by (\ref{eq:cc_byH}),  
it suffices to prove that 
\[
(-1)^n i_{\pt}^*(H(q,z)) = 
\frac{1}{(2\pi\iu)^n} 
\int_{\Gamma_{\rm c}} e^{W_q/z} \omega_q 
\] 
where $i_\pt\colon \pt \to \cX \subset I\cX$ 
is an inclusion of a point and 
we used the fact that  
$[\cO_\pt^\vee]=(-1)^n [\cO_\pt]$. 
By the residue calculations, 
the right-hand side is (see (\ref{eq:W_q})): 
\[
\sum_{
\substack{(k_1,\dots,k_m)\in (\Z_{\ge 0})^m \\  
\sum_{i=1}^m k_i b_i = 0}}  
\frac{1}{k_1!\cdots k_m!}
\frac{q^{k_1 \ell_1 + \cdots + k_m \ell_m}}{z^{k_1+\cdots+k_m}}.  
\] 
Because $(k_1,\dots,k_m)$ appearing in the 
summation gives an element $d\in \bL$ 
such that $k_i = \pair{D_i}{d}$, we can see 
that this equals $(-1)^n i_\pt^*H(q,z)$ 
by (\ref{eq:H-series}).

\section{Integral periods and crepant resolution conjecture} 
\label{sec:integralperiods} 
In mirror symmetry for Calabi-Yau manifolds
(see \emph{e.g.} \cite{CDGP,morrison,deligne}), 
flat co-ordinates (or mirror map) $\tau_i$ on the B-model 
in a neighborhood of a maximally unipotent monodromy point 
was given by periods over integral cycles  
$A_1,\dots, A_r$ of a holomorphic $n$-form $\Omega$ 
\[
\tau_i = \int_{A_i} \Omega,   
\]
where $\Omega$ is normalized by the condition: 
\[
\int_{A_0} \Omega = 1. 
\]
Here, $A_0$ is a monodromy-invariant cycle (unique up to sign)  
and $A_1,\dots, A_r$ are such that they 
transforms under monodromy as 
$A_i \mapsto A_i + k_i A_0$. 
Thus, in Calabi-Yau case, \emph{flat co-ordinates are  
constructed as integral periods}. 

In this section, we consider integral periods 
in the A-model by choosing some integral structure on it.  
The integral structure in this section 
does not need to be the $\hGamma$-integral structure. 
We study relationships between integral periods and 
flat co-ordinates in the conformal limit (\ref{eq:conformal_limit}). 
Then we discuss why quantum parameters 
should be specialized to roots of unity in Y. Ruan's crepant 
resolution conjecture \cite{ruan-crc}. 
Throughout this section, we assume that $\cX$ is a 
weak Fano (\emph{i.e.} $\rho=c_1(\cX)$ is nef) 
Gorenstein projective orbifold without generic stabilizer. 

\subsection{Integral periods in the A-model} 
\label{subsec:intperiod_A} 
In what follows, we fix an integral lattice 
$\Sol(\cX)_\Z$ in the space $\Sol(\cX)$ of flat sections 
of the quantum $D$-module $QDM(\cX)$ satisfying 
\begin{itemize}
\item $\Sol(\cX)_\Z$ is invariant under the Galois action: 
$G^\Sol(\xi) \Sol(\cX)_\Z = \Sol(\cX)_\Z$, 

\item The pairing $(\cdot,\cdot)_{\Sol}$ restricts  
to a $\Z$-valued pairing: 
$\Sol(\cX)_\Z\times \Sol(\cX)_\Z \to \Z$.  
\end{itemize} 
An example is given by the $\hGamma$-integral structure 
$\Sol(\cX)_\Z = \cZ_K(K(\cX))$.  
(See Definition \ref{def:A-model_int}, 
Proposition \ref{prop:A-model_int}). 
An \emph{integral period} in the A-model is 
defined to be a pairing between 
a section of $QDM(\cX)$ 
and an element of $\Sol(\cX)_\Z$. 
The quantum cohomology central charge 
(\ref{eq:qc_centralcharge}) 
is an example of integral periods. 

We set up the notation. 
We set $\cV := H^*_{\rm orb}(\cX)$. 
This is identified with $\Sol(\cX)$ via 
the cohomology framing $\cZ_{\rm coh}
\colon \cV = H^*_{\rm orb}(\cX) \to \Sol(\cX)$ 
in (\ref{eq:coh_framing}). 
The integral structure $\Sol(\cX)_\Z$ 
induces an integral lattice 
$\cV_\Z$ in $\cV$: 
\[
\cV_\Z := \cZ_{\rm coh}^{-1} (\Sol(\cX)_\Z) 
\subset \cV = H_{\rm orb}^*(\cX).  
\] 
For $A \in \cV_\Z$ and 
$\alpha \in H^*_{\rm orb}(\cX)$, we put 
\begin{align}
\label{eq:Amodel_period}
\begin{split} 
\Pi_A^\alpha(\tau,z) &:= 
(\alpha, \cZ_{\rm coh}(A)(\tau,z))_{\rm orb}  \\ 
&= (L(\tau,-z)^{-1} \alpha, 
z^{-\mu} z^{\rho} A)_{\rm orb},   
\end{split} 
\end{align} 
where we used $L(\tau,-z)^{-1} = L(\tau,z)^\dagger$.  
The quantum cohomology central charge (\ref{eq:qc_centralcharge}) 
is given by $Z(V) = c(z) \Pi^{\unit}_{\Psi(V)}$. 
(We do not need the $\hGamma$-class 
to define $\Pi_A^\alpha$.)  

In order to consider the integral 
periods (\ref{eq:Amodel_period}) 
without $\log z$ terms,  
we introduce the sublattice 
$\cV_{\Z,\rho} \subset \cV_\Z $ by 
\[
\cV_{\Z,\rho} := \Ker(\rho) \cap \cV_\Z. 
\]
By the assumption that $\cX$ is Gorenstein, 
all the ages $\iota_v$ are integers and 
$H_{\rm orb}^*(\cX)$ is graded by even integers. 
Therefore, an element of $\cV_{\Z,\rho}$ corresponds 
to a flat section which is single-valued 
(when $n=\dim_\C\cX$ is even) 
or two-valued (when $n$ is odd) under $\cZ_{\rm coh}$. 
We write the integral period 
$\Pi^\alpha_A$ for $A\in \cV_{\Z,\rho}$ 
as a pairing on the ``two-valued Givental space" $\hcH$: 
\[
\hcH:= \cH \otimes_{\cO(\C^*)} \cO(\C^*_{z^{1/2}}),  
\]
where $\C^*_{z^{1/2}} \to \C^*$ denotes 
the double cover of the $z$-plane. 
The pairing (\ref{eq:pairing_cH}) 
on $\cH$ is naturally extended to 
$\hcH$ as 
\[
(\alpha(z^{1/2}),\beta(z^{1/2}))_{\cH} = 
(\alpha(\iu z^{1/2}), \beta(z^{1/2}))_{\rm orb}.  
\]
Then we have for $A\in \cV_{\Z,\rho}$ 
\begin{equation}
\label{eq:monodromyfree_integralperiod}
\Pi_A^\alpha(\tau,z) = (\J_\tau \alpha, z^{-\mu}A)_{\cH},    
\end{equation}  
where $\J_\tau \alpha = L(\tau,z)^{-1}\alpha$ 
is given in (\ref{eq:Linv}).  
Recall that $\J_\tau \alpha$ 
is lying on the semi-infinite Hodge structure 
$\F_\tau$ in Section \ref{subsec:Jfunct}. 

\subsection{Conformal limit and integral periods} 
\label{subsec:conflimit} 
By \emph{conformal limit} we mean the following 
limit sequence in $H^2_{\rm orb}(\cX)$:  
\begin{equation}
\label{eq:conformal_limit}
\tau - s \rho, \quad \Re(s) \to \infty  
\end{equation} 
with a fixed $\tau \in H^2_{\rm orb}(\cX)$.  
Using the assumption that $\rho=c_1(\cX)$ is nef, 
we can define the conformal limit of 
$\J_\tau = L(\tau,z)^{-1}$ as follows: 
\begin{align} 
\label{eq:conf_Linv}
\begin{split}
& \Jc_\tau \alpha  
:= \lim_{\Re(s) \to \infty} 
e^{s\rho/z} \J_{\tau- s \rho} \alpha \\ 
& = e^{\tau_{0,2}/z} \biggl( \alpha + 
\sum_{\substack{(d,l)\neq (0,0),\\ d\in \Ker(\rho)}} 
\sum_{i=1}^N \frac{1}{l!}
\corr{\alpha,\tau_{\rm tw},\dots,\tau_{\rm tw}, 
\frac{\phi_i}{z-\psi}}_{0,l+2,d}^\cX 
e^{\pair{\tau_{0,2}}{d}}\phi^i \biggr).  
\end{split} 
\end{align} 
Here we put $\tau = \tau_{0,2} + \tau_{\rm tw}$ with 
$\tau_{0,2} \in H^2(\cX)$ and 
$\tau_{\rm tw} \in \bigoplus_{\iota_v=1} H^0(\cX_v)$ 
and used (\ref{eq:Linv}) and the fact that 
$\pair{\rho}{d}\ge 0$ for all $d \in \Eff_\cX$. 
When $\alpha \in H^{2k}_{\rm orb}(\cX)$, 
$\Jc_\tau(\alpha)$ is homogeneous of degree $2k$ 
if we set $\deg(z) =2$. 

\begin{definition}
\label{def:CY_limit_VHS}
Assume that $\rho=c_1(\cX)$ is nef. 
Let $\tau\mapsto \F_\tau$ be the quantum cohomology 
\seminf VHS in Section \ref{subsec:Jfunct}. 
The \emph{conformal limit quantum cohomology \seminf VHS} 
is defined to be 
\[ 
\Fc_\tau := \lim_{\Re(s)\to \infty} 
e^{s\rho/z} \F_{\tau-s\rho}
= \Jc_\tau(H^*_{\rm orb}(\cX)\otimes \cO(\C^*)), 
\quad \tau 
\in H^2_{\rm orb}(\cX).  
\]
This satisfies $\Fc_{\tau+ a\rho} = e^{a\rho/z} \Fc_{\tau}$ 
and is homogeneous 
$(z\partial_z +\mu) \Fc_\tau \subset 
\Fc_\tau$. 
\end{definition}  
\begin{remark}
The new \seminf VHS $\Fc_\tau$ 
can be also defined in terms of 
the ``conformal quantum product"  
$\lim_{\Re(s)\to \infty} \circ_{\tau-s\rho}$ 
and the Dubrovin connection associated to it.  
This conformal limit of quantum cohomology 
is closely related to Y.\ Ruan's quantum corrected ring 
\cite{ruan-crc}, which is defined by 
counting rational curves contained 
in the exceptional locus (in the case of 
crepant resolution). 
The conformal limit of a \seminf VHS  
appears in the work of 
Sabbah \cite[Part I]{sabbah-hypergeometric}   
as the associated graded of a free 
$\C[z]$-module $G_k$ (an algebraization of $z^{-k}\F_\tau$) 
with respect to the  Kashiwara-Malgrange 
$V$-filtration at $z=\infty$. 
See also Hertling and Sevenheck 
\cite[Section 7]{hertling-sevenheck} for a review. 
\end{remark}

In the conformal limit, the \seminf VHS 
reduces to a \emph{finite dimensional VHS}. 
We define subspaces 
$H_0$, $\hbFc_\tau$ of $\hcH$ by 
\[
H_0:= \Ker(z\partial_z +\mu), 
\quad 
\hbFc_\tau := \Fc_\tau \otimes_{\cO(\C^*)} 
\cO(\C^*_{z^{1/2}}).  
\] 
The pairing $(\cdot,\cdot)_{\cH}$ 
on $\hcH$ induces a $(-1)^n$-symmetric 
$\C$-valued pairing $(\cdot,\cdot)_{H_0}$ on $H_0$. 
The semi-infinite flag 
$\cdots \supset z^{-1} \hbFc_\tau \supset 
\hbFc_\tau \supset z 
\hbFc_\tau \supset \cdots$ 
restricts to a finite dimensional flag 
$H_0= \rsF_\tau^0 \supset \rsF_\tau^1 
\supset \cdots \supset \rsF_\tau^n \supset 0$: 
\begin{align*} 
\rsF^{p}_\tau := z^{p-n/2}\hbFc_\tau \cap H_0  
= \Span_\C \left
\{ z^{p-n/2} \Jc_{\tau}(z^j \alpha)\;;\; \alpha 
\in H^{2n-2p-2j}_{\rm orb}(\cX),j\ge 0 
\right\}  
\end{align*} 
satisfying  
the Griffiths transversality and 
Hodge-Riemann bilinear relation: 
\[
\parfrac{}{t^i} \rsF_\tau^p \subset \rsF_\tau^{p-1}, \quad 
(\rsF_\tau^p, \rsF_\tau^{n-p+1})_{H_0} =0. 
\]
% The real involution $\kappa_\cH$ on $\cH$ induces  
% those on $\hcH$ and $H_0$ since 
% $z\partial_z + \mu$ is purely imaginary on $\cH$ 
% by (\ref{eq:grading_imaginary_H}).   
% Denote by $\kappa_{H_0}$ the real involution on $H_0$. 
% When moreover $\Fc_\tau$ is pure and polarized 
% (these properties hold near the large radius limit 
% if the conditions of Theorem 
% \ref{thm:pure_polarized} are satisfied), 
% one can easily check that $\rsF^p_{\tau}$ satisfies 
% the Hodge decomposition and 
% Hodge-Riemann bilinear inequality:  
% \[
% H_0 = \rsF^p_\tau \oplus \kappa_{H_0}(\rsF^{n-p+1}_\tau), 
% \quad 
% (-\iu)^{2p-n} (\phi, \kappa_{H_0}(\phi))_{H_0} >0 
% \]
% where $\phi \in \rsF^{p}_\tau \cap \kappa_{H_0}(\rsF^{n-p}_\tau) 
% = z^{p-n/2} 
% (\hbFc_\tau \cap \kappa_{\cH}(\hbFc_\tau)) 
% \cap H_0$. 
Conversely, the finite dimensional VHS 
$\rsF^\bullet_\tau$ recovers $\Fc_\tau$ by 
\[
\Fc_\tau = z^{-n/2} \rsF^n_\tau \otimes \cO(\C) + 
z^{-n/2+1} \rsF^{n-1}_\tau \otimes \cO(\C) + 
\dots  + z^{n/2} \rsF^0_\tau \otimes \cO(\C).  
\]
The integral structure on the A-model \seminf VHS 
does not induce a full integral lattice in $H_0$. 
One can see however that the lattice 
$\cV_{\Z,\rho}$ is naturally 
contained in $H_0$ by $A \mapsto  z^{-\mu} A$  
as a {\it partial} lattice. 
An integral period for $A\in \cV_{\Z,\rho}$ 
is related to a period of the finite dimensional 
VHS $\{\rsF^p_\tau\subset H_0\}$ as follows. 

\begin{lemma} 
For $A\in \cV_{\Z,\rho}$ and 
$\alpha \in H^{2n-2p}_{\rm orb}(\cX)$, 
the integral period $\Pi_A^\alpha(\tau,z)$ 
in (\ref{eq:monodromyfree_integralperiod}) 
converges to the period of the finite dimensional VHS 
${\rsF^p_\tau \subset H_0}$ in the conformal limit: 
\begin{equation}
\label{eq:limitperiod_c}
\lim_{\Re(s)\to \infty} 
z^{p-n/2} \Pi_A^\alpha(\tau-s \rho,z) = 
(z^{p-n/2} \Jc_{\tau} \alpha, z^{-\mu} A)_{H_0} 
\in  \C.    
\end{equation} 
Note that $z^{p-n/2}\Jc_\tau \alpha \in \rsF^p_\tau$ 
and that the limit 
depends only on $\tau \in H^2_{\rm orb}(\cX)/\C\rho$. 
\end{lemma}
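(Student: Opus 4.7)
The plan is to reduce the integral period to an honest pairing in $\cH$, then freely insert the factor $e^{s\rho/z}$ that produces the conformal limit, and finally restrict everything to the subspace $H_0$. By the expression \eqref{eq:monodromyfree_integralperiod} for $A\in \cV_{\Z,\rho}$, I first write
\[
\Pi_A^\alpha(\tau - s\rho, z) = (\J_{\tau - s\rho}\alpha,\, z^{-\mu} A)_{\cH}.
\]

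The main step is to insert $e^{s\rho/z}$ into the first slot without cost. Unwinding the definition of $(\cdot,\cdot)_{\cH}$, this amounts to checking
\[
(e^{-s\rho/z}\gamma(-z),\, z^{-\mu} A)_{\rm orb} = (\gamma(-z),\, z^{-\mu} A)_{\rm orb}.
\]
The class $\rho \in H^2(\cX)$ is self-adjoint with respect to the orbifold Poincar\'{e} pairing, so I may move $e^{-s\rho/z}$ to the second slot. The commutation $[\mu,\rho]=\rho$ (which follows from the definition of $\mu$ and $\rho\in H^2$) gives $\rho^k z^{-\mu} = z^k z^{-\mu}\rho^k$, hence $e^{-s\rho/z} z^{-\mu} = z^{-\mu} e^{-s\rho}$. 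Since $A \in \Ker(\rho)$, $e^{-s\rho}A = A$, which closes the identity. Thus
\[
\Pi_A^\alpha(\tau - s\rho, z) = (e^{s\rho/z} \J_{\tau - s\rho}\alpha,\, z^{-\mu} A)_{\cH}.
\]

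Taking $\Re(s)\to\infty$ and invoking the definition \eqref{eq:conf_Linv} of $\Jc_\tau\alpha$ (well-defined because $\rho$ is nef, so only finitely many terms contribute at each polynomial order and those with $d \notin \Ker(\rho)$ are exponentially suppressed), continuity of the pairing yields
\[
\lim_{\Re(s)\to\infty}\Pi_A^\alpha(\tau - s\rho, z) = (\Jc_\tau \alpha,\, z^{-\mu} A)_{\cH}.
\]
Multiplying by $z^{p-n/2}$ gives the left-hand side of the desired identity.

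It then remains to verify that this $\cH$-pairing coincides with the pairing on $H_0$. For $z^{-\mu}A$ with $A\in H^{2q}_{\rm orb}(\cX)$, $(z\partial_z + \mu)(z^{-\mu}A) = 0$ immediately. For $z^{p-n/2}\Jc_\tau\alpha$, the stated homogeneity of $\Jc_\tau\alpha$ of total degree $2(n-p)$ with $\deg(z)=2$ translates to $(z\partial_z + \mu)\Jc_\tau\alpha = (n/2-p)\Jc_\tau\alpha$, whose shift is cancelled by the prefactor. Since the Givental pairing satisfies that $z\partial_z + \mu\otimes 1 + 1 \otimes \mu$ acts trivially (up to the fixed degree shift $-2n$ of the orbifold pairing), the $\hcH$-pairing between two elements of $H_0$ is a constant in $z$ and agrees by definition with $(\cdot,\cdot)_{H_0}$. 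The main obstacle is purely book-keeping in the commutation argument of the second step; everything else follows formally from the constructions already set up in Sections \ref{subsec:QuantumDmod}--\ref{subsec:Jfunct} and \ref{subsec:conflimit}.
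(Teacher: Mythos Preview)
The paper does not supply a proof of this lemma; it is stated and immediately followed by a remark, so the reader is expected to verify it directly from the definitions in Sections~\ref{subsec:Jfunct} and~\ref{subsec:conflimit}. Your argument is exactly the intended unpacking: rewrite $\Pi_A^\alpha(\tau-s\rho,z)$ via \eqref{eq:monodromyfree_integralperiod}, use $A\in\Ker(\rho)$ together with $[\mu,\rho]=\rho$ to insert $e^{s\rho/z}$ for free, pass to the limit \eqref{eq:conf_Linv}, and then observe that both $z^{p-n/2}\Jc_\tau\alpha$ and $z^{-\mu}A$ lie in $H_0$ so that the $\hcH$-pairing reduces to a constant. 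All of the steps are correct, including the commutation computation $e^{-s\rho/z}z^{-\mu}=z^{-\mu}e^{-s\rho}$ and the homogeneity check for $z^{p-n/2}\Jc_\tau\alpha$.
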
 

\begin{remark}
When the real structure $\Sol(\cX)_\Z \otimes \R$
makes $\Fc_\tau$ a pure and polarized \seminf VHS 
(see \cite[Section 2]{iritani-realint-preprint}), 
the finite dimensional VHS $\rsF^p_{\tau}$ 
satisfies the Hodge decomposition 
and Hodge-Riemann bilinear inequality:  
\[
H_0 = \rsF^p_\tau \oplus \kappa_{H_0}(\rsF^{n-p+1}_\tau), 
\quad 
(-\iu)^{2p-n} (\phi, \kappa_{H_0}(\phi))_{H_0} >0 
\]
where $\kappa_{H_0}$ is the real involution on $H_0$ and 
$\phi \in \rsF^{p}_\tau \cap \kappa_{H_0}(\rsF^{n-p}_\tau)$. 
\end{remark} 

\subsection{Co-ordinates on 
$H^2_{\rm orb}(\cX)$ via integral periods} 

We use periods for $\rsF^n_\tau \subset H_0$ 
to construct a co-ordinate system on $H^2_{\rm orb}(\cX)$.  
Note that $\rsF^n_\tau = z^{n/2} \hbFc \cap H_0 
=z^{n/2}\Jc_\tau(H^0_{\rm orb}(\cX))$ 
is one dimensional over $\C$. 
Using the Galois action, we take 
a good set of integral vectors in $\cV_{\Z,\rho}$ 
to measure $\rsF^n_\tau$. 

Choose an ample line bundle $L$ 
pulled back from the coarse moduli space $X$ of $\cX$. 
Then the Galois action $G^\Sol([L])$ 
is unipotent (since $f_v([L])=0$ in (\ref{eq:cohfr_property}))  
and its logarithm $\cN = 
\Log(\cZ_{\rm coh}^{-1} G^\Sol([L]) \cZ_{\rm coh}) 
= -2\pi\iu c_1(L)$   
defines a weight filtration $W_k$ on $\cV$.  
This is an increasing filtration 
characterized by the condition: 
\[
\cN W_k \subset W_{k-2}, \quad 
\cN^k \colon \Gr^W_k(\cV) \cong \Gr^W_{-k}(\cV),   
\]
where $\Gr^W_k(\cV) = W_k/W_{k-1}$. 
It is given by (independent of a choice of $L$)  
\begin{equation}
\label{eq:weightfiltr_ampleLoncoarse} 
W_k = \bigoplus_{v\in \sfT} H^{\ge n_v-k}(\cX_v).  
\end{equation} 
The weight filtration is defined over $\Q$ 
(with respect to the lattice $\cV_\Z$). 
Similarly, the subspace 
$\Ker(H^2(\cX)) := \{\alpha\in \cV=H^*_{\rm orb}(\cX) 
\;;\; \tau_{0,2}\cdot \alpha=0, \forall 
\tau_{0,2}\in H^2(\cX)\}$ 
is also characterized by Galois actions  
and is defined over $\Q$. 
These subspaces define 
the following filtration on $\cV_{\Z,\rho}$: 
\[
(W_{-n} \cap \cV_{\Z,\rho}) \subset 
(\Ker(H^2(\cX))\cap W_{-n+2} \cap \cV_{\Z,\rho}) \subset 
(W_{-n+2} \cap \cV_{\Z,\rho})  
\]
which are full lattices of the vector spaces: 
\[
H^{2n}(\cX)\subset 
H^{2n}(\cX) \oplus \bigoplus_{n_v=n-2} H^{2n_v}(\cX_v) 
\subset 
(H^{\ge 2n-2}(\cX)\cap \Ker(\rho)) 
\oplus \bigoplus_{n_v=n-2} H^{2 n_v}(\cX_v). 
\]
Since $\cX$ is Gorenstein, 
we have no $v\in \sfT$ satisfying $n_v=n-1$ 
and $\iota_v=1$ if $n_v=n-2$. 
Thus these subspaces are contained in 
$H^{\ge 2n-2}_{\rm orb}(\cX)$. 
We take integral vectors $A_0,A_1,\dots, A_\flat$, 
$A_{\flat+1},\dots,A_\sharp$ in $\cV_{\Z,\rho}$ 
compatible with this filtration: 
\begin{align*}
W_{-n} \cap \cV_{\Z,\rho} &= \Z A_0, \\
\Ker(H^2(\cX)) \cap W_{-n+2} \cap \cV_{\Z,\rho}
& = \Z A_0 + \textstyle\sum_{i=1}^\flat \Z A_i, \\ 
W_{-n+2} \cap \cV_{\Z,\rho} 
&= \Z A_0 + \textstyle\sum_{i=1}^\flat \Z A_i
+ \sum_{i=\flat+1}^\sharp \Z A_i. 
\end{align*} 
The vector $A_0\in H^{2n}(\cX)$ is unique up to sign 
and invariant under all Galois action. 
In analogy with the Calabi-Yau B-model,  
we normalize a generator $\Omega_\tau 
\in \rsF^n_\tau = z^{n/2}\Jc(H^0_{\rm orb}(\cX))$ 
by the condition 
\begin{equation}
\label{eq:normalization_Omega} 
(\Omega_\tau, z^{-\mu} A_0)_{H_0} = 1.  
\end{equation} 
Using the expression (\ref{eq:conf_Linv}), one can easily see that 
$\Omega_\tau = z^{n/2}\Jc_\tau((\iu^{n} a_0)^{-1} \unit)$
with $a_0 := (\unit,A_0)_{\rm orb}$.  
The \emph{normalized integral period} $\ov\Pi_A(\tau)$ 
of $A\in \cV_{\Z,\rho}$ is defined by  
(\emph{cf.} (\ref{eq:limitperiod_c})) 
\[
\ov\Pi_A(\tau) := (\Omega_\tau, z^{-\mu}A)_{H_0}, 
\quad \tau \in H^2_{\rm orb}(\cX).  
\] 
The filter $W_{-n+2} \cap \cV_{\Z,\rho}$ 
does not necessarily span  
$H^{\ge 2n-2}_{\rm orb}(\cX) \cap \Ker(\rho)$. 
% We also consider the subspace 
% $DF_0 := \bigoplus_{v\in \sfT} H^{2n_v}(\cX_v)$
% of $\cV$ consisting of geometrically 
% 0-dimensional classes.  
% We cannot tell if $DF_0$ is defined over $\Q$ 
% in general,  
% but this is so for the $\hGamma$-integral structure. 

\begin{proposition} 
\label{prop:integralperiods_basic} 
For $\tau \in H^2_{\rm orb}(\cX)$, 
we write $\tau = \tau_{0,2} + \tau_{\rm tw} = 
\tau_{0,2} + \tau_{\rm tw}' + \tau_{\rm tw}''$ 
with $\tau_{0,2} \in H^2(\cX)$, 
$\tau_{\rm tw} \in \bigoplus_{\iota_v=1}H^0(\cX_v)$, 
$\tau_{\rm tw}'\in \bigoplus_{n_v = n-2}H^0(\cX_v)$ 
and $\tau_{\rm tw}''\in 
\bigoplus_{n_v<n-2, \ \iota_v=1} H^0(\cX_v)$. 
Set $a_i:= (\unit, A_i)_{\rm orb}$. 
The normalized integral periods 
$\ov\Pi_{A_i}(\tau)$ 
give an affine co-ordinate system 
on the space 
$(H^2(\cX)/\C\rho) \oplus \bigoplus_{n_v=n-2} H^0(\cX_v)$:  
\begin{align*} 
\ov\Pi_{A_i}(\tau) &= 
a_0^{-1}a_i - (\tau_{\rm tw}', a_0^{-1}A_i)_{\rm orb}, \quad 
1\le i\le \flat,  \\ 
\ov\Pi_{A_i}(\tau) & =
a_0^{-1}a_i  - (\tau_{\rm tw}', a_0^{-1}A_i)_{\rm orb} 
- \frac{1}{2\pi\iu} \tau_{0,2}\cap [C_i],      
\quad \flat+ 1\le i \le \sharp 
\end{align*} 
where $[C_i]\in H_2(\cX)$ is the Poincar\'{e} dual 
of the $H^{2n-2}(\cX)$-component of $2\pi\iu a_0^{-1} A_i$ 
and 
\begin{equation}
\label{eq:integer_A_H2} 
[C_i]\in H_2(X,\Z)\cap \Ker\rho, \quad 
\text{where $X$ is the coarse moduli space of $\cX$}.  
\end{equation} 
Here, $[C_{\flat+1}],\dots,[C_{\sharp}]$ form a 
$\Q$-basis of $H_2(X,\Q)\cap \Ker\rho$. 
The period of a class 
$B \in \Ker(H^2(\cX))\cap \cV_{\Z,\rho}$ 
is possibly non-linear and has the asymptotic 
\[
\ov\Pi_B(\tau) \sim 
a_0^{-1}b - (\tau_{\rm tw}, a_0^{-1} B)_{\rm orb}, \quad 
b := (\unit, B)_{\rm orb}  
\]
in the large radius limit (\ref{eq:largeradiuslimit}).    
The constant term $a_0^{-1} a_i$ (resp.\ $a_0^{-1}b$)  
is a rational number if the following 
condition (\ref{eq:curveclass_overQ}) 
(resp.\ (\ref{eq:KerH2_rat})) holds.  
\begin{align} 
\label{eq:curveclass_overQ} 
& \text{The projection } 
W_{-n+2} \cap \Ker(\rho) 
\to H^{2n}(\cX) 
\text{ is defined over $\Q$}. \\ 
\label{eq:KerH2_rat} 
& \begin{cases} 
H^*(\cX)\text{ is generated by }H^2(\cX) \text{ and} \\   
\forall v \in \sfT \ 
(v \neq 0 \ \Longrightarrow \ \exists 
\xi \in H^2(\cX,\Z) \text{ such that } f_v(\xi)>0). 
\end{cases} 
\end{align} 
Here the projection in (\ref{eq:curveclass_overQ}) 
is to take the $H^{2n}(\cX)$-component. 
Recall that $W_{-n+2} \cap \Ker(\rho) 
= (H^{\ge 2n-2}(\cX) \cap \Ker(\rho)) 
\oplus \bigoplus_{n_v=n-2} H^{2n_v}(\cX_v)$.  
% \item[(i)] 
% If the following condition holds, 
% \begin{align} 
% \label{eq:sep_codim2_trivial}
% &\forall v \in \sfT \ 
% (n_v = n-2 \ \Longrightarrow \ 
% \exists \xi\in H^2(\cX,\Z) \text{ such that }  f_v(\xi)>0),  
% \end{align} 
% we have  $a_0^{-1} a_i  \in \Q$ for $1\le i\le \flat$.  
% \item[(ii)] 
% If moreover $H^*(\cX)$ is generated by $H^2(\cX)$ as a ring 
% and the following holds,  
% \begin{equation}
% \label{eq:sep_trivial}
% \forall v \in \sfT \ 
% (v \neq 0 \ \Longrightarrow \ \exists 
% \xi \in H^2(\cX,\Z) \text{ such that } f_v(\xi)>0), 
% \end{equation} 
% we have $a_0^{-1} b\in \Q$ for $b=(B,1)_{\rm orb}$ 
% and $B\in \Ker(H^2(\cX))\cap \cV_{\Z,\rho}^\cX$. 
% \item[(iii)] 
% If the following holds for the integral structure,   
% \begin{equation} 
% \label{eq:curveclass_overQ}
% (H^{2n-2}_{\rm orb}(\cX)\cap W_{-n+2} 
% \cap \Ker\rho)  
% \subset \cV   
% \text{ is defined over $\cV_\Z \otimes \Q$,} 
% \end{equation} 
% we have $a_0^{-1} a_i \in \Q$ for 
% $1\le i \le\sharp$.  
\end{proposition}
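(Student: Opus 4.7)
The proof strategy is to evaluate $\ov\Pi_A(\tau)=(\Omega_\tau, z^{-\mu}A)_{H_0}$ directly using the formula $\Omega_\tau=(\iu^n a_0)^{-1}z^{n/2}\Jc_\tau\unit$ together with the explicit expression (\ref{eq:conf_Linv}) for $\Jc_\tau$. Write $\Jc_\tau\unit=\sum_{k\ge 0} c_k(\tau)/z^k$ with $c_k(\tau)\in H^{2k}_{\rm orb}(\cX)$ homogeneous. Since $\Omega_\tau\in H_0$, the $\cH$-pairing is constant in $z$, and expanding it via $(\alpha(z^{1/2}),\beta(z^{1/2}))_\cH=(\alpha(\iu z^{1/2}),\beta(z^{1/2}))_{\rm orb}$ (which substitutes $z\mapsto -z$ in the first slot, producing phases $(-1)^{n/2-k}$) while matching orbifold-Poincar\'e degrees, one finds that for each component $A_v\in H^{2j_v}(\cX_v)$ only the term with $k_v=n-j_v-\iota_v$ contributes:
\[
\ov\Pi_A(\tau)=(\iu^n a_0)^{-1}\sum_v(-1)^{n/2-k_v}(c_{k_v}(\tau)_{\inv(v)},\inv^*A_v)_{\cX_v}.
\]
The combined sign $(-1)^{n/2-k_v}/\iu^n$ is $+1$ for $k_v=0$ and $-1$ for $k_v=1$, which will match the signs in the statement.

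The second step is to identify $c_0(\tau)$ and $c_1(\tau)$. Clearly $c_0(\tau)=\unit$. For $c_1(\tau)$, the classical piece $e^{\tau_{0,2}/z}\unit$ contributes $\tau_{0,2}$, while the exceptional $(d,l)=(0,1)$ summand in (\ref{eq:conf_Linv}) produces $\tau_{\rm tw}$: the $3$-point degree-zero correlator $\corr{\unit,\tau_{\rm tw},\phi_i/(z-\psi)}_{0,3,0}^\cX=\frac{1}{z}(\tau_{\rm tw},\phi_i)_{\rm orb}$ must be evaluated directly, as the string equation fails at $(l{+}2,d)=(3,0)$. Hence $c_1(\tau)=\tau=\tau_{0,2}+\tau_{\rm tw}$. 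For $A_i\in W_{-n+2}\cap\cV_{\Z,\rho}$ only $k_v\in\{0,1\}$ occur, giving three types of contributions: the $H^{2n}(\cX)$-component of $A_i$ pairs with $c_0=\unit$, yielding the constant $a_0^{-1}a_i$; each $H^{2n_v}(\cX_v)$-component with $n_v=n-2$ pairs with $c_1(\tau)_{\inv(v)}$, which (since $\iota_{\inv(v)}=1$) picks out only the $H^0(\cX_{\inv(v)})$-piece of $\tau_{\rm tw}$, i.e.\ $\tau_{\rm tw}'$, producing $-(\tau_{\rm tw}',a_0^{-1}A_i)_{\rm orb}$; and for $i>\flat$ the $H^{2n-2}(\cX)$-component pairs with the untwisted part $\tau_{0,2}$ of $c_1(\tau)$, producing $-a_0^{-1}(\tau_{0,2},A_i)_{\rm orb}=-\frac{1}{2\pi\iu}\tau_{0,2}\cap[C_i]$ given the normalization $[C_i]=(2\pi\iu)a_0^{-1}\bigl(H^{2n-2}\text{-component of }A_i\bigr)^{\vee}$.

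For the integrality (\ref{eq:integer_A_H2}) I would use the unimodularity of $(\cdot,\cdot)_\cV$ on $\cV_\Z$ (which follows from Proposition \ref{prop:A-model_int}(iii) in the $\hGamma$-case, and is assumed of the chosen integral structure): evaluating the pairing of $A_i$ against integral Galois translates of $A_0$ via the formula $(\alpha,\beta)_\cV=(e^{\pi\iu\rho}\alpha,e^{\pi\iu\mu}\beta)_{\rm orb}$ extracts, in the $H^{2n}$-component, precisely $\tau_{0,2}\cap[C_i]$ up to sign, showing this is an integer for every integer $\tau_{0,2}\in H^2(X,\Z)$. Triviality of the generic stabilizer identifies $H^{2n-2}(\cX)=H^{2n-2}(X)$, so $[C_i]\in H_2(X,\Z)$; containment in $\Ker\rho$ is immediate from $A_i\in\cV_{\Z,\rho}$. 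For the statement on $B\in\Ker(H^2(\cX))\cap\cV_{\Z,\rho}$, the hypothesis $\tau_{0,2}\cdot B=0$ kills any $\tau_{0,2}$-linear contribution in the pairing; in the large radius limit (\ref{eq:largeradiuslimit}) all $d\ne 0$ quantum corrections are exponentially suppressed and higher-order-in-$\tau_{\rm tw}$ corrections vanish as $\tau'\to 0$, leaving exactly $a_0^{-1}b-(\tau_{\rm tw},a_0^{-1}B)_{\rm orb}$. For rationality: condition (\ref{eq:curveclass_overQ}) makes the projection to $H^{2n}(\cX)$ rational, so $a_0^{-1}a_i\in\Q$; under (\ref{eq:KerH2_rat}), averaging $B$ over Galois translates $G^\Sol(\xi)$ with $f_v(\xi)\ne 0$ for each $v\ne 0$ extracts the untwisted component of $B$, which by the $H^2$-generation hypothesis forces $B_0\in\Q A_0$ and hence $a_0^{-1}b\in\Q$.

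The main obstacle I expect is the integrality argument for $[C_i]\in H_2(X,\Z)$: one must carefully translate the abstract unimodular pairing on $\cV_\Z$ into a concrete integer-valued pairing with curve classes on $X$, bridging through (\ref{eq:cohfr_property}) and the untwisted-sector projection. The other steps are organized applications of degree matching, the string equation together with its $(0,3,0)$-exception, and Galois invariance of $\cV_\Z$.
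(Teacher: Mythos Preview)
Your computation of the period formulas is essentially the paper's argument: expand $\Jc_\tau\unit$ using (\ref{eq:conf_Linv}) and the string equation to get $1+\tau/z+(\text{terms in }z^{-2}H^{\ge 4}_{\rm orb}(\cX)\otimes\C[z^{-1}])$, then read off the pairing with $z^{-\mu}A$ by degree matching. The rationality arguments under (\ref{eq:curveclass_overQ}) and (\ref{eq:KerH2_rat}) are also along the same lines as the paper, which simply observes that the relevant decompositions are defined over $\Q$ by Galois considerations.

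The genuine gap is in your integrality argument for (\ref{eq:integer_A_H2}). Two problems. First, unimodularity of $(\cdot,\cdot)_\cV$ is \emph{not} among the hypotheses in Section~\ref{subsec:intperiod_A}: only Galois-invariance and $\Z$-valuedness are assumed, so you cannot invoke it. Second, and more seriously, Galois translates of $A_0$ are trivial: $A_0\in H^{2n}(\cX)$ lies in the untwisted top degree, so $G^\Sol(\xi)A_0=e^{-2\pi\iu\xi_0}A_0=A_0$ for every $\xi$. Pairing $A_i$ against $A_0$ therefore yields a single number independent of $\xi$ and cannot extract $\xi\cap[C_i]$.

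The paper's argument avoids both issues and is simpler. For $\xi\in H^2(X,\Z)$ pulled back from the coarse moduli, $G^\Sol(\xi)$ acts on $\cV$ by $e^{-2\pi\iu\xi}$ (all $f_v(\xi)=0$). Since $A_i\in W_{-n+2}$ and multiplication by $\xi$ sends $H^{2n-2}(\cX)$ into $H^{2n}(\cX)$ and kills each top-degree piece $H^{2n_v}(\cX_v)$, one has $e^{-2\pi\iu\xi}A_i-A_i=-2\pi\iu\,\xi A_i\in H^{2n}(\cX)$. But this difference also lies in $\cV_\Z$ by Galois-invariance of the lattice, hence in $W_{-n}\cap\cV_\Z=\Z A_0$. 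Writing $2\pi\iu\,\xi A_i=m_iA_0$ with $m_i\in\Z$ gives
\[
\xi\cap[C_i]=(\xi,\,2\pi\iu\,a_0^{-1}A_i)_{\rm orb}
=a_0^{-1}(\unit,\,2\pi\iu\,\xi A_i)_{\rm orb}=m_i\in\Z.
\]
No unimodularity and no pairing gymnastics are needed---just that Galois preserves $\cV_\Z$ and that $W_{-n}\cap\cV_\Z=\Z A_0$. This is exactly the obstacle you anticipated; once you apply the Galois action to $A_i$ rather than to $A_0$, the argument goes through in one line.
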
 
\begin{proof} 
By (\ref{eq:conf_Linv}) and the string equation (see \cite{AGV}), 
$\Jc_{\tau} \unit$ can be written as follows:  
\begin{align*} 
\Jc_\tau \unit & = e^{\tau_{0,2}/z} 
\Biggl (1 + \frac{\tau_{\rm tw}}{z} + 
\sum_{\substack{d\in \Eff_\cX\cap \Ker(\rho),  
\\ l\ge 0,\\ 
d=0 \Rightarrow l\ge 2.}}
\sum_{i=1}^N 
\corr{\tau_{\rm tw},\dots,\tau_{\rm tw},  
\frac{\phi_i}{z(z-\psi)} }_{0,l+1d}^\cX 
e^{\pair{\tau_{0,2}}{d}} \phi^i \Biggr ) \\ 
& = 1 + \frac{\tau}{z} + 
z^{-2} H^{\ge 4}_{\rm orb}(\cX)\otimes \C[z^{-1}]   
\end{align*}  
The expressions for $\ov\Pi_{A_i}(\tau), \ov\Pi_B(\tau)$ 
easily follow from this. 

If $\xi \in H^2(X,\Z)$ 
is an integral class on the coarse moduli space, 
$G^{\Sol}(\xi)$ acts on $\cV$ by $e^{-2\pi\iu\xi}$ 
by (\ref{eq:cohfr_property}).  
Because the Galois action preserves the integral structure, 
$e^{-2\pi\iu\xi} A_i = A_i - m_i A_0$ 
for some integer $m_i$. 
Here, $2\pi\iu\xi A_i = m_i A_0$. 
Hence, 
$\xi\cap [C_i] = (\xi, 2\pi\iu a_0^{-1} A_i)_{\rm orb} = 
a_0^{-1} (\unit, 2\pi\iu\xi A_i)_{\rm orb} = m_i\in \Z$. 
This shows (\ref{eq:integer_A_H2}). 

Under the condition (\ref{eq:curveclass_overQ}), 
the $H^{2n}(\cX)$-component of $A_i$ 
is of the form $c_i A_0$ for $c_i\in \Q$.  
Hence $a_i = (\unit,A_i)_{\rm orb}
= c_i (\unit, A_0) = c_i a_0$ 
and $a_0^{-1}a_i$ is rational. 

Under the condition (\ref{eq:KerH2_rat}), 
we have the decomposition 
$\Ker(H^2(\cX)) = H^{2n}(\cX) \oplus 
(\Ker(H^2(\cX)) \cap \bigoplus_{v\in \sfT} H^*(\cX_v))$.  
By a consideration of the Galois action, 
we can easily see that this is defined over $\Q$.  
The rationality of $a_0^{-1}b$ follows similarly. 
\end{proof} 

\begin{remark} 
The rationality of $a_0^{-1} a_i$, $a_0^{-1}b$ 
are related to the rationality of  
specialization values in crepant resolution conjecture. 
The condition (\ref{eq:curveclass_overQ}) 
is satisfied by the $\hGamma$-integral structure. 
See Section \ref{subsec:hGamma} below. 
% Under suitable assumptions for $v\in\sfT'$, 
% we can similarly argue that 
% $(A_i,\unit_v)_{\rm orb}/(A_j,\unit_v)_{\rm orb}$ 
% or $(B,\unit_v)_{\rm orb}/(A_j,\unit_v)_{\rm orb}$  
% belong to a cyclotomic extension field of $\Q$ 
% for $1\le i,j \le \sharp$. 
%Not only for $1\le i,j\le \flat$: 
%consider a generalized simultaneous eigenspace 
%for the Galois action! 
\end{remark} 

\subsection{Example: $\hGamma$-integral structure} 
\label{subsec:hGamma} 
Here we take $\Sol(\cX)_\Z$ to be the 
$\hGamma$-integral structure in 
Definition \ref{def:A-model_int} 
and compute some examples of integral periods. 
The lattice $\cV_\Z$ is given by $\Psi(K(\cX))$. 
By a natural map from the $K$-group of coherent sheaves 
to the $K$-group of topological orbifold vector bundles, 
we can regard a coherent sheaf as an element of $K(\cX)$. 
The integral vector $A_0 \in W_{-n}\cap \cV_{\Z,\rho}$ 
comes from the structure sheaf 
$\cO_x$ of a non-stacky point $x\in \cX$: 
\[
A_0 = \Psi(\cO_x) = 
\frac{(2\pi\iu)^n}{(2\pi)^{n/2}}
[\pt]. 
\] 
Here, we used the Poincar\'{e} duality to identify 
$[\pt]\in H_0(\cX)$ with an element in $H^{2n}(\cX)$.  
Hence we have 
$\Omega_\tau = (-1)^n 
(2\pi)^{-n/2} z^{n/2} \Jc_\tau \unit$. 

\subsubsection{A smooth curve}  
Let $\cX=X$ be a manifold and $C\subset X$ be a 
smooth curve of genus $g$ such that 
$[C]\cap c_1(\cX) =0$.  
Then the structure sheaf $\cO_C(g-1)$ 
defines an integral vector 
$A_C\in W_{-n+2} \cap \cV_{\Z,\rho}$ 
\[
A_C: = \Psi(\cO_C(g-1)) = 
\frac{(2\pi\iu)^{n-1}}{(2\pi)^{n/2}} [C] 
\]
and an integral period 
\[
\ov\Pi_{A_C}(\tau)  
= - \frac{1}{2\pi\iu} [C]\cap \tau. 
\]

\subsubsection{A general element in 
$W_{-n+2}\cap \cV_{\Z,\rho}$} 
Let $\Psi(V)\in W_{-n+2}\cap \cV_{\Z,\rho}$ 
be an arbitrary element. 
Using the fact that the untwisted sector 
of $\hGamma(T\cX)$ is of the form $1 - \gamma \rho + H^{\ge 4}(\cX)$  
($\gamma$ is the Euler constant) and 
that $\rho \cdot \tch(V)=0$, 
we can see that the $H^{2n}(\cX)$ component of 
$\Psi(V)$ belongs to 
$(2\pi)^{-n/2}(2\pi\iu)^n H^{2n}(\cX,\Q)=\Q A_0$. 
Therefore, the condition (\ref{eq:curveclass_overQ}) 
holds for the $\hGamma$-integral structure. 
We have 
\[
\ov\Pi_{\Psi(V)}(\tau) = \int_{\cX} \ch(V) - 
(\tau'_{\rm tw}, a_0^{-1}\Psi(V))_{\rm orb}
- \frac{1}{2\pi\iu} \tau_{0,2} \cap [C].  
\]
for some $[C]\in H_2(X,\Z)\cap \Ker\rho$ 
and $a_0 = (2\pi)^{-n/2}(2\pi\iu)^n$. 

\subsubsection{A stacky point} 
\label{subsubsec:stackypt} 
Let $y \in \cX$ be a possibly stacky point. 
Let $\varrho \colon \Aut(y) \to \End(V)$ be a 
finite dimensional representation of 
the automorphism group of $y$. 
This defines a coherent sheaf $\cO_y\otimes V$ 
supported on $y$ and an integral vector 
$A_{(y,V)} 
:= \Psi(\cO_y\otimes V) \in 
\Ker(H^2(\cX)) \cap \cV_{\Z,\rho}$.  
Using Toen's Riemann-Roch theorem \cite{toen}, 
one calculates 
\[
A_{(y,V)}  
= \frac{(2\pi\iu)^n}{(2\pi)^{n/2}}  
\sum_{(g)\subset \Aut(y)}  
\frac{(-1)^{n+n_{v(g)}+\iota_{v(g)}}\Tr(\varrho(g^{-1}))}
{|C(g)| \prod_{j=1}^{n-n_{v(g)}} \Gamma(f_{g,j})} 
[\pt]_{v(g)}, 
\]
where the sum is over all conjugacy classes 
$(g)$ of $g\in \Aut(y)$, 
$C(g)$ is the centralizer of $g$, 
$v(g)\in \sfT$ is the inertia component 
containing $(y,g) \in I\cX$, 
$[\pt]_{v(g)}$ is the homology class of a point on $\cX_{v(g)}$ 
(represented by a map $\pt \to \cX_v$ of stacks), 
$f_{g,1},\dots f_{g, n-n_{v(g)}}$ are 
rational numbers in $(0,1)$ such that 
$\{e^{2\pi\iu f_{g,j}}\}_j$ is a multi-set of 
the eigenvalues $\neq 1$ of 
the $g$ action on $T_y\cX$. 
The corresponding integral period behaves 
\begin{align*} 
\ov\Pi_{A_{(y,V)}}(\tau) 
% &=  \sum_{(g)\subset \Aut(y)} 
% \frac{\Tr(\varrho(g^{-1}))}
% {|C(g)| \prod_{j=1}^{n-n_{v(g)}} \Gamma(f_{g,j})}
% [\pt]_{v(g)} \cap \Jc_{\tau}(1) \\
 \sim  \frac{\dim(V)}{|\Aut(y)|} + 
\sum_{\substack{(g)\subset \Aut(y) \\ \iota_{v(g)}=1}} 
\frac{\Tr(\varrho(g))}
{|C(g)| \prod_{j=1}^{n-n_{v(g)}} \Gamma(1-f_{g,j})} 
\tau_{\rm tw} \cap [\pt]_{v(g)}   
\end{align*}  
in the large radius limit. 
This is an exact formula 
if $y\notin \cX_v$ for all $v$ with 
$\codim \cX_v = n-n_v \ge 3$ 
or equivalently, $A_{(y,V)} \in 
\Ker(H^2(\cX))\cap W_{-n+2}\cap \cV_{\Z,\rho}$. 

Note that the subspace $\cV_{\rm top}
:=\bigoplus_{v\in \sfT} H^{2n_v}(\cX_v)
\subset \cV$ is spanned by the integral vectors 
$A_{(y,V)}$ above, so is defined over $\Q$ 
for the $\hGamma$-integral structure. 
(This may not be true for an arbitrary 
integral structure.) 
For an integral vector $\Psi(V)$ in $\cV_{\rm top}$,  
the period $\ov\Pi_{\Psi(V)}(\tau)$ 
takes the rational value $\int_{\cX} \ch(V)$ 
at the large radius limit. 

\subsection{Crepant resolution conjecture 
with an integral structure} 

Yongbin Ruan's crepant resolution conjecture \cite{ruan-crc} 
states that when $Y$ is a crepant resolution of  
the coarse moduli space $X$ of a Gorenstein orbifold $\cX$, 
\[
\pi\colon Y \to X, \quad \pi^*(K_X) = K_Y,  
\]
the (orbifold) quantum cohomology of $\cX$ and $Y$ 
are related by analytic continuation
in quantum parameters. 
This conjecture was formulated more precisely 
by Bryan-Graber \cite{bryan-graber} as 
an isomorphism of Frobenius manifolds 
(under the Hard Lefschetz condition). 
In the joint work \cite{CIT:I} with Coates and Tseng, 
based on the toric mirror picture, 
we gave a conjecture that 
the A-model \seminf VHS of $\cX$ and $Y$ 
are related by an $\cO(\C^*)$-linear symplectic transformation 
$\U\colon \cH^\cX \to \cH^Y$ between the Givental spaces.  
(This does not need the Hard Lefschetz condition.)  
This symplectic transformation $\U$ encodes 
all the information on relationships between 
the genus zero Gromov-Witten theories of $\cX$ and $Y$. 
See \cite{coates-ruan,iritani-rims} for expositions 
and \cite{coates-II} for local examples.  

In this section, we incorporate integral structures 
into this picture and propose a possible relationship 
between the $K$-group McKay correspondence  
and the crepant resolution conjecture. 
We use a superscript to distinguish 
the spaces $\cX$, $Y$, \emph{e.g.} $\cH^\cX$, $\cH^Y$ etc.  

\begin{proposal} 
\label{propo:crc_int} 
{\rm (a)} 
For each smooth Deligne-Mumford stack $\cX$ 
with a projective coarse moduli space, 
the space $\Sol(\cX)$ of flat sections 
of the quantum $D$-module admits 
a $\Z$-lattice $\Sol(\cX)_\Z$ 
which is given by the image of 
the topological $K$-group under 
a \emph{$K$-group framing} $\cZ_K^\cX$: 
\[
\cZ^\cX_K \colon K(\cX) \to \Sol(\cX), \quad 
V \mapsto L(\tau,z)z^{-\mu}z^\rho \Psi^\cX (V),   
\]
where $\Psi^\cX$ is a map from $K(\cX)$ to $H^*_{\rm orb}(\cX)$ 
and $L(\tau,z)$ is the fundamental solution (\ref{eq:fundamentalsol_L}).  
We hope that $\Sol(\cX)_\Z$ 
is given by the $\hGamma$-integral structure, 
namely, $\Psi^\cX$ is given by (\ref{eq:Psi}).  
In the discussion below, we only need to 
assume that $\cZ^\cX_K$ satisfies 
the conclusions of Proposition \ref{prop:A-model_int}. 

{\rm (b)} 
Let $Y$ be a crepant resolution of the coarse moduli space 
$X$ of a Gorenstein orbifold $\cX$. 
The \emph{$K$-group McKay correspondence} 
predicts that we have an isomorphism of $K$-groups 
\[
\U_K \colon K(\cX) \cong K(Y) 
\]
which preserves the Mukai pairing 
(given in Proposition \ref{prop:A-model_int}) 
and commutes with the tensor by 
a topological line bundle $L$ 
on the coarse moduli space of $\cX$, 
$\U_K(L \otimes \cdot) = \pi^*(L)\otimes \U_K(\cdot)$. 

{\rm (c)} 
The quantum $D$-modules $QDM(\cX)$, $QDM(Y)$ 
with integral structures $\Sol(\cX)_\Z$, $\Sol(Y)_\Z$ 
become isomorphic under analytic continuation.  
The isomorphism of $\Sol(\cX)_\Z$
and $\Sol(Y)_\Z$ are induced from 
the $K$-group McKay correspondence 
$\U_K\colon K(\cX) \to K(Y)$ 
via the $K$-group framings. 
 
In terms of the \seminf VHS introduced 
in Section \ref{subsec:Jfunct},  
we have a degree-preserving\footnote{The grading 
on $\cH$ is given by $\deg z =2$ and the grading on 
orbifold cohomology.} $\cO(\C^*)$-linear 
symplectic isomorphism $\U\colon \cH^\cX \to \cH^Y$ 
and a map $\Upsilon$ from a subdomain of 
$H_{\rm orb}^*(\cX)$ to a subdomain of $H^*(Y)$ 
(where the quantum cohomology is analytically 
continued) such that 
the \seminf VHS of $\cX$ and $Y$ 
are identified by $\U$ 
\[
\U (\F^\cX_\tau) = \F^Y_{\Upsilon(\tau)} 
\]
and that $\U$ is induced from $\U_K$ by the 
commutative diagram: 
\begin{equation}
\label{eq:CD_UK_U}
\begin{CD}
K(\cX) @>{\U_K}>> K(Y) \\ 
@V{z^{-\mu}z^{\rho}\Psi^\cX}VV  
@VV{z^{-\mu}z^{\rho} \Psi^Y}V  \\
\cH^\cX\otimes_{\cO(\C^*)}\cO(\widetilde{\C^*}) 
 @>{\U}>> 
\cH^Y \otimes_{\cO(\C^*)}\cO(\widetilde{\C^*}).  
\end{CD} 
\end{equation} 
where $\mu,\rho$ in the left/right vertical 
arrow are those for $\cX$/$Y$. 
\qed 
\end{proposal}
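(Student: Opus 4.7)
The plan is to reduce Proposal \ref{propo:crc_int} to a question about analytic continuation in the Landau-Ginzburg $B$-model via Theorem \ref{thm:pulledbackintstr}, and then to identify the induced $K$-group isomorphism via a Fourier-Mukai transform. First I would restrict to the toric case, where both $\cX$ and its crepant resolution $Y$ arise from wall-crossings in the secondary fan: given two GIT stability conditions $\eta_\cX, \eta_Y \in \bL^\vee \otimes \R$ with the same underlying exact sequence (\ref{eq:exactsequence_toric}) (possibly after extending), Borisov-Horja's framework realizes $\cX$ and $Y$ as quotients of (different) open subsets of the same $\C^m$ by the same torus $\T$. Crucially, the Landau-Ginzburg potentials $W_q\colon Y_q\to \C$ depend only on the exact sequence (\ref{eq:exactsequence_toric}), so the B-model $D$-modules of $\cX$ and $Y$ are canonically isomorphic over the intersection $\cM^{\cX,\mathrm{o}}\cap \cM^{Y,\mathrm{o}}$ in the shared parameter space $\cM$. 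By Proposition \ref{prop:Dmoduleiso}, this gives the desired isomorphism of quantum $D$-modules after analytic continuation, together with a symplectic isomorphism $\U\colon \cH^\cX \to \cH^Y$ matching the \seminf VHS of Section \ref{subsec:Jfunct}.

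Second, I would show that this $\U$ preserves integral lattices. The B-model integral structure is given by the local system of Lefschetz thimbles $R^\vee_\Z \to \cMo\times \C^*$ (Proposition \ref{prop:locsys_Lefschetz}), and this local system extends unambiguously across the wall between the $\cX$-chamber and the $Y$-chamber as long as one stays within $\cMo\cap \cMo$ (or completes across strata where Kouchnirenko's condition fails by a limiting argument). Under the identification of Theorem \ref{thm:pulledbackintstr}, Lefschetz thimbles at the $\cX$-cusp correspond to $K(\cX)$ via $\cZ^\cX_K$, and similarly at the $Y$-cusp via $\cZ^Y_K$. The monodromy of $R^\vee_\Z$ along a path connecting the two large-radius limits thus defines $\U_K\colon K(\cX)\to K(Y)$ fitting in (\ref{eq:CD_UK_U}). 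Compatibility with the Mukai pairing follows from (\ref{eq:B-model_pairing}) and Proposition \ref{prop:A-model_int}(iii); compatibility with tensoring by pullbacks of line bundles from the coarse moduli $X$ follows from Proposition \ref{prop:A-model_int}(ii) because the Galois action in the $q$-coordinates is identical on both sides when $\xi$ pulls back from $X$.

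Third, I would identify $\U_K$ geometrically, proposing (following Borisov-Horja \cite{borisov-horja-FM}) that it coincides with a Fourier-Mukai transform $\mathrm{FM}\colon D^b(\cX)\to D^b(Y)$ of the type produced by Bridgeland-King-Reid or Kawamata. To verify this, one would compute both sides on a generating set of $K(\cX)$ --- for instance on skyscraper sheaves of stacky points using the calculation of Section \ref{subsubsec:stackypt} --- and match against the known action of the Fourier-Mukai kernel on $K(Y)$. The key computational input is that the $\hGamma$-class transforms under $\mathrm{FM}$ as prescribed by Grothendieck-Riemann-Roch for the kernel, which is compatible with the comparison of $L(\tau,z) z^{-\mu}z^\rho$ on the two sides through the mirror map asymptotics.

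The main obstacle will be the global analytic continuation statement: passing from the chamber-by-chamber identification of B-model $D$-modules to a genuinely global flat bundle on (a completion of) $\cM/H^2(\cX,\Z)$ requires handling the walls where critical points of $W_q$ escape to infinity and the Kouchnirenko condition degenerates. In these degenerations, quantum parameters get specialized to roots of unity precisely because $\U_K$ must commute with tensoring by line bundles pulled back from $X$ (part (b) of the proposal): characters of the generic stabilizer of $\cX$ act trivially on $\pi^*\Pic(X)\subset\Pic(Y)$, forcing a quotient by torsion which manifests as root-of-unity specialization on the $q$-parameters. Making this specialization uniform, and checking the proposal beyond toric examples, would require an intrinsic (non-toric) construction of $\U_K$, presumably via the derived McKay correspondence, and a proof that $\Psi^Y\circ \mathrm{FM} = \U \circ \Psi^\cX$ for a general Fourier-Mukai kernel --- a statement about the compatibility of the transcendental class $\hGamma$ with derived equivalences.
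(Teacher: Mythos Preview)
The statement you are addressing is a \emph{Proposal} --- a conjecture --- and the paper does not prove it. There is no proof to compare your attempt against: the \qed\ at the end of the environment marks the end of the statement, not the end of an argument. Immediately after stating Proposal~\ref{propo:crc_int}, the paper writes ``We discuss what follows from this proposal assuming $\cX$ is weak Fano'' and proceeds to derive consequences (the matching of integral periods in (\ref{eq:matching_integralperiods}), the predictions (i)--(iii) on specialization values) \emph{under the hypothesis} that the proposal holds. The proposal itself is left open, with the remark that one hopes $\U_K$ arises from a Fourier--Mukai transform as in Borisov--Horja~\cite{borisov-horja-FM}.

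Your text is therefore not a proof attempt to be checked against the paper, but an outline of a research program for establishing the proposal in the toric setting. As such it is broadly sensible and indeed anticipates the direction later taken in the literature: using the shared B-model over the secondary fan to realize the $D$-module isomorphism, transporting the Lefschetz-thimble lattice across the wall via Theorem~\ref{thm:pulledbackintstr}, and identifying the induced $\U_K$ with a Fourier--Mukai transform. Two points where your sketch would need real work: first, the LG models for $\cX$ and $Y$ need not literally coincide, since the extended stacky fans (the extra vectors $b_{m'+1},\dots,b_m$) may differ, so ``same exact sequence (\ref{eq:exactsequence_toric})'' requires a careful common refinement; second, Theorem~\ref{thm:pulledbackintstr} as stated needs Conjecture~\ref{conj:mirrorthm} and the weak-Fano-type hypothesis $\hrho\in\cl(\tC)$ on \emph{both} sides simultaneously, and the analytic continuation across the wall must avoid (or control) the locus where Kouchnirenko's condition fails --- you flag this, but it is the substantive analytic difficulty, not a detail.
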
 

We hope that the isomorphism $\U_K$ in (b) 
arises from a geometric correspondence such as 
Fourier-Mukai transformations. 
In fact, Borisov-Horja \cite{borisov-horja-FM} 
showed that an analytic continuation of solutions to 
the GKZ-system corresponds to a Fourier-Mukai 
transformation between $K$-groups of 
toric Calabi-Yau orbifolds. 

\begin{remark} 
As formulated in \cite{CIT:I, coates-ruan}, 
the symplectic transformation $\U$ identifies 
the Givental's Lagrangian cone (\ref{eq:Giventalcone}), 
\emph{i.e.}  $\U \cL^\cX = \cL^Y$. 
Thus the relationship of the genus zero descendant 
potentials of $\cX$ and $Y$ 
is completely described by $\U$. 
\end{remark} 

We discuss what follows from this proposal   
assuming $\cX$ is weak Fano, \emph{i.e.} $c_1(\cX)$ is nef. 
As discussed in \cite{CIT:I}, this picture implies that 
quantum cohomology of $\cX$ and $Y$ 
are identified via $\Upsilon$ and $\U$ 
as a family of algebras  
(not necessarily as Frobenius manifolds). 
However, the large radius limit points for 
$\cX$ and $Y$ are not identified under $\Upsilon$, 
so we need actual analytic continuations.  
We refer the reader to \cite{CIT:I, coates-ruan, iritani-rims} 
for these things. 
Let us first observe that integral periods of $\cX$ and $Y$ 
in the conformal limit match under $\Upsilon$ and $\U$ 
(see (\ref{eq:matching_integralperiods}) below). 
Because $\U_K$ commutes with the tensor 
by a line bundle pulled back from $X$, 
it follows that $\U$ must commute with $H^2(\cX)$ 
((b), Section 5 in \cite{CIT:I}; 
(b), Conjecture 4.1 in \cite{coates-ruan}), 
\emph{i.e.} 
\begin{equation}
\label{eq:U_commutes_H2}
\U (\alpha\cup \cdot) = \pi^*(\alpha) \cup \U(\cdot), \quad 
\alpha \in H^2(\cX).   
\end{equation} 
Since $\cX$ is weak Fano, 
by the discussion leading to 
Theorem 8.2 in \cite{coates-ruan} 
(essentially using Lemma 5.1 \emph{ibid.}),  
we know that $\Upsilon$ should map $H^2_{\rm orb}(\cX)$ 
to $H^2(Y)$:  
\[
\Upsilon(H^2_{\rm orb}(\cX)) \subset H^2(Y). 
\]
The conformal limit $\tau \to \tau -s \rho$, $\Re(s)\to \infty$ 
on $H^2_{\rm orb}(\cX)$ should also be mapped to 
the conformal limit on $H^2(Y)$ under $\Upsilon$ 
because this flow is generated by the Euler vector field 
and the two Euler vector fields should match under $\Upsilon$  
(the Euler vector field is a part of the 
data of a quantum $D$-module).  
Therefore, by (\ref{eq:U_commutes_H2}) and 
$\pi^*c_1(\cX) = c_1(Y)$, 
the conformal limit of the \seminf VHSs  
(Definition \ref{def:CY_limit_VHS}) 
also match under $\U$: 
\[
\U(\F_{\tau}^{{\rm c}, \cX}) = 
\F_{\Upsilon(\tau)}^{{\rm c}, Y}.  
\] 
In particular, 
the finite dimensional VHS's 
$(\rsF^{\cX,\bullet}_\tau\subset H_0^\cX)$, 
$(\rsF^{Y,\bullet}_\tau \subset H_0^Y)$ 
associated with these also match:  
\[
\U(\rsF^{\cX,\bullet}_{\tau}) = \rsF^{Y,\bullet}_{\Upsilon(\tau)}, \quad 
\U \colon \hcH^\cX \supset H_0^{\cX} 
\to H_0^{Y} \subset \hcH^Y.  
\] 
We used the fact that $\U$ induces a map from 
$H_0^{\cX} = \Ker(z\partial_z + \mu^{\cX})$ 
to $H_0^Y = \Ker(z\partial_z + \mu^Y)$.  
Set $\cV^\cX := H^*_{\rm orb}(\cX)$, $\cV^Y := H^*(Y)$ 
and let $\U_\cV \colon \cV^\cX \to \cV^Y$ 
be the map induced from $\U_K$ (via $\Psi$) 
\[
\begin{CD}
K(\cX) @>{\U_K}>> K(Y) \\ 
@V{\Psi^\cX}VV        @VV{\Psi^Y}V  \\ 
\cV^\cX @>{\U_\cV}>> \cV^Y.  
\end{CD} 
\]
This again commutes with $H^2(\cX)$ and 
is related to $\U$ by 
\[
\U = z^{-\mu^Y} z^{\rho^Y} \U_\cV z^{-\rho^\cX} z^{\mu^\cX} 
= z^{-\mu^Y} \U_\cV z^{\mu^\cX}.  
\] 
The integral structures $\Sol(\cX)_\Z$, $\Sol(Y)_\Z$ 
induce the lattices $\cV^\cX_{\Z} = 
\cZ_{\rm coh}^{-1}(\Sol(\cX)_\Z) = \Psi^\cX(K(\cX))$, 
$\cV^Y_\Z =\cZ_{\rm coh}^{-1}(\Sol(Y)_\Z) 
= \Psi^Y(K(Y))$ as before. 
Let $L$ be an ample line bundle on $X$.  
Consider the weight filtration $W_k^\cX$ 
(\ref{eq:weightfiltr_ampleLoncoarse}) 
on $\cV^\cX$ defined by 
the Galois action logarithm 
$-2\pi\iu c_1(L)$. 
The first term $W_{-n}^\cX$ of the weight filtration 
is given by $\Image(c_1(L)^n)$. 
Thus $\U_\cV(W_{-n}^\cX) = \Image(\pi^*(c_1(L))^n) 
= H^{2n}(Y)$. Note that $\pi^*(c_1(L))^n$ is non-trivial   
since $\pi \colon Y\to X$ is birational. 
Therefore, for the weight filtration $W_k^Y$ on $\cV^Y$ 
(defined by an ample class on $Y$), 
we have 
\[
\U_{\cV}(W_{-n}^\cX) = W_{-n}^Y. 
\]
As we did before, we use an integral vector 
$A_0^\cX$ (unique up to sign)  
in $W_{-n}^\cX\cap \cV_{\Z,\rho}^\cX$ 
to normalize a generator $\Omega^{\cX}_\tau 
\in \rsF^{\cX,n}_\tau$ and then 
use $A_0^Y:=\U_\cV(A_0^\cX)\in W_{-n}^Y \cap \cV_{\Z,\rho}^Y$ 
to normalize $\Omega^Y_\tau \in \rsF^{Y,n}_\tau$  
(see (\ref{eq:normalization_Omega})).  
Because the $\U$ preserves the pairing, we have
\[
\U(\Omega_\tau^\cX) = \Omega_{\Upsilon(\tau)}^Y.  
\]
When $A^\cX \in \cV_{\Z,\rho}^\cX = \cV_{\Z}^\cX\cap \Ker(c_1(\cX))$, 
the corresponding vector $A^Y =\U_\cV(A^\cX)$ 
belongs to $\cV_{\Z}^Y \cap \Ker(\pi^*(c_1(\cX))) 
= \cV_{\Z,\rho}^Y$ and the integral periods match 
\begin{equation}
\label{eq:matching_integralperiods}
\ov\Pi^\cX_{A^\cX}(\tau) = 
(\Omega_\tau^\cX, z^{-\mu}A^\cX)_{H_0^\cX} = 
(\Omega_{\Upsilon(\tau)}^Y, z^{-\mu}A^Y)_{H_0^Y} 
= \ov\Pi^Y_{A^Y}(\Upsilon(\tau)). 
\end{equation} 

Now we can make predictions   
on the specialization values of quantum parameters. 
Note that $\Ker(\pi^*H^2(\cX))\subset \cV^Y$ 
is defined over $\Q$. 
Take a basis $A_0^Y,A_1^Y,\dots,A_\natural^Y$ of 
$\Ker(\pi^*H^2(\cX))\cap W_{-n+2}^Y \cap \cV^Y_{\Z,\rho}$.  
These generate a full lattice in $H^{2n}(Y) \oplus 
(H^{2n-2}(Y) \cap \Ker \pi_*)$ over $\C$.  
By Proposition \ref{prop:integralperiods_basic}, 
the integral periods for 
$A_1^Y,\dots,A_\natural^Y$ are of the form:
\begin{align}
\label{eq:exccurve_intperiod}
\ov\Pi^Y_{A_i^Y}(\tau) = 
a_0^{-1} a_i - \frac{1}{2\pi\iu} [C_i]\cap \tau,
\quad  a_i :=(A_i^Y, 1).   
\end{align} 
Here $[C_1],\dots,[C_\natural]\in H_2(Y,\Z)\cap \Ker\pi_*$ 
are a $\Q$-basis of $H_2(Y,\Q)\cap \Ker\pi_*$. 
So $\ov\Pi^Y_{A_i^Y}(\tau)$, $1\le i\le \natural$,   
form an affine co-ordinate system on $H^2(Y)/\Image\pi^*$. 
The integral vector $A_i^\cX$ corresponding to $A_i^Y$ 
belongs to $\Ker(H^2(\cX)) \cap \cV_{\Z,\rho}^\cX$. 
From (\ref{eq:matching_integralperiods}),  
Proposition \ref{prop:integralperiods_basic} 
and examples in Section \ref{subsec:hGamma},  
Proposal \ref{propo:crc_int} 
yields the following prediction: 

\begin{itemize}
\item[(i)] 
Assume that the condition (\ref{eq:KerH2_rat})
holds for $\cX$.  
Then the integral periods 
$\ov\Pi^Y_{A_i^Y}(\tau)$ (\ref{eq:exccurve_intperiod})
for $Y$ take rational values 
at the large radius limit point of $\cX$. 

\item[(ii)] 
Assume in addition to (i) that the condition 
(\ref{eq:curveclass_overQ}) (with $\cX$ there replaced with $Y$) 
holds for the rational structure on $\cV^Y$. 
Then $a_0^{-1} a_i$ in (\ref{eq:exccurve_intperiod}) 
is rational, so the ``quantum parameter" 
$q_C := \exp([C]\cap \tau)$ with 
$[C]\in H_2(Y,\Z)\cap \Ker\pi_*$ for $Y$ 
specializes to a root of unity at the 
large radius limit point of $\cX$. 

\item[(iii)] Assume that Proposal \ref{propo:crc_int}  
holds for the $\hGamma$-integral structures 
on $\cX$ and $Y$. 
Let $C\subset Y$ be a smooth rational curve in the 
exceptional set. 
If $\U_K^{-1}$ sends $[\cO_C(-1)]\in K(Y)$ to 
$[\cO_x\otimes V]\in K(\cX)$ for $x = \pi(C)$ and 
some representation $V$ of $\Aut(x)$,  
the quantum parameter $q_C$ specializes to 
$\exp(-2\pi\iu \dim V/|\Aut(x)|)$ 
at the large radius limit point of $\cX$.  
\end{itemize} 

For the $A_n$ surface singularity resolution, 
each irreducible curve in the exceptional set 
corresponds to a one-dimensional 
irreducible representation of $\Z/(n+1)\Z$ 
under the McKay correspondence. 
If we use this McKay correspondence as $\U_K$, 
the prediction of specialization values 
made in (iii) is true \cite{CCIT:An}.  
Also, under the McKay correspondence, (iii) gives 
the same prediction (up to complex conjugation)  
made by Bryan-Graber \cite{bryan-graber}, 
Bryan-Gholampour \cite{bryan-gholampour}  
for the ADE surface singularities   
and $\C^3/G$ with a finite subgroup $G\subset SO(3)$. 

The equality (\ref{eq:matching_integralperiods}) 
of integral periods can also predict  
the co-ordinate change $\Upsilon$.  
See \cite[Example 2.16, Section 3.8]{iritani-rims}
for local Calabi-Yau examples.

\section{Appendix} 

\subsection{Proof of Lemma \ref{lem:qsmall_kouchnirenko}} 
\label{subsec:qsmall} 
Let $\Delta$ be a face of $\hS$ 
($0\le \dim \Delta \le n-1$). 
Let $B_\Delta\subset (\C^*)^r$ be the 
discriminant locus of $W_{q,\Delta}(y)$, 
\emph{i.e.} the set of points $q=(q_1,\dots,q_r)$ 
such that $W_{q,\Delta}(y)$ has a critical point 
$y\in (\C^*)^r$. It suffices to show that 
the closure $\ov{B_\Delta}$ of $B_\Delta$ in $\C^r$ 
does not contain the origin. 
Suppose $0\in \ov{B_\Delta}$. 
Then there exists a curve 
$\alpha\colon \Spec \C[\![T]\!] \to \ov{B_\Delta}$ 
such that $\alpha(0) = 0$ and $\alpha$ 
restricts to $\alpha\colon \Spec \C(\!(T)\!) \to B_\Delta$. 
We can find a critical point $y(T)$ 
of $W_{q=\alpha(T),\Delta}(y)$ 
defined over the field 
$\ov{\C(\!(T)\!)} = \bigcup_{k\in \N} \C(\!(T^{1/k})\!)$ 
of Puiseux series. 
We take the leading terms of the $T$-expansions: 
\begin{align*}
\alpha_a (T) &= c_a T^{d_a} + \text{h.o.t.}, 
\quad c_a \neq 0, \quad 1\le a\le r, \\ 
y_i(T) &= s_i T^{f_i} + \text{h.o.t.}, \quad 
s_i \neq 0, \quad 1\le i\le n. 
\end{align*} 
Note that $d_a>0$ since $\alpha(0)=0$. 
Put $h_i := \sum_{a=1}^r\ell_{ia}d_a$. 
(See Section \ref{subsubsec:LGmodel_def} for $\ell_{ia}$.)
We claim that the piecewise linear function 
$h\colon \bN\otimes \R \to \R$ on the fan $\Sigma$ 
defined by $h(b_i) = h_i$ 
for $1\le i\le m'$ is strictly convex (with respect to $\Sigma$) 
and $h(b_j) < h_j$ for $m' < j \le m$. 
Since $\sum_{a=1}^r d_a p_a \in \tC_\cX$, for each ``anticone" 
$I \in \cA$, there exist $k_i>0$, $i\in I$ such that 
$\sum_{a=1}^r d_a p_a = \sum_{i\in I} k_i D_i$. 
Using $p_a = \sum_{i=1}^m D_i \ell_{ia}$ 
and the exact sequence dual to (\ref{eq:exactsequence_toric}), 
we have a linear function $\varphi \colon \bN\otimes \R 
\to \R$ such that 
$\varphi(b_i) = h_i - k_i$ for $i\in I$ 
and $\varphi(b_i) = h_i$ for $i\notin I$. 
Since $\varphi$ is a linear function 
which coincides with $h$ on the cone 
$\sum_{i\notin I} \R_{\ge 0} b_i$, 
the claim follows. 
Now consider the leading term of the critical point 
equation $d W_{\alpha(T),\Delta}(y) = 0$: 
\[
0 = \sum_{b_i \in \Delta} \alpha(T)^{\ell_i} y(T)^{b_i} b_i 
= \left( \sum 
c^{\ell_i} s^{b_i} b_i \right) T^g  + \text{h.o.t.} 
\]
where $g$ is the minimal exponent 
and the last summation is over $1\le i\le m$ such that 
$h_i + \sum_{j=1}^n b_{ij} f_j = g$ and $b_i \in \Delta$. 
The above claim shows that the 
$b_i$'s appearing in the leading term 
span a cone in $\Sigma$ and are linearly independent. 
This is a contradiction.

\subsection{Proof of Lemma \ref{lem:PScond}} 
\label{subsec:proof_PS} 
Let $B \subset \cMo\times \C^*$ be a compact set. 
We need to show that $B' = \{(q,z,y)\;;\; (q,z)\in B, \ y\in Y_q, \ 
\|df_{q,z}(y)\|\le \epsilon \}$ is compact. 
Assume that there exists a divergent 
sequence $\{(q_{(k)},z_{(k)}, y_{(k)})\}_{k=0}^\infty$ in $B'$, 
\emph{i.e.} any subsequence of it does not converge. 
Take an arbitrary Hermitian norm $\|\cdot\|$ on $\bN\otimes \C$. 
Note that we have 
\[
\|df_{q,z}(y)\| = \frac{1}{|z|} 
\|\sum_{i=1}^m q^{\ell_i} y^{b_i}b_i\|.   
\]
By passing to a subsequence and renumbering $b_1,\dots,b_m$, 
we can assume that $q_{(k)}$ and $z_{(k)}$ converge and that 
$|y_{(k)}^{b_1}| \ge |y_{(k)}^{b_2}| \ge \cdots \ge |y_{(k)}^{b_m}|$
for all $k$. 
Since $0$ is in the interior of $\hS$, 
there exist $c_i>0$ such that $\sum_{i=1}^m c_i b_i=0$. 
Hence $\prod_{i=1}^m |y_{(k)}^{b_i}|^{c_i} = 1$. 
Because $y_{(k)}$ diverges, 
we must have $\lim_{k\to\infty} |y_{(k)}^{b_1}| = \infty$. 
Since $\|df_{q_{(k)},z_{(k)}}(y_{(k)})\|$ is bounded, we have 
\[
0 = \lim_{k\to \infty} \frac{|z_{(k)}|}{|y_{(k)}^{b_1}|} 
\|df_{q_{(k)},z_{(k)}}(y_{(k)})\|
= \lim_{k\to \infty} \| \sum_{i=1}^m q_{(k)}^{\ell_i} y_{(k)}^{b_i-b_1} b_i \|.  
\]
Because $|y_{(k)}^{b_i-b_1}|\le 1$, 
by passing to a subsequence again, 
we can assume that $y_{(k)}^{b_i-b_1}$ converges to $\alpha_i\neq 0$ 
for all $1\le i\le l$ and $y_{(k)}^{b_i-b_1}$ goes to $0$ for 
$i> l$. Then we have 
\[
0= \sum_{i=1}^l \tilde{q}^{\ell_i} \alpha_i b_i, \quad 
\tilde{q}= \lim_{k\to\infty} q_{(k)} \in \cMo.  
\]
Put $\xi_{(k),i}:=\log y_{(k),i}$. 
By choosing a suitable branch of the logarithm, 
we can assume that 
$\lim_{k\to \infty} \pair{\xi_{(k)}}{b_i-b_1} = \log \alpha_i$ 
for $1\le i\le l$ 
and $\lim_{k\to \infty} \pair{\Re(\xi_{(k)})}{b_i-b_1} = -\infty$ 
for $i> l$. 
Let $V$ be the $\C$ subspace of $\bN\otimes \C$ 
spanned by $b_i-b_1$ with $1\le i\le l$. 
Take the orthogonal decomposition $\bN\otimes \C \cong V\oplus V^\perp$
and write $\xi_{(k)}=\xi_{(k)}'+\xi_{(k)}''$, 
where $\xi'_{(k)}\in V$ and $\xi''_{(k)}\in V^\perp$. 
Then $\xi_{(k)}'$ converges to some $\xi'\in V$. 
Putting $\tilde{y}_i = \exp(\xi'_i)$, we have 
$\tilde{y}^{b_i-b_1}=\alpha_i$ for $1\le i\le l$ and so 
\begin{equation}
\label{eq:crit_W_Delta}
\sum_{i=1}^l \tilde{q}^{\ell_i} \tilde{y}^{b_i} b_i  
= \tilde{y}^{b_1}
(\sum_{i=1}^l \tilde{q}^{\ell_i} \tilde{y}^{b_i-b_1} b_i) 
=0.    
\end{equation} 
On the other hand, for a sufficiently big $k$, 
$\pair{\Re(\xi''_{(k)})}{b_i-b_1}=0$ for $1\le i\le l$ 
and $\pair{\Re(\xi''_{(k)})}{b_i-b_1}<0$ for $i>l$. 
This means that $b_1,\dots,b_l$ are on some face $\Delta$ of $\hS$. 
But the equation (\ref{eq:crit_W_Delta}) shows that 
$\tilde{y}$ is a critical point of $W_{\tilde{q},\Delta}$.  
This contradicts to the assumption that $W_{\tilde{q}}$ 
is non-degenerate at infinity. 

\subsection{The pairings match under mirror symmetry}
\label{subsec:pairing} 
We give a proof of $(\cdot,\cdot)_{\cRz} = 
(\tau\times \id)^*(\cdot,\cdot)_F$ 
in Proposition \ref{prop:Dmoduleiso}. 
Firstly we show that $(\cdot,\cdot)_{\cRz}$ is a 
constant multiple of $(\tau\times \id)^*(\cdot,\cdot)_F$.  
The argument here follows the line of 
\cite[Proposition 3.6]{CIT:I}, where the case 
$\cX = \Proj(1,1,1,3)$ was discussed. 
We work on the (pulled back) A-model $D$-module 
via the identification $\Mir$. 
Let 
\[ 
(\cdot,\cdot)_{{\rm B},(\tau(q),z)}   
\colon F_{(\tau(q),-z)} \times F_{(\tau(q),z)}  
\to \C
\] 
be the pairing induced 
from the B-model pairing $(\cdot,\cdot)_{\cRz}$ 
via $\Mir$. 
% Note that this is invariant under the Galois action: 
% $(\alpha,\beta)_{{\rm B},(\tau(q),z)} = 
% (dG(\xi)\alpha, dG(\xi)\beta)_{{\rm B}, (G(\xi)\tau(q), z)}$. 
Via the fundamental solution (\ref{eq:fundamentalsol_L}),  
this induces a pairing $\Pair{\cdot}{\cdot}_{\rm B}$ 
on the Givental space $\cH$ (see (\ref{eq:Giventalsp}) 
and (\ref{eq:Giv_flat})):  
\[
\Pair{\alpha(z)}{\beta(z)}_{\rm B} := 
\left( L(\tau(q),-z)\alpha(-z), L(\tau(q),z)\beta(z) 
\right)_{{\rm B},(\tau(q),z)} 
\]
for $\alpha(z),\beta(z) \in \cH$. 
Since $(\cdot,\cdot)_{\rm B}$ is $\nabla$-flat, 
$\Pair{\alpha(z)}{\beta(z)}_{\rm B}$ is independent of $q$. 
By the discussion after Conjecture \ref{conj:mirrorthm}, 
the monodromy over $\cM$ gives all the Galois actions 
of $H^2(\cX,\Z)$. Since the B-model pairing 
is monodromy-invariant, we have 
\begin{equation}
\label{eq:mon-inv_B}
\Pair{\alpha(z)}{\beta(z)}_{\rm B} = 
\Pair{G^{\cH}(\xi)\alpha(z)}{G^{\cH}(\xi)\beta(z)}_{\rm B}. 
\end{equation} 
Taking $\xi \in H^2(\cX,\Z)$ to be  
classes pulled-back from the coarse moduli space $X$ 
(so that $f_v(\xi) = 0$ for $v\in \sfT$) 
and using (\ref{eq:GaloisH}),  
one can deduce 
\begin{equation}
\label{eq:02_adj} 
\Pair{\tau_{0,2}\cdot \alpha(z)}{\beta(z)}_{\rm B} 
= \Pair{\alpha(z)}{\tau_{0,2}\cdot \beta(z)}_{\rm B}, \quad 
\tau_{0,2} \in H^2(\cX). 
\end{equation} 
By (\ref{eq:mon-inv_B}) and (\ref{eq:02_adj}), 
one can see that the semisimple part 
$\bigoplus_{v\in \sfT} e^{2\pi\iu f_v(\xi)}$ of 
$G^{\cH}(\xi)$ also preserves $\Pair{\cdot}{\cdot}_{\rm B}$. 
This implies that, 
for $\alpha\in H^*(\cX_v)$, $\beta\in H^*(\cX_{v'})$,  
\begin{equation}
\label{eq:orth_B}
\Pair{\alpha}{\beta}_{\rm B} = 0 \quad \text{if $v' \neq \inv(v)$}.  
\end{equation} 
Here we used the fact that $v'=\inv(v)$ if 
$f_v(\xi) + f_{v'}(\xi) \in \Z$ 
for all $\xi\in H^2(\cX,\Z)$.  
By the definition of $\Pair{\cdot}{\cdot}_{\rm B}$, 
one has for $\alpha,\beta \in H^*_{\rm orb}(\cX)$, 
\begin{align*}
(\alpha,\beta)_{{\rm B},(\tau(q),z)} 
&= \Pair{L(\tau(q),z)^{-1}\alpha}{L(\tau(q),z)^{-1}\beta}_{\rm B} \\ 
&\sim \Pair{e^{\sum_{a=1}^{r'} \ov{p}_a \log q_a/z} \alpha}
{e^{\sum_{a=1}^{r'} \ov{p}_a \log q_a/z} \beta}_{\rm B}  
= \Pair{\alpha}{\beta}_{\rm B} \quad 
\text{ as $q\to 0$,}    
\end{align*} 
where we used (\ref{eq:Linv}),  
(\ref{eq:mirrormap_exp}) and (\ref{eq:02_adj}). 
Since the left-hand side is regular at $z=0$,  
we know that $\Pair{\alpha}{\beta}_{\rm B}$ is regular 
at $z=0$. 
Moreover, since $(\cdot,\cdot)_{\rm B}$ 
is $\nabla_{z\partial_z}$-flat, we have 
\begin{equation}
\label{eq:homog_B} 
z\partial_z \Pair{\alpha}{\beta}_{\rm B}  
= \frac{1}{2}(\deg \alpha + \deg \beta - 2n)
\Pair{\alpha}{\beta}_{\rm B} 
+ \frac{1}{z} \Pair{\rho \cdot \alpha}{\beta}_{\rm B} 
- \frac{1}{z} \Pair{\alpha}{\rho\cdot \beta}_{\rm B}     
\end{equation} 
by the second equation of (\ref{eq:diffeq_L}). 
The last two terms cancel by (\ref{eq:02_adj}) 
and so $\Pair{\cdot}{\cdot}_{\rm B}$ is of degree $-2n$ 
when we set $\deg z = 2$. 

Now we claim that $\Pair{\alpha}{\beta}_{\rm B} \in \C$ 
for $\alpha,\beta\in H^*_{\rm orb}(\cX)$.  
To show the claim, 
by (\ref{eq:02_adj}), (\ref{eq:orth_B}) 
and the Lefschetz decomposition, 
it suffices to show that $\Pair{\alpha}{\omega^k \beta}\in \C$
for primitive classes $\alpha\in H^*(\cX_v)$, 
$\beta \in H^*(\cX_{\inv(v)})$  
with respect to a K\"{a}hler class $\omega$. 
By the homogeneity (\ref{eq:homog_B}) of 
$\Pair{\cdot}{\cdot}_{\rm B}$, 
we have $\Pair{\alpha}{\omega^k\beta} 
\in \C z^{k + \frac{1}{2}(\deg \alpha + \deg \beta -2n)}$. 
By the regularity at $z=0$, this is zero unless  
$2k + \deg \alpha + \deg \beta \ge 2n$. 
When $2k + \deg \alpha + \deg \beta >2n$, 
it follows from the Lefschetz decomposition  
that $\omega^k \alpha =0 $ or $\omega^k \beta =0$.

By this claim, one has 
$(\alpha,\beta)_{{\rm B},(\tau(q),z)} 
= \Pair{L(\tau(q),z)^{-1} \alpha}
{L(\tau(a),z)^{-1}\beta}_{\rm B} 
= \Pair{\alpha}{\beta}_{\rm B} + O(1/z)$ 
for $\alpha,\beta\in H^*_{\rm orb}(\cX)$. 
Because $(\alpha,\beta)_{{\rm B}, (\tau(q),z)}$ 
is regular at $z=0$, we have 
$(\alpha,\beta)_{{\rm B},(\tau(q),z)} 
= \Pair{\alpha}{\beta}_{\rm B}\in\C$ 
and this is independent of $q$ and $z$.  
Now the $\nabla$-flatness of $(\cdot,\cdot)_{\rm B}$ 
gives the Frobenius property 
\[ 
(\tau_*(\partial_a) \circ  \alpha,\beta)_{\rm B} 
= (\alpha, \tau_*(\partial_a) \circ \beta)_{\rm B}, 
\quad \partial_a = q_a (\partial/\partial q_a),    
\]
where we identify $\tau_*(\partial_a)$ with 
a section of $(\tau\times \id)^*F$ 
and the subscript $(\tau(q),z)$ is omitted. 
Since $\tau_*(\partial_a)\circ$ corresponds to 
the multiplication by $\sfp_a$ in the Batyrev ring 
(see Proposition \ref{prop:Jac-Bat}),  
$\tau_*(\partial_a)$ generates the quantum 
cohomology over $\unit$. 
Therefore, the pairing $(\cdot,\cdot)_{\rm B}$ 
is completely determined by the value 
$(\unit,\gamma)_{\rm B}\in \C$ 
for a top dimensional class $\gamma \in H^{2n}(\cX)$ 
and is proportional to $(\cdot,\cdot)_F$. 

Finally, we fix the constant ambiguity.  
Theorem \ref{thm:cc_match} implies that 
the $\Gamma_\R$ and $\Gamma_{\rm c}$ corresponds 
to the linear functions  
$\chi(- \otimes \cO_\cX^\vee), 
\chi(- \otimes \cO_{\pt}^\vee)$ 
on the $K$-group. 
(See the proof of Theorem \ref{thm:pulledbackintstr} 
in Section \ref{subsubsec:pr_main}.)
The pairings match under this correspondence 
%aftsbm the pairing matches --> the pairings match  
$\sharp(\Gamma_\R \cap \Gamma_{\rm c}) 
= 1 = \chi(\cO_\cX^\vee\otimes \cO_\pt)$, 
so the proportionality constant is one. 
\qed 

\bibliographystyle{amsplain}

\end{document}